\newtheorem{prop}{Proposition}[section]
\newtheorem{thm}{Theorem}[section]
\newtheorem{lem}{Lemma}[section]
\newtheorem{defi}{Definition}[section]
\newtheorem{ex}{Example}[section]
\newtheorem{rem}{Remark}[section]
\title{Counting Minimal tori in Riemannian Manifolds}%
\author{Narges Bagherifard\thanks{School of Mathematics, Institute for Research in Fundamental Sciences (IPM), Tehran, Iran \texttt{n.bagherifard@gmail.com}}}%
\begin{document}
	\maketitle
	\noindent\textbf{Abstract.}
		In this paper, we introduce a function which counts minimal tori in a Riemannian manifold $(M,g)$ with $\mathrm{dim}\, M\ge8$. Moreover, we show that this count function is invariant under perturbations of the metric.
		\\
		\\
		\textbf{Mathematics Subject Classification}\; 53A10 . 58E12 . 58D27

	\section{Introduction}
The problem of counting closed geodesics in Riemannian Manifolds of constant negative curvature has been of interest for many years (\cite{Margulis} and \cite{Sarnak}). 
Recently, Eftekhary in \cite{EF-CLosedGeodesics} has
introduced a method for counting closed geodesics in closed manifolds with
arbitrary curvature.

Minimal surfaces in a Riemannian manifold are higher dimensional generalizations of closed geodesics.
A natural question is how to count these types of submanifolds.

Let $\mathscr{G}$ denote the space of metrics of class $C^\infty$ on a smooth manifold $M$. 
For $g\in\mathscr{G}$, denote by $\tilde{\mathscr{L}}(g)$ the space of all closed $g-$minimal surfaces, with a specified genus, of the Riemannian manifold $(M,g)$ (See Section \ref{ModuliSpaceOfMinimalSurfaces} for the precises definition). 
Let $\overline{\mathcal{U}}_g$ be a compact, open subset of $\tilde{\mathscr{L}}(g)$.
When we refer to counting minimal surfaces, we are describing a function denoted as $n(g,\overline{\mathcal{U}}_g)$, which counts the number of elements in $\overline{\mathcal{U}}_g$, probably with appropriate weights.
Clearly, $\overline{\mathcal{U}}_g$ must be finite in order to $n(g,\overline{\mathcal{U}}_g)$ be well-defined.
This condition is met when $\tilde{\mathscr{L}}(g)$ is a $0-$dimensional manifold.
The main obstacle for $\tilde{\mathscr{L}}(g)$ to be a manifold is the existence of non-trivial Jacobi fields along a $g-$minimal surface.
By a theorem of White, \cite{White-2017}, it is known that for a generic metric $g$, every closed, immersed $g$-minimal submanifold of $M$ does not have non-trivial Jacobi fields.  
Consequently, for a generic metric $g$, the moduli space of closed, immersed $g$-minimal submanifolds of $M$ is countable, rendering the counting of elements in $\overline{\mathcal{U}}_g$ meaningful. 
However, $n(g,\overline{\mathcal{U}}_g)$, for a generic metric $g$, cannot be simply defined by counting the elements of $\overline{\mathcal{U}}_g$, as a small perturbation of $g$ may change this count.

Denote by
$$\bar{\Gamma}:[0,1]\rightarrow\mathscr{G}$$ 
a path of metrics and let $g_t:=\bar{\Gamma}(t)$ for $t\in [0,1]$.
Let $\tilde{\mathscr{L}}(\bar{\Gamma})$ denote the moduli space of minimal surfaces, with a specified genus, along the path $\bar{\Gamma}$ (See Subsection \ref{LociOfSuper-RigidityBreakdown}).
For a compact, open subset
$\overline{\mathcal{U}}\subset \tilde{\mathscr{L}}(\bar{\Gamma})$, set
$$\overline{\mathcal{U}}_i:=\overline{\mathcal{U}}\cap \tilde{\mathscr{L}}(g_i)$$ 
, $i=0,1$. 
Then the count function is invariant if for every path of metrics like $\bar{\Gamma}$, 
$n(g_0,\overline{\mathcal{U}}_0)=n(g_1,\overline{\mathcal{U}}_1)$.
What prevents the count function from being invariant under perturbations is the occurrence of a birth-death phenomenon or the emergence of the branching phenomenon.
The latter, which prevent $\tilde{\mathscr{L}}(\bar{\Gamma})$ from being a manifold, arises when a sequence of $g_t-$minimal embedded surfaces converges to a $g-$minimal surface that is a multiple cover of an embedded one.
Indeed, it is the symmetries of the multiple cover that cause problems with transversality, consequently preventing the moduli space to be a manifold.
A metric for which this phenomenon does not happen is called super-rigid (See Definition \ref{Super-rigidMetric}).
This notion arises in the context of $J-$holomorphic curves, which are solutions of the Cauchy-Riemann operator.
Super-rigidity can be restated in terms of the triviality of the kernel of an operator.
Genericity of super-rigidity for Cauchy-Riemann operator has been proved by Wendl \cite{Wendl}.
Wendl's method propose a new proof for the genericity of bumpy metrics in the sense of White \cite{White-1991}.
Indeed, we prove a stronger result:

\begin{thm}\label{Theorem-A}
	Let $M$ be a smooth, closed, connected manifold with $\mathrm{dim}\, M\ge 6$. 
	The subspace $\mathscr{G}^\bullet\subset\mathscr{G}$ of super-rigid metrics is comeager.
	In fact, the subspace of metrics which are not super-rigid is of codimension one.
\end{thm}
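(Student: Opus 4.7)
The plan is to adapt Wendl's strategy from \cite{Wendl} for the Cauchy--Riemann operator to the normal Jacobi operator along minimal immersions. The first step is to linearize the notion of super-rigidity: for a $g$-minimal immersion $u:\Sigma\to (M,g)$ that factors as $u=\bar u\circ\varphi$ through a (possibly branched) cover $\varphi:\Sigma\to\bar\Sigma$ of a simple minimal surface $\bar u$, the Jacobi operator $J_u$ on normal vector fields pulls back from $J_{\bar u}$, and the deck group $\Gamma=\mathrm{Aut}(\varphi)$ acts on $\ker J_u$. Super-rigidity becomes the assertion that for every such factorization, the only Jacobi fields along $u$ are pullbacks of those along $\bar u$; equivalently, $\ker J_u$ agrees with its $\Gamma$-invariant part. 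Decomposing normal vector fields into $\Gamma$-isotypical components (which $J_u$ respects, by unique continuation together with the representation theory of finite groups), the failure of super-rigidity is equivalent to the non-triviality of the kernel of one of the resulting \emph{twisted} Jacobi operators acting on sections of a twisted coefficient bundle over $\bar\Sigma$.

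Next I would form the universal moduli space $\widetilde{\mathscr{L}}=\{(g,u)\}$ and stratify it by covering data: the genus of $\bar\Sigma$, the degree of $\varphi$, the branching profile, and the choice of irreducible $\Gamma$-representation labelling a twisted operator. Each stratum $\widetilde{\mathscr{L}}_\alpha$ should be cut out as a Banach submanifold of a suitable $C^k$-completion of $\mathscr{G}\times\{\text{maps}\}$, and the projection $\pi_\alpha:\widetilde{\mathscr{L}}_\alpha\to\mathscr{G}$ should be Fredholm of non-positive index (equal to minus the dimension of the twisted kernel defining the stratum). The heart of the argument is a Sard--Smale transversality statement: for every $\alpha$ and every $(g,u)\in\widetilde{\mathscr{L}}_\alpha$, the linearization of the twisted operator in the metric direction must be surjective. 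Concretely, one constructs metric perturbations supported in a small ball about a regular point $p\in\bar u(\bar\Sigma)$ whose preimage in $\Sigma$ is unbranched, and shows that their contribution to the zeroth order term of $J_u$, after projection to the chosen isotypical component, spans a dense subspace of the target; the hypothesis $\dim M\ge 6$ ensures the normal bundle has rank $\ge 4$, providing enough independent directions to excite each isotypical component separately. A Baire argument over the countable list of strata then gives that $\mathscr{G}^\bullet$ is comeager, and the codimension-one assertion follows because the ``top'' stratum of failure, where the twisted kernel is one-dimensional, maps to $\mathscr{G}$ by a Fredholm map of index $-1$.

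The main obstacle will be the transversality step for the twisted operators. Wendl's argument in the Cauchy--Riemann case exploits the complex-linear symbol and the holomorphic structure of the covering bundle; here $J_u$ is a real-linear Schr\"odinger-type operator whose dependence on $g$ is intricate, since a perturbation $g\mapsto g+h$ simultaneously alters the induced metric on $\Sigma$, the normal bundle and its connection, and the curvature/Weingarten term appearing in the zeroth order part of $J_u$. One must carefully disentangle these contributions and verify that, at a generic regular point of $\bar u$, the metric variations surject onto the relevant isotypical component after accounting for the reparametrization of the induced conformal structure. A secondary difficulty is the reparametrization symmetry of $\Sigma$ (a full complex automorphism group in the torus case): a slice construction in the spirit of White \cite{White-2017,White-1991} is required before Sard--Smale applies, and that slice must be compatible with the covering stratification so that all strata can be treated uniformly within a single Banach set-up.
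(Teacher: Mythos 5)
Your proposal captures the correct high-level architecture---factoring $u=\bar u\circ\varphi$, passing to twisted Jacobi operators over the underlying simple surface, stratifying the universal moduli space by covering data, running Sard--Smale over the strata, and finishing with a Baire category argument---and this matches the skeleton of the paper's proof, which follows Wendl's strategy as packaged by Doan--Walpuski. However, the proposal has a substantive gap precisely at what you call ``the heart of the argument.''

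You assert that metric perturbations supported near a regular unbranched point, projected to the chosen isotypical component, ``span a dense subspace of the target.'' This conflates two logically distinct conditions. What those perturbations actually give is the \emph{flexibility} condition: the image of the zeroth-order evaluation map $\mathrm{ev}_\mathfrak{v}(A)=A\cdot(-)\bmod\mathrm{Im}\,D_\mathfrak{v}$, applied to compactly supported $A\in\Gamma_c(U,\mathrm{Hom}(N_{\bar u},N_{\bar u}))$, is contained in the image of the full linearization $\Theta_\mathfrak{v}$ (this is Lemma~\ref{lem:1} in the paper, and it does \emph{not} need $\dim M\ge 6$; any normal rank $\ge 1$ suffices for that construction). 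Surjectivity of $\Theta_\mathfrak{v}$ additionally requires surjectivity of $\mathrm{ev}_\mathfrak{v}$ onto $\mathrm{Hom}(\mathrm{Ker}\,D_\mathfrak{v}^{\underline V},\mathrm{Coker}\,D_\mathfrak{v}^{\underline V})$, and this is exactly the \emph{Petri condition}: injectivity of $\mathfrak{P}:\mathrm{Ker}\,D^{\underline V}\otimes\mathrm{Ker}\,(D^{\underline V})^\dagger\to\Gamma(U,N\otimes N^\dagger)$. The Petri map can and does fail to be injective, because the admissible perturbations have the constrained form $A\otimes\mathrm{Id}_{\underline V}$ rather than being arbitrary endomorphisms of $N\otimes\underline V$; counting ``independent directions'' in the normal bundle does not rule out relations of the form $\sum\gamma_{ij}\,\alpha_i(x)\otimes\beta_j(x)\equiv 0$. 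This is the central obstruction Wendl's theorem addresses: the locus where the Petri condition fails has infinite codimension \emph{provided} the symbol of the operator satisfies Wendl's technical condition (Definition~\ref{Wendl's Condition}). The paper devotes Subsection~\ref{Subsetion:WendCondition} (Lemmas~\ref{lem:2},~\ref{lem:4} and Proposition~\ref{prop:Wendl'sConditionForJacobi}) to verifying this condition for the Laplacian symbol $\Delta$ of the Jacobi operator, via an explicit analysis of harmonic polynomials and a rank estimate for the map $\mathrm{L}_{\Delta,B}$. Your proposal neither identifies this step nor offers a substitute; your paragraph on ``disentangling'' the metric variation's effect on $g|_\Sigma$, the normal connection and the curvature term is aimed at flexibility, not at Petri's condition, and would not close the argument.

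Two secondary discrepancies. First, the dimension hypothesis. You place $\dim M\ge 6$ in the transversality step. In the paper the relevant bounds enter through the orbifold Riemann--Roch computation of Proposition~\ref{IndexOfTwistedOperator}, which shows $\mathrm{index}(\mathcal{J}_{g,v;\varpi}^{\underline V})\le -(n-2)$ per branch point, and through the codimension bookkeeping in Proposition~\ref{FailureOfSuperRigidity} and Proposition~\ref{CodimOfSet}: after pushing forward by $\sqcap_s$, which has index $2s$, one needs $(n-2)s+1-2s\ge 1$, and one also wants to exclude branched weak limits entirely (which forces $Z_\varpi=\emptyset$ when $n>4$) and bubbling (which needs $n\ge 5$). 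Your proposal omits this index formula for twisted orbifold Jacobi operators altogether, yet it is what controls the codimensions of the strata. Second, your reformulation ``super-rigidity $\iff$ $\ker J_u$ equals its $\Gamma$-invariant part'' is a weaker statement than the paper's definition (which requires $\ker J_u=0$); it is only equivalent once one has already arranged rigidity of $\bar u$, so it should be stated as part of an inductive scheme, not as the definition.
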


The locus of the failure of super-rigidity which is a codimension $1$ subset is called a wall.
Walls divide the space into chambers. In the process of counting tori, inside each chamber, tori are classified based on Taubes' viewpoint \cite{Taubes}.

Defining a count function requires a careful analysis of the moduli space around walls and birth-death phenomena, which leads us to the following:



\begin{thm}\label{Theorem-B}
	Let $M$ be a smooth manifold with $\mathrm{dim}\, M\ge 8$.
	Denote by $\tilde{\mathscr{L}}(g)$ the space of all $g-$minimal immersed maps of a torus to $M$, up to reparameterization.
	Let $\overline{\mathcal{U}}_g$ be a compact, open subset of $\tilde{\mathscr{L}}(g)$.
	There is a weight $n(g,\overline{\mathcal{U}}_g)\in\mathbb{Z}$, defined based on Taubes' classifications of tori, with the following properties:
	\begin{enumerate}
		\item If $g$ is super-rigid and $[v]\in\tilde{\mathscr{L}}(g)$ represents an embedded torus, then $n(g,[v])=\pm 1$.
		\item Let $\bar{\Gamma}:[0,1]\rightarrow\mathscr{G}$ be a path of metrics and let $g_t:=\bar{\Gamma}(t)$ for $t\in [0,1]$.
		Denote by
		\[
		\mathscr{L}(\bar{\Gamma}):=\{(t,[u])\,|\,[u]\in\tilde{\mathscr{L}}(g_t)\}
		\]
		the moduli space of minimal tori along the path $\bar{\Gamma}$.
		If $\overline{\mathcal{U}}\subset \tilde{\mathscr{L}}(\bar{\Gamma})$ be a compact, open subset and 
		$\mathcal{U}_i:=\overline{\mathcal{U}}\cap \tilde{\mathscr{L}}(g_i)$, $i=0,1$, then 
		$$n(g_0,\mathcal{U}_0)=n(g_1,\mathcal{U}_1).$$
	\end{enumerate}
\end{thm}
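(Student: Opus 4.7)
The strategy is to adapt Taubes' construction of the Gromov invariant for tori in symplectic $4$-manifolds to the Riemannian minimal surface setting. For a super-rigid metric $g$, Theorem \ref{Theorem-A} together with White's genericity result implies that $\tilde{\mathscr{L}}(g)$ is discrete and every embedded representative $v$ has an invertible Jacobi operator of Fredholm index zero; the sign $n(g,[v])=\pm 1$ is then defined as the spectral-flow sign of the associated determinant line relative to a fixed reference, which gives property (1) by construction. For a multiply-covered representative $v=u\circ\pi$, where $\pi$ is a $d$-fold covering of a primitive embedded minimal torus $u$, the Jacobi operator along $v$ decomposes over the characters of the deck transformation group of $\pi$; following Taubes, one assigns to $v$ a finite signed combination depending on these twisted operators and on the signs of the relevant eigenvalues. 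The integer $n(g,\overline{\mathcal{U}}_g)$ is then the finite sum of these contributions over $[v]\in\overline{\mathcal{U}}_g$.

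For the invariance statement, a transversality argument (in the same spirit as the proof of Theorem \ref{Theorem-A}) lets us perturb $\bar{\Gamma}$ with its endpoints fixed so that it meets the wall of non-super-rigid metrics transversally. The space $\mathscr{L}(\bar{\Gamma})$ then has the structure of a $1$-dimensional manifold away from a finite set of exceptional times, where two codimension-$1$ phenomena can occur: a \emph{birth--death} point, at which two branches of embedded minimal tori meet tangentially and an eigenvalue of the Jacobi operator on an embedded torus crosses zero; and a \emph{wall crossing}, at which an eigenvalue of the Jacobi operator on a multiply-covered torus crosses zero, possibly spawning a family of nearby embedded or lower-cover tori on one side of the wall.

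Invariance then reduces to a local cancellation statement at each exceptional time. Birth--death points are straightforward: the two embedded branches carry opposite spectral-flow signs, so their contributions to $n$ cancel as the parameter passes through. A wall crossing uses the more delicate Taubes weights: the change in the count of embedded (or lower-cover) tori that appear or disappear upon passing through the wall must equal the change in the Taubes contribution attached to the multiply-covered torus at the wall. The weights assigned in the first step are chosen precisely so that these local equalities hold, and summing over the finitely many exceptional times in $[0,1]$ yields $n(g_0,\mathcal{U}_0)=n(g_1,\mathcal{U}_1)$.

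The main obstacle will be the wall-crossing bookkeeping. It requires a Kuranishi-type local model for $\mathscr{L}(\bar{\Gamma})$ near a non-super-rigid multiply-covered torus: one must identify the obstruction bundle with the kernel of the twisted Jacobi operator, compute the induced local bifurcation diagram counting the embedded or lower-covered solutions emerging on either side of the wall, and match these counts against the algebraic combinatorics of Taubes' weights. Extending Taubes' analysis from $J$-holomorphic tori in $4$-manifolds to minimal tori in general Riemannian manifolds with $\dim M\ge 6$ requires verifying that the analytic ingredients (Fredholm theory, spectral flow, transversality on twisted pullback bundles) carry over, and that multiple-cover symmetries produce only finitely many types of wall crossings, each tractable by Taubes' formula.
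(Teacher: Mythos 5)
Your proposal follows essentially the same strategy as the paper: assign weights $n_{\pm k}^d$ via Taubes' classification of embedded tori by the signs of $\det\mathcal{J}_{g,v,\iota}$ for $\iota\in H^1(T,\mathbb{Z}_2)$, and establish invariance by analyzing the one-dimensional local models of $\tilde{\mathscr{L}}(\bar{\Gamma})$ near birth--death critical points and near weak-limit (wall-crossing) points. The substantive content that your sketch leaves to be supplied is exactly what the paper's proof of Theorem \ref{thm:Well-definedness} carries out: the reduction, via the deck-group action on $\mathrm{Ker}\,\mathcal{J}^{\underline{V}}$, to showing only degree-$2$ covers occur at a generic wall (and hence only $d=2^m$ contributes), and the explicit verification through the recursions $-1+n_{+0}^2=n_{+1}^2$, $n_{+k}^d=-n_{-k}^d$, etc.\ (read off from the twelve bifurcation diagrams of Figure \ref{fig:fig3}) that a consistent, eventually-vanishing system of weights exists, yielding the closed formula of Definition \ref{def:1}.
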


\textbf{Organization.} The paper is organized as follows.
In Section \ref{ModuliSpaceOfMinimalSurfaces}, the moduli space of minimal embedded surfaces, $\tilde{\mathscr{Y}}$, is introduced.
This space consists of pairs $(g,\Sigma)$, where $g$ is a metric on $M$ and $\Sigma\subset M$ is a $g-$minimal submanifold.
In \cite{White-1991}, White shows that $\tilde{\mathscr{Y}}$ is a Banach manifold. This result and its corollaries are reviewed in this section.

Due to the failure of transversality, the results of Section \ref{ModuliSpaceOfMinimalSurfaces} does not hold for immersed submanifolds.
It turns out that investigating transversality can be reduced to examining two simpler conditions called Petri's condition and the flexibility condition (See Subsection \ref{Subsection:TransversalityTheorem}).
In Subsection \ref{FlexibilityConditionSection}, we show that the latter condition is satisfied, but the former may fail in some instances.
In Subsection \ref{Subsetion:WendCondition}, we review Wendl's work, \cite{Wendl}, according to which the locus where Petri's condition fails is of infinite codimension, provided the symbol of the Jacobi operator satisfies a technical condition known as Wendl's condition \cite{DoanWalpuski}.  
Moreover, we investigate the validity of Wendl's condition for the Jacobi operator.

A sequence of $g_t-$minimal embedded surfaces may converge to an "orbifold" $g-$minimal surface.
However, the analysis of the index of the Jacobi operator along orbifold Riemann surfaces in Subection \ref{IndexOfTwistedOperator} demonstrates that this phenomenon can be prevented if the dimension of $M$ is greater than $6$. 
Moreover, the arguments in Subsection \ref{LociOfSuper-RigidityBreakdown} provide us with a proof of Theorem \ref{Theorem-A}.
A review of orbifolds can be found in Appendix \ref{Jacobi Operator}.

In Subsection \ref{Subsection:CountingMinimalToriInManifold}, the count function will be introduced.
In Subsubsections \ref{Subsection:LocalModel_CriticalPoint} and \ref{Subsection:LocalModel_WeakLimitPoint}, we analyze the moduli space around the points where the birth-death phenomenon occurs and also
around the points where a paths of metrics faces a wall, respectively.
With the local model, in Subsubsection  \ref{Subsection:InvarianceOfTheCountFunction}, we show the invariance and, consequently, the well-definedness of the counting function.

Let $v:\Sigma\rightarrow (M,g)$ represent a $g-$minimal embedded surface. 
As mentioned above, one of the major problems in investigating super-rigidity arises where a sequence $(g_t,v_t)$ of $g_t-$minimal embedded maps, converges to an element $(g,u)$, where $u$ factors as $u=v\circ\pi$ and $\pi:\Sigma\rightarrow\Sigma'$ is a covering map of degree $d>1$.

The occurrence of a multiple covering is equivalent to the non-triviality of the kernel of the Jacobi operator along $u$, i.e.  $\mathrm{Ker}\,\mathcal{J}_{g,u}$.
The operator $\mathcal{J}_{g,u}$ is found to be equivalent to $\pi^*\mathcal{J}_{g,v}$, which in turn is equivalent to $\mathcal{J}_{g,v}\otimes \underline{V}$, for an appropriate vector bundle $\underline{V}$.

The relationship between pullback and twisted operators is explored in Appendix \ref{AppendixA}.
The automorphism group of $\pi$ leads us to a stratification of the moduli space, which is a fundamental concept in addressing transversality. 
Appendix \ref{AppendixA} also provides a review of the stratification and transversality in the presence of symmetry.

\textbf{Acknowledgement}. I would like to thank Eaman Eftekhary for his insightful discussions and for proposing the problem addressed in this paper. 
 

	\tableofcontents
	\section{Moduli Space Of Minimal Surfaces}\label{ModuliSpaceOfMinimalSurfaces}
Let $M$ be a closed, smooth manifold with $\mathrm{dim}\,M=n$.
Denote by $\mathscr{G}_k$, $k\ge 3$ or $k=\infty$, the space of $C^k-$ Riemannian metrics on $M$.
If the degree of smoothness of metrics is not important to us, we drop $k$ and write $\mathscr{G}$.

Let $\Sigma$ be a surface equipped with a fixed metric $h$. For $u:\Sigma \rightarrow M$, let $W_u:=u^*TM$ be the pullback of the tangent bundle $TM$ of $M$.
$W_u$ equipped with its smooth structure defines a sheaf which is denoted by $\mathcal{W}_u$.
Similarly, the tangent bundle of $\Sigma$, also equipped with its smooth structure is another sheaf on $\Sigma$ which we denote it by $\mathcal{T}\Sigma$. Then $u$ induces a map of sheaves, denoted by
\[
\mathrm{d}u:\mathcal{T}\Sigma \rightarrow \mathcal{W}_u
\]
Since $\mathcal{T}\Sigma$ and $\mathcal{W}_u$ are locally free sheaves, $\widetilde{\mathcal{N}}_u:=\mathcal{W}_u/\mathrm{d}u(\mathcal{T}\Sigma)$ is a coherent sheaf on $\Sigma$. 
The quotient of $\widetilde{\mathcal{N}}_u$ by the torsion subsheaf, 
\[
\mathcal{N}_u:=\widetilde{\mathcal{N}}_u/\mathrm{Tor}(\widetilde{\mathcal{N}}_u)
\]
is a locally free sheaf. Hence, a smooth vector bundle is associated with it, denoted by $N_u$, which is called the \textbf{generalized normal bundle} of $u$.
Kernel of the quotient map 
\[
\mathcal{W}_u \rightarrow \mathcal{N}_u
\]
is also a locally free sheaf and its corresponding vector field, denoted by $T_u$, is called the \textbf{generalized tangent bundle} of $u$.
Note that when $u:\Sigma \rightarrow M$ is an embedding, $T_u$ is the image of tangent bundle of $\Sigma$ and $N_u=W_u/T_u$ is the \textbf{normal bundle} of $u$.

Let $V$ be a vector bundle over $\Sigma$, where $V$ can represent $W_u,T_u$ or $N_u$. and let $C^{j,\alpha}(\Sigma,V)$ denote the vector space of $C^{j,\alpha}-$sections of $V$ which is a Banach space.
The $C^{j,\alpha}-$norm is defined as follows:
\begin{align*}
	\|s\|_{j,\alpha}:=\|s\|_{j-1}+\|\nabla^js\|_{0,\alpha}.
\end{align*}
Here $\nabla$ denote a smooth Riemannian connection
and
\begin{align*}
	\|s\|_{j}:=\|s\|_{0}+\|\nabla s\|_{0}+...+\|\nabla^js\|_{0}.
\end{align*}
Moreover, for a map $f:U\rightarrow\mathbb{C}^n$, $U\subset \mathbb{C}$,
\begin{align*}
	\|f\|_{0,\alpha}:=\|f\|_{0}+\mathrm{sup}\{\frac{|f(x)-f(y)|}{|x-y|^\alpha}\,|\,x,y\in U, x\ne y\}.
\end{align*}

In \cite{White-1991}, White investigates how the critical points of the area functional vary with metric. To state this result, let
\begin{align*}
	A:\mathcal{U} \times &C^{j,\alpha}(\Sigma,V)\rightarrow \mathbb{R} \\
	A(g,s)&=\int_{\Sigma} A_g(x,s(x),\nabla s(x)) \; \mathrm{d}x
\end{align*}
be the area functional, where $\mathcal{U}$ is an open subset of $C^k-$metrics on $M$, $k\ge 3$.

\begin{thm}[\cite{White-1991}, Theorem 1.1] \label{thm:c1}
	Let $\Sigma, V$ and $\nabla$ be as above.
	\begin{enumerate}
		\item The rate of change of the area functional is expressed as follows:
		\[
		\frac{d}{dt}\Big|_{t=0} 
		\int_{\Sigma} A_g(x,s_t(x),\nabla s_t(x)) \; \mathrm{d}x=
		\int_{\Sigma} H(g,s_0)(x).\frac{d}{dt}s_0(x) \; \mathrm{d}x
		\]
		where $s_t\in C^{2,\alpha}(\Sigma,V)$ is a differentiable one parameter family of sections and 
		\[
		H(g,s)=-\mathrm{div}\, D_3A_g(x,s,\nabla s)+D_2A_g(x,s,\nabla s)
		\]
		\item  Let $\mathcal{U}$ be an open subset of $C^k-$metrics on $M$, $k\ge 3$. If $2\le j\le k$ and $s\in C^{j,\alpha}(\Sigma,V)$, then $H(g,s)\in C^{j-2,\alpha}(\Sigma,V)$ and the map
		\[
		H:\mathcal{U}\times C^{j,\alpha}(\Sigma,V) \rightarrow C^{j-2,\alpha}(\Sigma,V)
		\]
		is $C^{k-j}$.
		\item The second order partial differential operator, $\mathcal{J}_{g_0,s_0}=D_2H(g_0,s_0)$, which is the linearization of $H$ is self-adjoint.
		\\
		Moreover, if $\mathcal{J}_{g_0,s_0}$ is an elliptic operator, then
		\item  $\mathcal{J}_{g_0,s_0}$ is an index 0 Fredholm map.
		\item For each $\alpha<1$, if $s_0$ is a solutions of $H(g_0,s_0)=0$, then $s_0$ is $C^{q,\alpha}$.
	\end{enumerate}
\end{thm}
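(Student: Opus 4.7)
The plan is to handle the five parts separately; parts (1), (3), (4), (5) reduce to standard variational or elliptic facts, while part (2)---the careful bookkeeping of function-space regularity of $H$---is where the genuine technical work sits.

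For part (1), I would apply the chain rule under the integral sign to $t\mapsto A_g(x,s_t(x),\nabla s_t(x))$, obtaining
\[
\int_\Sigma\bigl(D_2 A_g(x,s_0,\nabla s_0)\cdot\dot{s}_0+D_3 A_g(x,s_0,\nabla s_0)\cdot\nabla\dot{s}_0\bigr)\,\mathrm{d}x,
\]
and, since $\Sigma$ is closed, integrate the second term by parts to identify $H(g,s_0)=D_2 A_g-\mathrm{div}\,D_3 A_g$ as the $L^2$-gradient of the area functional.

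For part (2) I would use that the integrand $A_g(x,\xi,\eta)$ is smooth in $(\xi,\eta)$ with coefficients depending $C^k$-smoothly on $g\in\mathcal{U}$. The assignment $s\mapsto(s,\nabla s)$ sends $C^{j,\alpha}(\Sigma,V)$ continuously into $C^{j,\alpha}\times C^{j-1,\alpha}$, and a standard omega lemma (composition with a fibrewise smooth map) shows that $(g,s)\mapsto D_2 A_g(x,s,\nabla s)$ and $(g,s)\mapsto D_3 A_g(x,s,\nabla s)$ land in $C^{j-1,\alpha}$ and are of joint class $C^{k-j+1}$. Taking one additional derivative to form the divergence costs one unit of regularity, so $H$ lands in $C^{j-2,\alpha}$ and is of joint class $C^{k-j}$. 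This is the step I expect to be the main obstacle, because tracking exactly how many derivatives are absorbed by $s$ versus $g$ at each step---and verifying the relevant composition estimates in H\"older spaces---is the only place where genuine care is needed.

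Parts (3) and (4) follow from the variational structure. Since $H(g_0,\cdot)$ is the $L^2$-gradient of $s\mapsto A(g_0,s)$, its linearization $\mathcal{J}_{g_0,s_0}=D_2 H(g_0,s_0)$ realizes the Hessian at $s_0$; equality of mixed partial derivatives then yields symmetry of the associated bilinear form, hence formal self-adjointness. Once ellipticity is assumed, the standard theory on the closed surface $\Sigma$ gives Fredholmness, and self-adjointness forces $\dim\ker\mathcal{J}_{g_0,s_0}=\dim\coker\mathcal{J}_{g_0,s_0}$, so the index vanishes. For part (5) I would apply interior Schauder estimates to the quasilinear elliptic equation $H(g_0,s_0)=0$: freezing coefficients along the assumed $C^{2,\alpha}$ solution produces a linear equation with H\"older coefficients, and a bootstrap argument gains one order of H\"older regularity at each step, terminating at the regularity permitted by the $C^k$ metric.
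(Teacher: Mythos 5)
The paper does not prove this theorem; it is quoted verbatim as Theorem~1.1 of White's 1991 paper and used as a black box, so there is no in-paper argument to compare yours against. Your sketch is a plausible reconstruction of White's actual approach: the first variation plus integration by parts for part~(1), a Palais-style omega lemma for the H\"older-space smoothness claim in part~(2), the Hessian/second-variation interpretation for formal self-adjointness in part~(3), the equality of kernel and cokernel dimensions for a self-adjoint elliptic operator on a closed manifold in part~(4), and Schauder bootstrap in part~(5). You are also right that the genuine technical content is the regularity bookkeeping in part~(2), where one must verify that composition with the fibrewise-smooth $A_g$ loses precisely the expected number of derivatives in $g$ when passing to H\"older sections, and that the subsequent divergence does not further degrade joint $(g,s)$-smoothness beyond $C^{k-j}$; this is exactly what White's omega-lemma argument is designed to control, so your outline is consistent with the source.
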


For $u:\Sigma \rightarrow M$, let $[u]$ denote the equivalence class of all parameterizations of the surface $u(\Sigma)$, where an element of $[u]$ takes the form $u\circ \phi$ with $\phi:\Sigma \rightarrow \Sigma$ being a diffeomorphism.

The space of minimal surfaces is defined as
\[
\mathscr{L}=\{(g,u) \in \mathcal{U}\times C^{j,\alpha}(\Sigma,M) \, |\, u \; \text{is a}\; g-\text{stationary immersion}\}.
\] 
Moreover, let $\tilde{\mathscr{L}}:=\mathscr{L}/\sim$, where $\sim$ is the equivalence relation define above.

Near the multiple coverings, i.e. a map like $u=v\circ \phi$ where $\phi:\Sigma \rightarrow \Sigma^\prime$ is a covering map of surfaces of degree $d>1$ and $v:\Sigma^\prime \rightarrow M$, this space may fail to be a manifold. This arises due to the potential convergence of a sequence of minimal embeddings toward a multiple covering.
However, if we consider the subspace of "almost embedded" minimal surfaces, denoted by $\mathscr{Y} \subset \mathscr{L}$, a theorem of White, \cite{White-1991}, asserts that $\tilde{\mathscr{Y}}:=\mathscr{Y}/\sim$ has a manifold structure. 
Here, an \textbf{almost embedded surface} refers to a surface that is immersed and has multiplicity one on an open dense subset.
\begin{thm}[\cite{White-1991}, Theorem 2.1] \label{thm:c2}
	Let $\mathcal{U}$ be an open subset of $C^k-$Riemannian metrics on $M$, $k\ge 3$ and let $\Sigma$ be a Riemannian manifold with $\mathrm{dim}\,\Sigma<\mathrm{dim}\,M$. 
	Then $\tilde{\mathscr{Y}}:=\mathscr{Y}/\sim$ is a $C^{k-j}$ Banach manifold modeled on $\mathcal{U}$, where
	\[
	\mathscr{Y}=\{(g,u) \in \mathcal{U}\times C^{j,\alpha}(\Sigma,M) \, |\, u \; \mathrm{is \; a}\; g-\mathrm{stationary \; almost \; embedding}\}.
	\] 
	Moreover the map
	\begin{align*}
		&\Pi:\tilde{\mathscr{Y}}\rightarrow \mathcal{U} \\
		&\Pi(g,[u])=g
	\end{align*}
	is a $C^{k-j}$ Fredholm map with index $0$.
\end{thm}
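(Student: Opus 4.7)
The plan is to apply the implicit function theorem to the universal mean-curvature map, following the standard recipe for moduli spaces cut out by an elliptic equation with parameters. Near a solution $(g_0,u_0)\in\mathscr{Y}$, I first fix a slice for the diffeomorphism action: use the exponential map along the generalized normal bundle $N_{u_0}$ to identify a neighborhood of $[u_0]$ in $C^{j,\alpha}(\Sigma,M)/\!\sim$ with a neighborhood of $0$ in $C^{j,\alpha}(\Sigma,N_{u_0})$, sending $s\mapsto [\exp_{u_0}(s)]$. The mean-curvature map of Theorem \ref{thm:c1} then gives a $C^{k-j}$ map
\[
F:\mathcal{U}\times C^{j,\alpha}(\Sigma,N_{u_0})\longrightarrow C^{j-2,\alpha}(\Sigma,N_{u_0}),\qquad F(g,s)=H(g,\exp_{u_0}(s)),
\]
whose zero set is locally $\tilde{\mathscr{Y}}$.

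The core step is to show that the total linearization
\[
DF_{(g_0,0)}:T_{g_0}\mathcal{U}\oplus C^{j,\alpha}(\Sigma,N_{u_0})\longrightarrow C^{j-2,\alpha}(\Sigma,N_{u_0}),\qquad (\dot g,s)\longmapsto L(\dot g)+\mathcal{J}_{g_0,u_0}(s),
\]
is surjective with split kernel. Since $\mathcal{J}_{g_0,u_0}$ is Fredholm of index $0$ (Theorem \ref{thm:c1}(4)), surjectivity reduces to showing that the image of $L$ surjects onto $\mathrm{coker}\,\mathcal{J}_{g_0,u_0}$. By self-adjointness (Theorem \ref{thm:c1}(3)), the cokernel is spanned by the Jacobi fields $\zeta\in\ker\mathcal{J}_{g_0,u_0}$, so it is enough to prove that for every nonzero $\zeta$ there exists $\dot g$ with
\[
\int_\Sigma\langle L(\dot g),\zeta\rangle\,\mathrm{d}\mathrm{vol}_h\neq 0.
\]
A direct computation of the first variation of mean curvature in the metric rewrites this pairing as $\int_\Sigma\langle\dot g,\,\mathcal{T}(\zeta)\rangle$ for a tensor $\mathcal{T}(\zeta)$ on $M$ supported on the image $u_0(\Sigma)$ and built algebraically from $\zeta$ and the second fundamental form. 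On the open dense set where $u_0$ is an embedding of multiplicity one (this is exactly the almost-embedded hypothesis), pick a point $p$ with $\zeta(p)\neq 0$; near $u_0(p)$ the image is a single embedded sheet, so one can choose a compactly supported metric variation $\dot g$ concentrated in a small $M$-neighborhood of $u_0(p)$ and pair it against $\mathcal{T}(\zeta)$ to obtain a nonzero integral. The almost-embedded condition is precisely what makes this localization legitimate, and I expect this step to be the main obstacle: constructing $\dot g$ with prescribed effect on the normal sections of $u_0$ fails at self-intersection points and at higher-multiplicity points, which is why the theorem does not extend from $\tilde{\mathscr{Y}}$ to all of $\tilde{\mathscr{L}}$.

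Given surjectivity, the implicit function theorem exhibits the zero set of $F$ near $(g_0,0)$ as a $C^{k-j}$ Banach submanifold whose tangent space is $\ker DF_{(g_0,0)}$; passing back through the slice construction gives the manifold structure on $\tilde{\mathscr{Y}}$ near $[g_0,u_0]$. To identify the model and the Fredholm index of $\Pi$, observe that $\Pi$ corresponds under the chart to the restriction of the first projection $T_{g_0}\mathcal{U}\oplus C^{j,\alpha}(\Sigma,N_{u_0})\to T_{g_0}\mathcal{U}$ to $\ker DF_{(g_0,0)}$. Its kernel is $\{(0,s):\mathcal{J}_{g_0,u_0}s=0\}\cong\ker\mathcal{J}_{g_0,u_0}$ and, using surjectivity of $DF_{(g_0,0)}$, its cokernel is naturally isomorphic to $\mathrm{coker}\,\mathcal{J}_{g_0,u_0}$; hence $\Pi$ is Fredholm of the same index as $\mathcal{J}_{g_0,u_0}$, namely $0$. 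Finally, the regularity statement of Theorem \ref{thm:c1}(5) upgrades all elements of $\tilde{\mathscr{Y}}$ to the claimed smoothness, and the construction patches together in a standard way over $\mathscr{Y}$ to yield the global Banach manifold structure modeled on $\mathcal{U}$.
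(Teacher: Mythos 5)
The paper does not reprove this statement; it is quoted directly from White's 1991 paper, so your proposal is effectively a reconstruction of White's own argument. Your overall strategy — fix a normal-graph slice over $u_0$, linearize the mean-curvature map, show the total linearization $(\dot g,s)\mapsto L(\dot g)+\mathcal{J}_{g_0,u_0}(s)$ is surjective, and then read off the manifold structure and the index of $\Pi$ from the implicit function theorem together with the self-adjointness and index-zero facts of Theorem \ref{thm:c1} — is exactly the right skeleton, and the index bookkeeping at the end ($\ker\Pi\cong\ker\mathcal{J}$, $\mathrm{coker}\,\Pi\cong\mathrm{coker}\,\mathcal{J}$ once $DF$ is onto) is correct.

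Where you should be more careful is the surjectivity step, which you correctly flag as the crux. Two points. First, the rewriting $\int_\Sigma\langle L(\dot g),\zeta\rangle=\int\langle\dot g,\mathcal{T}(\zeta)\rangle$ is not purely formal: one has to compute the first variation of $H$ in the metric direction and then verify that the resulting tensor $\mathcal{T}(\zeta)$ is nondegenerate (not identically zero) on the multiplicity-one locus whenever $\zeta$ is; without this, the localization argument produces nothing. Second, and related, the metric perturbations constructed later in the paper (Lemma \ref{lem:1}) are explicitly designed so that $g_t$ agrees with $g_0$ along $v(\Sigma)$ to first order — that is, they keep $v$ minimal and hence have $L(\dot g)=0$; they prescribe $\dot{\mathcal{J}}$, not $L$. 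So for the surjectivity you need here you cannot reuse those variations: you need a genuinely different family of $\dot g$, ones that do change the mean curvature of $u_0$ near an embedded single-sheet point (e.g., a localized conformal bump near a point where $\zeta\neq 0$). This is precisely the nondegeneracy hypothesis in White's abstract Theorem 1.2 that his Theorem 2.1 verifies; making it explicit would close the only real gap in your sketch.
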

If $g$ is a regular value for $\Pi$, then the space of $g-$almost embedded minimal surfaces 
\[
\tilde{\mathscr{Y}}(g):=\Pi^{-1}(g)
\]
is a $0-$dimensional submanifold of $\tilde{\mathscr{Y}}$. 

The obstruction for $\tilde{\mathscr{Y}}(g)$ to be a manifold is the non-triviality of the kernel of $D\Pi(g,[v])$. 
Theorem \ref{thm:c2} implies this kernel is equal to the maximum number of linearly independent $g-$Jacobi fields on $[v]$ (see below).
Note that a $g-$Jacobi field along $u$ is, by definition, a section of $N_u$ that belongs to the kernel of $\mathcal{J}_{g,u}=D_2H(g,u)$. 


\begin{defi}
	Let $g\in \mathscr{G}=\mathscr{G}_k$ be a $C^k-$Riemannian metric and $u:\Sigma \rightarrow (M,g)$ be a $g-$stationary map. The dimension of $\mathrm{Ker}\, D_2H(g,u)$ is called the \textbf{nullity} of $u$ and is denoted by $\pmb{\nu}(g,u)$. The number of negative eigenvalues of $D_2H(g,u)$ is called the \textbf{index} of $u$ and is deonted by $\pmb{\iota}(g,u)$.
\end{defi}

With this definition, the dimension of $\mathrm{Ker}\,D\Pi(g,[u])$ can be expressed as follows:
\begin{thm}[\cite{White-1991}, Theorem 2.1] \label{thm:c3}
	Let $\Pi$ be the map introduced in Theorem \ref{thm:c2}. Then for $(g,[u]) \in \tilde{\mathscr{Y}}$, 
	\[
	\mathrm{dim}\;\mathrm{Ker}\,D\Pi(g,[u])=\mathrm{nullity\; of\;} [u]\; \mathrm{with\;respect\;to\;} g.
	\]
\end{thm}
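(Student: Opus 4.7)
The plan is to exploit the Banach manifold structure on $\tilde{\mathscr{Y}}$ provided by Theorem \ref{thm:c2} and compute $\ker D\Pi(g,[u])$ directly from a local model. First I would fix a chart around $(g,[u])$. Using the exponential map of an auxiliary metric, nearby maps $\Sigma \to M$ are of the form $\exp_u\xi$ for $\xi \in C^{j,\alpha}(\Sigma, W_u)$, and the action of reparametrizations $u \mapsto u\circ\phi$ has, infinitesimally at $u$, image exactly $du(\mathcal{T}\Sigma) \subset \mathcal{W}_u$. A standard slice argument (valid because $u$ is almost embedded, so $N_u = W_u/du(T\Sigma)$ after removing torsion is a genuine smooth bundle) identifies a neighborhood of $[u]$ with a neighborhood of $0$ in $C^{j,\alpha}(\Sigma, N_u)$. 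Together with a neighborhood of $g$ in $\mathcal{U}$, this realizes $\tilde{\mathscr{Y}}$ locally as
\[
\{(g', s) \in \mathcal{U}\times C^{j,\alpha}(\Sigma, N_u) \;:\; H(g', s) = 0\}.
\]

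Next I would differentiate the defining equation at $(g, 0)$. Since $D_2 H(g,u) = \mathcal{J}_{g,u}$ by definition, the linearization reads
\[
D_1 H(g,u)\,\dot g + \mathcal{J}_{g,u}\,\dot s = 0,
\]
so the tangent space is
\[
T_{(g,[u])}\tilde{\mathscr{Y}} = \bigl\{(\dot g,\dot s)\in T_g\mathcal{U}\times C^{j,\alpha}(\Sigma,N_u) \;:\; D_1 H(g,u)\,\dot g + \mathcal{J}_{g,u}\,\dot s = 0\bigr\}.
\]
The projection $\Pi(g',[u']) = g'$ has differential $D\Pi(g,[u])(\dot g,\dot s) = \dot g$, so its kernel consists of pairs $(0, \dot s)$ with $\mathcal{J}_{g,u}\dot s = 0$, i.e. precisely the Jacobi fields along $[u]$. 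Consequently $\ker D\Pi(g,[u]) \cong \ker \mathcal{J}_{g,u}$, whose dimension is $\pmb{\nu}(g,u)$ by definition.

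The main obstacle will be justifying the slice step: one must show that modding out by reparametrizations is implemented by restricting the deformation variable from $W_u$ to $N_u$. For embeddings this is essentially automatic, but for \emph{almost} embeddings one has to take the quotient $\mathcal{W}_u/du(\mathcal{T}\Sigma)$ at the level of sheaves, remove the torsion supported on the non-embedded locus, and check that the resulting smooth normal bundle $N_u$ still provides an honest complement to the infinitesimal reparametrization action in $C^{j,\alpha}$. Once that slice is set up and the implicit function theorem is applied using the Fredholm property of $\mathcal{J}_{g,u}$ from Theorem \ref{thm:c1}(4), the identification of $\ker D\Pi(g,[u])$ with $\ker \mathcal{J}_{g,u}$ is immediate.
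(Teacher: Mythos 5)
Your argument is correct and matches the approach in White's proof of Theorem~2.1 of \cite{White-1991}, which is precisely what the paper cites here and what it explicitly invokes later in Subsection~\ref{Subsection:LocalDescriptionOfModuliSpace}: a neighborhood of $(g_0,[v_0])$ in $\tilde{\mathscr{Y}}$ is $\{(g,[\exp\circ s]) : H(g,s)=0\}$ in $\mathcal{U}\times C^{j,\alpha}(\Sigma,N_{v_0})$, so the tangent space is the kernel of the linearization $D_1H\,\dot g + \mathcal{J}_{g,u}\,\dot s$, and $\ker D\Pi$ consists of the pairs with $\dot g=0$, i.e.\ $\ker\mathcal{J}_{g,u}$. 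The slice step you flag is indeed the technical content (and is handled by White via the normal bundle $N_u$ of an almost embedding), but your identification of $\ker D\Pi$ with $\ker\mathcal{J}_{g,u}$ is exactly the intended argument.
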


A $C^k-$Riemannian metric $g\in\mathscr{G}=\mathscr{G}_k$ on $M$ with the property that every $g-$stationary map $u:\Sigma \rightarrow (M,g)$ has zero nullity with respect to $g$, meaning there are no non-zero Jacobi vector field along $u$, is called a \textbf{bumpy metric}. 
Note that if $u$ has no non-zero Jacobi field on it then $[u]\in\tilde{\mathscr{L}}(g)$ is isolated. Therefore, if $g$ is a bumpy metric, then the space of $g-$minimal surfaces 
\[
\tilde{\mathscr{L}}(g)=\{(g,[u])\in\tilde{\mathscr{L}}\,|\, u:(\Sigma,h)\rightarrow (M,g) \; \mathrm{is}\; g-\mathrm{stationary}\}
\]
is discrete. This enables us to enumerate the elements of $\tilde{\mathscr{L}}(g)$ inside a compact open subset. 

According to White's Theorem we know that almost every metric is bumpy. 

\begin{thm}[\cite{White-1991}, Theorem 2.2 and \cite{White-2017}, Theorem 2.1] \label{thm:c4}
	Let $M$ be a smooth manifold. There exists a Bair subset $\mathscr{G}_k^*\subset\mathscr{G}_k$ which consists of bumpy $C^k-$metrics, where $k\ge3$ or $k=\infty$. 
\end{thm}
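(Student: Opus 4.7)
The strategy is standard Sard--Smale applied to the Fredholm projection $\Pi$ from Theorem \ref{thm:c2}, together with a separate handling of multiply covered stationary maps. I would first fix a finite $k$ and a closed model surface $\Sigma$ of prescribed genus, then work inside the moduli space $\tilde{\mathscr{Y}}$ of $g$-stationary almost embeddings $\Sigma \to M$. Theorem \ref{thm:c2} presents $\Pi : \tilde{\mathscr{Y}} \to \mathcal{U}$ as a $C^{k-j}$ Fredholm map of index $0$, so once $j$ is chosen with $k-j \geq 1$ (a choice consistent with the regularity provided by item (5) of Theorem \ref{thm:c1}) the Sard--Smale theorem gives a comeager subset of regular values. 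By Theorem \ref{thm:c3}, these regular values are exactly the metrics admitting no Jacobi field along any $g$-almost embedded stationary immersion of $\Sigma$, and intersecting over the countably many genera handles the almost embedded case.

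The main obstacle is extending bumpiness to multiply covered stationary maps, where transversality may fail: if $u = v \circ \pi$ with $\pi : \Sigma \to \Sigma'$ a nontrivial covering and $v$ almost embedded, the deck group of $\pi$ acts on $\mathrm{Ker}\,\mathcal{J}_{g,u}$ and obstructs a direct application of Sard--Smale to this stratum of $\mathscr{L}$. I would resolve this following \cite{White-2017}: decompose $\pi^* \mathcal{J}_{g,v}$ into isotypic components indexed by the irreducible representations $\chi$ of the deck group and, for each combinatorial datum $(\pi, \chi)$, construct a twisted version of $\tilde{\mathscr{Y}}$ whose natural projection to $\mathcal{U}$ is again Fredholm of index $0$ by the same universal transversality recipe as in Theorem \ref{thm:c2}. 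Sard--Smale then produces a comeager set of metrics for which the twisted kernel is trivial; intersecting over the countable set of data $(\text{genus}, d, \chi)$ yields $\mathscr{G}_k^*$. Self-adjointness of the Jacobi operator from Theorem \ref{thm:c1}(3) is what makes the Fredholm index computations for the twisted operators straightforward.

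For the $k = \infty$ statement, I would use the standard density argument from \cite{White-1991}: a $C^\infty$ metric is bumpy in the $C^\infty$ sense iff it is bumpy as a $C^k$ metric for every $k$, and the inclusion $\mathscr{G}_\infty \hookrightarrow \mathscr{G}_k$ is continuous with dense image, so comeagerness of $\mathscr{G}_k^*$ in $\mathscr{G}_k$ for each $k$ translates into comeagerness of $\mathscr{G}_\infty^* = \bigcap_k (\mathscr{G}_k^* \cap \mathscr{G}_\infty)$ in the Fr\'echet topology on $\mathscr{G}_\infty$. The delicate point is to make sure that when $j$ is fixed and $k \to \infty$ the regularity of the maps in $\tilde{\mathscr{Y}}$ is preserved; this is where elliptic regularity (item (5) of Theorem \ref{thm:c1}) does the work, since any $g$-stationary section with $g \in \mathscr{G}_k$ automatically has $C^{k,\alpha}$ regularity and is hence insensitive to the choice of $j$. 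I expect the twisted transversality step to be the technically demanding ingredient, while the rest is bookkeeping around countable intersections.
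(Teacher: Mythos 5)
The paper itself offers no proof of Theorem~\ref{thm:c4}; it simply cites White and then remarks that the rest of the paper, via Wendl's method and Floer's $C^\epsilon$ topology, furnishes an independent route (through Theorem~\ref{Theorem-A}, since super-rigidity implies bumpiness). So your proposal should be compared against White's argument, not the paper's.

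Your finite-$k$ step is essentially correct and is White's argument: apply Sard--Smale to the index-zero Fredholm map $\Pi\colon\tilde{\mathscr{Y}}\to\mathcal{U}$ of Theorem~\ref{thm:c2}, use Theorem~\ref{thm:c3} to identify regular values with nullity-zero metrics, and intersect over genera. Your second step is, however, misattributed and largely out of scope. White's Theorem~2.2 (1991) and Theorem~2.1 (2017) concern \emph{almost embeddings only}; the bumpy metrics theorem being cited does not claim triviality of $\mathrm{Ker}\,\mathcal{J}_{g,v\circ\pi}$ for nontrivial covers $\pi$. That stronger statement is precisely super-rigidity, it requires the isotypic/Brill--Noether machinery of Taubes, Eftekhary, Wendl, and Doan--Walpuski rather than anything in \cite{White-2017}, and it is the subject of Theorem~\ref{Theorem-A}, not Theorem~\ref{thm:c4}. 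So either you are importing a much harder theorem as a sub-step of an easier one, or you are citing the wrong source for it.

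The genuine gap is your $k=\infty$ step. The claim that ``comeagerness of $\mathscr{G}_k^*$ in $\mathscr{G}_k$ for each $k$ translates into comeagerness of $\mathscr{G}_\infty^*=\bigcap_k(\mathscr{G}_k^*\cap\mathscr{G}_\infty)$ in the Fr\'echet topology'' is false. Comeager subsets do not pull back along a continuous dense inclusion: $\mathscr{G}_\infty$ is \emph{meager} in $\mathscr{G}_k$ and carries a strictly finer topology, so a set $N$ nowhere dense in $\mathscr{G}_k$ need not have $N\cap\mathscr{G}_\infty$ nowhere dense in $\mathscr{G}_\infty$. This obstruction is exactly why White wrote the 2017 paper; his argument there is, in the paper's own words, ``an indirect method,'' not a density transfer. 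Elliptic regularity (Theorem~\ref{thm:c1}(5)) governs the regularity of solutions, but it says nothing about how Baire-category statements behave under the inclusion $\mathscr{G}_\infty\hookrightarrow\mathscr{G}_k$. The two ways this is actually handled in the literature are (i) White's indirect open-dense argument in \cite{White-2017}, and (ii) the route taken in this paper, which replaces the Fr\'echet manifold $\mathscr{G}_\infty$ by the separable Banach manifolds $\mathcal{U}(g_0,\epsilon)$ of Floer's $C^\epsilon$ topology (Subsection~\ref{FlexibilityConditionSection}), on which Sard--Smale applies directly. Your proposal should invoke one of these rather than the density transfer, which does not work.
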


Since the space of $C^\infty-$metrics is not a Banach manifold, but a Frechet manifold, one can not use directly Theorem \ref{thm:c2} to show that bumpy smooth metrics are comeager.
In \cite{White-2017}, White use an indirect method to prove Theorem \ref{thm:c4} for smooth metrics. 
There is another point of view, based on Wendl's method for proving super-rigidity conjecture, which can be applied here to give another proof of Theorem \ref{thm:c4}.

To explain it note that if $g\in\mathscr{G}^*=\mathscr{G}^*_k$, since there are no non-zero Jacobi field along every closed $g-$minimal immersed submanifold of $M$, then the kerenel of this operator is trivial.
This motivates the following definition:

\begin{defi}
	Let $(M,g)$ be a Riemannian manifold. A non-constant map $u:\Sigma\rightarrow (M,g)$ is \textbf{rigid} if $\mathrm{Ker} (\mathcal{J}_{g,u})=0$.
\end{defi}
Non-triviality of the kernel of this operator is a condition we do not want to face.

An immersed minimal map $u:(\Sigma,h) \rightarrow (M,g)$ 
factors through a smooth Riemann surface $\Sigma'$ via an almost minimal embedding $v:(\Sigma',h') \rightarrow (M,g)$ and a holomorphic covering map $\pi: (\Sigma,j) \rightarrow (\Sigma',j')$ so that $u=v\circ \pi$.
Here, $j$ and $j'$ are the unique complex structures induced by $h$ and $h'$, respectively, (See \ref{Branched Imm.}).
Moreover, by Proposition 2.6.3 of \cite{DoanWalpuski}
\begin{equation} \label{pullback of J.Operator}
	\mathcal{J}_{g,v\circ \pi}=\pi^* \mathcal{J}_{g,v}.	
\end{equation}
Equation (\ref{pullback of J.Operator}) suggests employing $\tilde{\mathscr{Y}}(g)$ to explore $\tilde{\mathscr{L}}(g)$.
Furthermore, it implies that the appropriate criterion for an immersed minimal surfaces to be isolated in $\tilde{\mathscr{L}}(g)$ is the triviality of the kernel of the Jacobi operator for all coverings of the embedded minimal surfaces i.e.  $\mathrm{Ker}(\mathcal{J}_{g,v\circ \pi})=0$, for each embedding $v$ and each covering map $\pi$. 
This motivates following definitions:
\begin{defi}
	Let $(M,g)$ be a Riemannian manifold. An embedded minimal map $v:(\Sigma,h)\rightarrow (M,g)$ is \textbf{super-rigid} if $v$ and all of its multiple covers are rigid.
\end{defi}

\begin{defi}\label{Super-rigidMetric}
	A smooth metric $g$ is called \textbf{super-rigid} if every element of $\mathscr{Y}(g)$ is super-rigid.
	The subspace of super-rigid metrics is denoted by $\mathscr{G}^\bullet$.
\end{defi}
 
In the context of counting closed geodesics on a Riemannian manifold, Eftekhary has demonstrated that $\mathscr{G}^\bullet$ forms a comeager subset of $\mathscr{G}$ (see \cite{EF-CLosedGeodesics}).
His approach to studying immersed geodesics in $M$ involves stratifying the space
\[
\mathscr{W}:=\{(g,u,\xi)\,|\, u \in \mathscr{Y}(g)\quad \mathrm{and} \quad 0\ne\xi \in \mathrm{Ker}(\mathcal{J}_{g,u})\}
\]
into submanifolds.
A similar stratification technique is employed by Wendl to prove the super-rigidity conjecture for $J-$homolorphic curves in a symplectic manifold. 
Indeed he proves the subspace of super-rigid almost complex structures is comeager.

By applying Wendl's method to minimal surface, we give a new way to prove that $\mathscr{G}^\bullet$ is also comeager. 

\begin{rem}
	Let $m\in\mathbb{N}\cup\{0\}$, $\Sigma$ a surface and $(M,g)$ be a Riemannian manifold. set
	\[
	\mathcal{M}_m(\Sigma,g):=\{(\Sigma,u,(z_1,...,z_m))\}/ \sim 
	\]
	where, $u:\Sigma\rightarrow(M,g)$ is a $g-$minimal map and $(z_1,...,z_m)$ is an ordered set of $m$ distinct points on $\Sigma$.
	Moreover,
	$(\Sigma,u,(z_1,...,z_m))\sim (\Sigma,u',(z'_1,...,z'_m))$ if and only if there exits a diffeomorphism $\phi:\Sigma\rightarrow\Sigma'$ such that $u=u'\circ \phi$ and $\phi(z_i)=z'_i$, $i=1,...,m$.
	Note that $\mathrm{dim}\,\mathcal{M}_m(\Sigma,g)=2m$.
	
    Let $\Delta=\{(p,p)|\, p\in M\}$ and
    \begin{align*}
    	\mathrm{ev}:& \mathcal{M}_1(S^2,g) \times \mathcal{M}_1(\Sigma,g) \rightarrow M\times M\\
    	& ([(S^2,u_1,z_1)],[(\Sigma,u_2,z_2)]) \mapsto (u_1(z_1),u_2(z_2)).
    \end{align*}
    If $\Delta$ is transverse to $\mathrm{ev}$, then
    \[
    \mathcal{M}_+:=\{([(S^2,u_1,z_1)],[(\Sigma,u_2,z_2)])\in \mathcal{M}_1(S^2,g) \times \mathcal{M}_1(\Sigma,g) |\, u_1(z_1)=u_2(z_2)\}
    \]
    is a submanifold. If $\mathrm{dim}\, M=n$ then
    \begin{align*}
    \mathrm{dim}\,\mathcal{M}_+& =\mathrm{dim}\,\mathcal{M}_1(S^2,g)+\mathrm{dim}\,\mathcal{M}_1(S^2,g)-\mathrm{dim}\, M \\
    &=2+2-n \\
    &=4-n.
    \end{align*}
    Therefore, If $\mathrm{dim}\, M\ge 5$, then there is no bubbling phenomenon as we compactify the moduli space of minimal surfaces.
\end{rem}
	\section{Transversality} \label{sec:Tranversality}
Bryan and Pandharipande in \cite{BryanPand} proposed a conjecture which asserts that super-rigidity for almost complex structures is generic.
Recently, Wendl has proved this conjecture \cite{Wendl}. In his work, the notion of transversality plays a crucial role. 

Holomorphic curves are solutions of the Cauchy-Riemann operator which is an elliptic operator that acts on sections of a vector bundle over a manifold. 
Elliptic operators between suitable Sobolev spaces on a compact manifold are usually Fredholm. Consequently, in many situations this leads us 
to the study of Fredholm operators. 

Let $\mathcal{F}(X,Y)$ denote the space of Fredholm operators defined between two Banach spaces $X$ and $Y$.
This space can be stratified by submanifolds determined by the dimensions of the kernel and cokernel.:
\[
\mathcal{F}_{d,e}:=\{L \in \mathcal{F}(X,Y) : \mathrm{dim} \, \mathrm{ker} \, L=d \quad \mathrm{and} \quad \mathrm{dim} \, \mathrm{coker} \, L=e \}.
\]
\begin{defi}\label{def:FamilyOfOoperators}
	Let $\mathscr{V}$ denote a Banach manifold. A family of elliptic operators of order $k$, parametrized by $\mathscr{V}$, consists of a smooth map
	\[
	D:\mathscr{V} \rightarrow \mathcal{F}(X,Y).
	\]
\end{defi}
Doan and Walpuski in \cite{DoanWalpuski} investigate questions concerning the smoothness and codimension of subsets $D^{-1}(\mathcal{F}_{d,e})$. In geometric applications $X$ and $Y$ represent the spaces of sections of some vector bundles $E$ and $F$ over a manifold $Z$.

The idea of stratification, using conditions on kernel and cokernel, in investigating transversality results in moduli spaces is not new and was initiated by Taubes \cite{Taubes}.  It has also been employed by Eftekhary \cite{EF} and later by Wendl \cite{Wendl} in the context of super-rigidity conjecture.
As geodesics and minimal submanifolds of a Riemannian manifold are solutions of an elliptic operator, namely the Jacobi operator, these ideas are applicable here.

\subsection{Transversality Theorem}\label{Subsection:TransversalityTheorem}
In this subsection, we review the definitions of Petri's conditions and the flexibility condition which imply transversality (Section $1.1$ of \cite{DoanWalpuski}).

Let $M$ be a smooth manifold and $E$ and $F$ be two real vector bundles on it equipped with orthogonal covariant derivatives. Let $j\in\mathbb{N}_0$ and $W^{k,p}(E)$ denote the Sobolev completion of the space of smooth sections of $E$. Let $j,\alpha$, $0<\alpha<1$ be such that
\[
j+\alpha=k-\frac{2}{p}.
\]
If $pk>2$, then the following embedding holds
\[
W^{k,p}(E)\subset C^{j,\alpha}(E).
\]

Consider a family of linear elliptic differential operators of order $k$ \begin{align}\label{FamilyOfOperators}
D:\mathscr{V}\rightarrow\mathcal{F}(W^{k,p}(E),W^{0,p}(F))
\end{align}
where $\mathscr{V}$ is a Banach manifold.
Let
\[
\mathscr{V}_{d,c}:=\{\mathfrak{v}\in\mathscr{V}\,|\,
\text{dim}(\text{Ker}\,D_\mathfrak{v})=d\quad\text{and}\quad \text{dim}(\text{Coker}\,D_\mathfrak{v})=c
\}
\]
where $d,c\in\mathbb{N}_0$ and $D_\mathfrak{v}=D(\mathfrak{v})$. 
We want to know when $\mathscr{V}_{d,c}$ is a submanifold and then what its codimension is.
To that end, first we recall a describtion of a neighborhood of a Fredholm operator in the space of Flredholm operators. 

Let $X$ and $Y$ be Banach spaces and $L\in\mathcal{F}(X,Y)$. 
Since $L$ is Fredholm, there are splittings $X=V\oplus K$ and $Y=I\oplus C$, where $K=\text{Ker}\,L$, $C=\text{Coker}\,L$ and $I=\text{Im}\,L$. 
This implies that $L|_{V}:V\rightarrow I$ is an isomorphism.
With respect to these decompositions, every $T\in\mathcal{F}(X,Y)$ has the form
\[
\begin{pmatrix}
	A&B\\
	C&D
\end{pmatrix}.
\]

By choosing a sufficiently small neighborhood of $L$,  denoted as $\mathcal{U}$, one can assume that for each $T\in \mathcal{U}$, $A:V\rightarrow I$ is an isomorphism.
Define
\begin{align*}
	& \mathscr{I}:\mathcal{U}\rightarrow\text{Hom}(\text{Ker}\,L,\text{Coker}\,L) \\
	& \mathscr{I}(T):=D-CA^{-1}B.
\end{align*}
It follows that for 
$\hat{\text{T}}=\begin{pmatrix}
	\mathfrak{a}&\mathfrak{b}\\
	\mathfrak{c}&\mathfrak{d}
\end{pmatrix} \in T_L\mathcal{U}=T_L\mathcal{F}(X,Y)$

\begin{equation}\label{DiffOfFredholm}
	\begin{split}
		& \mathrm{d}_{L}\mathscr{I}:T_L\mathcal{F}(X,Y)\rightarrow\text{Hom}(\text{Ker}\,L,\text{Coker}\,L) \\
		& \mathrm{d}_{L}\mathscr{I}(\hat{\text{T}})(x)=\mathfrak{d}\,x=\hat{\text{T}}x \quad \text{mod} \quad \text{Im}\,L.
	\end{split}
\end{equation}

Evidently, $\mathrm{d}_{L}\mathscr{I}$ is surjective. For $T$, with a matrix presentation as above, let
\[
\Psi=\begin{pmatrix}
	\text{Id}&-A^{-1}B\\
	0&\text{Id}
\end{pmatrix},
\quad\text{and}\quad
\Phi=\begin{pmatrix}
	A^{-1}&0\\
	-CA^{-1}&\text{Id}
\end{pmatrix},
\]
then 
$$\Phi T\Psi=\begin{pmatrix}
	\text{Id}&0\\
	0&\mathscr{I}(T)
\end{pmatrix}.$$ 
This implies that $\text{Ker}\,T\cong\text{Ker}\,\mathscr{I}(T)$ and
$\text{Coker}\,T\cong\text{Coker}\,\mathscr{I}(T)$.
$\mathscr{I}(T)$ is defined on $K$ and the dimension of $K$ is $\text{dim}(\text{Ker}\,L)$. Therefore, 
$\text{dim}(\text{Ker}\,T)\le \text{dim}(\text{Ker}\,L)$ and equality holds if and only if $\mathscr{I}(T)=0$. 
Similarly, $\text{dim}(\text{Coker}\,T)= \text{dim}(\text{Coker}\,L)$ if and only if  $\mathscr{I}(T)=0$.
If we let 
$$L\in \mathcal{F}_{d,c}=\{T:X\rightarrow Y\,|\,\text{dim}(\text{Ker}\,T)=d\quad\text{and}\quad \text{dim}(\text{Coker}\,T)=c\}$$
then implicit function theorem implies that 
\[
\mathcal{F}_{d,c}\cap\mathcal{U}=\mathscr{I}^{-1}(0)
\]
is a smooth submanifold of the space of bounded linear maps from $X$ to $Y$, i.e. $\mathscr{L}(X,Y)$, of codimension $\text{dim}\big(\mathrm{Hom}(\text{Ker}\,L,\text{Coker}\,L)\big)=dc$.

Returning to the family of operators (\ref{FamilyOfOperators}),our focus lies on the Brill-Noether locus $\mathscr{V}_{d,c}$.
The ensuing discussion is true for any family of linear elliptic differential operator.
Let $\mathfrak{v}\in\mathscr{V}$ and set $L=D_\mathfrak{v}$. For an appropriate neighborhood $\mathcal{U}$ of $L$, consider the map 
\begin{align*}
	\mathscr{I}:\mathcal{U}\rightarrow\text{Hom}(\text{Ker}\,L,\text{Coker}\,L) 
\end{align*}
defined as above.
If $0$ is a regular value for $\mathscr{I}\circ D$, then by the Regular Value Theorem, there is a neighborhood $\mathscr{U}$ of $\mathfrak{v}\in\mathscr{V}$ such that $\mathscr{V}_{d,c}\cap \mathscr{U}$ is a submanifold of codimension $dc$.
Clearly, $0$ is a regular value of $\mathscr{I}\circ D$ if 
\begin{align*}
	& \Theta_\mathfrak{v}:T_\mathfrak{v}\mathscr{V}\rightarrow\text{Hom}(\text{Ker}\,D_\mathfrak{v},\text{Coker}\,D_\mathfrak{v}) \\
	& \Theta_\mathfrak{v}(\hat{X})\,x:=\text{d}_\mathfrak{v}D(\hat{X})\,x
	\quad\text{and}\quad\text{Im}\,D_\mathfrak{v}
\end{align*}
is surjective. More generally,
\begin{thm}[Theorem $1.1.5$ of \cite{DoanWalpuski}]\label{thm:Transversality}
	Let $\{D_\mathfrak{v}\}_{\mathfrak{v}\in\mathscr{V}}$ denote a family of linear differential operators and let $d,c\in\mathbb{N}_0$.
	If $\Theta_\mathfrak{v}$ defined by
	\begin{align*}
		& \Theta_\mathfrak{v}:T_\mathfrak{v}\mathscr{V}\rightarrow\mathrm{Hom}(\mathrm{Ker}\,D_\mathfrak{v},\mathrm{Coker}\,D_\mathfrak{v}) \\
		& \Theta_\mathfrak{v}(\hat{X})\,x:=\mathrm{d}_\mathfrak{v}D(\hat{X})\,x
		\quad\mathrm{mod}\quad\mathrm{Im}\,D_\mathfrak{v}
	\end{align*}
    is surjective,
    then there is a neighborhood $\mathscr{U}$ of $\mathfrak{v}\in\mathscr{V}$ such that $\mathscr{V}_{d,c}\cap \mathscr{U}$ is a submanifold of codimension $dc$.
\end{thm}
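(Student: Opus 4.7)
The plan is to reduce the claim to an application of the implicit function theorem for the composition $\mathscr{I}\circ D$, using the local model of Fredholm operators that the excerpt has just established. All the machinery needed is essentially in place, so the proof is a matter of correctly assembling it.

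First I would fix $\mathfrak{v}_0\in\mathscr{V}_{d,c}$ and set $L=D_{\mathfrak{v}_0}$, together with the splittings $X=V\oplus K$ and $Y=I\oplus C$ where $K=\mathrm{Ker}\,L$, $C=\mathrm{Coker}\,L$ (identified with a complement of $\mathrm{Im}\,L$) and $I=\mathrm{Im}\,L$. Shrinking the neighborhood $\mathcal{U}\subset\mathcal{F}(X,Y)$ of $L$ if necessary, the ``$A$-block'' of any $T\in\mathcal{U}$ remains an isomorphism $V\to I$, so the map $\mathscr{I}:\mathcal{U}\to\mathrm{Hom}(K,C)$, $\mathscr{I}(T)=D-CA^{-1}B$, is well-defined and smooth. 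The block factorization via $\Phi$ and $\Psi$ recorded above shows that for $T\in\mathcal{U}$ one has isomorphisms $\mathrm{Ker}\,T\cong\mathrm{Ker}\,\mathscr{I}(T)$ and $\mathrm{Coker}\,T\cong\mathrm{Coker}\,\mathscr{I}(T)$. Since $\dim K=d$ and $\dim C=c$, an operator $T\in\mathcal{U}$ lies in $\mathcal{F}_{d,c}$ if and only if $\mathscr{I}(T)=0$, giving the identification
\[
\mathcal{F}_{d,c}\cap\mathcal{U}=\mathscr{I}^{-1}(0).
\]

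Next I would pull this picture back to $\mathscr{V}$. Pick a neighborhood $\mathscr{U}\subset\mathscr{V}$ of $\mathfrak{v}_0$ with $D(\mathscr{U})\subset\mathcal{U}$ and consider $F:=\mathscr{I}\circ D:\mathscr{U}\to\mathrm{Hom}(K,C)$. By construction $\mathscr{V}_{d,c}\cap\mathscr{U}=F^{-1}(0)$, so it suffices to show that $0$ is a regular value of $F$ at $\mathfrak{v}_0$. By the chain rule and the formula \eqref{DiffOfFredholm} for $\mathrm{d}_L\mathscr{I}$, for every $\hat{X}\in T_{\mathfrak{v}_0}\mathscr{V}$ and every $x\in K$,
\[
\mathrm{d}_{\mathfrak{v}_0}F(\hat{X})\,x=\mathrm{d}_L\mathscr{I}\bigl(\mathrm{d}_{\mathfrak{v}_0}D(\hat{X})\bigr)\,x=\mathrm{d}_{\mathfrak{v}_0}D(\hat{X})\,x\pmod{\mathrm{Im}\,L},
\]
which is precisely $\Theta_{\mathfrak{v}_0}(\hat{X})\,x$. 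The hypothesis that $\Theta_{\mathfrak{v}_0}$ is surjective therefore says exactly that $\mathrm{d}_{\mathfrak{v}_0}F$ is surjective onto the finite-dimensional space $\mathrm{Hom}(K,C)$, so its kernel is complemented and the implicit function theorem applies.

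The implicit function theorem then yields, after possibly shrinking $\mathscr{U}$, that $F^{-1}(0)=\mathscr{V}_{d,c}\cap\mathscr{U}$ is a smooth submanifold of $\mathscr{V}$ of codimension equal to $\dim\mathrm{Hom}(K,C)=dc$, as claimed. There is no real obstacle in this argument: the only substantive inputs are the local normal form for Fredholm operators (already derived) and the standard implicit function theorem in Banach manifolds, which applies because $\mathrm{Hom}(K,C)$ is finite dimensional. The one point to check carefully is that the identification of $\mathrm{d}_{\mathfrak{v}_0}F$ with $\Theta_{\mathfrak{v}_0}$ uses \eqref{DiffOfFredholm} at $T=L$, where the $A^{-1}B$ and $CA^{-1}$ correction terms vanish because $B=0$ and $C=0$ in the block decomposition of $L$ itself; this is why $\Theta_{\mathfrak{v}_0}$ takes the clean form ``$\mathrm{d}_{\mathfrak{v}_0}D(\hat{X})\,x\bmod\mathrm{Im}\,D_{\mathfrak{v}_0}$'' without any correction.
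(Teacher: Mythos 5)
Your argument coincides with the one the paper sketches just before stating the theorem: identify $\mathscr{V}_{d,c}\cap\mathscr{U}$ with the zero set of $F=\mathscr{I}\circ D$ using the local normal form for Fredholm operators, observe that $\mathrm{d}_{\mathfrak{v}_0}F=\Theta_{\mathfrak{v}_0}$ via the formula for $\mathrm{d}_L\mathscr{I}$, and apply the implicit function theorem. The proposal is correct and essentially the same as the paper's approach.
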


According to Proposition 1.1.12 of \cite{DoanWalpuski}, investigating the surjectivity of  $\Theta_\mathfrak{v}$ breaks down into exploring two conditions called 
\textbf{flexibility} and \textbf{Petri's condition}.
 To elucidate this (see proof of Proposition 1.1.12 of \cite{DoanWalpuski}), let $U\subset \Sigma$ be an open subset and let $\Gamma_c(U,\mathrm{Hom}(E,E))$ denote the space of smooth sections with compact support.
There is an evaluation map for $\mathfrak{v}\in\mathscr{V}$ defined as follows
\begin{align*}
	\mathrm{ev}_\mathfrak{v}:&\Gamma_c(U,\mathrm{Hom}(E,E))\rightarrow
	\text{Hom}(\text{Ker}\,D_\mathfrak{v},\text{Coker}\,D_\mathfrak{v}) \\
	& \mathrm{ev}_\mathfrak{v}(A)x:=Ax \quad\mathrm{mod}\quad\text{Im}\,D_\mathfrak{v}
\end{align*}
If the image of $\Theta_\mathfrak{v}$ contains the image of $\Theta_\mathfrak{v}$, then the surjectivity of $\Theta_\mathfrak{v}$ implies the surjectivity of $\Theta_\mathfrak{v}$.  
the first condition i.e. $\text{Im}\,\mathrm{ev}_\mathfrak{v} \subset\text{Im}\,\Theta_\mathfrak{v}$ is called the flexibility condition.
\begin{defi}[Definition 1.1.9 of \cite{DoanWalpuski}]
	Let $U\subset \Sigma$ be an open set. A family of linear elliptic differential operators $\{D_\mathfrak{v}\}_{\mathfrak{v}\in\mathscr{V}}$ is called to be \textbf{flexible} in $U$ at $\mathfrak{v}\in\mathscr{V}$ if for every $A\in \Gamma_c(U,\mathrm{Hom}(E,E))$ there exists a $\hat{X}\in T_\mathfrak{v}\mathscr{V}$ such that for every $x\in\mathrm{Ker}\,D_\mathfrak{v}$
	\[
	\mathrm{d}_\mathfrak{v}D(\hat{X})\,x=A\,x 
	\quad\mathrm{mod}\quad\mathrm{Im}\,D_\mathfrak{v}.
	\]
\end{defi}

Let $E^\dag:=E^*\otimes\Lambda^nT^*\Sigma$ and let $\boldsymbol{\pmb{\langle}}.\,,.\boldsymbol{\pmb{\rangle}}$ denote the inner product for sections of $E$ and $E^\dag$ i.e.
\[
\boldsymbol{\pmb{\langle}}s,t\boldsymbol{\pmb{\rangle}}:=\int_{\Sigma}\langle s,t\rangle
\]
for $s\in\Gamma(E)$ and $t\in\Gamma(E^\dag)$. 
Note that 
\begin{align*}
	& \langle .\,,.\rangle:E\otimes E^\dag\rightarrow\Lambda^nT^*\Sigma \\
	& \langle .\,,.\rangle(s\otimes t)=\langle s,t\rangle=t(s).
\end{align*}
The formal adjoint of $D$ with respect to $\boldsymbol{\pmb{\langle}}.\,,.\boldsymbol{\pmb{\rangle}}$ is denoted by $D^\dag$, thus
\[
\boldsymbol{\pmb{\langle}}s,D^\dag t\boldsymbol{\pmb{\rangle}}=
\boldsymbol{\pmb{\langle}}Ds,t\boldsymbol{\pmb{\rangle}}.
\]
Note that $\text{Ker}\,D_\mathfrak{v}^\dag\cong (\text{Coker}\,D_\mathfrak{v})^*$. 
Let $\alpha_1,...,\alpha_s$ be a basis for $\text{Ker}\,D_\mathfrak{v}$ and $\beta_1,...,\beta_t$ be a basis for $\text{Ker}\,D_\mathfrak{v}^\dag$.
For every $\alpha_i$, $\mathrm{ev}_\mathfrak{v}(A)\,\alpha_i-A\,\alpha_i$ is orthogonal to every $\beta_j$, with respect to $\boldsymbol{\pmb{\langle}}.\,,.\boldsymbol{\pmb{\rangle}}$.
Therefore,
\[
\boldsymbol{\pmb{\langle}}
\mathrm{ev}_\mathfrak{v}(A)\,\alpha_i,\beta_j
\boldsymbol{\pmb{\rangle}}=
\boldsymbol{\pmb{\langle}}
A\,\alpha_i,\beta_j
\boldsymbol{\pmb{\rangle}}
\]
which means that the matrix elements of the linear map 
$\mathrm{ev}_\mathfrak{v}(A)$
are determined by 
$\boldsymbol{\pmb{\langle}}
A\,\alpha_i,\beta_j
\boldsymbol{\pmb{\rangle}}$.
Then $\mathrm{ev}_\mathfrak{v}$ is not surjective if and only if there exist non-trivial constants $\gamma_{ij}\in\mathbb{R}$ such that for every $A\in \Gamma_c(U,\mathrm{Hom}(E,E))$
\begin{align*}
	0=\sum_{i,j}\gamma_{ij}
	\boldsymbol{\pmb{\langle}}
	A\,\alpha_i,\beta_j
	\boldsymbol{\pmb{\rangle}}
	&=
	\boldsymbol{\pmb{\langle}}
	.\,,.
	\boldsymbol{\pmb{\rangle}}
	\circ
	(A\otimes\text{Id})
	\big(
	\sum_{i,j}\gamma_{ij}\alpha_i\otimes\beta_j
	\big) \\
	&=\int_{U}\langle .,. \rangle \circ
	(A\otimes \mathrm{Id})  
	\big(
	\sum_{i,j}\gamma_{ij}\alpha_i\otimes\beta_j
	\big)
	d\mathrm{vol}
\end{align*}
If 
$\sum_{i,j}\gamma_{ij}\alpha_i\otimes\beta_j\in \Gamma(U,E\otimes E^\dag)$ 
is nonzero, then there exists 
$$B\in \Gamma_c(U,\mathrm{Hom}(E,E))$$
such that the above equality is nonzero. Consequently, $\mathrm{ev}_\mathfrak{v}$ will be surjective.
Note that
$\sum_{i,j}\gamma_{ij}\alpha_i\otimes\beta_j$
belongs to the image of the following map
\begin{align*}
	\mathfrak{P}:& \text{Ker}\,D_\mathfrak{v}\otimes \text{Ker}\,D_\mathfrak{v}^*
	\rightarrow \Gamma(U,E\otimes E^\dag) \\
	& \mathfrak{P}(s\otimes t)(x):=s(x)\otimes t(x)
\end{align*}

This map is induced by \textbf{Petri map}
\begin{align*}
	\mathfrak{P}:& \Gamma(E)\otimes \Gamma(F^\dag)
	\rightarrow \Gamma(E\otimes E^\dag) \\
	& \mathfrak{P}(s\otimes t)(x):=s(x)\otimes t(x).
\end{align*} 
Therefore, the surjectivity of $\mathrm{ev}_\mathfrak{v}$ is equivalent to the injectivity of the Petri's map.
\begin{defi}
	Let $U\subset\Sigma$ be an open subset. The linear elliptic differential operator $D:W^{k,2}\Gamma(E)\rightarrow L^2\Gamma(E)$ satisfies \textbf{Petri's condition in $U$} if the Petri's map
	\begin{align*}
		\mathfrak{P}:& \mathrm{Ker}\,D\otimes \mathrm{Ker}\,D^*
		\rightarrow \Gamma(U,E\otimes E^\dag) \\
		& \mathfrak{P}(s\otimes t)(x):=s(x)\otimes t(x)
	\end{align*}
	is injective.
\end{defi}

	\subsection{Petri's Condition}\label{Subsection:PetriCondition}
In this section we prove that the locus where Petri's condition for the Jacobi operator is violated, has positive codimension. 
It turns out that it suffices to work with for a smaller subset consisting of elements for which a simpler version of this condition, known as the polynomial Petri condition, is satisfied.
To that end, let us first recall what a jet space is.

Let $\Sigma$ be a smooth manifold and  $E$ be a vector bundle on $\Sigma$.
For $x\in\Sigma$, let $\mathscr{E}_x$ denote the stalk of the sheaf of sections of $E$, defined as
\[
\{(s,U)\,|\,U\subset\Sigma \,, x\in U \;\;\text{and}\;\; s\in\Gamma(U,E)\}/\sim
\]
where $(s,U)\sim(t,V)$ if there is an open subset $W\subset U\cup V$ such that $x\in W$ and $s|_{W}=t|_{W}$.

There is a natural filtration on $\mathscr{E}_x$ defined by considering the vanishing order of elements of this space.
To define it, let us recall the definition of the intrinsic derivative. 
Let $f:F\rightarrow E$ be a vector bundle map over $\Sigma$, and choose some trivializations of them on some neghiborhood of $x\in U\subset \Sigma$, i.e., let $F|_{U}\cong U\times\mathbb{R}^k$ and $E|_{U}\cong U\times \mathbb{R}^l$.
Then $f$ can be viewed as
\[
f:U\rightarrow\text{Hom}(\mathbb{R}^k,\mathbb{R}^l).
\]
The \textbf{intrinsic derivative} of $f$ at $x$,$\text{d}_x f$, is defined as the composition map
\begin{align}\label{equ:IntrinsicDer}
T_x\Sigma\rightarrow \text{Hom}(\mathbb{R}^k,\mathbb{R}^l)\rightarrow\text{Hom}(\mathrm{Ker}\,f(x),\mathrm{Coker}\,f(x))
\end{align}
This definition does not depend on the choice of trivializations of $E$ and $F$.
As a special case, if $\rho:\Sigma\rightarrow \Sigma'$ is a smooth map, then for $F=T\Sigma$ and $E=\rho^*T\Sigma'$, $T\rho:F\rightarrow E$ is a vector bundle map over $\Sigma$, and its intrinsic derivative at $x\in \Sigma$ is
\[
\mathrm{d}_x(\mathrm{T}\rho):T_x\Sigma\rightarrow
\text{Hom}(\mathrm{Ker}\,T_x\rho,\mathrm{Coker}\,T_x\rho)
\]
which can be viewed as a map
\begin{align}\label{equ:SecondIntrinsic}
T_x\Sigma\otimes \mathrm{Ker}\,T_x\rho \rightarrow \mathrm{Coker}\,T_x\rho
\end{align}
that is symmetric when restricted to $\mathrm{Ker}\,T_x\rho \otimes \mathrm{Ker}\,T_x\rho$.
When restricted to $\mathrm{Ker}\,T_x\rho \otimes \mathrm{Ker}\,T_x\rho$, $\mathrm{d}_x(\mathrm{T}\rho)$ is denoted by $\mathrm{d}_x^2\rho$.

Let $s:U\rightarrow E$ be a representative of $[s]\in\mathscr{E}_x$. 
Locally, $s$ can be viewed as a vector bundle map between trivial vector bundles $U\times\mathbb{R}^0$ and $U\times E_x$, where $E_x$ is the fiber over $x$.
If $s(x)=0$ in $E_x$, then by (\ref{equ:IntrinsicDer}), the intrinsic derivative is equal to $\text{T}_xs$. 
Therefore, $\text{d}_xs\in\text{Hom}(T_x\Sigma,E_x)$.
Moreover, if $\text{d}_xs=0$, then by (\ref{equ:SecondIntrinsic}), $\text{d}_x^2s$ is defined on $S^2T_x\Sigma$ and belongs to $\text{Hom}(S^2T_x\Sigma,E_x)$, where $S^2T_x\Sigma$ is the symmetric tensor product of $T_x\Sigma$ with itself.
Similarly, if $\mathrm{d}_x^js$, $j=0,1,...,k-1$, vanishes, then its $k-$th intrinsic derivative, $\mathrm{d}_x^ks$ 
, is defined and belongs to $\text{Hom}(S^kT_x\Sigma,E_x)$.

The filtration on  $\mathscr{E}_x$ is defined as follows. Let $\mathscr{E}_x^k$, $k\in\mathbb{N}_0$, denote the subspace of $\mathscr{E}_x$  consists of germs of sections $[s]$ for which $\mathrm{d}_x^js$, $j=0,1,...,k-1$, vanishes.
Moreover, for $k\in\mathbb{N}_0$ define $\mathscr{E}_x^{-k}:=\mathscr{E}_x$.
Then
\begin{align}\label{VanishingFiltration}
	...= \mathscr{E}_x^{-2} = \mathscr{E}_x^{-1} = \mathscr{E}_x^{0}=\mathscr{E}_x \supseteq \mathscr{E}_x^{1} \supseteq \mathscr{E}_x^{2} \supseteq ...
\end{align} 

\begin{defi}
	Let $E$ be a vector bundle on $\Sigma$ and let $\mathscr{E}_x$ denote the stalk of the sheaf of sections of $E$ at $x\in\Sigma$. The \textbf{$k-$jet space of $E$ at $x$} is defined as
	\[
	J_x^kE:=\mathscr{E}_x/\mathscr{E}_x^{k+1}
	\]
	and the \textbf{$\infty-$jet space of $E$ at $x$} is defined as
	\[
	J_x^\infty E:=\mathscr{E}_x/\big(\bigcap_{k\in\mathbb{Z}}\mathscr{E}_x^{k}\big).
	\]
\end{defi}

A Differential operator $D:\Gamma(E)\rightarrow\Gamma(F)$ of order $l$ with smooth coefficient induces a linear map on the jet space of sections of $E$ and $F$
\[
j_x^kD:J_x^{k+l}E\rightarrow J_x^k F
\]
where $k\in\mathbb{N}_0$. $j_x^kD$ is called $k-$jet of $D$.

The Petri's map 
\begin{align*}
	\mathfrak{P}:& \Gamma(E)\otimes \Gamma(F^\dag)
	\rightarrow \Gamma(E\otimes E^\dag).
\end{align*}
also induce a map on $k-$jet spaces
\begin{align*}
	\mathfrak{P}^k:& J^k_xE\otimes J^k_xF^\dag
	\rightarrow J^k_x(E\otimes E^\dag).
\end{align*}
This map is not injective since if the order of vanishing of $s_0\in J^k_x E$ be $m<k$ and  the order of vanishing of $t_0\in J^k_x F$ be $n<k$ 
such that $m+n>k$, then the order of vanishing of $s_0\otimes t_0$ is greater that $k$, therefore, its image under $\mathfrak{P}^k$ is zero.
We are interested in an injective Petri's map. 
By Proposition $5.11$ of \cite{Wendl}, if $D$ and $D^\dag$ have strong unique continuation property at $x$ 
in some neighborhood (see below) then there is no non-trivial germ in $x$ of $\mathrm{Ker}\,D\otimes \mathrm{Ker}\,D^\dag$ which vanishes to infinite order.
This suggests the following definition.
\begin{defi}
	Let $D:\Gamma(E)\rightarrow\Gamma(F)$ be a differential operator with smooth coefficient. If
	\[
	\mathfrak{P}_{J_x^\infty D}: \ker J_x^\infty D \otimes \ker J_x^\infty D^{\dag} \rightarrow J_x^\infty (E\otimes F^\dag)
	\]
	is injective,
	we say that $J_x^\infty D:J_x^\infty E \rightarrow J_x^\infty F$ satisfies \textbf{$\infty-$jet Petri condition}.
\end{defi}

Let $x\in\Sigma$ and $U$ be an open neighborhood of $x$. A differential operator $D:\Gamma(E) \rightarrow \Gamma(F)$ has the \textbf{strong unique continuation property at $x$ in $U$} if any non-trivial solution of $Du=0$, $u\in\Gamma(U,E)$, does not vanish to infinite order at $x$. 
\begin{prop} [\cite{DoanWalpuski}, Proposition $1.6.6$]\label{J-InftySUCP}
	Let $x \in M$ and $U \subset M$ be an open neighborhood of $x$. Let $D:\Gamma(E) \rightarrow \Gamma(F)$ be a linear elliptic differential operator with smooth coefficients such that $D$ and its adjoint $D^\dagger$ have strong unique continuation property at $x$ in $U$. 
	If $J_x^\infty D$ satisfies the  $\infty-$jet Petri condition, then $D$ satisfies the $\mathcal{B}-$equivariant (and equivalently $G-$equivariant) Petri condition.
	(See Sections $1.3$ and $1.4$ of \cite{DoanWalpuski} or Appendix \ref{AppendixA} for definitions of $\mathcal{B}-$equivariant and $G-$equivariant Petri condition).
\end{prop}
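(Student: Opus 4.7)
The plan is to reduce the (equivariant) Petri condition on $D$ to the pointwise $\infty$-jet condition at $x$, exploiting two features: the Petri map is evaluated pointwise, so it commutes with taking jets at $x$, and strong unique continuation makes the $\infty$-jet functor injective on $\ker D$ and $\ker D^\dag$. So the whole proof is a short diagram-chase once those two facts are lined up.

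Concretely, I would pick an arbitrary $\eta=\sum_i s_i\otimes t_i\in \ker D\otimes \ker D^\dag$ lying in the kernel of the Petri map $\mathfrak{P}$, so that the pointwise identity $\sum_i s_i(y)\otimes t_i(y)=0$ holds for every $y\in U$. Taking $\infty$-jets at $x$ gives
\[
\sum_i j_x^\infty s_i \,\otimes\, j_x^\infty t_i = 0 \quad\mathrm{in}\quad J_x^\infty(E\otimes F^\dag).
\]
Since $s_i$ and $t_i$ solve $D$ and $D^\dag$ respectively, $j_x^\infty s_i\in \ker J_x^\infty D$ and $j_x^\infty t_i\in \ker J_x^\infty D^\dag$, so this identity lies in the image of $\mathfrak{P}_{J_x^\infty D}$. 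The $\infty$-jet Petri hypothesis then forces
\[
\sum_i j_x^\infty s_i\,\otimes\, j_x^\infty t_i=0 \quad\mathrm{in}\quad \ker J_x^\infty D\otimes \ker J_x^\infty D^\dag.
\]
Strong unique continuation for $D$ and $D^\dag$ at $x$ in $U$ now asserts exactly that the jet maps $j_x^\infty\colon \ker(D|_U)\to \ker J_x^\infty D$ and $j_x^\infty\colon \ker(D^\dag|_U)\to \ker J_x^\infty D^\dag$ are injective linear maps of vector spaces (their only kernel elements would be solutions vanishing to infinite order at $x$). A tensor product of injective linear maps of vector spaces is injective, so the jet-level vanishing pulls back to $\eta=0$, establishing the ordinary Petri condition.

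For the $\mathcal{B}$-equivariant and $G$-equivariant versions, the same chain runs verbatim on each isotypic summand: the Petri map respects the group action on the fiberwise tensor product, while the $\infty$-jet functor and strong unique continuation respect equivariant subspaces because an equivariant section lying in $\ker D$ is still a solution of the elliptic operator. The main obstacle I expect is organizing this equivariant bookkeeping, namely verifying that the point $x$ can be chosen so that the isotypic decompositions of $\ker D$ and $\ker D^\dag$ descend cleanly to $J_x^\infty(E\otimes F^\dag)$, and that the isotypic projections commute with the $\infty$-jet map. Once those compatibilities are in place (with reference to the equivariant Petri formalism of Sections $1.3$ and $1.4$ of \cite{DoanWalpuski}), the substance of the argument is exactly the three-line implication above: global Petri vanishing $\Rightarrow$ $\infty$-jet vanishing $\Rightarrow$ (by hypothesis) jet-level zero $\Rightarrow$ (by strong unique continuation) $\eta=0$.
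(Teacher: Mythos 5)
The paper does not actually prove this proposition; it cites it verbatim as Proposition 1.6.6 of Doan--Walpuski, so there is no in-paper proof to check your argument against. What follows is an assessment of the proposal on its own terms.

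Your non-equivariant chain is correct and, as far as I can tell, is essentially the argument used in the source: apply $j_x^\infty$ to the global Petri vanishing $\sum_i s_i(y)\otimes t_i(y)\equiv 0$, use that $j_x^\infty$ is an algebra map to land in the image of $\mathfrak{P}_{J_x^\infty D}$, invoke the $\infty$-jet Petri hypothesis to kill the jet-level tensor, and then observe that strong unique continuation makes $j_x^\infty\colon\ker(D|_U)\to\ker J_x^\infty D$ and its adjoint counterpart injective, so that $j_x^\infty\otimes j_x^\infty$ is injective as a tensor product of injections of vector spaces. One small point worth making explicit: the statement concerns $\ker D$ over all of $\Sigma$, whereas SUCP is a statement about solutions on $U$; to conclude you need the restriction $\ker D\to\ker(D|_U)$ to be injective, which is true here because the SUCP itself forbids nontrivial global solutions that vanish identically on $U$, but this should be said rather than left implicit.

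Where you go wrong is the equivariant paragraph, and it is a conceptual gap rather than mere bookkeeping. The $\mathcal{B}$-equivariant (resp.\ $G$-equivariant) Petri condition is \emph{not} a condition about isotypic summands of $\ker D$ and $\ker D^\dag$ for the original operator $D$ on $\Sigma$. It is the Petri condition for the \emph{twisted} operators $D^{\underline{V}_i}\colon\Gamma(E\otimes\underline{V}_i)\to\Gamma(F\otimes\underline{V}_i)$ (equivalently, by Proposition \ref{prop:push-pull formula}, for the pullback $\pi^\ast D$ on a finite cover $\pi\colon\tilde\Sigma\to\Sigma$). The sections entering the Petri map live in $\ker D^{\underline{V}_i}$, which is not an isotypic piece of $\ker D$, so there are no ``isotypic projections of $\ker D$ descending to $J_x^\infty(E\otimes F^\dag)$'' to keep track of. The mechanism that actually makes the reduction work is that a Euclidean local system $\underline{V}$ is locally constant, hence trivializable in a neighborhood of $x$; consequently $J_x^\infty D^{\underline{V}}\cong J_x^\infty D\otimes \mathrm{id}_{V_x}$, the jet-level Petri map for $D^{\underline{V}}$ is the Petri map for $D$ tensored with $\mathrm{id}_{V_x\otimes V_x^\ast}$, and the SUCP for $D^{\underline{V}}$ on a small $U$ reduces to SUCP for $D$. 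With that local trivialization in hand you run your three-step chain for each $D^{\underline{V}_i}$ separately, which yields the full $\mathcal{B}$-equivariant statement. Without it, the argument does not close, because the hypotheses you have (on $D$ and its jet at $x$) say nothing a priori about $\ker D^{\underline{V}}$.
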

Super-rigidity theorem claims that the locus of violation of Petri's condition has positive codimension. Therefore it is enough to work with the $\infty-$jet Petri condition.

According to Remark $1.6.5$ of \cite{DoanWalpuski}, a differential operator $D$ with smooth coefficients which satisfies $D^*D=\nabla^* \nabla+$lower order terms, has the strong unique continuation property at $x \in U$, whenever $U$ is connected. Therefore, Jacboi operator also has this property.

To define Petri condition in terms of polynomials let us first recall the definition of the symbol for an operation.
Let 
\begin{align*}
	&D:\Gamma(E) \rightarrow \Gamma(F)\\
	&D=\sum_{i=0}^ka_i\nabla^i
\end{align*} 
be a linear differential operator of order $k$ where $a_i$ is a smooth section of $\mathrm{Hom}(T^*\Sigma^{\otimes i}\otimes E,F)$. 
In a chart $(U,x_1,...x_n)$ in which $x$ is identified with $0$ and $E$ 
and $F$ have a trivialization, i.e. there are frames $e_1,...,e_{s}\in\Gamma(U,E)$ and $f_1,...,f_t\in\Gamma(U,F)$ then 
\begin{align*}
	D|_{U}\big(\sum_{l=1}^s\alpha_le_l\big)=\sum_{l=1}^s\sum_{h=1}^t\sum_{\text{i}}
	\mathbf{a}_{\text{i}lh}\frac{\partial^{|\text{i}|}\alpha_l}{\partial x^{\text{i}}}f_h
\end{align*}
Then the symbol of this operator is
a section of $S^kT\Sigma \otimes \mathrm{Hom}(E,F)$, where $S^kT\Sigma$ is the $k$th symmetric power of the tangent bundle of $\Sigma$.
In coordinates
\begin{align*}
	\sigma_x(D)=\sum_{|\text{i}|=k}
	\partial^{\text{i}} \otimes \mathbf{a}_{\text{i}}
\end{align*}
where $\mathbf{a}_{\text{i}}\in\mathrm{Hom}(E_x,F_x)$ is a matrix with entries $\mathbf{a}_{\text{i}lh}$ as above. 
Moreover, $\partial^{\text{i}}=\partial_1^{i_1}...\partial_n^{i_n}$ and $\{\partial_1,...,\partial_n\}$ is a basis for $T_x\Sigma$.
$\sigma_x(D)$ induces a formal differential operator
\[
\hat{\sigma}_x(D):S^\bullet T_x^*M \otimes E_x \rightarrow S^\bullet T_x^*M \otimes F_x.
\]
If $\{x_1,...,x_n\}$ be a basis for $T_x^*M$, dual to the basis $\{\partial_1,...,\partial_n\}$ such that $x_j(\partial_i)=\delta_{ij}$.
Set $\partial_ix_j:=x_j(\partial_i)$. 
Then $S^\bullet T_x^*M$ can be viewed as the polynomial ring $\mathbb{R}[x_1,...,x_n]$ which is equipped with the actions of $\partial_j'$s, i.e. $\partial_j$ acts by differentiation on polynomials.
In this basis
\[
\hat{\sigma}_x(D)=\sum_{|\text{i}|=k} 
\mathbf{a}_{\text{i}}\, \partial^{\text{i}_1}...\partial^{\text{i}_n}.
\]

For the adjoint operator, $\sigma_x(D^\dagger)=\sigma_x^\dagger(D)$, where $$\sigma_x^\dagger(D) \in S^kT_xM \otimes \mathrm{Hom}(E_x^\dagger,F_x^\dagger)$$ 
is determined as the image of $\sigma_x(D)$ under the canonical isomorphism $\mathrm{Hom}(E_x,F_x) \cong \mathrm{Hom}(E_x^\dagger,F_x^\dagger)$ times $(-1)^k$.

The polynomial Petri map is defined as follows (\cite{DoanWalpuski}, Definition 1.6.9):
\begin{align*}
	\hat{\omega}&:(S^\bullet T_x^*M \otimes E_x) \otimes (S^\bullet T_x^*M \otimes F_x^\dagger) \rightarrow S^\bullet T_x^*M \otimes E_x \otimes F_x^\dagger \\
	&\hat{\omega}((p \otimes e)\otimes (q \otimes f)):=(p.q) \otimes e \otimes f
\end{align*}
Let  $D$ be a linear differential operator. $\sigma_x(D)$ is said to satisfy the polynomial Petri condition if the restriction of $\hat{\omega}$ to $\ker \hat{\sigma}_x(D) \otimes \ker \hat{\sigma}_x^\dagger(D)$, i.e.
\[
\hat{\omega}_D:\ker \hat{\sigma}_x(D) \otimes \ker \hat{\sigma}_x^\dagger(D) \rightarrow  S^\bullet T_x^*M \otimes E_x \otimes F_x^\dagger
\]
is injective.
This type of Petri condition is related to the $\infty-$Petri condition via the following proposition:
\begin{prop}[\cite{DoanWalpuski}, Proposition 1.6.10]
	Let $D$ be a linear differential operator. If $J_x^\infty D$ does not satisfy the $\infty-$Petri condition, then $\sigma_x(D)$ fails to satisfy the polynomial Petri condition.
\end{prop}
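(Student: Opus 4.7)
The plan is to prove the contrapositive: if $\sigma_x(D)$ satisfies the polynomial Petri condition, then $J_x^\infty D$ satisfies the $\infty$-jet Petri condition. The argument is a standard associated-graded/leading-term passage based on the filtration (\ref{VanishingFiltration}) by order of vanishing.

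Set $K := \ker J_x^\infty D$ and $K^\dagger := \ker J_x^\infty D^\dagger$, and equip each with the induced filtration $K \supseteq K^1 \supseteq K^2 \supseteq \cdots$, where $K^m$ consists of classes represented by germs vanishing to order at least $m$. The $m$-th intrinsic derivative identifies the graded piece $\mathrm{gr}^m K := K^m / K^{m+1}$ with a subspace of $S^m T_x^*\Sigma \otimes E_x$. Because the associated graded of $J_x^\infty D$ is exactly the formal symbol $\hat{\sigma}_x(D)$ (lower-order terms of $D$ raise the filtration degree by more than the order of $D$), this subspace lies in $\ker \hat{\sigma}_x(D)$; the analogue holds for $K^\dagger$ with $\hat{\sigma}_x^\dagger(D)$. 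A Leibniz-type expansion in a local trivialization shows further that if $s$ vanishes to order $m$ with leading polynomial part $\bar{s}$ and $t$ vanishes to order $n$ with leading part $\bar{t}$, then $\mathfrak{P}(s\otimes t)$ vanishes to order $m+n$ with leading part precisely $\hat{\omega}(\bar{s}\otimes\bar{t})$.

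Choose splittings $K = \bigoplus_m W_m$ and $K^\dagger = \bigoplus_n V_n$ with $W_m \subset K^m$ and $V_n \subset (K^\dagger)^n$ mapping isomorphically onto $\mathrm{gr}^m K$ and $\mathrm{gr}^n K^\dagger$ respectively. This produces a bigrading $K\otimes K^\dagger = \bigoplus_{m,n} W_m\otimes V_n$ that embeds into $\ker \hat{\sigma}_x(D) \otimes \ker \hat{\sigma}_x^\dagger(D)$, and by the previous step the associated graded of $\mathfrak{P}_{J_x^\infty D}$ on this bigraded object is precisely $\hat{\omega}_D$. Now assume the $\infty$-jet Petri condition fails and pick a nonzero $\tau \in K\otimes K^\dagger$ with $\mathfrak{P}_{J_x^\infty D}(\tau)=0$. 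Expanding $\tau = \sum_{m,n} \tau_{m,n}$ in the bigrading, set $d := \min\{m+n : \tau_{m,n} \neq 0\}$ and $\bar{\tau} := \sum_{m+n=d} \tau_{m,n}$; then $\bar{\tau}\neq 0$ by the choice of $d$, while the degree-$d$ component of $\mathfrak{P}_{J_x^\infty D}(\tau)$, which equals $\hat{\omega}_D(\bar{\tau})$, vanishes because the whole image does. Hence $\bar{\tau}$ is a nonzero element of $\ker\hat{\omega}_D$, contradicting the polynomial Petri condition. The main technical burden is the compatibility assertion in the previous paragraph---injectivity of $\mathrm{gr}^m K \hookrightarrow \ker\hat{\sigma}_x(D)$ and the identification of the leading term of $\mathfrak{P}(s\otimes t)$ with $\hat{\omega}(\bar{s}\otimes\bar{t})$---and both reduce to direct computations in a local trivialization once one checks that lower-order terms of $D$ and cross-terms in the Leibniz expansion all land in strictly higher filtration levels.
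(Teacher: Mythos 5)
The paper itself gives no proof of this proposition --- it is cited from \cite{DoanWalpuski}, Proposition 1.6.10, verbatim --- so there is no in-paper argument to compare against. Your leading-term/associated-graded argument is correct in substance and is essentially the argument in \cite{DoanWalpuski}. Two small remarks for accuracy. First, you do not need (and for infinite filtrations cannot readily justify) the global splittings $K=\bigoplus_m W_m$ and $K^\dagger=\bigoplus_n V_n$: it is cleaner to equip $K\otimes K^\dagger$ with the tensor-product filtration $(K\otimes K^\dagger)^d:=\sum_{m+n=d}K^m\otimes(K^\dagger)^n$, observe that it is separated so that a nonzero $\tau$ has a well-defined lowest filtration degree $d$, and let $\bar\tau$ be the (nonzero) image of $\tau$ in $\mathrm{gr}^d(K\otimes K^\dagger)\cong\bigoplus_{m+n=d}\mathrm{gr}^m K\otimes\mathrm{gr}^n K^\dagger$, which embeds into $\ker\hat\sigma_x(D)\otimes\ker\hat\sigma^\dagger_x(D)$ via the intrinsic-derivative maps as you describe. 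Second, $\mathfrak{P}(s\otimes t)$ vanishes to order \emph{at least} $m+n$, with degree-$(m+n)$ component $\hat\omega(\bar s\otimes\bar t)$, which may itself be zero; this is harmless, because your argument only uses that the degree-$d$ component of $\mathfrak{P}_{J_x^\infty D}(\tau)$, namely $\hat\omega_D(\bar\tau)$, is forced to vanish while $\bar\tau\neq 0$. With those tightenings, the proof as you outlined it is sound.
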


Now we are ready to state Wendl's condition. This is a technical condition which was proved to be satisfied for the Cauchy-Riemann type operators and has been Wendel's important innovation in order to prove super-rigidity conjecture. 

Let $B \in \ker \hat{\omega}_D$ be a homogeneous element. Let
$\hat{R}$ and $\hat{R}^\dagger$ be right inverses for $\hat{\sigma}_x(D)$ and $\hat{\sigma}^\dagger_x(D)$, respectively. 
Define
\begin{equation}\label{Wendl'sLinearMap}
	\begin{split}
	\mathrm{L}_{\sigma_x(D),B}:&S^\bullet T_x^*M \otimes \mathrm{Hom}(E_x,F_x) \rightarrow S^\bullet T_x^*M \otimes E_x \otimes F_x^\dagger \\
	&\mathrm{L}_{\sigma_x(D),B}(A):=\hat{\omega}((\hat{R}A \otimes 1+1 \otimes \hat{R}^\dagger)B)
	\end{split}
\end{equation}
Note that as proved in Proposition 1.6.20 of \cite{DoanWalpuski} whenever $D$ is an elliptic operator, $\sigma_x(D)$ and $\sigma_x^\dagger(D)$ have right inverses.

Every element $B\in\mathrm{ker}(D)\otimes\mathrm{ker}(D^\dagger)$ can be written of the form
$B=\sum_{i=1}^\rho s_i\otimes t_i$
where $s_i\in\mathrm{ker}(D)$ and $t_i\in\mathrm{ker}(D^\dagger)$, $i=1,...,\rho$, are linearly independent. Then $\rho$ is defined to be the rank of $B$, i.e. $\rho=\mathrm{rk}(B)$.

\begin{defi}[\cite{DoanWalpuski}, Definition 1.6.15]\label{Wendl's Condition}
	Let be a linear differential operator of order $k$. $\sigma_x(D)$ satisfies \textbf{Wendl's condition} if for every homogeneous element $B \in \ker \hat{\omega}_D$, there exist right inverses $\hat{R}$ and $\hat{R}^\dagger$ for $\hat{\sigma}_x(D)$ and $\hat{\sigma}^\dagger_x(D)$ such that for every $l \ge l_{d,\rho}$
	\begin{align*}
		\mathrm{rk} \, \mathrm{L}_{\sigma_x(D),B}^{\le l} \ge c_{d,\rho} \, l^n.
	\end{align*}
	Here $l_{d,\rho}$ and $c_{d,\rho}$ are two constants that only depend on $d=\mathrm{deg}(B)$ and $\rho=\mathrm{rk}(B)$.
	Moreover $ \mathrm{L}_{\sigma_x(D),B}^{\le l}$ denotes the restriction of $ \mathrm{L}_{\sigma_x(D),B}$ on the space of polynomials of degree at most $l$:
	\[
	\mathrm{L}_{\sigma_x(D),B}^{\le l}:\bigoplus_{i=0}^l S^i T_x^*M \otimes \mathrm{Hom}(E_x,F_x) \rightarrow \bigoplus_{i=0}^{l+k} S^i T_x^*M \otimes E_x \otimes F_x^\dagger
	\]
\end{defi}

With this preparation, we can state Wendl's theorem that claims for a family of linear elliptic differential operators, $\{D_p\}_{p\in \mathcal{P}}$, 
the space of operators violating the $\infty-$Petri condition is small, meaning that its codimension is positive. More precisely:
\begin{thm}[Section $5.2$ of \cite{Wendl} and Theorem $1.6.17$ of \cite{DoanWalpuski}]\label{thm:Wendl'sTheorem}
	Let $\{D_p\}_{p\in \mathcal{P}}$ be a family of linear elliptic operators of order $k$ with smooth coefficients. Let
	\[
	\mathcal{R}:=\{p\in \mathcal{P} \, |\, J_x^\infty D \quad \mathrm{fails \; to \; satisfy \; the \; \infty-jet \; Petri \; condition} \}
	\]
	Fix $p_0 \in \mathcal{P}$. If for each $l\in\mathbb{N}_0$, $\{D_p\}_{p\in\mathcal{P}}$ be $l-$jet flexible at $x$ and  $p_0 \in \mathcal{P}$ and 
	$\sigma_x(D_{p_0})$ satisfies Wendl's condition,
	then for every $n\in \mathbb{N}_0$, there is a neighborhood $\mathcal{W}$ of $p_0$ such that the codimension of $\mathcal{R}\cap\mathcal{W}$ is at least $n$.

\end{thm}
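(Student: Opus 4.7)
The plan is to reduce the statement about the infinite--jet Petri condition to a sequence of Brill--Noether type transversality problems for the polynomial Petri map truncated at levels $l\to\infty$, and then to apply a codimension bound whose strength grows polynomially in $l$ thanks to Wendl's condition.

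First I would stratify $\mathcal{R}\cap\mathcal{W}$, for a small enough neighbourhood $\mathcal{W}$ of $p_0$, by the degree $d$ and rank $\rho$ of a nontrivial homogeneous element $B\in\ker\hat{\omega}_{D_p}$. Since $D_{p_0}$ is Fredholm, the dimensions of $\ker D_p$ and $\ker D_p^{\dagger}$ are upper semi--continuous in $p$, so $\rho$ is bounded uniformly on a small enough $\mathcal{W}$; combined with strong unique continuation (Proposition \ref{J-InftySUCP}), failure of the $\infty$--jet Petri condition is already detected at some finite truncation, and one obtains $\mathcal{R}\cap\mathcal{W}\subset\bigcup_{(d,\rho)}\mathcal{R}_{d,\rho}$ as a \emph{finite} union. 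Each piece $\mathcal{R}_{d,\rho}$ is then cut out, up to a truncation level $l$, as the locus where a suitable polynomial Petri map has a nonzero kernel element of rank $\rho$ and degree $d$.

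Next, for each stratum I would adapt the proof of Theorem \ref{thm:Transversality}. The Brill--Noether stratum sits inside the zero set of an obstruction map whose linearisation at $p_0$, after fixing right inverses $\hat{R}$ and $\hat{R}^{\dagger}$ for $\hat{\sigma}_x(D_{p_0})$ and $\hat{\sigma}_x^{\dagger}(D_{p_0})$, factors through $\mathrm{L}^{\le l}_{\sigma_x(D_{p_0}),B}$ composed with $\mathrm{d}_{p_0}D$. Flexibility at $p_0$ (applied at each finite jet level) provides, for every $A\in\Gamma_c(U,\mathrm{Hom}(E,E))$, a tangent vector $\hat{X}\in T_{p_0}\mathcal{P}$ realising $A$ as $\mathrm{d}_{p_0}D(\hat{X})$ modulo $\mathrm{Im}\,D_{p_0}$; Wendl's condition then asserts that the induced map has rank at least $c_{d,\rho}\,l^{n}$, with $n$ here denoting the Wendl exponent. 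By the same matrix argument that underlies Theorem \ref{thm:Transversality}, the Brill--Noether stratum therefore has codimension at least $c_{d,\rho}\,l^{n}$ in $\mathcal{P}$. Given any prescribed codimension target in the statement of the theorem, it now suffices to choose $l$ large enough that $c_{d,\rho}\,l^{n}$ dominates it for every $(d,\rho)$ appearing in $\mathcal{W}$, and to take the finite union of the resulting strata.

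The hard part is the reduction step: one must argue that failure of the infinite--jet Petri condition at $p$ forces failure of the polynomial Petri condition of $\sigma_x(D_p)$ at a uniformly bounded finite level across $\mathcal{W}$, so that truncating at level $l$ already captures all of $\mathcal{R}\cap\mathcal{W}$. This is precisely where strong unique continuation, combined with the stability of jet kernels of Fredholm operators under perturbation, must be deployed delicately, and it is arguably the most subtle ingredient taken from Wendl's original work. A secondary technical point is verifying that $\mathrm{L}_{\sigma_x(D_{p_0}),B}$, defined via choices of right inverses, is well--defined modulo the images of $\hat{\sigma}_x(D_{p_0})$ and $\hat{\sigma}_x^{\dagger}(D_{p_0})$, so that Wendl's rank hypothesis really yields an intrinsic lower bound on the linearised obstruction entering Theorem \ref{thm:Transversality}.
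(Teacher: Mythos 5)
The paper itself does not prove this statement; Theorem \ref{thm:Wendl'sTheorem} is imported from Wendl's work and Theorem~1.6.17 of \cite{DoanWalpuski}, so there is no in-paper proof to compare against, and I will evaluate your outline on its own terms. Your architecture is the right one: stratify the failure locus by the degree $d$ and rank $\rho$ of a homogeneous obstruction $B\in\ker\hat{\omega}_{\sigma_x(D)}$, cut out each stratum as a Brill--Noether locus at a finite jet level $l$, and use flexibility together with the Wendl rank bound $\mathrm{rk}\,\mathrm{L}^{\le l}_{\sigma_x(D_{p_0}),B}\ge c_{d,\rho}l^n$ to drive the codimension of each stratum past any prescribed target. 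The step you flag as ``the hard part'' is indeed the crux.

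However, there is a concrete gap in the reduction: the claimed \emph{finite} stratification $\mathcal{R}\cap\mathcal{W}\subset\bigcup_{(d,\rho)}\mathcal{R}_{d,\rho}$ does not hold. Upper semi-continuity of $\dim\ker D_p$ does control $\rho$ on a small neighborhood, but nothing bounds the degree $d$: an element of $\ker J_x^\infty D_p\otimes\ker J_x^\infty D_p^\dagger$ killed by the jet Petri map may vanish to arbitrarily high order at $x$, and its leading homogeneous piece --- the $B\in\ker\hat{\omega}_{\sigma_x(D_p)}$ that feeds into Wendl's condition --- has unbounded degree as $p$ ranges over any open set. This is precisely why Definition~\ref{Wendl's Condition} allows the constants $c_{d,\rho}$ and $l_{d,\rho}$ to depend on $d$: one must match the truncation level $l$ to the degree $d$ separately for each $d$, so that $\mathcal{R}\cap\mathcal{W}$ is exhibited as a \emph{countable}, not finite, union of strata, each of codimension at least $n$. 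One must then invoke that a countable union of subsets of codimension $\ge n$ still has codimension $\ge n$ in the sense of the definition preceding Proposition~\ref{CodimOfSet}, which holds because the defining Fredholm maps can be combined over a disjoint union of Banach manifolds; your proposal skips this. Finally, a misattribution: Proposition~\ref{J-InftySUCP} (Prop.~1.6.6 of \cite{DoanWalpuski}) runs in the opposite direction, upgrading the $\infty$-jet Petri condition to the full Petri condition under SUCP. The finite-degree reduction you actually need is the other proposition stated in Subsection~\ref{Subsection:PetriCondition}, namely Proposition~1.6.10: if the $\infty$-jet Petri condition fails, then the polynomial Petri condition fails, because the leading-order term of the obstruction lies in $\ker\hat{\omega}_{\sigma_x(D)}$.
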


\subsection{Wendl's Condition For The Jacobi Operator}\label{Subsetion:WendCondition}
The symbol of the Jacobi operator is $\Delta=\partial_1^2\otimes \mathrm{Id}+\partial_2^2 \otimes \mathrm{Id}$.
In order to investigate Wendl's condition for this operator, first let us see what the elements of $\mathrm{Ker}(\Delta)$ are.
%
\begin{lem} \label{lem:2}
	Every element in $\mathrm{ker}(\Delta)$ is a linear combination of homogeneous elements which are of the form:
	\[
	\sum_{i=0}^{d} a_i^d x_1^{d-i} x_2^{i} 
	\]
	where $a_{2i+1}^d=\frac{(-1)^i}{d} \tbinom{d}{2i+1} \, a_1^d$ and 
	$a_{2i}^d=(-1)^i \tbinom{d}{2i} \, a_0^d$.
	
\end{lem}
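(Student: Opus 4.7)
The symbol $\Delta=\partial_1^2\otimes\mathrm{Id}+\partial_2^2\otimes\mathrm{Id}$ acts on $S^\bullet T_x^*M\otimes E_x$ (viewed as vector-valued polynomials in $x_1,x_2$) and each of the two summands lowers the polynomial degree by exactly $2$. Consequently $\Delta$ preserves the homogeneous-degree decomposition, so $\ker(\Delta)$ is the direct sum of its homogeneous pieces. It therefore suffices to classify the homogeneous harmonic polynomials in two variables of each degree $d\ge 0$, because an arbitrary element of $\ker(\Delta)$ is then by definition a finite linear combination of such homogeneous pieces.

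\textbf{Setting up the recurrence.} Fix $d$ and write a general homogeneous polynomial of degree $d$ as $p=\sum_{i=0}^{d}a_i^d\,x_1^{d-i}x_2^i$ with coefficients in $E_x$. Applying $\partial_1^2$ and $\partial_2^2$ term by term and collecting the coefficient of the monomial $x_1^{d-2-b}x_2^{b}$ in $\Delta p$ (where $0\le b\le d-2$) yields
\begin{equation*}
(d-b)(d-b-1)\,a_b^d+(b+1)(b+2)\,a_{b+2}^d=0.
\end{equation*}
Hence $p\in\ker(\Delta)$ if and only if
\begin{equation*}
a_{b+2}^d=-\frac{(d-b)(d-b-1)}{(b+1)(b+2)}\,a_b^d,\qquad 0\le b\le d-2.
\end{equation*}
This two-term recurrence decouples the even-indexed coefficients from the odd-indexed ones, so the full homogeneous harmonic polynomial is determined by the two initial data $a_0^d$ and $a_1^d$, as the statement of the lemma anticipates.

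\textbf{Solving the recurrence.} The plan is to iterate the recurrence and verify the closed-form expressions by induction on $i$. For the even case, starting from $a_0^d$ and applying the recurrence $i$ times gives, after telescoping the product,
\begin{equation*}
a_{2i}^d=(-1)^i\,\frac{d(d-1)(d-2)\cdots(d-2i+1)}{(2i)!}\,a_0^d=(-1)^i\binom{d}{2i}a_0^d.
\end{equation*}
For the odd case, iterating from $a_1^d$ yields a similar telescoping product which, after pulling out the factor $1/d$ to convert a product of $2i$ consecutive integers starting at $d-1$ into a binomial coefficient $\binom{d}{2i+1}$, gives
\begin{equation*}
a_{2i+1}^d=\frac{(-1)^i}{d}\binom{d}{2i+1}a_1^d.
\end{equation*}
Both closed forms are then confirmed by a one-line induction step using the recurrence.

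\textbf{Conclusion.} Each homogeneous degree-$d$ element of $\ker(\Delta)$ is thus parameterized by the pair $(a_0^d,a_1^d)\in E_x\oplus E_x$ via the formulas above, and an arbitrary element of $\ker(\Delta)$ is a finite sum of such pieces. There is no real obstacle in the argument; the only point requiring care is correctly indexing the recurrence and tracking the $1/d$ factor in the odd-index closed form (which is the combinatorial artifact of starting the falling factorial at $d-1$ instead of $d$).
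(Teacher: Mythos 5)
Your proof is correct and follows the same route as the paper's: both derive the two-term recurrence $(d-b)(d-b-1)a_b^d+(b+1)(b+2)a_{b+2}^d=0$ by collecting coefficients in $\Delta p=0$, and both read off the closed forms by iterating it; the paper simply ends at "Then the claim is followed easily," whereas you carry out the telescoping and the $1/d$ bookkeeping explicitly.
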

\begin{proof}
	Let $\sum_{i=0}^{d} a_i^d x_1^{d-i} x_2^{i}$ be a homogeneous elements in $\mathrm{ker}(\Delta)$. then
	\begin{align*}
		&\sum_{i=0}^{d-2} a_i^d(d-i-1)(d-i) x_1^{d-i-2} x_2^{i}+\sum_{i=2}^{d} a_i^d i(i-1) x_1^{d-i} x_2^{i-2} \\
		&=\sum_{i=0}^{d-2} a_i^d(d-i-1)(d-i) x_1^{d-i-2} x_2^{i}+\sum_{i=0}^{d-2} a_{i+2}^d (i+2)(i+1) x_1^{d-i-2} x_2^{i}=0.
	\end{align*} 
	This implies $a_i^d(d-i-1)(d-i)+a_{i+2}^d (i+2)(i+1)=0$. Then the claim is followed easily.
\end{proof}

From this lemma it follows that if 
$\sum_{i=0}^{d} a_i^d x_1^{d-i} x_2^{i}$
is a non-zero element in $\mathrm{ker}(\Delta)$ then either all coefficient $a_i^d$, $i$ even, must be non-zero or all coefficient $a_i^d$, $i$ odd, must be non-zero.

A straightforward calculation shows that 
	\[
	\hat{R}(x_1^m x_2^n)=\sum_{i=0}^{[n/2]} A_i x_1^{m+2+2i} x_2^{n-2i}
	+\sum_{i=0}^{[m/2]} B_i x_1^{m-2i} x_2^{n+2+2i}
	\]
is a right inverse for $\Delta$, where 
	$A_i=\frac{(-1)^i m! n!}{2 (n-2i)! (m+2+2i)!}$ and
	$B_i=\frac{(-1)^i m! n!}{2 (m-2i)! (n+2+2i)!}$.

Checking the validity of Wendl's condition requires knowing the form of homogeneous elements of $\mathrm{ker}(\hat{\omega}_\Delta)$.
\begin{lem} \label{lem:4}
	Every homogeneous element of $\mathrm{ker}(\hat{\omega}_\Delta)$ is of the form
	\[
	\sum_{l=0}^d \sum_{i=0}^{l} \sum_{j=0}^{d-l}  
	C_l a_i^l b_j^{d-l} x_1^{l-i} x_2^{i} \otimes x_1^{d-l-j} x_2^{j}   	
	\]
	such that $\sum_{l=0}^d \sum_{i=\mathrm{max}\{0,l+d^\prime-d\}}^{\mathrm{min}\{l,d^\prime\}}   
	C_l a_i^l b_{d^\prime-i}^{d-l}=0$, for every $d^\prime=0,...,d$.
\end{lem}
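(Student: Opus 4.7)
The plan is to proceed by unpacking the definitions and reducing everything to a bookkeeping computation. First, because the symbol $\Delta = \partial_1^2 \otimes \mathrm{Id} + \partial_2^2 \otimes \mathrm{Id}$ is a formally self-adjoint scalar operator tensored with the identity on the fiber, its formal adjoint agrees with itself, so $\ker \hat{\sigma}_x^\dagger(\Delta) = \ker \hat{\sigma}_x(\Delta) = \ker \Delta$. Consequently $\hat{\omega}_\Delta$ is just the restriction of the multiplication map $\hat{\omega}$ to $\ker\Delta \otimes \ker\Delta$. By Lemma \ref{lem:2}, every homogeneous harmonic polynomial of degree $l$ has the form $p_l = \sum_{i=0}^l a_i^l x_1^{l-i} x_2^i$ with the coefficients constrained by the two-term recursion stated there.

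Next I note that $\hat{\omega}$ preserves total polynomial degree: an element of bidegree $(l,m)$ maps into degree $l+m$. Therefore any homogeneous element $B$ of total degree $d$ in $\ker\hat{\omega}_\Delta$ decomposes uniquely as $B = \sum_{l=0}^d B_l$ with $B_l$ lying in the bidegree-$(l,d-l)$ component of $\ker\Delta \otimes \ker\Delta$. Each $B_l$ is a linear combination of simple tensors $p_l \otimes q_{d-l}$ of harmonic polynomials with
\[
p_l = \sum_{i=0}^l a_i^l x_1^{l-i} x_2^i, \qquad q_{d-l} = \sum_{j=0}^{d-l} b_j^{d-l} x_1^{d-l-j} x_2^j,
\]
and upon absorbing overall scalars into a parameter $C_l$ (and implicitly summing over multiple simple tensors per fixed $l$ when the bidegree component is higher rank) one obtains exactly the expansion displayed in the lemma. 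This gives the general form portion of the claim.

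The kernel condition is then a routine computation. Since $\hat{\omega}(p\otimes q) = p\cdot q$, one has
\[
\hat{\omega}(B) = \sum_{l=0}^d C_l \sum_{i=0}^{l} \sum_{j=0}^{d-l} a_i^l b_j^{d-l}\, x_1^{d-i-j} x_2^{i+j}.
\]
Setting $d' := i+j$ and observing that the joint constraints $0 \le i \le l$ and $0 \le d'-i \le d-l$ are equivalent to $\max\{0,l+d'-d\} \le i \le \min\{l,d'\}$, one regroups as
\[
\hat{\omega}(B) = \sum_{d'=0}^d \left( \sum_{l=0}^d C_l \sum_{i=\max\{0,l+d'-d\}}^{\min\{l,d'\}} a_i^l b_{d'-i}^{d-l} \right) x_1^{d-d'} x_2^{d'}.
\]
Requiring $\hat{\omega}(B)=0$ forces each coefficient of $x_1^{d-d'} x_2^{d'}$ to vanish, which is exactly the stated relation for $d'=0,\dots,d$. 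The only mildly delicate step is the careful determination of the summation range for $i$; everything else is bilinearity of $\otimes$ and polynomial multiplication.
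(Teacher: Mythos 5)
Your proof is correct and follows essentially the same route as the paper: expand in homogeneous harmonic polynomials using Lemma~\ref{lem:2}, apply the multiplication map, reindex with $d'=i+j$, and read off the vanishing of each coefficient of $x_1^{d-d'}x_2^{d'}$. The two small things you make more explicit — that $\ker\hat{\sigma}_x^\dagger(\Delta)=\ker\Delta$ because the symbol is formally self-adjoint, and the derivation of the summation range $\max\{0,l+d'-d\}\le i\le\min\{l,d'\}$ — are implicit in the paper but not spelled out, so this is a mild improvement in exposition rather than a different argument.
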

\begin{proof}
	An element in $\ker(\Delta)\otimes\ker(\Delta)$ is of the form
	\begin{align*}
		\sum_{l=0}^n M_l f_l(x_1,x_2) \otimes \sum_{t=0}^m N_t g_t(x_1,x_2)
	\end{align*}
	where $f_l(x_1,x_2)=\sum_{i=0}^{l} a_i^l x_1^{l-i} x_2^{i}$ and $g_t(x_1,x_2)=\sum_{j=0}^{t} b_j^t x_1^{t-j} x_2^{j}$ are homogeneous polynomials in $\ker(\Delta)$ of degrees $l$ and $t$, respectively. Therefore every element in $\ker(\hat{\omega}_\Delta)$ can be written as
	\begin{align*}
		&\sum_{l=0}^n \sum_{i=0}^{l} M_l a_i^l x_1^{l-i} x_2^{i}  \otimes \sum_{t=0}^m  \sum_{j=0}^{t} N_t b_j^t x_1^{t-j} x_2^{j} \\
		&=\sum_{l=0}^n \sum_{i=0}^{l} \sum_{t=0}^m  \sum_{j=0}^{t}  
		M_l N_t a_i^l b_j^t x_1^{l-i} x_2^{i} \otimes x_1^{t-j} x_2^{j} \\
		&=\sum_{d=0}^{n+m} \sum_{l=0}^d \sum_{i=0}^{l} \sum_{j=0}^{d-l}  
		M_l N_{d-l} a_i^l b_j^{d-l} x_1^{l-i} x_2^{i} \otimes x_1^{d-l-j} x_2^{j}   	
	\end{align*}
    Where $M_l=0$ for $l>n$ and $N_k=0$ for $k>m$.
    Here, $a_i^l=T_{il}a_{\alpha}^l$, $\alpha=i\; \mathrm{mod}\; 2$, and $b_j^{k}=R_{jk}b_{\beta}^{k}$, $\beta=j\; \mathrm{mod}\; 2$, where $T_{il}$ and $R_{jk}$ are appropriate coefficients introduced in Lemma \ref{lem:2}.
	Therefore every homogeneous element of $\mathrm{ker}(\Delta) \otimes \ker(\Delta)$ is of the form
	\[
	\sum_{l=0}^d \sum_{i=0}^{l} \sum_{j=0}^{d-l}  
	C_l a_i^l b_j^{d-l} x_1^{l-i} x_2^{i} \otimes x_1^{d-l-j} x_2^{j}. 	
	\]
	The image of this element under $\omega_\Delta$ is
	$
	\sum_{l=0}^d \sum_{i=0}^{l} \sum_{j=0}^{d-l}  
	C_l a_i^l b_j^{d-l} x_1^{d-i-j} x_2^{i+j} 
	$. 
	This belongs to $\ker(\hat{\omega}_\Delta)$ if
	\[
	\sum_{l=0}^d \sum_{i=\mathrm{max}\{0,l+d^\prime-d\}}^{\mathrm{min}\{l,d^\prime\}}   
	C_l a_i^l b_{d^\prime-i}^{d-l}=0
	\]
\end{proof}

Take a non-zero element 
$$
B=\sum_{l=0}^d \sum_{i=0}^{l} \sum_{j=0}^{d-l}  
C_l a_i^l b_j^{d-l} x_1^{l-i} x_2^{i} \otimes x_1^{d-l-j} x_2^{j} \in \ker(\omega_\Delta).
$$ 
Set 
\[
\alpha=\text{min}\{i\,|\,i\in\{0,1\}\; \text{and} \; a_i^l \; \text{is non-zero for some} \; l\}
\]
and 
\[
\beta=\text{min}\{j\,|\,j\in\{0,1\}\; \text{and} \; b_j^k \; \text{is non-zero for some} \; k\}.
\]
Let $A=x_1^m x_2^n$. 
For $\mu=1,2$, set
\[
M^\mu_{ijkl}:=C_l a_i^l b_j^{d-l} A_{lik}^\mu x_1^{l-i} x_2^{i} \otimes x_1^{d-l-j} x_2^{j}
\]
and 
\[
N^\mu_{ijkl}:=C_l a_i^l b_j^{d-l}  B_{lik}^1 
x_1^{l-i} x_2^{i} \otimes x_1^{d-l-j} x_2^{j}.
\]
Then
\begin{align*}
	(\hat{R}A\otimes 1)B=\sum_{l=0}^d \sum_{i=0}^{l} \sum_{j=0}^{d-l}  
	&\Big(
	\sum_{k=0}^{[\frac{n+i}{2}]}
	M^1_{ijkl} x_1^{m+2+2k} x_2^{n-2k} \otimes 1 \\
	&+\sum_{k=0}^{[\frac{m+l-i}{2}]}
	N^1_{ijkl} 
	x_1^{m-2k} x_2^{n+2+2k} \otimes 1
	\Big)
\end{align*}
and 
\begin{align*}
	(1\otimes \hat{R}A)B=\sum_{l=0}^d \sum_{i=0}^{l} \sum_{j=0}^{d-l}  
	&\Big(
	\sum_{k=0}^{[\frac{n+j}{2}]}
	M^2_{ijkl} 1 \otimes x_1^{m+2+2k} x_2^{n-2k} \\
	&+\sum_{k=0}^{[\frac{m+d-l-j}{2}]}
	N^2_{ijkl} 
	1 \otimes x_1^{m-2k} x_2^{2+2k}  \Big) 
\end{align*}
where
\begin{align*}
	A_{lik}^1&=\frac{(-1)^k (m+l-i)! (n+i)!}{2(m+l-i+2+2k)! (n+i-2k)!}, \\
	A_{ljk}^2&=\frac{(-1)^k (m+d-l-j)! (n+j)!}{2(m+d-l-j+2+2k)! (n+j-2k)!} 
\end{align*}
Therefore, Wendl's map, defined in (\ref{Wendl'sLinearMap}), is calculated as follows:
\begin{align*}
	L_{\Delta,B}(A)&=\hat{\omega}_\Delta (\hat{R}A \otimes 1+1 \otimes \hat{R}A)B \\
	&=\sum_{l=0}^d \sum_{i=0}^{l} \sum_{j=0}^{d-l}   C_l a_i^l b_j^{d-l} x_1^{m+d-i-j} x_2^{n+i+j} \\
	&\times \Big(
	\sum_{k=0}^{[\frac{n+i}{2}]}
	A_{lik}^1 x_1^{2+2k} x_2^{-2k}  
	+\sum_{k=0}^{[\frac{m+l-i}{2}]}
	B_{lik}^1 x_1^{-2k} x_2^{2+2k}  \\
	&+\sum_{k=0}^{[\frac{n+j}{2}]}
	A_{ljk}^2  x_1^{2+2k} x_2^{-2k} 
	+\sum_{k=0}^{[\frac{m+d-l-j}{2}]}
	B_{lik}^2 x_1^{-2k} x_2^{2+2k}  \Big) \\
\end{align*} 
To compute the rank of $L_{\Delta,,B}$, two special coefficients are important for us:
\begin{align*}
	A_{li0}^1&=\frac{1}{2(m+l-i+2)(m+l-i+1)}, \\
	A_{lj0}^2&=\frac{1}{2(m+d-l-j+2)(m+d-l-j+1)} 
\end{align*}
\begin{prop}\label{prop:Wendl'sConditionForJacobi}
	Let $B \in \mathrm{ker}(\omega_\Delta)$ be a non-zero homogeneous element of degree $d$. For $k \ge 10d+6$
	\[
	\mathrm{rk} \, L_{\Delta,B}^{\le k} \ge \frac{1}{2} k.
	\]
\end{prop}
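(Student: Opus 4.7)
The plan is to exploit the grading of $L_{\Delta,B}$ and test it on the one-parameter family $A_m := x_1^m$ for $m = 0, 1, \ldots, k$. From the explicit formula displayed in the paper, $L_{\Delta,B}$ sends a homogeneous polynomial of degree $m$ to one of degree $m+d+2$, so the images $L_{\Delta,B}(A_m)$ lie in pairwise distinct graded pieces $S^{m+d+2}T_x^*M$ and are automatically linearly independent whenever nonzero. Consequently
\[
\mathrm{rk}\,L_{\Delta,B}^{\le k} \ge \#\{m \in \{0,1,\ldots,k\} : L_{\Delta,B}(x_1^m) \ne 0\},
\]
and the task reduces to showing that this count is at least $k/2$.

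To detect non-vanishing, I would extract the coefficient $q(m)$ of the pure monomial $x_1^{m+d+2}$ in $L_{\Delta,B}(x_1^m)$. The requirement that the output have $x_2$-exponent zero, together with the summation bounds $0 \le k \le [i/2]$ and $0 \le k \le [j/2]$ appearing in the two series, restricts the contributing terms to $(i,j) = (2k,0)$ in the $A^1$-part and $(i,j) = (0,2k)$ in the $A^2$-part. Substituting the explicit values of $A^1_{l,2k,k}$ and $A^2_{l,2k,k}$ expresses $q(m)$ as a rational function of $m$ whose denominator divides $\prod_{j=1}^{d+2}(m+j)$ and whose Laurent expansion at infinity begins at order $1/m^2$.

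The main obstacle is proving that $q(m)$ is not identically zero, or else switching to a companion coefficient (such as that of $x_1^{m+d+1}x_2$) when the parity of $B$ forces $q$ to vanish. The subtlety is that $B \in \ker(\omega_\Delta)$ already enforces cancellations: the leading $1/m^2$ asymptotic of $q(m)$ is proportional to $\sum_l C_l\,a_0^l\,b_0^{d-l}$, which is exactly the $x_1^d$-coefficient of $\omega_\Delta(B)$ and therefore vanishes, and a short asymptotic expansion shows that the $1/m^3$ term cancels symmetrically as well. I would therefore analyse the residues of $q(m)$ at its poles $m \in \{-1, -2, \ldots, -(d+2)\}$; each residue is an explicit linear combination of the products $C_l\,a_{2k}^l\,b_0^{d-l}$ and $C_l\,a_0^l\,b_{2k}^{d-l}$. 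A Vandermonde-type argument, combined with the $(d+1)$ constraints from Lemma \ref{lem:4} and an analogous residue analysis for the parity-companion coefficient, shows that the simultaneous vanishing of all these residues forces every coefficient of $B$ to vanish, contradicting $B \ne 0$.

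Granted the non-triviality, clearing denominators writes $q(m)\cdot\prod_{j=1}^{d+2}(m+j)$ as a polynomial in $m$ of degree at most $d$, which therefore has at most $d$ zeros. Since $m \in \{0,1,\ldots,k\}$ is non-negative and hence avoids the negative-integer poles, $q(m) \ne 0$ for at least $k+1-d$ values of $m$ in this range. The hypothesis $k \ge 10d+6$ comfortably ensures $k+1-d \ge k/2$, establishing $\mathrm{rk}\,L_{\Delta,B}^{\le k} \ge k/2$.
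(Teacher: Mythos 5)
Your top-level strategy coincides with the paper's: evaluate $L_{\Delta,B}$ on the monomials $x_1^m$, exploit the grading so that images for distinct $m$ are automatically independent, and then show a particular coefficient of $L_{\Delta,B}(x_1^m)$ --- viewed as a rational function of $m$ --- vanishes for only boundedly many $m$. However, the execution diverges from the paper in ways that leave real gaps.

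First, you track the coefficient of the pure monomial $x_1^{m+d+2}$. As you yourself observe, this can vanish identically: if $a_0^l=0$ for all $l$ (the case $\alpha=1$ in the paper's notation), all contributing terms drop. The paper resolves this cleanly by choosing $\alpha,\beta\in\{0,1\}$ to be the minimal indices with $a_\alpha^l\neq 0$ and $b_\beta^k\neq 0$, and tracking the coefficient of $x_1^{m+d-\alpha-\beta+2}x_2^{\alpha+\beta}$ instead. Your suggested fix of ``switching to a companion coefficient when the parity of $B$ forces $q$ to vanish'' is the right instinct but is not carried out, and is precisely the nontrivial bookkeeping that the $(\alpha,\beta)$ convention handles systematically.

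Second, your degree accounting is wrong. You assert the cleared denominator is $\prod_{j=1}^{d+2}(m+j)$ (degree $d+2$) and the numerator has degree at most $d$. In the paper's analysis the rational function $\tilde A_{ml}$ has a degree-$4$ denominator $p(m,l)=2(m+l-\alpha+2)(m+l-\alpha+1)(m+d-l-\beta+2)(m+d-l-\beta+1)$ for each $l$, so the common denominator has degree $4(d+1)$ and the cleared numerator $\sum_l C_l a_\alpha^l b_\beta^{d-l}\,q(m,l)$ has degree at most $4d+2$. The bound on the number of zeros --- and hence the final estimate on $|\Omega|$ and the constant $10d+6$ --- comes from this $4d+2$, not from $d$. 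Your accounting produces an unjustifiably small zero count.

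Third, the crucial non-vanishing step is not proven. You correctly note that $B\in\ker(\hat\omega_\Delta)$ forces the leading $1/m^2$ (and next $1/m^3$) asymptotics of $q(m)$ to cancel, but this observation works against you: the rational function is ``small'' and the proposed residue/Vandermonde analysis to show it is not identically zero is not carried out and is genuinely delicate in the presence of the $(d+1)$ constraints from Lemma~\ref{lem:4}. The paper sidesteps this entirely by proving linear independence of the polynomials $\{q(m,l)\}_{l=0}^d$ (via the factor $(m+l-\alpha+2)^3$ and the fact that $\tilde q(m,l)$ is the unique one not vanishing at $m=-l+\alpha-2$), so that $\sum_l C_l a_\alpha^l b_\beta^{d-l}\,q(m,l)\equiv 0$ forces every coefficient to vanish regardless of any asymptotic cancellations. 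Replacing your residue plan with this linear-independence argument, and switching to the $(\alpha,\beta)$-coefficient, is what is needed to close the gaps.
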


\begin{proof}
Let 
$$
B=\sum_{l=0}^d \sum_{i=0}^{l} \sum_{j=0}^{d-l}  
C_l a_i^l b_j^{d-l} x_1^{l-i} x_2^{i} \otimes x_1^{d-l-j} x_2^{j} \in \ker(\omega_\Delta).
$$
be non-zero and set
\begin{align*}
	\tilde{A}_{ml}&=A^1_{l\alpha 0}+A^2_{l\beta 0} 
\end{align*}
Then the coefficient of $x_1^{m+d-\alpha-\beta+2} x_2^{n+\alpha+\beta}$ in $L_{\Delta,B}(A)$ is $\sum_{l=0}^dC_la_\alpha^l b_\beta^{d-l}\tilde{A}_{ml}$.

Note that $\tilde{A}_{ml}$ is of the form $\frac{P(m,l)}{p(m,l)}$. Here,
$P(m,l)=f(m)^2+g(l)^2+R$ where $f$ and $g$ are linear polynomials and $R=-\frac{1}{2}$. Moreover,
\[
p(m,l)=2(m+l-\alpha+2)(m+l-\alpha+1)(m+d-l-\beta+2)(m+d-l-\beta+1)
\]
is a polynomial of degree $4$ in $m$.
To see this, note that
\[
P(m,l)=(m+l-\alpha+2)(m+l-\alpha+1)+(m+d-l-\beta+2)(m+d-l-\beta+1)
\]
Set $E=m+l-\alpha+1$ and $F=m+d-l-\beta+1$. Then
\begin{align*}
	P(m,l)&=(E+1)E+(F+1)F \\
	      &=E^2+E+F^2+F
\end{align*}
Let $e=1-\alpha$ and $f=1+d-\beta$. Thus
\begin{align*}
	P(m,l)&=(m+l+e)^2+(m+l+e)+(m-l+f)^2+(m-l+f) \\
	&=2m^2+2l^2+e^2+f^2+2(e+f)m+2(e-f)l+e^2+f^2+e+f \\
	&=2(m+\frac{e+f+1}{2})^2+2(l+\frac{e-f}{2})^2-\frac{1}{2}
\end{align*}
It is easy to see $P(m,l)\ne 0$, for $m,l\in\mathbb{N}_0$.
Assume that
$$\sum_{l=0}^{d}C_la_\alpha^l b_\beta^{d-l} q(m,l)=0, \quad m=m_1,...,m_{4d+3}$$
where 
\[
q(m,l)=(f(m)^2+g(l)^2+R) \prod_{k=0,k \ne l}^{d} p(m,k)
\] 
is a polynomial of degree $4d+2$ obtained by taking the common denominator.
$\sum_{l=0}^{d}C_la_\alpha^l b_\beta^{d-l} q(m,l)$ is a polynomial of degree at most $4d+2$ and vanishes at $4d+3$ distinct points and thus is identically zero. But the polynomials
\[
\big\{ q(m,l) \big\}_{l=0}^{d}
\]
are linearly independent.
To see this let $\tilde{q}(m,l):=\frac{q(m,l)}{(m+l-\alpha+2)^3}$.
If $\big\{ \tilde{q}(m,l) \big\}_{l=0}^{d}$ were linearly dependent then 
\[
\sum_{l=0}^d{\tilde{C}_l(m+l-\alpha+2)^3}\tilde{q}(m,l)=0
\]
for some $\tilde{C}_l$.
By differentiating three times of the left hand side of this expression and the fact that $\tilde{q}(m,l)$ is the only polynomial which does not vanishes at the value $m=-l+\alpha-2$, it follows that $\tilde{C}_l=0$. 
It proves the linear independence of the polynomials $q(m,l)$, $l=0,...,d$.
Thus $C_la_\alpha^l b_\beta^{d-l}=0$ for $l=0,...,d$,
from which it follows that $C_l=0$. This contradicts with the fact that $B$ is non-zero. 
This contradiction, proves that for at most $4d+2$ distinct amount of $m$,
$\sum_{l=0}^{d}C_la_\alpha^l b_\beta^{d-l} \tilde{A}_{ml}$ can vanish.

Let $m+n+d+2 \le k$ and set
\[
\Omega=\{(m,0)\in \mathbb{N}_0^2 \,|\, m+d+2\le k \quad \mathrm{and} \quad \sum_{l=0}^{d}C_la_\alpha^l b_\beta^{d-l} \tilde{A}_{ml}\ne 0 \}
\]
Then the restriction of $L_{\Delta,B}^{\le k}$ on $\Omega$ is injective and according to what we said above
\[
|\Omega| \ge k-5d-3.
\]
If $k\ge 10d+2$, then $|\Omega| \ge \frac{1}{2}k$.
This completes the proof.
\end{proof}
Therefore it is enough to set $c_0(\rho,d)=\frac{1}{2}$ and $l_0(\rho,d)=5d+3$ in Definition \ref{Wendl's Condition} in order to Wendl's condition is satisfied.

	\subsection{Flexibility Condition}\label{FlexibilityConditionSection}
\subsubsection{Floer's $C^\epsilon$ Topology}
Consider a family of Jacobi operators as in Definition \ref{def:FamilyOfOoperators}, where $\mathscr{V}$ is defined to be the the universal moduli space
\[
\{(g,[v]) \in \mathscr{U}\times C^{j,\alpha}(\Sigma,M) \, |\, v \; \mathrm{is \; a}\; g-\mathrm{stationary \; almost \; embedding}\}.
\]
Here, $\mathcal{U}\subset\mathscr{G}_k$ is an open subset of $C^k-$Riemannian metrics on $M$, $k\ge 3$ or $k=\infty$.
For $k=\infty$, $\mathscr{G}_k$ is not a manifold but a Fr\'{e}chet manifold.
In order to use Theorem \ref{thm:Transversality}, it is necessary for $\mathscr{V}$ to be a Banach manifold.
Floer got around this problem by introducing $C^\epsilon$ topology.

To define Floer's $C^\epsilon$ let $E$ be a Euclidean vector bundle over a Riemannian manifold $N$ and assume it is equipped with an orthogonal connection. For $\eta \in \Gamma(E)$ define
\[
\|\eta\|_{C^\epsilon}:=\sum_{k=0}^{\infty} \epsilon_k \| \nabla^k\eta\|_{C^0}
\]
where $(\epsilon_k) \in \mathfrak{s}=(0,\infty)^{\mathbb{N}_0}$. Then according to Theorems $B.2$ and $B.5$ of \cite{Wendl-Lecture},
\[
C^\epsilon\Gamma(E):=\{\eta \in \Gamma(E) | \|\eta\|_{C^\epsilon} < \infty \} 
\]
is a separable Banach space.

There is a preorder $\prec$ on $\mathfrak{s}$ defined as follows
\[
(\nu_k) \prec (\mu_k) \quad \iff \quad \mathrm{lim\,sup}_{k \to \infty} \frac{\nu_k}{\mu_k} < \infty.
\]
The following proposition summarizes the useful properties we need.
\begin{prop}[\cite{Wendl}, Lemma 5.28 and \cite{DoanWalpuski}, Propositions $2.2.2$ and $2.2.4$] \label{prop:2}
	Let $\mathfrak{s}$ and $E$ be as above, then
	\item 1. \label{lowerbound}
	Let $\mathfrak{s}_0 \subset \mathfrak{s}$ be a countable subset. There is an $\epsilon \in \mathfrak{s}$ such that $\epsilon \prec \nu$ for every $\nu \in \mathfrak{s}_0$.
	\item 2.
	For each $\epsilon \in \mathfrak{s}$, $C^\epsilon\Gamma(E) \hookrightarrow \Gamma(E)$ is continuous.
	\item 3.
	For every $\eta \in \Gamma(E)$, $\|\eta\|_{C^\epsilon} < \infty$ provided $\epsilon\in \mathfrak{s}$ decays sufficiently fast. 
\end{prop}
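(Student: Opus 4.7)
The plan is to treat the three assertions separately, each by a short and essentially elementary argument about sequences of positive reals; no deep analytic input is needed. For part (1), I would use a diagonal construction. Enumerate the countable subset as $\mathfrak{s}_0 = \{\nu^{(n)}\}_{n \in \mathbb{N}_0}$ and set
\[
\epsilon_k := \min_{0 \le n \le k} \nu^{(n)}_k.
\]
Each entry is a minimum of finitely many strictly positive reals, so $\epsilon_k > 0$ and $\epsilon \in \mathfrak{s}$. For a fixed $m$ and any $k \ge m$ one has $\epsilon_k \le \nu^{(m)}_k$, hence $\limsup_{k \to \infty} \epsilon_k / \nu^{(m)}_k \le 1 < \infty$, i.e.\ $\epsilon \prec \nu^{(m)}$.

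For part (2), I would exploit the fact that the Fréchet topology on $\Gamma(E)$ is generated by the seminorms $\|\nabla^j \cdot\|_{C^0}$, $j \in \mathbb{N}_0$. Directly from the definition $\|\eta\|_{C^\epsilon} = \sum_{k \ge 0} \epsilon_k \|\nabla^k \eta\|_{C^0}$ each individual term is controlled by the full sum, so
\[
\|\nabla^j \eta\|_{C^0} \;\le\; \epsilon_j^{-1}\, \|\eta\|_{C^\epsilon} \qquad \text{for every } j \in \mathbb{N}_0.
\]
This estimate simultaneously shows that any $\eta \in C^\epsilon \Gamma(E)$ has all covariant derivatives bounded (so the inclusion into $\Gamma(E)$ is well-defined and lands in smooth sections) and that each of the generating seminorms is dominated by $\|\cdot\|_{C^\epsilon}$, giving continuity of $C^\epsilon \Gamma(E) \hookrightarrow \Gamma(E)$ in the Fréchet topology.

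For part (3), I would fix $\eta \in \Gamma(E)$, write $M_k := \|\nabla^k \eta\|_{C^0}$ (a fixed sequence of nonnegative reals), and choose
\[
\epsilon_k := \frac{2^{-k}}{1 + M_k}.
\]
Then $\epsilon \in \mathfrak{s}$ and $\epsilon_k M_k \le 2^{-k}$, whence $\|\eta\|_{C^\epsilon} \le \sum_{k \ge 0} 2^{-k} = 2 < \infty$. Any $\epsilon' \in \mathfrak{s}$ with $\epsilon' \prec \epsilon$ satisfies $\epsilon'_k \le C \epsilon_k$ for $k$ large, so $\|\eta\|_{C^{\epsilon'}}$ is finite as well; this makes precise the phrase ``decays sufficiently fast''.

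I do not anticipate any serious obstacle. The only points where one has to be mildly careful are checking that the diagonal $\epsilon$ in part (1) remains strictly positive (which is automatic since each minimum is taken over finitely many positive numbers) and applying the termwise bound in part (2) before summing, so that the argument goes through for every $j$ at once.
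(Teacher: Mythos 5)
Your proof is correct, and it follows the same standard elementary arguments — a diagonal minimum for (1), termwise domination of the $C^0$-seminorms for (2), and a geometric-series choice of $\epsilon$ for (3) — that appear in the cited references (Wendl's Lemma 5.28 and Doan--Walpuski's Propositions 2.2.2, 2.2.4); the paper itself does not reproduce a proof but merely cites these sources.
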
    

Let $g_0 \in \mathscr{G}$. $T_{g_0}\mathscr{G}$ is the space of smooth symmetric $2$-tensors on $M$. 
Every $\mathfrak{g} \in T_{g_0}\mathscr{G}$ belongs to $C^\epsilon\Gamma(\mathrm{Sym^2T^*M})$, for some $\epsilon$, which is by the preceding discussion a Banach space.
Moreover, for a sufficiently small $\delta>0$ there is a diffeomorphism
\[
\mathrm{exp}_{g_0}:\{\mathfrak{g} \in T_{g_0}\mathscr{G} | \, \|\mathfrak{g}\|_{C\epsilon}  <\delta \} \rightarrow \mathscr{G}.
\]
For $\epsilon \in \mathfrak{s}$ this gives rise to a smooth Banach manifold
\[
\mathcal{U}(g_0,\epsilon):=\{\mathrm{exp}_{g_0}(\mathfrak{g}) | \, \|\mathfrak{g}\|_{C^0}  <\delta\quad\mathrm{and}\quad \|\mathfrak{g}\|_{C^\epsilon}  <\infty \}
\]
which is separable and metrizable and moreover embeds continuously into $\mathscr{G}$.

For a subset of a Banach manifold, there is a notion of codimension which is defined as follows:
\begin{defi}[Definition $1.B.1$ of \cite{DoanWalpuski}]
	A subset $S$ of a Banach manifold $X$ is said to have codimension at least $k\in\mathbb{N}_0$ if there exists a $C^1$ Banach manifold $Z$ and a $C^1$ Fredholm map $\zeta:Z\rightarrow X$
	  such that $X\subset \mathrm{Im}\,\xi$ and
	  \[
	  \mathrm{sup}_{z\in Z}\,\mathrm{index}\;\mathrm{d}_z\zeta \le -k
	  \]
	  Then the codimension of $S$ is defined by
	  \[
	  \mathrm{codim}\; S:=\mathrm{sup}\{
	  k\in\mathbb{N}_0\,|\, S \text{ codimension at least } k
	  \}\in \mathbb{N}_0\cup \{\infty\}.
	  \]
\end{defi}
\begin{prop}[Proposition $1.B.2$ of \cite{DoanWalpuski}]\label{CodimOfSet}
	Let $f:X\rightarrow Y$ be a Fredholm map between Banach manifolds. Let $S \subset X$. Then
	\[
	\mathrm{codim}\; f(S) \ge \mathrm{codim}\; S-\mathrm{inf}_{x\in X}\; \mathrm{index}\; \mathrm{d}_x f.
	\]
\end{prop}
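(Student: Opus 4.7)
The plan is to push the codimension witness for $S$ forward through $f$ and read off the bound from the additivity of the Fredholm index under composition. For any $k \le \mathrm{codim}\,S$, the definition supplies a $C^1$ Banach manifold $Z$ and a $C^1$ Fredholm map $\zeta : Z \to X$ with $S \subset \mathrm{Im}\,\zeta$ and $\sup_{z\in Z}\mathrm{index}\,\mathrm{d}_z\zeta \le -k$. The natural candidate witness for the codimension of $f(S)$ in $Y$ is then the composition
\[
\xi := f \circ \zeta : Z \longrightarrow Y,
\]
which automatically satisfies $f(S) \subset f(\mathrm{Im}\,\zeta) = \mathrm{Im}\,\xi$.

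The key structural ingredient I would invoke is the classical linear fact that the composition of two Fredholm operators between Banach spaces is again Fredholm, with index equal to the sum of the two indices. Applied pointwise via the chain rule $\mathrm{d}_z\xi = \mathrm{d}_{\zeta(z)}f \circ \mathrm{d}_z\zeta$, this shows that $\xi$ is itself a $C^1$ Fredholm map and that
\[
\mathrm{index}\,\mathrm{d}_z\xi \;=\; \mathrm{index}\,\mathrm{d}_{\zeta(z)}f \,+\, \mathrm{index}\,\mathrm{d}_z\zeta
\]
at every $z\in Z$.

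Taking the supremum over $z$ and combining with the hypothesis on $\mathrm{d}\zeta$ gives control on $\mathrm{index}\,\mathrm{d}\xi$. Since the Fredholm index of $\mathrm{d}f$ is locally constant on $X$, one may decompose $Z$ according to the (locally constant) value of $\mathrm{index}\,\mathrm{d}_xf$ on the image and treat each piece separately, after which supremum over the $f$-indices collapses to infimum and one obtains
\[
\sup_{z\in Z}\mathrm{index}\,\mathrm{d}_z\xi \;\le\; -k + \inf_{x\in X}\mathrm{index}\,\mathrm{d}_xf.
\]
Hence $\xi$ certifies $\mathrm{codim}\,f(S) \ge k - \inf_x\mathrm{index}\,\mathrm{d}_xf$; letting $k$ run up to $\mathrm{codim}\,S$ yields the claim.

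The only step I expect to require care is verifying that composition of Fredholm maps in the $C^1$ Banach-manifold category genuinely produces a $C^1$ Fredholm map with additive index; this reduces immediately to the linear statement above combined with the $C^1$ dependence of $\mathrm{d}_z\zeta$ on $z$. Everything else is formal manipulation of the definition, so no substantive analytic obstacle appears — the result is essentially a functoriality statement for the Fredholm index.
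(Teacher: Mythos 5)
Your pushforward-by-composition strategy is the right one (the paper itself gives no proof, citing \cite{DoanWalpuski}), but the final step where you claim ``supremum over the $f$-indices collapses to infimum'' does not hold, and it converts a correct argument into a proof of a false statement. What the composition actually delivers is the following: for $\xi = f\circ\zeta$,
\[
\sup_{z\in Z}\mathrm{index}\,\mathrm{d}_z\xi
\;\le\;
\sup_{z\in Z}\mathrm{index}\,\mathrm{d}_{\zeta(z)} f
+\sup_{z\in Z}\mathrm{index}\,\mathrm{d}_z\zeta
\;\le\;
\sup_{x\in X}\mathrm{index}\,\mathrm{d}_x f \;-\; k,
\]
so $\xi$ certifies $\mathrm{codim}\,f(S)\ge k-\sup_{x\in X}\mathrm{index}\,\mathrm{d}_xf$, with a \emph{supremum}. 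Decomposing $Z$ along the locally constant values of $\mathrm{index}\,\mathrm{d}f$ cannot turn this supremum into an infimum: if $Z=\bigsqcup_i Z_i$ with the $f$-index equal to $c_i$ on $\zeta(Z_i)$, then $Z_i$ certifies codimension $\ge k-c_i$ for the corresponding part of $f(S)$, but since $f(S)$ contains each of these parts, $\mathrm{codim}\,f(S)$ is bounded \emph{above} by each of these codimensions; the piece with the largest $c_i$ governs, and the supremum is what survives.

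In fact the $\inf$ version is false in general. Take $X=\mathbb{R}^2\sqcup\mathbb{R}$, $Y=\mathbb{R}^2$, with $f$ equal to the identity on the $\mathbb{R}^2$ component (index $0$) and a line embedding on the $\mathbb{R}$ component (index $-1$), and $S=\mathbb{R}^2$. Then $\mathrm{codim}\,S=\mathrm{codim}\,f(S)=0$, while $\mathrm{codim}\,S-\inf_{x\in X}\mathrm{index}\,\mathrm{d}_xf=0-(-1)=1$. The statement as printed appears to carry a typo: the inequality obtained by the composition argument, and the one needed for the applications, has $\sup_{x\in X}$ in place of $\inf_{x\in X}$. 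The two coincide whenever $\mathrm{index}\,\mathrm{d}f$ is constant, for example whenever $X$ is connected, which covers the use in the present paper, so the discrepancy is harmless in context; but you should not try to prove the infimum form in general.
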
 

\subsubsection{Proof of The Flexibility Condition}
Let $(M^n,g)$ be a Riemannian manifold and let $u:\Sigma^k \rightarrow M^n$ be a $g$-minmal embedding, where $\Sigma$ is a manifold of dimension $k \le n$.
Let $\nabla$ denote the Levi-Civita connection on $M$. Then the second fundamental form, $h$, of $\Sigma$ is given by
\[
h(V,W):=(\nabla_V W)^N,
\]
where $V,W$ are tangent to $\Sigma$. 
Let $\{e_1,...,e_k\}$ be an orthogonal frame for the tangent bundle of $\Sigma$. Then 
\[
\mathcal{J}_{g,u} \, X=\Delta^\perp X+\big(\sum_{i=1}^{k} R(e_i,X)e_i\big)^\perp+\sum_{i,j} g(h_{ij},X) h_{ij}
\]
where $X\in \Gamma(N_u)$ and $\Delta^\perp$ is the Laplace operator acting on normal vector fields and is defined by the induced normal connection.
Moreover, $R(e_i,X)e_i$ denotes the Riemann tensor and $h_{ij}:=h(e_i,e_j)$, c.f. \cite{Simons}. 
Note that $\mathcal{J}_{g,u}$ is a self-adjoint elliptic operator.

For $u:\Sigma \rightarrow M$ as above,
set $E=F=Nu$.

\begin{lem} \label{lem:1}
	Let $g_0$ be a smooth metric on $M$. Let $\Sigma$ be a $k-$dimensional manifold and $v:\Sigma \rightarrow M$ be an almost $g_0-$minimal embedding. Let $U$ be the open set consisting of injective points of $v$ i.e.
	\[
	U:=\{x \in \Sigma | v^{-1}(v(x))=\{x\} \quad \mathrm{and} \quad \mathrm{d}_xv \ne 0\}.
	\]
    Let
	\[
	A \in \Gamma(\mathrm{Hom}(Nv,Nv))
	\]
	be a section with support in $U$.
	Moreover, at each point, $A$ is a symmetric matrix. 
	Then there is a path of metrics $(g_t)_{t\in (-T,T)}$ in $\mathcal{U}(g_0,\epsilon)$, for some $T>0$, such that $v$ is an almost $g_t-$minimal embedding and
	\[
	\frac{\mathrm{d}}{\mathrm{dt}}|_{t=0} \mathcal{J}_{g_t,v}X=AX
	\]
	for every $X \in \Gamma(Nv)$.
\end{lem}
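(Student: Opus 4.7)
The strategy is to construct $\dot g := \frac{d}{dt}\big|_{t=0} g_t$ explicitly in Fermi coordinates along $v(\Sigma)$, arranging that $\dot g$ vanishes to order two along $\Sigma$. This double vanishing keeps the induced metric, the normal connection, and the second fundamental form of $v$ unchanged to first order in $t$, so $v$ remains a $g_t$-minimal embedding; at the same time, the second normal derivatives of $\dot g$ along $\Sigma$ can be prescribed, and these drive the variation of the ambient curvature tensor, which is exactly the quantity the zeroth-order part of the Jacobi operator depends on.

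First I would localize. Since $\mathrm{supp}(A) \subset U$ and $v|_U$ is injective, a partition of unity reduces the problem to constructing $\dot g$ in a small coordinate patch $V \subset U$. On a tubular neighborhood of $v(V)$, introduce Fermi coordinates $(x^a, y^\alpha)$ in which $g_{0,ab}(x,0) = g_{\Sigma,ab}(x)$, $g_{0,a\alpha}(x,0) = 0$, $g_{0,\alpha\beta}(x,0) = \delta_{\alpha\beta}$, with an orthonormal normal frame in which $A$ becomes a symmetric matrix $A_{\alpha\beta}(x)$. Then set
\[
\dot g \;=\; c\,\chi(x)\,\eta(|y|)\sum_{\alpha,\beta,\gamma} A_{\alpha\beta}(x)\,(y^\gamma)^2\, dy^\alpha\, dy^\beta,
\]
where $\chi$ is a cutoff on $\Sigma$ equal to $1$ on $\mathrm{supp}(A)\cap V$, $\eta$ is a radial cutoff on the normal fiber with $\eta(0)=1$, and $c$ is a universal normalizing constant; extend $\dot g$ by zero outside the tubular neighborhood. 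By construction $\dot g$ is smooth and compactly supported with $\dot g|_{y=0}=0$ and $(\partial_{y^\gamma}\dot g)|_{y=0}=0$.

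Because $\dot g$ vanishes to order two along $\Sigma$, the induced metric, the normal connection on $Nv$, and the second fundamental form of $v$ coincide for $g_t$ and $g_0$; in particular the mean curvature vanishes identically and $v$ is an almost $g_t$-minimal embedding for $|t|<T$ small. Writing $\mathcal{J}_{g,v}X = -\Delta^\perp_g X + \sum_i (R_g(e_i,X)e_i)^\perp - \sum_{i,j} g(h_{ij},X)h_{ij}$, the $\Delta^\perp$-term and the $h$-term do not vary at $t=0$, so the entire variation comes from $\dot R$. A direct computation of the linearized curvature tensor in Fermi coordinates shows that the only surviving contribution at $y=0$ comes from $\partial^2_{y^\gamma y^\gamma} \dot g_{\alpha\beta}\big|_{y=0} = 2c\,\chi(x) A_{\alpha\beta}(x)$, and after choosing $c$ to absorb the resulting nonzero universal factor one obtains $\frac{d}{dt}\big|_{t=0}\mathcal{J}_{g_t,v}X = AX$ on the region where $\chi\equiv 1$. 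Smoothness and compact support of $\dot g$ give $\|\dot g\|_{C^\epsilon}<\infty$ for $\epsilon\in\mathfrak{s}$ decaying sufficiently fast by Proposition \ref{prop:2}(3), so $g_t := \exp_{g_0}(t\dot g)$ lies in $\mathcal{U}(g_0,\epsilon)$ for $|t|<T$.

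The main obstacle is verifying the linearized curvature computation in Step~3: one must track the variation of the Christoffel symbols and their tangential derivatives in Fermi coordinates, confirm that no spurious mixed or tangential terms appear (none do, since $\dot g$ has only normal-normal components), and check that the resulting universal constant multiplying $A_{\alpha\beta}$ is indeed nonzero so that the rescaling by $c$ makes sense. The self-adjointness of $\mathcal{J}_{g,v}$ with respect to the induced $L^2$-inner product on $\Gamma(Nv)$ ensures that the image of $\dot g \mapsto \frac{d}{dt}\mathcal{J}_{g_t,v}$ automatically lies in the space of symmetric endomorphisms of $Nv$, so the hypothesis that $A$ be symmetric is compatible with --- and necessary for --- this construction.
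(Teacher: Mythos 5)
There is a genuine gap in your construction of $\dot g$. You place the perturbation entirely in the normal--normal block,
$\dot g_{\alpha\beta}(x,y)=c\,\chi(x)\,\eta(|y|)\,A_{\alpha\beta}(x)\,|y|^2$,
and you claim that the second normal derivatives $\partial_{y^\gamma}\partial_{y^\gamma}\dot g_{\alpha\beta}\big|_{y=0}$ drive the variation of the curvature term in the Jacobi operator. They do not. The curvature contribution to $\mathcal{J}_{g,v}$ is $\bigl(\sum_i R(e_i,X)e_i\bigr)^\perp$, i.e.\ the components $R_{i\alpha i\beta}$ with $i$ tangential and $\alpha,\beta$ normal. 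Linearizing the Riemann tensor in $h=\dot g$ gives, at $y=0$,
\[
\sum_i \dot R_{i\alpha i\beta}
=\tfrac12\sum_i\bigl(\nabla_i\nabla_i h_{\alpha\beta}+\nabla_\alpha\nabla_\beta h_{ii}
-\nabla_i\nabla_\beta h_{\alpha i}-\nabla_\alpha\nabla_i h_{i\beta}\bigr)
\]
(the curvature-times-$h$ remainder vanishes since $h|_{y=0}=0$). With your ansatz only $h_{\alpha\beta}$ is nonzero, so the only candidate term is $\nabla_i\nabla_i h_{\alpha\beta}$ --- a tangential second derivative. But $h_{\alpha\beta}\propto|y|^2$, so $h_{\alpha\beta}$ and all of its tangential derivatives vanish identically on $\{y=0\}$, and the surviving Christoffel corrections also drop out because $h$ and $\nabla h$ vanish there. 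Hence $\sum_i\dot R_{i\alpha i\beta}\big|_{y=0}=0$: your perturbation produces \emph{no} first-order change in $\mathcal{J}_{g_t,v}$. The quantity $\partial_{y^\gamma}\partial_{y^\gamma}h_{\alpha\beta}$ that you isolate controls the fully normal components $\dot R_{\gamma\alpha\delta\beta}$, which do not appear in the Jacobi operator.

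The fix, which is what the paper does, is to put the perturbation in a tangential--tangential slot and make it quadratic in the normal coordinates: take $f(x,y)=\sum_{\alpha,\beta}a_{\alpha\beta}(x)\,y^\alpha y^\beta$ and set $\mathfrak{h}_{11}=-2f$ with all other components zero. Then $h$ and its first derivatives still vanish along $v(\Sigma)$ (so $v$ stays $g_t$-minimal to first order), and now the term $\nabla_\alpha\nabla_\beta h_{ii}$ in the variation formula is nonvanishing at $y=0$: it reduces to $\partial_\alpha\partial_\beta(-2f)=-2a_{\alpha\beta}$, which after the factor $-\tfrac12$ yields exactly $a_{\alpha\beta}$. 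Your surrounding argument --- localization on $U$, the observation that double vanishing of $h$ along $\Sigma$ preserves minimality and freezes $\Delta^\perp$ and the second fundamental form at first order, the use of Proposition~\ref{prop:2} to land in $\mathcal{U}(g_0,\epsilon)$, and the remark that self-adjointness of $\mathcal{J}$ forces $A$ to be symmetric --- is all sound and matches the paper; it is only the choice of which block of $\dot g$ to populate that is wrong.
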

Note that because $\mathcal{J}$ is a self-adjoint operator, we can take $A$ to be symmetric.

\renewcommand*{\proofname}{Proof}
\begin{proof}
	Let $x_0 \in \Sigma$. One can choose a local coordinates around $p_0=v(x_0)$ in such a way that $v(\Sigma)$ is given by $(x_1,...,x_k,0,...,0)$ and $g_0|_{v(\Sigma)}$ is the identity matrix in this local coordinate.
	
	For a given $A$ as in the Lemma, let $f(x)=\sum_{i,j > k}^{n} a_{ij}(x_1,...,x_k) x_i x_j$ in which $a_{ij}$ are entries of $A$. 
	Note that $\partial_l\partial_tf=a_{lt}$, for $l,t>k$
	
	Let $\mathfrak{h}_f \in T_{g_0} \mathscr{G}$ be an element of the form $\mathfrak{h}_f=(\mathfrak{h}_{ij})$ where $\mathfrak{h}_{11}=-2f$, $\mathfrak{h}_{ij}=0$ for $i,j>1$. Note that for all $i,j$, $\mathfrak{h}_{ij}$ and its first derivative are zero in a neighborhood of	$p_0$ along $v(\Sigma)$. In particular,  $\mathfrak{h}_f|_{v(\Sigma)}=0$.
	
	For $0<T \ll 1$ define $(g_t)_{t \in (-T,T)}$ by
	\[
	g_t=\mathrm{exp}_{g_0}(t\mathfrak{h}_f)
	\]
	Therefore, for all $t\in(-T,T)$ and all $x \in \Sigma$, $g_t(v(x))=g_0(v(x))$. 
	Thus, if $t \in (-T,T)$,  $v$ is an almost $g_t-$minimal embedding.
	
	Let $w_{ij}=\frac{\partial}{\partial t}((g_t)_{ij})$ and $w=(w_{ij})$. 
	Note that 
	$$\frac{\partial}{\partial t} \Gamma_{ij}^k=\frac{1}{2} g^{kl}(\nabla_i w_{jl}+\nabla_j w_{il}-\nabla_l w_{ij}) $$
	$$\frac{\partial}{\partial t} R_{ijil}=\frac{1}{2} (\nabla_j \nabla_i w_{il}+\nabla_i \nabla_l w_{ji}-\nabla_i \nabla_i w_{jl}-\nabla_j \nabla_l w_{ii})+\frac{1}{2} R_{iji}^p w_{pl}-\frac{1}{2} R_{ijl}^p w_{pi},$$
	$$\frac{\partial}{\partial t} \Delta X^j=-<w,\mathrm{Hess} X^j>_g-g(\mathrm{div} \, w-\frac{1}{2} d(\mathrm{tr_g}\, w),d X^j), $$
	where, $\Gamma_{ij}^k$ denotes the Christoffel symbol and $R_{ijkl}=\langle R(e_i,e_j)e_k,e_l \rangle$.
	
	Since  in a neighborhood of	$p_0$ along $v(\Sigma)$,  $w_{ij}$ and its first derivative are zero, it is not difficult to see that the $l-$th component of $\frac{\mathrm{d}}{\mathrm{dt}}|_{t=0} \mathcal{J}_{g_t,v}X$ 
	is $\sum_{j > k}^{n} X^j \partial_j \partial_l f$.
	$\partial_l\partial_tf=a_{lt}$ and $A$ is symmetric, therefore, 
	\begin{align*}
	\big(\frac{\mathrm{d}}{\mathrm{dt}}|_{t=0} \mathcal{J}_{g_t,v}X\big)_l
	&=\sum_{j}a_{lj}X^j \\
	&=(AX)_l
	\end{align*}
	Provided $\epsilon\in\mathfrak{s}$ decays sufficiently fast, $(g_t)_{t \in (-T,T)}$ is in $\mathcal{U}(g_0,\epsilon)$, c.f. Proposition \ref{prop:2} and Theorem $2.4.2$ of \cite{DoanWalpuski}.
	This completes the proof.
\end{proof}
	\section{Failure Of Super-Rigidity}
\subsection{Index Of Twisted Operator For Orbifold Riemann Surfaces} \label{Index Operator}
Let $v:(\Sigma,h)\rightarrow (M,g)$ denote an embedding of a Riemann surface into a Riemannian manifold $(M,g)$, and let $\pi:(\tilde{\Sigma},\tilde{j}) \rightarrow (\Sigma,j)$ be a non-constant holomorphic map, where $j$ represents the unique complex structure on $\Sigma$ induced by the metric $h$. Let $N_v$ denote the normal vector bundle over $\Sigma$.
There is a one-to-one correspondence between $\Gamma(\pi^*N)$ and $\Gamma(N \otimes W)$, where $W$ is an appropriate Euclidean local system.( See Appendix \ref{AppendixA} for the definition of a Euclidean local system and the aforementioned correspondence) 

Let $\varpi:\Sigma \rightarrow \mathbb{N}$ be a function such that 
\[
Z_\varpi:=\{x \in \Sigma | \varpi(x) >1\}
\]
is a discrete set. $\varpi$ with this property is called a \textbf{multiplicity function}. One can associate an orbifold Riemann surface, $\Sigma_\varpi$, to $\varpi$, which has $\Sigma$ as its underlying topological space as follows: 
Let  $x \in \Sigma$ and $\phi:D\rightarrow\Sigma$ be a holomorphic chart with $\phi(0)=x$, where $D$ is the unit disk in $\mathbb{C}$. 
An orbifold chart around $x \in \Sigma_\varpi$ is the triple $(D,\mu_k,\phi_k)$, where $k=\varpi(x)$, 
$$\mu_k:=\{\xi \in \mathbb{C} | \xi^k=1 \}$$ 
and $\phi_k: D \rightarrow \Sigma$ is the $\mu_k-$invariant continuous map defined by 
\[
\phi_k(z):=\phi(z^k).
\]
Note that $D/ \mu_k \cong \phi_k(D)$.

There is a holomorphic orbifold map $\beta_\varpi:\Sigma_\varpi \rightarrow \Sigma$ which is the identity map on the underlying topological space. With respect to these charts, this map locally is given by $\phi^{-1} \circ \beta_v \circ \phi_k(z)=z^k$. (See Appendix \ref{Jacobi Operator} for the definition of an orbifold and its related concepts required for this paper, as well as the properties of the twisted Jacobi operator for orbifolds.)

We would like to know how the index of the Jacobi operator over an orbifold Riemann surface changes as it is twisted with a Euclidean local system. For this purpose we investigate the local behavior.

\begin{prop}\label{IndexOfTwistedOperator}
	Let $(\Sigma,h)$ be a closed Riemann surface and denote by $j$ the unique complex structure on $\Sigma$ induced by $h$.
	Let $\varpi$ be a multiplicity function on $\Sigma$ and the map $\beta_\varpi:(\Sigma_\varpi,j_\varpi)\rightarrow(\Sigma,j)$ be as above.
	Let $v:(\Sigma.h)\rightarrow(M,g)$ be a minimal embedding. 
	For a Euclidean local system $\underline{V}$ on $\Sigma_\varpi$ and $x\in Z_\varpi$ let $\rho_x$ be the monodromy of $\underline{V}$ around $x$ and $V^{\rho_x}\subset V$ denote the subspace of $\rho_x-$invariant vectors. 
	Then
	\begin{align*}
		\mathrm{index}(\mathcal{J}_{g,v;\varpi}^{\underline{V}})&=-\mathrm{rk}_\mathbb{C}\,\textbf{\textit{N}}_{v,\varpi}\sum_{x\in Z_\varpi}\mathrm{dim}(V/V^{\rho_x})
	\end{align*}
	Where $\textbf{\textit{N}}_{v,\varpi}=N_{v,\varpi}\otimes \mathbb{C}$ is the complexification of $N_{v,\varpi}=\beta_\varpi^* N_v$ and $\mathcal{J}_{g,v;\varpi}:=\beta_\varpi^*\mathcal{J}_{g,v}$. 
	Moreover, we can assume that for each $x\in Z_\varpi$, $\mathrm{dim}(V/V^{\rho_x})\ge1$.
\end{prop}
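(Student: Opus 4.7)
My plan is to compute the index by reducing, via ellipticity and excision, to a purely local computation at each orbifold point, and then evaluating that local contribution by $\mu_k$-equivariant analysis in a uniformizing chart.

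First I would set up the reduction. Because the Fredholm index is invariant under deformations of elliptic operators with fixed principal symbol, and because $\mathcal{J}_{g,v}$ has Laplacian-type symbol, I would deform $\mathcal{J}_{g,v;\varpi}^{\underline{V}}$ through elliptic operators to a Dolbeault-type model $\bar\partial^{\ast}\bar\partial+\partial^{\ast}\partial$ on $\mathbf{N}_{v,\varpi}\otimes\underline{V}$, for a holomorphic structure on $\mathbf{N}_{v,\varpi}$ compatible with the Riemannian connection. Away from $Z_\varpi$ the local system $\underline{V}$ is flat, and the Laplacian of a compact surface twisted by a flat bundle has index zero; hence by excision the entire index is a sum of contributions concentrated at the points of $Z_\varpi$.

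Next I would compute the local contribution at a point $x\in Z_\varpi$ with $k=\varpi(x)$. In the uniformizing chart $\phi_k:D\to\phi_k(D)$, $\phi_k(z)=z^k$, the pullback $\phi_k^{\ast}(\mathbf{N}_{v,\varpi}\otimes\underline{V})$ is canonically isomorphic to $D\times(\mathbf{N}_x\otimes V)$ with diagonal $\mu_k$-action $\xi\cdot(z,n\otimes v)=(\xi z,\,n\otimes\rho_x(\xi)v)$. Writing $V=\bigoplus_{n=0}^{k-1}V_n$ for the $\rho_x$-eigenspace decomposition (so $V_0=V^{\rho_x}$), an orbifold section near $x$ corresponds to a Taylor series $\sum_{m\ge 0}z^{m}s_m$ with $s_m\in\mathbf{N}_x\otimes V_{m\,\mathrm{mod}\,k}$; in particular the $V_n$-component with $n\neq 0$ is forced to vanish to order at least $n$ at $x$. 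Applying Kawasaki's orbifold index theorem---or equivalently tracking jumps of the weighted Fredholm index on the punctured disk---produces a Lefschetz-type sum
\begin{equation*}
\tfrac{1}{k}\sum_{\xi\in\mu_k,\,\xi\ne 1}\mathrm{tr}(\rho_x(\xi))\cdot L(\xi),
\end{equation*}
where $L(\xi)$ is the Euler-class factor coming from the Dolbeault symbol at the fixed point. A direct computation shows that $L(\xi)$ is $\mu_k$-invariant and equals $-\mathrm{rk}_\mathbb{C}(\mathbf{N}_{v,\varpi})$, so by the character orthogonality relation $\sum_{\xi\in\mu_k}\mathrm{tr}(\rho_x(\xi))=k\dim V^{\rho_x}$ the expression collapses to $-\mathrm{rk}_\mathbb{C}(\mathbf{N}_{v,\varpi})\cdot\dim(V/V^{\rho_x})$.

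Summing over $x\in Z_\varpi$ yields the claimed formula. The closing assertion that $\dim(V/V^{\rho_x})\ge 1$ may be assumed at every $x\in Z_\varpi$ is cosmetic: orbifold points where $\rho_x$ is trivial contribute nothing, so they can be removed from $Z_\varpi$ by resetting $\varpi$ to $1$ there. The main obstacle I expect is the precise evaluation of the Lefschetz factor $L(\xi)$---this is where the specific structure of the Jacobi operator, rather than that of an abstract elliptic operator, must be used, and some bookkeeping is required to confirm the $\xi$-independence claim. A cleaner fallback, if the direct Kawasaki bookkeeping proves awkward, is to pass to a finite Galois cover $\tilde\Sigma\to\Sigma_\varpi$ with covering group $G$, lift the operator to a smooth $G$-equivariant self-adjoint operator on $\tilde\Sigma$, and compute its $G$-equivariant Atiyah--Singer index; the fixed-point contributions reproduce the same local formula directly.
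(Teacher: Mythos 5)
Your localization strategy via Kawasaki's orbifold index theorem (or, equivalently, equivariant Atiyah--Singer on a Galois cover) is a genuinely different route from the paper's. The paper realizes $\underline{V}^\mathbb{C}$ as a Hecke modification $\mathcal{V}_{\varpi,\rho^\mathbb{C}}$ of a holomorphic bundle $\mathcal{V}_\varpi$ pulled back to $\Sigma_\varpi$, applies the orbifold Riemann--Roch formula of Theorem \ref{Riemann-Roch}, $\chi(E_{\varpi,\rho})=\chi(E_\varpi)-\sum_{x\in Z_\varpi}\dim_{\mathbb{C}}(E_x/E_x^{\rho_x})$, and then kills the residual degree term using $\deg(\mathcal{V}_\varpi)=\tfrac12\sum_{x}\dim(V/V^{\rho_x})$ together with $\mathrm{index}(\mathcal{J}_{g,v;\varpi})=\mathrm{index}(\mathcal{J}_{g,v})=0$. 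In effect, the algebraic formula pre-packages exactly the local contribution you want to extract analytically. Your version trades that for a Lefschetz-type localization; the bulk-vanishing step and the reduction to points of $Z_\varpi$ are sound, so the route is viable in principle.

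The gap is in the claimed evaluation of the Lefschetz factor. You assert that $L(\xi)=-\mathrm{rk}_\mathbb{C}\mathbf{N}_{v,\varpi}$ for every $\xi\neq 1$. If that were true, the contribution at $x$ with $k=\varpi(x)$ would be
\begin{equation*}
\frac{1}{k}\sum_{\xi\ne 1}\mathrm{tr}(\rho_x(\xi))\cdot\bigl(-\mathrm{rk}_\mathbb{C}\mathbf{N}_{v,\varpi}\bigr)
=-\mathrm{rk}_\mathbb{C}\mathbf{N}_{v,\varpi}\cdot\Bigl(\dim V^{\rho_x}-\tfrac{1}{k}\dim V\Bigr),
\end{equation*}
which equals $-\mathrm{rk}_\mathbb{C}\mathbf{N}_{v,\varpi}\cdot\dim(V/V^{\rho_x})$ only when $2k\dim V^{\rho_x}=(k+1)\dim V$ --- false in general. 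In the Kawasaki/Lefschetz framework the fixed-point contribution for a Dolbeault-type symbol carries a factor of the form $(1-\xi^{-1})^{-1}$ that genuinely depends on $\xi$, and the combination $\dim V-\dim V^{\rho_x}$ only emerges after evaluating the resulting Dedekind-type sums and adding the orbifold bulk term. So the step you gloss as ``a direct computation shows that $L(\xi)$ is $\mu_k$-invariant'' is precisely where the proof breaks; it is also exactly the part you flag as the expected obstacle. The paper's Riemann--Roch route is preferable here because Theorem \ref{Riemann-Roch} delivers $\sum_x\dim(V/V^{\rho_x})$ directly, without having to sum over group elements.
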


In particular, if $\mathrm{dim}\,M=n$, then by Proposition \ref{IndexOfTwistedOperator},
\[
\mathrm{index}(\mathcal{J}_{g,v;\varpi}^{\underline{V}}) \le -(n-2).
\]


Let $\underline{V}$ be a Euclidean local system where its fiber, $V$, is a vector space over $\mathbb{C}$. 
The subsequent argument closely follows the Section 2.B of \cite{DoanWalpuski} with slight adjustments to ensure compatibility with the Jacobi operator.

Let $\rho:\mu_k \rightarrow \mathrm{GL}(V)$ be a representation. This induces an action of $\mu_k$ on $D \times V$, defined as follows:
\[
\zeta.(z,v):=(\zeta z,\rho(\zeta)v), \quad \text{where}\quad
\zeta\in\mu_k,\quad (z,v)\in D\times V
\]
Let $V_\rho$ denote the quotient of $D \times V$ by this action.
By Examples \ref{ex:Orbifold_example_1} and \ref{ex:Orbifold_example_4}, this quotient is an orbifold and the projection on the first component gives an orbifold bundle 
$$V_\rho \rightarrow D/\mu_k.$$
There is also a trivial bundle over over $D/\mu_k$ with fiber $V$. 

Every holomorphic vector bundle over a non-compact Riemann surface is holomorphically trivial [Section $3.30$ of \cite{Forster}]. Thus, on $\dot{D}:=D\setminus \{0\}$, there is a trivialization $\alpha:V_\rho|_{\dot{D}} \rightarrow \dot{D} \times V$.
This trivialization is characterized by using the aforementioned representation as follows:
Since $\rho$ is a complex representation, it can be diagonalized by choosing an isomorphism $V\cong \mathbb{C}^r$: 
\[
\rho(z)=\mathrm{diag}(z^{w_1},...,z^{w_r}),
\]
where $w_i \in \mathbb{Z}/k\mathbb{Z}$, $i=1,...,r$.
By lifting $w_i$ to $\hat{w}_i \in \mathbb{Z}$ one obtains a representation $\hat{\rho}:\mathbb{C}^* \rightarrow \mathrm{GL}(V)$ which extends $\rho$.
Then, define 
\begin{equation}\label{eq:Trivialization}
	\begin{split}
	&\alpha: V_\rho|_{\dot{D}} \rightarrow \dot{D} \times V \\
	& \alpha([z,v]):=[z,\hat{\rho}(z^{-1})v].
	\end{split}
\end{equation}

Let $\mathcal{V}_\rho$ and $\mathcal{V}$ denote the corresponding sheaves of holomorphic orbifold sections of these bundles, respectively.
There is an exact sequence 
\[
0 \rightarrow \mathcal{V}_{\rho} \rightarrow \mathcal{V} \rightarrow V/V^\rho \otimes \mathcal{O}_0 \rightarrow 0
\]
Here $\mathcal{O}_0$ is the structure sheaf of the point $[0]$ and $V^\rho \subset V$ denotes the $\rho-$invariant subspace under the action of $\rho$ on $V$. 
The injection $\mathcal{V}_{\rho} \hookrightarrow \mathcal{V}$ in the above short exact sequence is induced by $\alpha$ and the finial map is the evaluation map at $0$. 

This construction can be generalized to a holomorphic vector bundle $E$ on $\Sigma$.
Let $\varpi$ be a multiplicity function on $\Sigma$ and consider the map $\beta_\varpi:(\Sigma_\varpi,j_\varpi)\rightarrow(\Sigma,j)$.
Set $E_\varpi=\beta_\varpi^*E$. One can modifies this bundle over $Z_\varpi$ 
via a collection of representations $\rho=(\rho_x:\mu_{\varpi(x)} \rightarrow \mathrm{GL}(E_x))_{x\in Z_\varpi}$ of the fibers of $E$ at each of these points. 
This process is called a \textbf{Hecke modification} of $E$ of type $\rho$. The modified bundle is denoted by $E_\rho$. More precisely, outside $Z_\varpi$ there is an isomorphism 
\[
\alpha: E_{\varpi,\rho}|_{\Sigma_\varpi \setminus Z_\varpi} \rightarrow E|_{\Sigma_\varpi \setminus Z_\varpi}
\]
which locally is given by \ref{eq:Trivialization} for every $x\in Z_\varpi$.
In this situation the following Riemann-Roch formula for orbifolds holds:

\begin{thm}[Theorem 2.B.6 of \cite{DoanWalpuski}]\label{Riemann-Roch} 
	$
	\chi(E_\varpi,\rho)=\chi(E_\varpi)-\sum_{x\in Z_\varpi}\mathrm{dim}_{\mathbb{C}}(E_x/E_x^{\rho_x}).
	$
\end{thm}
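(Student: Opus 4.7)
The strategy is to recast the index computation for $\mathcal{J}_{g,v;\varpi}^{\underline{V}}$ into a Riemann-Roch-type calculation for holomorphic orbifold bundles, then apply Theorem \ref{Riemann-Roch}. My first step would be to set up the holomorphic framework: since $v:(\Sigma,h)\to(M,g)$ is a $g$-minimal embedding and $h$ induces the complex structure $j$ on $\Sigma$, the complexified normal bundle $\mathbf{N}_v=N_v\otimes_{\mathbb{R}}\mathbb{C}$ inherits a canonical holomorphic structure. Pulling back along $\beta_\varpi$ produces the holomorphic orbifold bundle $\mathbf{N}_{v,\varpi}$ on $\Sigma_\varpi$, and tensoring with the Euclidean local system $\underline{V}$ yields $\mathbf{N}_{v,\varpi}\otimes\underline{V}$, the bundle on which $\mathcal{J}_{g,v;\varpi}^{\underline{V}}$ acts.

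Next I would identify $\mathbf{N}_{v,\varpi}\otimes\underline{V}$ as an explicit Hecke modification. Away from $Z_\varpi$, the bundle is canonically $\mathbf{N}_v\otimes V$, and its orbifold extension across each $x\in Z_\varpi$ is governed precisely by the representation $\mathrm{id}_{\mathbf{N}_{v,x}}\otimes\rho_x$ acting on the fiber $\mathbf{N}_{v,x}\otimes V$. Applying Theorem \ref{Riemann-Roch} to this Hecke modification then yields
\[
\chi(\mathbf{N}_{v,\varpi}\otimes\underline{V})-\chi(\mathbf{N}_v\otimes V) = -\sum_{x\in Z_\varpi}\dim_{\mathbb{C}}\bigl((\mathbf{N}_{v,x}\otimes V)/(\mathbf{N}_{v,x}\otimes V^{\rho_x})\bigr) = -\mathrm{rk}_{\mathbb{C}}\mathbf{N}_{v,\varpi}\sum_{x\in Z_\varpi}\dim(V/V^{\rho_x}),
\]
where I used the identification of $(\mathrm{id}\otimes\rho_x)$-invariants in $\mathbf{N}_{v,x}\otimes V$ with $\mathbf{N}_{v,x}\otimes V^{\rho_x}$ and the distributivity of tensor product over quotients.

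The remaining and most delicate step is to identify the Fredholm index of the second-order elliptic operator $\mathcal{J}_{g,v;\varpi}^{\underline{V}}$ with exactly this Euler-characteristic difference. I would carry this out via a Bochner-Weitzenbock reduction, writing $\mathcal{J}_{g,v}^{\mathbb{C}}=2\bar{\partial}^{*}\bar{\partial}+R$ for a zero-order curvature term $R$, observing that zero-order perturbations do not affect the Fredholm index, and noting that the resulting Dolbeault complex on the orbifold computes holomorphic Euler characteristics. Under this reduction the untwisted case corresponds to the baseline contribution $\chi(\mathbf{N}_v\otimes V)$, while the twisted case corresponds to $\chi(\mathbf{N}_{v,\varpi}\otimes\underline{V})$; combining with the Riemann-Roch computation above delivers the stated formula. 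The final remark that one may assume $\dim(V/V^{\rho_x})\ge 1$ for every $x\in Z_\varpi$ is immediate: any point with trivial monodromy $\rho_x=\mathrm{id}$ satisfies $V^{\rho_x}=V$ and contributes nothing to the sum, so it can be deleted from $Z_\varpi$ by redefining $\varpi(x):=1$ without altering either side of the index formula.

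The main technical obstacle I anticipate lies in making the Bochner-Weitzenbock reduction rigorous in the orbifold setting: one must set up the correct orbifold Sobolev spaces of sections compatible with the monodromy constraints imposed by $\underline{V}$ across $Z_\varpi$, and verify that the zero-order curvature remainder $R$ remains a compact perturbation of $2\bar{\partial}^{*}\bar{\partial}$ between these spaces. Once this analytic identification is in place, the index computation reduces to the combinatorial Hecke-modification calculation supplied by Theorem \ref{Riemann-Roch}, and the formula follows.
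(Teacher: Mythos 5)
Your proposal does not prove the assigned statement. The statement is the orbifold Riemann--Roch formula for a Hecke modification, $\chi(E_{\varpi,\rho})=\chi(E_\varpi)-\sum_{x\in Z_\varpi}\dim_{\mathbb{C}}(E_x/E_x^{\rho_x})$, but your argument explicitly says ``apply Theorem \ref{Riemann-Roch}'': you take this formula as a given and use it to compute the index of the twisted Jacobi operator $\mathcal{J}_{g,v;\varpi}^{\underline{V}}$. That computation is the content of a different result in the paper (Proposition \ref{IndexOfTwistedOperator}, whose proof indeed proceeds roughly along the lines you sketch, via complexification, identification of $\underline{V}^{\mathbb{C}}$ with a Hecke modification, and the vanishing of $\mathrm{index}(\mathcal{J}_{g,v})$). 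As a proof of Theorem \ref{Riemann-Roch} itself, your argument is circular: nothing in it establishes how the Euler characteristic changes under the modification, which is precisely what was to be shown. The closing remark about discarding points with trivial monodromy is likewise part of Proposition \ref{IndexOfTwistedOperator}, not of the Riemann--Roch statement.

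What is actually needed is the sheaf-theoretic argument the paper sets up just before the theorem. The local trivialization $\alpha$ of (\ref{eq:Trivialization}), built from the lifted weights $\hat w_i$, exhibits at each $x\in Z_\varpi$ an exact sequence $0\rightarrow\mathcal{V}_\rho\rightarrow\mathcal{V}\rightarrow V/V^{\rho}\otimes\mathcal{O}_0\rightarrow 0$: a germ of section of $E_\varpi$ lies in the modified sheaf exactly when its value at the orbifold point lies in the $\rho_x$-invariant subspace, so the evaluation map has image $E_x/E_x^{\rho_x}$ supported at $x$. Globalizing over the finitely many points of $Z_\varpi$ gives a short exact sequence of sheaves on $\Sigma_\varpi$,
\begin{equation*}
0\longrightarrow \mathcal{E}_{\varpi,\rho}\longrightarrow \mathcal{E}_{\varpi}\longrightarrow \bigoplus_{x\in Z_\varpi}\bigl(E_x/E_x^{\rho_x}\bigr)\otimes\mathcal{O}_x\longrightarrow 0,
\end{equation*}
and the formula follows from additivity of the Euler characteristic in short exact sequences together with the fact that a skyscraper sheaf has $\chi$ equal to the dimension of its stalk and vanishing higher cohomology. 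The only genuinely delicate point, which your proposal never touches, is verifying that the cokernel of $\alpha$-induced inclusion at each $x$ is exactly $E_x/E_x^{\rho_x}$ (i.e.\ that the non-invariant eigendirections of $\rho_x$ acquire strictly positive vanishing order under the trivialization while the invariant ones do not); that verification, not any Bochner--Weitzenb\"ock analysis of the Jacobi operator, is the substance of the theorem.
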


\renewcommand*{\proofname}{Proof of Proposition \ref{IndexOfTwistedOperator}}
\begin{proof}
	Let $V$ be a real vector field. We want to compute $\mathrm{ind}\, \mathcal{J}_{g,v;\varpi}^{\underline{V}}$. 
	To that end consider the monodromy representation $\rho_x: \mu_{\varpi(x)} \rightarrow \mathrm{GL}(V)$ for each $x \in Z_v$ and complexify $V$ and $\underline{V}$, $V^{\mathbb{C}}:=V\otimes \mathbb{C}$ and $\underline{V}^\mathbb{C}:=\underline{V} \times \underline{\mathbb{C}}$. 
	This gives a complexification of the representation: $\rho_x^\mathbb{C}: \mu_{v(x)} \rightarrow \mathrm{GL}_\mathbb{C}(V^\mathbb{C})$.
	By the previous discussion there is a holomorphic vector bundle $\mathcal{V}$ over $\Sigma$ such that the Hecke modification of type $\rho^\mathbb{C}$ of $\mathcal{V}_\varpi:=\beta_\varpi^* \mathcal{V}$ is isomorphic to $\underline{V}^\mathbb{C}$ i.e., $\underline{V}^\mathbb{C} \cong \mathcal{V}_{\varpi,\rho^\mathbb{C}}$, c.f. proof of Proposition $2.8.6$ of \cite{DoanWalpuski}.
	
	Let $N_{v,\varpi}=\beta_\varpi^* N_v$. 
	The elliptic operator $\mathcal{J}_{g,v;\varpi}^{\underline{V}}$ gives the following complex
	\[
	0\rightarrow\Gamma(\Sigma_\varpi,N_{v,\varpi}\otimes\underline{V})
	\xrightarrow{\mathcal{J}_{g,v;\varpi}^{\underline{V}}}
	\Gamma(\Sigma_\varpi,N_{v,\varpi}\otimes\underline{V})\rightarrow 0.
	\]
	Denote by $\textbf{\textit{N}}_{v,\varpi}$ the complexification of $N_{v,\varpi}$, i,e. $\textbf{\textit{N}}_{v,\varpi}=N_{v,\varpi}\otimes \mathbb{C}$. 
	Index of $\mathcal{J}_{g,v;\varpi}^{\underline{V}}$ is two times the Euler characteristics of the complex $\textbf{\textit{N}}_{v,\varpi}\otimes_\mathbb{C}\underline{V}^\mathbb{C}$. 
	Therefore, by Theorem \ref{Riemann-Roch} for $\underline{V}^\mathbb{C} \cong \mathcal{V}_{\varpi,\rho^\mathbb{C}}$,
	\begin{align*}
		\mathrm{index}(\mathcal{J}_{g,v;\varpi}^{\underline{V}})
		&:=\chi(N_{v,\varpi}\otimes\underline{V}\otimes\mathbb{C})\\
		&=\chi(N_{v,\varpi}\mathbb{C}\otimes_\mathbb{C}\mathbb{C}\otimes\underline{V})\\
		&=\chi(\textbf{\textit{N}}_{v,\varpi}\otimes_{\mathbb{C}}\underline{V}^\mathbb{C})\\
		&=\chi(\textbf{\textit{N}}_{v,\varpi}\otimes_{\mathbb{C}} \mathcal{V}_{\varpi,\rho^\mathbb{C}})\\
		&=\chi(\textbf{\textit{N}}_{v,\varpi}\otimes_{\mathbb{C}} \mathcal{V}_{\varpi})-\mathrm{rk}_\mathbb{C}\,\textbf{\textit{N}}_{v,\varpi}
		\sum_{x\in Z_\varpi}\mathrm{dim}(V/V^{\rho_x})\\
	\end{align*}
	Let $X$ be a (complex) curve and $\mathcal{F}$ be a locally free sheaf $\mathcal{O}_X-$module of rank $m$. In this situation a notion of degree for $\mathcal{F}$ is defined as follows:
	\[
	\mathrm{deg}(\mathcal{F}):=\chi(X,\mathcal{F})-m\chi(X,\mathcal{O}_X)
	\]
	If $\mathcal{E}$ be a coherent $\mathcal{O}_X-$module, then it is known that
	\[
	\chi(X,\mathcal{F}\otimes\mathcal{E})=r\,\mathrm{deg}(\mathcal{F})+m\,\chi(X,\mathcal{E})
	\]
	where $r=\mathrm{dim}_{\mathbb{C}(x)}\mathcal{E}_x$.
	For the current situation $X=\Sigma_\varpi$, $\mathcal{F}=\mathcal{V}_\varpi$ of rank $\mathrm{dim}\,V$ and $\mathcal{E}=\textbf{\textit{N}}_{v,\varpi}$ with $r=\mathrm{rk}_\mathbb{C}\,\textbf{\textit{N}}_{v,\varpi}$. It follows that
	\begin{align*}
		\mathrm{index}(\mathcal{J}_{g,v;\varpi}^{\underline{V}})&=\mathrm{dim}\,V\,\chi(\textbf{\textit{N}}_{v,\varpi})+\mathrm{rk}_\mathbb{C}\,\textbf{\textit{N}}_{v,\varpi}\Big(\mathrm{deg}(\mathcal{V}_\varpi)-\sum_{x\in Z_\varpi}\mathrm{dim}(V/V^{\rho_x})\Big)\\
		&=\mathrm{dim}\,V\,\mathrm{index}(\mathcal{J}_{g,v;\varpi})+\mathrm{rk}_\mathbb{C}\,\textbf{\textit{N}}_{v,\varpi}\Big(\mathrm{deg}(\mathcal{V}_\varpi)-\sum_{x\in Z_\varpi}\mathrm{dim}(V/V^{\rho_x})\Big)
	\end{align*}
	By the proof of Proposition $2.8.6$ of \cite{DoanWalpuski}
	\[
	\mathrm{deg}(\mathcal{V}_\varpi)=\frac{1}{2}\sum_{x\in Z_\varpi}\mathrm{dim}(V/V^{\rho_x}).
	\]
	By Proposition \ref{prop:IndexOrbifoldJacobiOperator}, $\mathrm{index}(\mathcal{J}_{g,v;\varpi})=\mathrm{index}(\mathcal{J}_{g,v})$. Therefore,
	\begin{align*}
		\mathrm{index}(\mathcal{J}_{g,v;\varpi}^{\underline{V}})&=\mathrm{dim}\,V\,\mathrm{index}(\mathcal{J}_{g,v;\varpi})-\frac{1}{2}\mathrm{rk}_\mathbb{C}\,\textbf{\textit{N}}_{v,\varpi}\sum_{x\in Z_\varpi}\mathrm{dim}(V/V^{\rho_x})
	\end{align*}
	As $\mathcal{J}_{g,v}$ is an elliptic operator of index zero, therefore,
	\begin{align*}
		\mathrm{index}(\mathcal{J}_{g,v;\varpi}^{\underline{V}})&=-\frac{1}{2}\mathrm{rk}_\mathbb{C}\,\textbf{\textit{N}}_{v,\varpi}\sum_{x\in Z_\varpi}\mathrm{dim}(V/V^{\rho_x}).
	\end{align*}
	
	According to an argument of [\cite{DoanWalpuski}, proof of Proposition 2.8.5], we can always modify the multiplicity function $\varpi$, if necessary, so that for each $x\in Z_\varpi$, $\mathrm{dim}_{\mathbb{C}}(V/V^{\rho_x})\ge 1$: 
	Let $\underline{V}$ be an irreducible local system on $\Sigma_\varpi$. If $\underline{V}$ has trivial monodromy around some points of $Z_\varpi$, $\varpi$ is modified as follows
	\begin{align*}
		\varpi'(x):=
		\begin{cases}
			\varpi(x), \quad \text{if $\underline{V}$ has trivial monodromy around $x$}, \\
			1 \quad \text{otherwise.}
		\end{cases}
	\end{align*}
	Let $\underline{V}'$ denote the induced irreducible local system by $\underline{V}$ on $\Sigma_{\varpi'}$. Then $\mathrm{ker}\,\mathcal{J}_{g,v;\varpi}^{\underline{V}}\cong \mathrm{ker}\,\mathcal{J}_{g,v;\varpi'}^{\underline{V'}}$ and the monodromy of $\underline{V}'$ around each point of $Z_{\varpi'}$ is non-trivial.
\end{proof}

\subsection{Loci of Failure of Super-Rigidity}\label{LociOfSuper-RigidityBreakdown}
Let $x_0\in\Sigma$. Let $B=\{\underline{V}_i\}_{i=1}^k$, where each $\underline{V}_i$ is an irreducible Euclidean local system such that its monodromy representation factors through a finite quotient of  $\pi_1(\Sigma,x_0)$. 
For $i=1,...,k$, let $K_i=\mathrm{End}(\underline{V}_i)$ be the algebra of parallel endomorphisms of $\underline{V}_i$ and set $k_i:=\mathrm{dim}_\mathbb{R}K_i$.

\begin{defi} [Definition 2.9.2 of \cite{DoanWalpuski}]\label{def:FailureOfSuperRigidity}
	Let $\mathscr{V}\subset\tilde{\mathscr{Y}}$ and $s\in \mathbb{N}_0$. Let
	\begin{align*}
	 \mathcal{M}_s(\mathscr{V}):=
	 \{
	  (g,[v;\varpi])\,|\, & (g,[v])\in\mathscr{V}\quad\mathrm{and}\quad 
	  \varpi:\Sigma\rightarrow\mathbb{N}
	   \text{ is }\\ & \text{a multiplicity function with } \sharp Z_\varpi=s
	 \}
	\end{align*}
	
	Define $\mathcal{W}_s(\mathscr{V})$ as the subset of elements $(g,[v;\varpi])\in \mathcal{M}_s(\mathscr{V})$ for which there is an irreducible Euclidean local system $\underline{V}$ on $\Sigma_\varpi$ with the following properties:
	\begin{enumerate}
		\item the monodromy representation of $\underline{V}$ factors through a finite quotient of $\pi_1(\Sigma,x_0)$.
		\item the monodromy of $\underline{V}$ around each $x\in Z_\varpi$ is non-trivial.
		\item $\mathrm{ker}(\mathcal{J}_{g,v;\varpi}^{\underline{V}})\ne 0$.
	\end{enumerate}

	Define $\mathcal{W}_s^{\mathrm{top}}(\mathscr{V})$ as the subset of elements $(g,[v;\varpi])\in \mathcal{W}_s(\mathscr{V})$ 
	for which there is an irreducible local system $\mathscr{V}$ with properties $(1)$ and $(2)$ such that 
	\begin{enumerate}
    	  \setcounter{enumi}{3}
		\item $\mathrm{ker}(\mathcal{J}_{g,v;\varpi}^{\underline{V}})=1$
		\item If $V'$ is any other irreducible local system which is not isomorphic to $V$ and satisfies properties $(1)$ and $(2)$, then
		$
		\mathrm{ker}(\mathcal{J}_{g,v;\varpi}^{\underline{V}'})=0.
		$
		\item If $\mathrm{dim}\,M=6$, then for every $x\in Z_\varpi$, $\mathrm{dim}(V/V^{\rho_x})=1$. If $\mathrm{dim}(M)>6$, then $Z_\varpi=\emptyset$.
	\end{enumerate}
	
\end{defi}

Let $s\in\mathbb{N}_0$ and $\epsilon\in\mathfrak{s}$ be a sequence which converges to zero.
Let $(g_0,[v_0])\in\mathscr{Y}$ and $\mathscr{V}$ denote an open neighborhood of this element which maps to $\mathcal{U}(g_0,\epsilon)$ under the natural projection
\begin{align*}
	\tilde{\mathscr{Y}}\longrightarrow\mathscr{G} \\
	(g,[v])\mapsto g.
\end{align*}

\begin{prop}[Proposition 2.9.4 of \cite{DoanWalpuski}]\label{FailureOfSuperRigidity}
	Let $M$ be a Riemannian Manifold with $\mathrm{dim}\,M\ge 6$. Let $s\in\mathbb{N}_0$. There is a closed subset of $\mathcal{M}_s(\mathscr{V})$ of infinite codimension which is denoted by $\mathcal{W}_s^\infty(\mathscr{V})$ such that
	\begin{enumerate}
		\item $\mathcal{W}_s^{top}(\mathscr{V})\setminus\mathcal{W}_s^\infty(\mathscr{V})$ is contained in a submanifold of codeimension $2s+1$.
		\item 
		$\mathcal{W}_s(\mathscr{V})\setminus\big(\mathcal{W}_s^\infty(\mathscr{V})\cup \mathcal{W}_s^{top}(\mathscr{V})\big)$ has codeimension at least $2s+2$.
	\end{enumerate}
\end{prop}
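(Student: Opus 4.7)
The plan is to follow the strategy of Proposition 2.9.4 of \cite{DoanWalpuski}, substituting the Jacobi operator for the Cauchy--Riemann operator. The three essential inputs have already been established in earlier sections: flexibility (Lemma \ref{lem:1}), Wendl's condition for the symbol (Proposition \ref{prop:Wendl'sConditionForJacobi}), and the twisted orbifold index formula (Proposition \ref{IndexOfTwistedOperator}). I would begin by setting
\[
\mathcal{W}_s^\infty(\mathscr{V}) := \{(g,[v;\varpi]) \in \mathcal{M}_s(\mathscr{V}) \mid J_x^\infty\, \mathcal{J}_{g,v;\varpi}^{\underline{V}} \text{ fails the } \infty\text{-jet Petri condition at some } x \text{ for some admissible } \underline{V} \}.
\]
Applying Theorem \ref{thm:Wendl'sTheorem} to the family $\{\mathcal{J}_{g,v;\varpi}^{\underline{V}}\}$ yields arbitrarily large codimension for each fixed $\underline{V}$; since admissible irreducible local systems are countable up to isomorphism, the union over $\underline{V}$ preserves infinite codimension, and closedness is lower semicontinuity of the rank of the Petri map.

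Outside $\mathcal{W}_s^\infty$ the $\infty$-jet Petri condition holds and is upgraded to the $\mathcal{B}$-equivariant Petri condition via Proposition \ref{J-InftySUCP}, using that $\mathcal{J}^\dagger\mathcal{J} = \nabla^*\nabla + \text{l.o.t.}$ gives strong unique continuation. Combined with flexibility, the $\mathcal{B}$-equivariant version of Theorem \ref{thm:Transversality} (Theorem 1.3.5 of \cite{DoanWalpuski}) then presents each Brill--Noether stratum $\mathscr{V}_{d,c}^B$ as a submanifold of codimension $\sum_i k_i d_i c_i$, with $k_i = \dim_\mathbb{R}\mathrm{End}(\underline{V}_i) \in \{1,2,4\}$. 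Proposition \ref{IndexOfTwistedOperator} combined with the nontrivial-monodromy hypothesis of Definition \ref{def:FailureOfSuperRigidity}(2) yields
\[
k_i c_i \;\ge\; k_i d_i + (n-2)\,s \quad \text{whenever } d_i \ge 1.
\]
For $\mathcal{W}_s^{\mathrm{top}} \setminus \mathcal{W}_s^\infty$ exactly one $\underline{V}$ contributes, with $d = 1$, and Definition \ref{def:FailureOfSuperRigidity}(6) pins down $\sum_x \dim(V/V^{\rho_x}) = s$ (for $n = 4$) or forces $s = 0$ (for $n > 4$), giving codimension $k + (n-2)s$, minimized at $k = 1$, $n = 4$ to yield $2s+1$. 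For $\mathcal{W}_s \setminus (\mathcal{W}_s^\infty \cup \mathcal{W}_s^{\mathrm{top}})$ the failure of the top structure falls into four cases: some $d_i \ge 2$, with codim $\ge 4 + 4s$; two or more indices with $d_i \ge 1$, with codim $\ge 2 + 4s$; some $\dim(V/V^{\rho_x}) \ge 2$ (only possible when $n = 4$), with codim $\ge 2s + 3$; or $n > 4$ with $Z_\varpi \ne \emptyset$, forcing $s \ge 1$ and giving codim $\ge 1 + 3s \ge 2s + 2$. Each case yields codim $\ge 2s + 2$.

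The principal obstacle is propagating the flexibility of Lemma \ref{lem:1}, which was proved for variations of $g$ with $v$ and $\varpi$ fixed, to the family of pullback-and-twist operators $\mathcal{J}_{g,v;\varpi}^{\underline{V}} = \beta_\varpi^*(\mathcal{J}_{g,v}) \otimes \mathrm{Id}_{\underline{V}}$. This transfer is direct given the pullback identity (\ref{pullback of J.Operator}) and the correspondence of Appendix \ref{AppendixA} between sections of $\pi^* N$ and sections of $N \otimes \underline{V}$: any prescribed compactly supported endomorphism of $N_{v,\varpi} \otimes \underline{V}$ supported near an injective point of $v$ away from $Z_\varpi$ is $\beta_\varpi^* \otimes \mathrm{Id}$ of a compactly supported endomorphism on $\Sigma$, to which Lemma \ref{lem:1} applies. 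After that the remaining work is the case analysis above combined with the index formula.
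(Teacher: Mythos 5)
Your proposal is correct and follows essentially the same route as the paper: establish infinite codimension for the Petri-failure locus via Wendl's condition and Theorem \ref{thm:Wendl'sTheorem}, then stratify the complement by $B$-equivariant Brill--Noether loci and bound codimensions with the twisted orbifold index formula of Proposition \ref{IndexOfTwistedOperator}. (The paper takes $\mathcal{W}_s^\infty$ to be the union of the $\Theta^B$-non-surjectivity loci rather than the $\infty$-jet Petri failure locus itself, but these play the same role.) Two small slips worth correcting: the top-stratum codimension is $k(1+(n-2)s)$, not $k+(n-2)s$ (these coincide at $k=1$, where the minimum occurs, so your $2s+1$ stands), and your case analysis of part (2) omits the case $K_i\ne\mathbb{R}$, i.e.\ $k_i\geq 2$, which gives codimension at least $2(1+(n-2)s)\geq 2s+2$ and so does not threaten the bound. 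Your remark on propagating flexibility through the pullback--twist correspondence is a genuinely helpful clarification that the paper leaves implicit.
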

\renewcommand*{\proofname}{Proof}
\begin{proof}
	The proof is the same as that of Proposition $2.9.4$ of \cite{DoanWalpuski} except here the index formula of Proposition \ref{IndexOfTwistedOperator} is used
	and in the proof, $n-1$ substituted with $n-2$. 
	Here, the proof of Proposition $2.9.4$ of \cite{DoanWalpuski} is rewritten, in order the paper be self-contained.


By varying $v$, the various normal bundles $N_v$ define a vector bundle $\mathfrak{N}$ over $\mathscr{V}\times\Sigma$. 
By shrinking $\mathscr{V}$, if necessary, one can be ensured that $\mathfrak{N}\cong \mathrm{pr}_2^*N_{v_0}$ via the pullback through the following projection map
\begin{align*}
	\mathrm{pr}_2:\mathscr{V}\times\Sigma\rightarrow\Sigma.
\end{align*}
Here, $v_0:\Sigma\rightarrow(M,g)$ is an almost embedding.
This gives the following family of Jacobi operators
\begin{align*}
	\mathcal{J}:\mathscr{V}\rightarrow\mathcal{F}(\Gamma(N_{v_0}),\Gamma(N_{v_0}))
\end{align*}
Let $U$ denote the open subset of injective points of $v_0$.
By Lemma \ref{lem:1}, for sufficiently small $\epsilon$, with respect to the preorder $\prec$, $\mathcal{J}$ is $B-$equivariantly flexible in $U$.
Moreover, by Theorem \ref{thm:Wendl'sTheorem} and Proposition \ref{prop:Wendl'sConditionForJacobi}, the subset of elements $(g,[v])\in\mathscr{V}$ for which $\mathcal{J}_{g,v}$ does not satisfy the Petri condition in $U$ is of infinite codimension.

Let $\underline{V}_1$ and $\underline{V}_2$ denote two irreducible Euclidean local system on $\Sigma$ such that their monodromy representations factor through a finite quotient of $\pi_1(\Sigma,x_0)$.
Let $B=\{\underline{V}_1,\underline{V}_2\}$, as in Theorem \ref{thm:B-Transversality}.
Let 
\begin{align*}
	\mathfrak{F}_{s,B}(\mathscr{V}):=\{
	\mathfrak{v}:=(g,[v])\in \mathscr{V}\,|\, \Theta_\mathfrak{v}^B 
	\text{ is not surjective and } \sharp Z_\varpi=s
	\}.
\end{align*}
where, $\Theta_\mathfrak{v}^B$ has been introduced in Theorem \ref{thm:B-Transversality}.
Then $\mathfrak{F}_{s,B}(\mathscr{V})$ is a closed subset and by the previous paragraph has infinite codimension.
The subset $\mathcal{W}_s^\infty(\mathscr{V})$ is the union of $\mathfrak{F}_{s,B}(\mathscr{V})$ for all possible choices of $B$.
Let $\textbf{d}=\{d_1,d_2\}\in\mathbb{N}_0^2$, and set
\begin{align*}
	\mathfrak{F}_{s,B}^\textbf{d}(\mathscr{V}):=\{
	(g,[v])\in\mathscr{V}\setminus\mathfrak{F}_{s,B}(\mathscr{V})\,|\, \mathrm{dim}_{K_i}\,\mathrm{Ker}\,\mathcal{J}_{g,v;\varpi}^{\underline{V}_i}=d_i, i=1,2
	\}
\end{align*}
Theorem \ref{thm:B-Transversality} implies that $\mathfrak{F}_{s,B}^\textbf{d}(\mathscr{V})$ is a submanifold of codimension
\[
\sum_{i=1}^2k_i d_i(d_i-\mathrm{index}_{K_i}\,\mathcal{J}_{g,v;\varpi}^{\underline{V}_i})
\]
By Proposition \ref{IndexOfTwistedOperator},
\begin{align*}
	\mathrm{index}_{K_i}\,\mathcal{J}_{g,[v];\varpi}^{\underline{V}_i}&=-\frac{n-2}{2}\sum_{x\in Z_\varpi}\mathrm{dim}_{K_i}(V/V^{\rho_x})
\end{align*}
where $n=\mathrm{dim}\,M$. Assume that the monodromy of each $\underline{V}_i$ around every $x\in Z_\varpi$ is non-trivial. Then
$\mathrm{index}_{K_i}\,\mathcal{J}_{g,v;\varpi}^{\underline{V}_i}\le -\frac{n-2}{2}$.
Then, provided $d_i\ge 1$,
\[
\mathrm{codim}\,\mathfrak{F}_{s,B}^\textbf{d}(\mathscr{V}) \ge \frac{n-2}{2}s+1\ge 2s+1.
\]
The union of countably many subsets of the form $\mathfrak{F}_{s,B}^\textbf{d}(\mathscr{V})$ for which at least one $d_i\ge 1$, $i=1,2$ construct $\mathcal{W}_s(\mathscr{V})\setminus\mathcal{W}_s^\infty(\mathscr{V})$.
Therefore, the codimension of $\mathcal{W}_s(\mathscr{V})\setminus\mathcal{W}_s^\infty(\mathscr{V})$ is at least $2s+1$.

$\mathcal{W}_s^{top}(\mathscr{V})\setminus\mathcal{W}_s^\infty(\mathscr{V})$ consists of the union of subsets of the form $\mathfrak{F}_{s,B}^\textbf{d}(\mathscr{V})$ for which there is an $i=1,2$ such that
\begin{itemize}
	\item $d_i=1, K_i=\mathbb{R}$ and $d_j= 0$, for $j\ne i$, and
	\item if $n=6$, then for every $x\in Z_\varpi$, $\mathrm{dim}(V/V^{\rho_x})=1$, otherwise $Z_\varpi=\emptyset$.
\end{itemize}
Pursuing the above inequalities, reveals that these conditions are satisfied if and only if 
$$\mathrm{codim}\,\mathfrak{F}_{s,B}^\textbf{d}(\mathscr{V})=2s+1.$$
This proves part $(1)$ of the proposition.

Since the union of subsets of the form $\mathfrak{F}_{s,B}^\textbf{d}(\mathscr{V})$ whose codimension is at least $2s+2$ constructs the subset $\mathcal{W}_s(\mathscr{V})\setminus\big(\mathcal{W}_s^\infty(\mathscr{V})\cup \mathcal{W}_s^{top}(\mathscr{V})\big)$, part $(2)$ of the proposition is proved.
\end{proof}

\renewcommand*{\proofname}{Proof of Theorem \ref{Theorem-A}}
\begin{proof}
Note that $\mathcal{M}_s(\mathscr{V})$ is a Banach manifold and the map
\begin{align*}
\sqcap_s:\mathcal{M}_s(\mathscr{V}) &\rightarrow \mathcal{U}(g_0,\epsilon) \\
(g,[v;\varpi]) &\mapsto g
\end{align*}
is Fredholm of index $2s$ (Section $2.9$ of \cite{DoanWalpuski}).
Let $\mathcal{W}$ denote the subset of those elements $g\in\mathscr{G}$ which are not super-rigid. 
The intersection of $\mathcal{W}$ with $\mathcal{U}(g_0,\epsilon)$ is contained in 
$\bigcup_{s\in\mathbb{N}_0} \sqcap_s(\mathcal{W}_s(\mathscr{V}))$. 
Proposition \ref{FailureOfSuperRigidity}, Proposition \ref{CodimOfSet} and the fact that $\sqcap_s$ is of index $2s$ imply that $\mathcal{W}$ has codimension at least one.
This proves the Theorem.
\end{proof}

Therefore, for a generic path of metrics, to avoid minimal embedded surfaces which have a "branched" cover that fails to be rigid, it is necessary to assume $\mathrm{dim}\,M>6$. 

Let $\pmb{\mathscr{G}}$ denote the space of $C^\infty-$paths $\bar{\Gamma}:[0,1]\rightarrow\mathscr{G}_\infty$. From now on set $\mathscr{G}=\mathscr{G}_\infty$. 
For a path of metrics, like $\bar{\Gamma}$, one can form the fiber product of $[0,1]$ and $\mathscr{Y}$ along $\mathscr{G}$
\begin{align*}
	\mathscr{Y}(\bar{\Gamma})&:=[0,1]\times_{\mathscr{G}}\mathscr{Y}  \\
	&=\{(t,g,[u])\,|\,\bar{\Gamma}(t)=\Pi(g,[u])\}
\end{align*}
which is a $1-$parameter space of minimal embedded surfaces.
Denote $(t,g,[u])$ by $(g_t,[u])$.

\begin{defi}[Definition $2.10.1$ of \cite{DoanWalpuski}]
	Define $\pmb{\mathscr{G}}^\bullet$ as the subset of all $\bar{\Gamma}\in \pmb{\mathscr{G}}$ with the following properties:
	\begin{enumerate}
		\item $\mathscr{Y}(\bar{\Gamma})$ is a $1-$dimensional manifold with boundary.
		\item The subset of elements $t\in[0,1]$ for which $\bar{\Gamma}(t)$ is not super-rigid, is countable and belongs to
		\[
		\{t\in[0,1]\,|\, g_t\in \bigcup_{s\in\mathbb{N}_0} \sqcap_s(\mathcal{W}_s^{top}(\mathscr{V}))\}.
		\]
	\end{enumerate}
\end{defi}

\begin{prop}\label{PathOfSuper-rigidMetrics}(Theorem $2.10.2$ of \cite{DoanWalpuski} and also see Section $2.4$ of \cite{Wendl}).
	Let $M$ be a smooth manifold. Then
	$\pmb{\mathscr{G}}^\bullet$ is a comeager subset of $\pmb{\mathscr{G}}$.
\end{prop}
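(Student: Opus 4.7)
The plan is to prove comeagerness by a parametric Sard--Smale argument applied to the projection $\Pi:\mathscr{Y}\to \mathscr{G}$ of Theorem \ref{thm:c2} and to the Brill--Noether stratification of Proposition \ref{FailureOfSuperRigidity}, localized in Floer-type charts $\mathcal{U}(g_0,\epsilon)$ so that everything takes place inside genuine Banach manifolds. Since $\mathscr{G}$ can be covered by countably many such charts and comeagerness is preserved under countable intersections, it suffices to establish the statement for paths whose image lies in a single chart $\mathcal{U}(g_0,\epsilon)$, with $\epsilon\in\mathfrak{s}$ chosen to decay fast enough that Lemma \ref{lem:1}, the transversality statements of Subsection \ref{Subsection:TransversalityTheorem}, and the index formula of Proposition \ref{IndexOfTwistedOperator} all apply simultaneously; Proposition \ref{prop:2}(1) guarantees that such a common $\epsilon$ exists by dominating the countable collection of rates required.

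For part (1), I would consider the fiber product $\pmb{\mathscr{G}}\times [0,1] \times_{\mathscr{G}} \mathscr{Y}$ together with its projection to $\pmb{\mathscr{G}}$. Since $\Pi$ is Fredholm of index $0$, this projection is Fredholm of index $1$, and Sard--Smale, applied inside the chart $\mathcal{U}(g_0,\epsilon)$ viewed as a separable Banach manifold, yields a comeager set of regular values $\bar{\Gamma}$. For any such regular value the fiber $\mathscr{Y}(\bar{\Gamma})=[0,1]\times_{\mathscr{G}}\mathscr{Y}$ is a smooth $1$-dimensional manifold whose boundary comes from $\{0,1\}\subset [0,1]$.

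For part (2), I would apply Proposition \ref{FailureOfSuperRigidity} for each $s\in\mathbb{N}_0$ together with Proposition \ref{CodimOfSet} and the fact that $\sqcap_s:\mathcal{M}_s(\mathscr{V})\to \mathcal{U}(g_0,\epsilon)$ is Fredholm of index $2s$. One finds that $\sqcap_s(\mathcal{W}_s^{\infty}(\mathscr{V}))$ has infinite codimension; that $\sqcap_s\bigl(\mathcal{W}_s(\mathscr{V})\setminus(\mathcal{W}_s^{\infty}(\mathscr{V})\cup \mathcal{W}_s^{\mathrm{top}}(\mathscr{V}))\bigr)$ has codimension at least $(2s+2)-2s=2$; and that $\sqcap_s\bigl(\mathcal{W}_s^{\mathrm{top}}(\mathscr{V})\setminus \mathcal{W}_s^{\infty}(\mathscr{V})\bigr)$ is the image of a submanifold of codimension $2s+1$, hence of codimension at least $1$. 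A generic smooth path $\bar{\Gamma}:[0,1]\to \mathcal{U}(g_0,\epsilon)$, viewed as an element of the Banach manifold of $C^\epsilon$-paths, therefore avoids the first two loci entirely and meets the last transversely in a discrete, hence countable, subset of $[0,1]$ which by construction lies inside $\bigcup_s \sqcap_s(\mathcal{W}_s^{\mathrm{top}}(\mathscr{V}))$, as condition (2) demands. Intersecting the generic sets obtained for each $s\in\mathbb{N}_0$ and for each chart of the countable cover preserves comeagerness.

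The main obstacle will be making the parametric transversality uniform across countably many strata and charts while working in the Fr\'echet space $\mathscr{G}_\infty$. The resolution, exactly as in \cite{Wendl} and \cite{DoanWalpuski}, is to exploit Floer's $C^\epsilon$ topology with a sufficiently fast-decaying $\epsilon$, which converts the Fr\'echet manifold into a separable Banach manifold on which the flexibility of Subsection \ref{FlexibilityConditionSection}, Wendl's Theorem \ref{thm:Wendl'sTheorem} (whose hypothesis was verified in Proposition \ref{prop:Wendl'sConditionForJacobi}), and the orbifold index computation all apply without modification; once this infrastructure is in place, the codimension bookkeeping above combined with the standard fact that a generic smooth path in a Banach manifold meets a codimension-$k$ subset in a codimension-$(k-1)$ set yields both clauses of the definition of $\pmb{\mathscr{G}}^\bullet$.
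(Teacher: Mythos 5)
The paper supplies no proof for this proposition --- it is cited outright from Theorem~2.10.2 of \cite{DoanWalpuski} and Section~2.4 of \cite{Wendl} --- so your proposal is best compared against those sources, and it does indeed capture their strategy: localize in a Floer $C^\epsilon$ chart to obtain a separable Banach manifold, run parametric Sard--Smale on the projection $\Pi:\mathscr{Y}\to\mathscr{G}$ for clause~(1), and for clause~(2) push forward the stratification of Proposition~\ref{FailureOfSuperRigidity} by $\sqcap_s$ (Fredholm of index $2s$), using Proposition~\ref{CodimOfSet} to get image codimensions $\infty$, $\ge 2$, and $\ge 1$ for the three strata, so that a generic path misses the first two and hits the third in a discrete set. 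The codimension bookkeeping and the role of Proposition~\ref{prop:2}(1) in choosing a single $\epsilon$ dominating the countable family of requirements are both correct.

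One step as written does not quite hold up: the reduction ``it suffices to establish the statement for paths whose image lies in a single chart $\mathcal{U}(g_0,\epsilon)$.'' Paths with image contained in a single chart are a very thin (far from comeager) subset of $\pmb{\mathscr{G}}$, so proving genericity inside that subset does not directly yield genericity in $\pmb{\mathscr{G}}$. The correct localization, which is also what \cite{DoanWalpuski} and \cite{Wendl} do, is: for each chart $\mathcal{U}_i$ in a countable cover of $\mathscr{G}$, consider the subset $\pmb{\mathscr{G}}_i^\bullet\subset\pmb{\mathscr{G}}$ of \emph{all} paths $\bar\Gamma$ whose restriction to $\bar\Gamma^{-1}(\mathcal{U}_i)$ satisfies clauses~(1) and~(2); show $\pmb{\mathscr{G}}_i^\bullet$ is comeager in the full path space $\pmb{\mathscr{G}}$ by perturbing $\bar\Gamma$ with $C^\epsilon$-small deformations supported over $\bar\Gamma^{-1}(\mathcal{U}_i)$; then observe that $\bigcap_i\pmb{\mathscr{G}}_i^\bullet\subset\pmb{\mathscr{G}}^\bullet$ because the compact image of each $\bar\Gamma$ is covered by finitely many charts. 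With this adjustment --- genericity of a family of subsets of the ambient path space, not genericity within a restricted path space --- the rest of your argument goes through.
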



	\section{Counting Function For Minimal Tori}

\subsection{Counting Minimal Tori In Riemanian Manifolds}\label{Subsection:CountingMinimalToriInManifold}
We want to count minimal tori in a Riemannian manifold $(M,g)$, where $\mathrm{dim}\,M\ge 7$. 
Let $T\cong S^1\times S^1$ and $h$ be a smooth metric on it. Let $u:(T,h)\rightarrow (M,g)$ denote a $g-$minimal immersed torus. 
By Appendix \ref{Branched Imm.}, $u=v\circ\pi$ where $\pi:(T,j)\rightarrow (T',j')$ is a holomorphic covering map and $v:(T',h')\rightarrow (M,g)$ ia a $g-$minimal almost embedded map. 
Here, $T'\cong S^1\times S^1$ and $j$ and $j'$ are the unique complex structures induced by $h$ and $h'$, respectively. 

By Proposition $2.6.3$ of \cite{DoanWalpuski}, the Jacobi operator
\[
\mathcal{J}_{g,u}:\Gamma(N_u)\rightarrow\Gamma(N_u)
\]
equals to 
\[
\pi^*\mathcal{J}_{g,v}:\Gamma(\pi^*N_v)\rightarrow\Gamma(\pi^*N_v)
\] 
which in turn is equivalent to $\mathcal{J}_{g,v}$ twisted by a local system $\underline{V}$, i.e. 
\[
\mathcal{J}_{g,v}^{\underline{V}}:\Gamma(N_v\otimes\underline{V})\rightarrow\Gamma(N_v\otimes\underline{V})
\]
where $\underline{V}=\pi_*\underline{\mathbb{R}}$ (See Appendix \ref{AppendixA}). 
Note that according to Definition $2.6.2$ of \cite{DoanWalpuski} $\pi^*\mathcal{J}_{g,v}$ is characterized by $(\pi^*\mathcal{J}_{g,v})(\pi^*s)=\pi^*(\mathcal{J}_{g,v}\,s)$.

Let $\bar{\Gamma}:[0,1]\rightarrow\mathscr{G}$ denote a path of metrics in $\pmb{\mathscr{G}}^\bullet$ and $(g_{t_k},[v_k])\in \mathscr{Y}(\bar{\Gamma})$ be a sequence such that $t_k\rightarrow t$. Here  $v_k$ is an embedding of a torus $T_k$ into $M$. 
Moreover, let $v_k:T_k\rightarrow M$ converge to $v_0:T_0\rightarrow M$ which factors as $v\circ\pi$, where $\pi:T_0\rightarrow T$ is a covering map of degree $d>1$ and $v:T\rightarrow M$ is an embedding. 
Following \cite{Taubes}, $(g_t,[v])$ obtained in this way is called a weak limit of the sequence $\{(g_{t_k},[v_k])\}$.
Since $\mathrm{dim}(M)>6$, by Proposition \ref{FailureOfSuperRigidity}, we can assume that $(g_t,[v])\in\mathcal{W}_0^{\text{top}}$.
Consequently, Remark \ref{rem:ActionOnKernel} implies that $d=2$.

Note that the isomorphism classes of double covers of a torus $T$ are isomorphic to $H^1(T,\mathbb{Z}_2)$. $H^1(T,\mathbb{Z}_2)$ also classifies the real line bundles on $T$.
Let $\epsilon_\iota$ denote the real line bundle corresponding to $\iota\in H^1(T,\mathbb{Z}_2)$ and let $\mathcal{J}_{g_t,v,\iota}$ denote the operator which acts on $N_v\otimes\epsilon_\iota$ and is defined as $\mathcal{J}_{g_t,v}$ twisted by $\epsilon_\iota$. 
For this twisted operator the index $\pmb{\iota}(g_t,v;\iota)$ is defined to be the index of the $g_t-$minimal map $v_0$.

A sign $\mathrm{sign}(\mathrm{det}(\mathcal{J}_{g,v,\iota}))$ (see Section 2 of \cite{Taubes}) is associated to each minimal map:
Choose a minimal map $u_0:T\rightarrow(M,g_0)$ with zero index and for this minimal map define $\mathrm{sign}(\mathrm{det}(\mathcal{J}_{g_0,u_0})):=+1$. Then $\mathrm{sign}(\mathrm{det}(\mathcal{J}_{g,v,\iota}))$ will be $(-1)^{\pmb{\iota}(g_t,v;\iota)}$.

Remark \ref{rem:ActionOnKernel} and Proposition \ref{FailureOfSuperRigidity} 
imply the following:
\begin{prop}[Lemma $5.16$ of \cite{Taubes}]
	Let $\bar{\Gamma}\in\pmb{\mathscr{G}}^\bullet$. Then 
	$(g,[v])\in\mathscr{Y}(\bar{\Gamma})$, $v:T\rightarrow M$, is a weak limit point of a sequence $(g_{t_k}[v_k])\in \mathscr{Y}(\bar{\Gamma})$ if and only if there exists non-zero $\iota\in H^1(T,\mathbb{Z}_2)$ such that the kernel of $\mathcal{J}_{{g},v,\iota}$ is non-trivial.
\end{prop}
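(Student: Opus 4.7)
The plan is to translate the geometric "weak-limit" condition into a spectral statement about $\mathcal{J}_{g,v}$ and its twists, via the algebraic fact that the Jacobi operator along an \'etale double cover splits into deck-invariant and deck-anti-invariant parts. Since $\bar{\Gamma}\in\pmb{\mathscr{G}}^\bullet$ and $\mathrm{dim}\,M\ge 5$, Proposition \ref{FailureOfSuperRigidity} forces the degree of any weak-limit covering to be $d=2$. Such covers are classified by non-zero classes $\iota\in H^1(T,\mathbb{Z}_2)$, and the associated covering $\pi:T_0\to T$ satisfies $\pi_{*}\underline{\mathbb{R}}\cong\underline{\mathbb{R}}\oplus\underline{\epsilon_\iota}$. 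Combining equation (\ref{pullback of J.Operator}) with the pullback/twisted equivalence reviewed in Appendix \ref{AppendixA} then gives the direct-sum decomposition
\[
\mathrm{ker}\,\mathcal{J}_{g,v\circ\pi}\;\cong\;\mathrm{ker}\,\mathcal{J}_{g,v}\;\oplus\;\mathrm{ker}\,\mathcal{J}_{g,v,\iota},
\]
in which the two summands correspond to deck-invariant and deck-anti-invariant sections of $N_{v\circ\pi}$. The proof then reduces to tracking which summand captures the degeneration.

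For the forward direction, given a convergent sequence of $g_{t_k}$-minimal embeddings $v_k\to v\circ\pi$, I would pass to the normal-graph formulation and extract a leading-order normal deviation $\xi\in\Gamma(N_{v\circ\pi})$, which after suitable rescaling satisfies $\mathcal{J}_{g,v\circ\pi}\xi=0$ by linearizing the stationarity condition jointly in the metric and in the parametrization. Decomposing $\xi=\xi_{+}+\xi_{-}$, I would then argue $\xi_{-}\ne 0$ as follows: if $\xi_{-}=0$, then $\xi=\pi^{*}\eta$ for some $\eta\in\mathrm{ker}\,\mathcal{J}_{g,v}$, so the first-order picture of the family $(v_k)$ would be that of double covers of perturbed embeddings of $v$, contradicting the hypothesis that each $v_k$ itself is an embedding. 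Hence $\xi_{-}$ is a non-trivial element of $\mathrm{ker}\,\mathcal{J}_{g,v,\iota}$.

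For the converse, I would start from a non-zero $\xi_{-}\in \mathrm{ker}\,\mathcal{J}_{g,v,\iota}$. Under the $\pmb{\mathscr{G}}^\bullet$ hypothesis the point $(g,[v])$ belongs to $\mathcal{W}_{0}^{\mathrm{top}}(\mathscr{V})$, so this kernel is one-dimensional and all other irreducible twistings at $(g,[v])$ have trivial kernel. Pulling $\xi_{-}$ back to an anti-invariant Jacobi field on $v\circ\pi$, I would run a Lyapunov--Schmidt reduction for the family of Jacobi operators parametrized by $t$ along $\bar{\Gamma}$. Using the transversality of $\bar{\Gamma}$ to the wall encoded in $\pmb{\mathscr{G}}^\bullet$, the reduced equation becomes a scalar equation of the form $t-t_{0}\sim c\,s^{2}$ in a parameter $(t,s)$, which on the appropriate side of $t_{0}$ produces a pair of deck-related branches of $g_t$-minimal maps $T_0\to M$. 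These two branches assemble into a single embedded $g_{t_k}$-minimal torus $v_k$ whose image converges to $v(T)$ doubly covered, providing the desired weak-limit sequence.

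The chief obstacle will be the converse construction, and in particular verifying that the family produced by Lyapunov--Schmidt consists of honest embeddings rather than persisting as double covers beyond leading order. This is precisely the content of Taubes' local analysis in Lemma 5.16 of \cite{Taubes}, and it relies crucially on the one-dimensionality of $\mathrm{ker}\,\mathcal{J}_{g,v,\iota}$ together with the non-degeneracy built into $\mathcal{W}_{0}^{\mathrm{top}}$, which controls the sign of the coefficient $c$ and the non-vanishing of the obstruction arising from the quadratic correction to the minimal-surface equation.
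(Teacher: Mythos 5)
Your overall strategy — reduce to degree $d=2$ via Proposition \ref{FailureOfSuperRigidity} and Remark \ref{rem:ActionOnKernel}, split $\pi_*\underline{\mathbb{R}}\cong\underline{\mathbb{R}}\oplus\underline{\epsilon_\iota}$, and decompose $\ker\mathcal{J}_{g,v\circ\pi}\cong\ker\mathcal{J}_{g,v}\oplus\ker\mathcal{J}_{g,v,\iota}$ — is the right one, and it is consistent with the route the paper takes (the paper itself simply asserts the proposition follows from Remark \ref{rem:ActionOnKernel} and Proposition \ref{FailureOfSuperRigidity}, deferring the bifurcation analysis to Lemma~5.16 of Taubes). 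Your identification of the converse direction as the place where the real analytic work lives, and that the one-dimensionality of $\ker\mathcal{J}_{g,v,\iota}$ coming from $\mathcal{W}_0^{\mathrm{top}}$ is what makes the Lyapunov--Schmidt picture tractable, is also on target.

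There is, however, a genuine gap in your forward direction. You extract a rescaled Jacobi field $\xi=\xi_++\xi_-$ from the degenerating family and claim $\xi_-\neq 0$ because, if $\xi_-=0$, "the first-order picture of the family $(v_k)$ would be that of double covers of perturbed embeddings of $v$, contradicting the hypothesis that each $v_k$ itself is an embedding." This inference is not valid as stated: a deck-invariant leading-order deviation from $v\circ\pi$ does not by itself force the $v_k$ to be double covers, since the higher-order terms are free to break the deck symmetry. The argument you actually need, and the one the paper implicitly relies on, is Proposition \ref{prop:ComeagerPaths}(1): along $\bar{\Gamma}\in\pmb{\mathscr{G}}^\bullet$ the set of weak limit points is \emph{disjoint} from the set of critical points of $\mathrm{pr}$, so $\ker\mathcal{J}_{g,v}=0$ automatically. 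Granting this, if $\ker\mathcal{J}_{g,v\circ\pi}$ were also trivial, then by the implicit function theorem the $g_t$-minimal maps near $v\circ\pi$ form a \emph{unique} one-parameter family, which must coincide with $v_t\circ\pi$ (the double covers of the deformations of the rigid embedding $v$), contradicting that the $v_k$ are embeddings. Hence $\ker\mathcal{J}_{g,v\circ\pi}\neq 0$, and since $\ker\mathcal{J}_{g,v}=0$, the non-triviality sits in $\ker\mathcal{J}_{g,v,\iota}$. Your rescaling can of course be made rigorous, but it has to be supplemented by this uniqueness/disjointness input rather than the leading-order heuristic; without it, $\xi_-\neq 0$ does not follow. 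The converse direction is sketched at the right level of detail, provided you explicitly invoke Proposition \ref{prop:ComeagerPaths} to guarantee transversal wall-crossing.
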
 

By the proof of Theorem \ref{Theorem-A}  $$\mathscr{G}_0^{top}:=\bigcup_{s\in\mathbb{N}_0}\Pi_s(\mathcal{W}_s^{top}(\mathscr{Y}))$$
is a codimension one subset of $\mathscr{G}$.
A codimension one subspace is called a \textbf{wall}. Walls divide the space of metrics into chambers. Therefore, if $g$ is inside a chamber then the nullity of $v_0:T_0\rightarrow (M,g)$ is zero and as the metric $g$ changes and cross the wall the index changes by $\pm 1$.

In \cite{Taubes}, Taubes introduces a classification of tori, based on which he defines a function for counting pseudo-holomorphic surfaces: 
Within a chamber, $g-$minimal embedded tori are classified into four types according to the number of $\iota\in H^1(T,\mathbb{Z}_2)$ for which 
$\mathrm{sign}(\mathrm{det}(\mathcal{J}_{g,v,\iota}))$ is negative .
More precisely, let $v:T\rightarrow(M,g)$ denote a $g-$minimal embedded torus and
\[
\delta_v:H^1(T,\mathbb{Z}_2)\rightarrow \mathbb{Z}_2
\]
be a map defined by assigning to each $\iota\in H^1(T,\mathbb{Z}_2)$ the sign of $\mathrm{det}(\mathcal{J}_{g,v,\iota})$.
Note that as far as $g$ is inside a chamber $\delta_v$ does not depend on the choice of $g$.

Call $v$ of type $+k$, $k=0,1,2,3$, if the image of the trivial element of $H^1(T,\mathbb{Z}_2)$
under $\delta_v$ is $+1$ and exactly $k$ other elements of $H^1(T,\mathbb{Z}_2)$ be sent to $-1$ by $\delta_v$. 
Note that the trivial element of $H^1(T,\mathbb{Z}_2)$ represents the trivial double cover.
Similarly, we say $T$ is of type $-k$, $k=0,1,2$, if $\delta_v$ sends the trivial element to $-1$ and exactly $k$ other elements of $H^1(T,\mathbb{Z}_2)$ to $-1$.

\begin{defi}\label{def:1}
	For a superrigid metric $g$, let $v:(T,h)\rightarrow (M,g)$ be a $g-$minimal embedded map of a torus. For $d\in \mathbb{N}$ define
	\[
	n_{\pm k}^d(g,v):=
	\begin{cases}
		\epsilon, \quad & \mathrm{if}\; d=1, \\
		\pm k, \quad & \mathrm{if}\; d=2, \\
		\pm [\frac{k}{2}], \quad & \mathrm{if}\; d=4, \\
		0, \quad & \mathrm{otherwise}.
	\end{cases}
	\]
	where, $\epsilon=\pm 1$ is the sign of $\mathrm{det}(\mathcal{J}_{g,v})$
\end{defi}

Let $g$ be a super-rigid metric on $M$ and let $u:(T,h)\rightarrow (M,g)$ denote  a $g-$minimal immersed map which factors as $u=v\circ \pi$, where $v:(T',h')\rightarrow (M,g)$ is a $g-$minimal embedding of type $\pm k$ and $\pi:T\rightarrow T'$ is a degree $d$ covering map. Then the wight associated to $u$ is defined as
$
n(g,u):=n^d_{\pm k}(g,v).
$

Let $\mathcal{U}\subset \mathscr{L}(g)$ be a finite subset. Define
\[
n(g,\mathcal{U}):=\sum_{u\in\mathcal{U}}n(g,u).
\]
\subsection{Local Description Of The Moduli Space}\label{Subsection:LocalDescriptionOfModuliSpace}
Let $(g_0,[v_0])\in\tilde{\mathscr{Y}}$ and let $\mathrm{exp}:N_{v_0}\rightarrow M$ be the exponential map. 
There is a neighborhood $V$ of the zero section of $N_{v_0}$ such that $\mathrm{exp}$ maps $V$ diffeomorphically onto a neighborhood of $v_0(\Sigma)$ 
and if $v\in C^{j,\alpha}(\Sigma,M)$ is sufficiently close to $v_0$ then $[v]=[\mathrm{exp}\circ s]$ for some section $s$ of $V$.
Then, as discussed in the proof of Theorem $2.1$ of \cite{White-1991}, 
\begin{align*}
	\{(g,[\mathrm{exp}\circ s])\;|\; s\in C^{j,\alpha}(\Sigma,N_{v_0}), H(g,s)=0, \|g-g_0\|+\|s\|_{j,\alpha}<\epsilon\}
\end{align*}
is a neighborhood of $(g_0,[v_0])\in \tilde{\mathscr{Y}}$. Here, $H$ is the function introduced in Theorem \ref{thm:c1}. (See proof of Theorem $2.1$ of \cite{White-1991}.)

For $(g,v)\in \mathscr{G}\times C^{j,\alpha}(\Sigma,N_{v_0})$, set $\mathscr{E}_{g,v}=C^{j-2,\alpha}(\Sigma,M)$.
Consider the bundle
\[
\mathscr{E}\rightarrow \mathscr{G}\times C^{j,\alpha}(\Sigma,N_{v_0})
\]
with fibers $\mathscr{E}_{g,v}$. Then $H$ can be viewed as a section of this bundle. 
Note that $\mathscr{L}$ is the zero set of $H$.

Let $g_0,g_1\in\mathscr{G}^\bullet$ and $\bar{\Gamma}:[0,1]\rightarrow\mathscr{G}$ denote a path of metrics in $\pmb{\mathscr{G}}^\bullet$ such that $\bar{\Gamma}(i)=g_i$, for $i=0,1$. There is a projection map
\begin{align*}
&\mathrm{pr}:\tilde{\mathscr{Y}}(\bar{\Gamma})\rightarrow [0,1] \\
&\mathrm{pr}(g_t,[v])=t
\end{align*}
By Proposition \ref{PathOfSuper-rigidMetrics} and Lemma $5.8$ of \cite{Taubes}, $\bar{\Gamma}$ can be choose such that
\begin{prop}\label{prop:ComeagerPaths}
	The comeager subset $\pmb{\mathscr{G}}^\bullet$ can be refined so that the following conditions hold:
	\begin{enumerate}
		\item The set of weak limit points of $\tilde{\mathscr{Y}}(\bar{\Gamma})$ is distinct from the set of critical points of $\mathrm{pr}$ and the image of these points under $\mathrm{pr}$ is distinct from $0$ and $1$.
		Moreover, both sets—the set of weak limit points and the set of critical points—are finite.
		\item Let $t\in (0,1)$ and $\mathrm{pr}^{-1}(t)$ does not contain any critical point of $\mathrm{pr}$ or weak limit point, then $\mathrm{pr}^{-1}(t)$ is a finite set of super-rigid elements. 
	\end{enumerate}
\end{prop}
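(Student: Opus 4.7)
The plan is to refine the comeager set $\pmb{\mathscr{G}}^\bullet$ further by a parametric transversality argument that separates two codimension-one phenomena. First, I would identify what each kind of bad point is. A critical point of $\mathrm{pr}:\tilde{\mathscr{Y}}(\bar{\Gamma})\to [0,1]$ is an $(g_t,[v])$ at which $\bar{\Gamma}'(t)$ fails to be transverse to $\mathrm{d}\Pi$, which by Theorem \ref{thm:c3} is exactly a metric $g_t$ at which the embedded minimal surface $v$ carries a non-trivial Jacobi field, i.e.\ a critical value of $\Pi:\tilde{\mathscr{Y}}\to\mathscr{G}$. A weak limit point, as identified in Subsection \ref{Subsection:CountingMinimalToriInManifold}, is an element of $\mathcal{W}_0^{\mathrm{top}}(\mathscr{V})$: an embedding $v$ whose normal Jacobi operator twisted by some non-trivial $\iota\in H^1(T,\mathbb{Z}_2)$ has non-trivial kernel. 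These are two a priori different codimension-one conditions in the universal moduli space.

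Second, I would handle the endpoint requirement by choosing $g_0,g_1\in\mathscr{G}^\bullet$, which is possible since Theorem \ref{Theorem-A} makes this a comeager subset. Super-rigidity at $g_i$ forbids both twisted kernels (hence weak limit points) and ordinary Jacobi fields along embeddings (hence critical points), so the fibres $\mathrm{pr}^{-1}(0)$ and $\mathrm{pr}^{-1}(1)$ contain neither type of bad point and are automatically finite as $\tilde{\mathscr{Y}}(g_i)$ is a compact $0$-dimensional manifold. Since $\mathscr{G}^\bullet$ is open and dense inside a neighborhood of super-rigid metrics and the path space $\pmb{\mathscr{G}}$ carries a natural Banach/Fréchet structure in which "$\bar{\Gamma}(0),\bar{\Gamma}(1)\in\mathscr{G}^\bullet$" is an open condition, this refinement preserves comeagerness.

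For the interior, I would apply Proposition \ref{FailureOfSuperRigidity} together with Proposition \ref{CodimOfSet}: the weak-limit stratum $\mathcal{W}_0^{\mathrm{top}}(\mathscr{V})$ descends under the index-zero Fredholm map $\Pi_0$ to a codimension-one subset $\mathcal{W}\subset\mathscr{G}$, while the critical-value set of $\Pi$ (metrics admitting an embedding with a Jacobi field) is likewise codimension one. Their joint occurrence sits inside $\mathcal{W}_0(\mathscr{V})\setminus\mathcal{W}_0^{\mathrm{top}}(\mathscr{V})$ plus higher strata, and by the second clause of Proposition \ref{FailureOfSuperRigidity} this has codimension at least two. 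Parametric Sard–Smale applied to the evaluation map from $\pmb{\mathscr{G}}\times[0,1]$ to $\mathscr{G}$ then yields a comeager subset of paths which are transverse to each codimension-one wall and miss the codimension-two intersection, so in the interior the two sets of bad points are automatically disjoint, both project to interior times, and by compactness of $[0,1]$ together with transversality are finite. The clause about super-rigidity of $\mathrm{pr}^{-1}(t)$ at regular interior $t$ is exactly condition $(2)$ of $\pmb{\mathscr{G}}^\bullet$ together with the openness of rigidity under small deformations. I expect the main obstacle to be verifying that the critical-value set of $\Pi$ genuinely has codimension one in $\mathscr{G}$ rather than lower, and that its intersection with $\mathcal{W}$ has codimension at least two; both reduce to combining the index-zero Fredholm character of $\Pi$ with the stratified transversality packaged in Theorem \ref{thm:B-Transversality}, much as in the proof of Theorem \ref{Theorem-A}.
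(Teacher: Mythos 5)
Your strategy—identifying critical points of $\mathrm{pr}$ with non-trivial untwisted Jacobi kernels, weak limit points with twisted kernels at some non-zero $\iota$, treating both as codimension-one walls via Proposition~\ref{FailureOfSuperRigidity} and Proposition~\ref{CodimOfSet}, and then applying a parametric Sard--Smale argument to get paths transverse to each wall and avoiding their codimension-two overlap—is the correct framework and is in the spirit of what the paper outsources to Proposition~\ref{PathOfSuper-rigidMetrics} (Proposition~2.10.2 of \cite{DoanWalpuski}) and Lemma~5.8 of \cite{Taubes}. The paper gives no independent proof of this proposition, so the comparison is really to those references.

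There is, however, a genuine gap in the finiteness claims. You assert that $\mathrm{pr}^{-1}(0)$ and $\mathrm{pr}^{-1}(1)$ are ``automatically finite as $\tilde{\mathscr{Y}}(g_i)$ is a compact $0$-dimensional manifold,'' and that the interior bad sets are finite ``by compactness of $[0,1]$ together with transversality.'' Neither step follows as written: zero-dimensionality does not imply compactness, and compactness of the parameter interval does not make $\mathrm{pr}$ a proper map, so the bad locus could a priori be discrete yet infinite (for example, minimal tori with unbounded area). Some additional compactness input is required—either an explicit restriction to a compact open subset $\overline{\mathcal{U}}\subset\tilde{\mathscr{L}}(\bar{\Gamma})$ as in Theorem~\ref{Theorem-B}, or a compactness theorem for minimal surfaces of bounded area together with the dimension hypothesis $\dim M\ge 5$ that rules out bubbling (remark at the end of Section~\ref{ModuliSpaceOfMinimalSurfaces}). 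As a smaller point, $\mathscr{G}^\bullet$ is residual rather than open, so ``$\bar{\Gamma}(i)\in\mathscr{G}^\bullet$'' is not an open condition on paths; your conclusion that this refinement preserves comeagerness still holds because the endpoint evaluation map is an open continuous surjection, but the justification should say that rather than appeal to openness of $\mathscr{G}^\bullet$.
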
	

Thus, for all but finitely many points $t\in\{t_1,...,t_m\}$, $\bar{\Gamma}(t)$ is a super-rigid metric.

\subsubsection{Local Model Around A Critical Point}\label{Subsection:LocalModel_CriticalPoint}
First let us assume that $t=t_k$ is the image of a critical point, say $(g_t,[v])$ of $\mathrm{pr}$.
Following arguments is basically the ones in \cite{EF-CLosedGeodesics} and \cite{Taubes}.
Let us identify $g_t$ with $t$ and a $1-$dimensional neighborhood of $g_t$ with $(t-\epsilon_0,t+\epsilon_0)$.
We want to describe a neighborhood of $(t,[v])$. 
Note that if $(t+\epsilon,[v_\epsilon])\in \tilde{\mathscr{Y}}(\bar{\Gamma})$ is near to $(t,[v])$ then 
\begin{align}\label{eq:5}
H(t+\epsilon,s_\epsilon)=0
\end{align}
where $[v_\epsilon]=[\mathrm{exp}\circ s_\epsilon]$ and $H:(t-\epsilon_0,t+\epsilon_0)\times C^{j,\alpha}(\Sigma,M)\rightarrow C^{j-2,\alpha}(\Sigma,M)$ is obtained from the restriction of $H$. 
Recall that by Theorem \ref{thm:c1}, $D_2H(s,w)$ is a $0-$index Fredholm operator at $(s,w)\in(t-\epsilon_0,t+\epsilon_0)\times C^{j,\alpha}(\Sigma,M)$.

Let $\mathcal{J}=D_2H(t,0)$. As $(t,[v])$ is a critical point of $\mathrm{pr}$, $\mathrm{Ker}(\mathcal{J})$ is not zero, since otherwise the implicit function theorem implies that for any $\epsilon$ sufficiently close to $0$ there is a unique solution $s_\epsilon$ for Equation \ref{eq:5} which contradicts with the fact that $(t,[v])$ is a critical point of $\mathrm{pr}$.
Therefore $\mathrm{Ker}(\mathcal{J})\ne 0$. As discussed in the previous section we can assume $\mathrm{Ker}(\mathcal{J})$ is $1-$dimensional. Let $\theta$ denotes the generator.

Note that
\begin{align*}
	\mathrm{Ker}(DH)=\mathbb{R}\oplus\mathrm{Ker}(\mathcal{J}) \quad \mathrm{and} \quad 
	\mathrm{Coker}(DH)=\mathrm{Coker}(\mathcal{J})
\end{align*}

Let $X$ denote $C^{j,\alpha}(\Sigma,M)/\left \langle \theta \right \rangle$ and $Y=\mathrm{Im}(DH)$. Then
\begin{align*}
	&C^{j,\alpha}(\Sigma,M)\cong\mathbb{R}\oplus X=\mathrm{Ker}(\mathcal{J})\oplus X  \\
	&C^{j-2,\alpha}(\Sigma,M)\cong\mathbb{R}\oplus Y=\mathrm{Coker}(\mathcal{J})\oplus Y
\end{align*}

Let
\begin{align*}
	& H:[0,1]\times \mathbb{R}\times X\rightarrow \mathbb{R}\times Y \\
	& H(\epsilon,r,x)=(H_1(\epsilon,r,x),H_2(\epsilon,r,x))
\end{align*}

Since the differential of $H_2$ with respect to $x$ at $(t,0,0)$ is invertible, therefore, by implicit function theorem for $\epsilon$ and $r$ close to zero, there is a function $x=x(\epsilon,r)$ which solves $H_2(t+\epsilon,r,x)=0$. This implies that $H(t+\epsilon,r,x)=0$ is equivalent to $g(\epsilon,r):=H_1(t+\epsilon,r,x(\epsilon,r))=0$.

Since $\mathbb{R}\cong\left \langle \theta \right \rangle=\mathrm{Ker}(\mathcal{J})$, therefore, $\frac{\partial\,g}{\partial\,r}(t,0)=0$ (Differentiation of $g$ with respect to $r$ corresponds to $\mathcal{J}$).

Since we can assume $\mathrm{dim}(\mathrm{Ker}(\mathcal{J}))=1$, therefore, $\frac{\partial^2 g}{\partial r^2}(t,0)\ne 0$. 
Since $\tilde{\mathscr{Y}}(\bar{\Gamma})$ is a manifold, therefore $\frac{\partial g}{\partial \epsilon}(t,0)\ne 0$. 
Then the Taylor's expansion of $g$ around $(t,0)$ is of the form
\[
g(\epsilon,r)=a_1\epsilon+b_2 r^2+...
\]
where $a_1$ and $b_2$ are non-zero.
This leads us to the following Proposition.
\begin{prop}[Lemma $5.9$ of \cite{Taubes}]\label{prop:LocalModelCriticalPoint}
There is an open interval $I$ of the form $(t-\epsilon,t)$ or $(t,t+\epsilon)$ and there are two maps 
\begin{align*}
&\rho_{\pm}:I\rightarrow\tilde{\mathscr{Y}}(\bar{\Gamma}) \\
&\rho_{\pm}(s)=(g_s,[v_{\pm}^s])
\end{align*} 
so that the following properties hold (see Figure \ref{fig:fig1}):

	\begin{enumerate}
		\item $\mathrm{pr}\circ \rho_{\pm}(s)=s$. 
		\item The two curves $v_{\pm}^s$ converges to $v$ and $g_s$ converges to $g_t$.
		\item For $v_{\pm}^s$, the sign of $\mathrm{det}(\mathcal{J}_{g_s,v_{\pm}^s,\iota}^{\pm})$ changes only when $\iota=0$. Moreover, $\mathrm{sign}(\mathrm{det}(\mathcal{J}_{g_s,v_{+}^s,0}^+))>0$ and $\mathrm{sign}(\mathrm{det}(\mathcal{J}_{g_s,v_{-}^s,0}^-))<0$.
	\end{enumerate}
\end{prop}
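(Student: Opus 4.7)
The plan is to continue the Lyapunov--Schmidt reduction initiated in the excerpt, solve the resulting scalar equation, and determine the sign of each relevant determinant by first-order perturbation theory for the simple eigenvalue of $\mathcal{J}_{g_t,v}$ that passes through $0$.

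First I would complete the reduction. The reduced scalar equation is
\[
g(\epsilon,r)=a_1\epsilon+b_2r^2+O\bigl(\epsilon r,\epsilon^2,r^3\bigr)
\]
with $a_1\neq 0$ and $b_2\neq 0$. Since $\partial_\epsilon g(0,0)=a_1\neq 0$, the implicit function theorem produces a smooth function $\epsilon(r)=-\tfrac{b_2}{a_1}r^2+O(r^3)$ whose graph is the zero set of $g$ near $(0,0)$. Hence $\tilde{\mathscr{Y}}(\bar{\Gamma})$ is locally a smooth $1$-dimensional manifold parameterized by $r$, and the projection $\mathrm{pr}$ corresponds to $r\mapsto t+\epsilon(r)$, which has a nondegenerate critical point at $r=0$. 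Consequently, for $\epsilon$ on the side of $0$ determined by $\sigma=-\mathrm{sign}(b_2/a_1)\in\{\pm 1\}$, the equation $\epsilon(r)=\epsilon$ has exactly two solutions $r_\pm(\epsilon)=\pm\sqrt{-a_1\epsilon/b_2}+O(\epsilon)$; choosing $I=(t-\epsilon_0,t)$ or $(t,t+\epsilon_0)$ accordingly and setting
\[
v_\pm^s:=\exp\bigl(r_\pm(\epsilon)\,\theta+x(\epsilon,r_\pm(\epsilon))\bigr),\qquad \rho_\pm(s):=(g_s,[v_\pm^s])
\]
yields properties (1) and (2).

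Next I would extract the sign of the nearly-zero eigenvalue on each branch. Since $\mathcal{J}_{g_t,v}$ is self-adjoint Fredholm of index $0$ with a simple kernel $\mathbb{R}\theta$, Kato perturbation theory provides a continuous family of simple eigenvalues $\mu_\pm(s)$ of $\mathcal{J}_{g_s,v_\pm^s}$ with $\mu_\pm(t)=0$. Parameterizing the branches by $r$ (so $\epsilon=\epsilon(r)=O(r^2)$) and using $\partial_r x(0,0)=0$ (which follows from $\mathcal{J}_{g_t,v}\theta=0$ and self-adjointness of $\mathcal{J}$), the first-order formula for the perturbation of a simple eigenvalue gives
\[
\mu_\pm(s)=\frac{\langle D_2^2H|_{(t,0)}(\theta,\theta),\theta\rangle}{\|\theta\|^{2}}\,r_\pm(\epsilon)+O(\epsilon)=\frac{2b_2}{\|\theta\|^{2}}\,r_\pm(\epsilon)+O(\epsilon),
\]
where we used the identity $\partial_r^2 g(0,0)=\langle D_2^2H(\theta,\theta),\theta\rangle=2b_2$. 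Because $r_\pm(\epsilon)=\pm\sqrt{-a_1\epsilon/b_2}$ dominates $\epsilon$ as $\epsilon\to 0$, we have $\mathrm{sign}\,\mu_+(s)=-\mathrm{sign}\,\mu_-(s)\neq 0$ for $s\in I$. The remaining eigenvalues of $\mathcal{J}_{g_t,v}$ are bounded away from $0$, so by continuity they retain their signs along both branches. Therefore $\mathrm{sign}\,\det\mathcal{J}_{g_s,v_+^s,0}$ and $\mathrm{sign}\,\det\mathcal{J}_{g_s,v_-^s,0}$ have opposite signs, and a harmless relabeling produces the convention in (3) for $\iota=0$.

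Finally, for each nontrivial $\iota\in H^1(T,\mathbb{Z}_2)$, Proposition~\ref{prop:ComeagerPaths} guarantees that the critical point $(g_t,[v])$ is not a weak limit point, so $\mathcal{J}_{g_t,v,\iota}$ has trivial kernel. Continuity of the operator and its spectrum then keeps $\mathcal{J}_{g_s,v_\pm^s,\iota}$ invertible on a neighborhood of $t$, so $\mathrm{sign}\,\det\mathcal{J}_{g_s,v_\pm^s,\iota}$ is locally constant in $s$ and independent of the branch, completing (3). The main technical hurdle is the nonvanishing of $b_2$: this is where the refined genericity from Proposition~\ref{prop:ComeagerPaths} is essential, since $b_2=0$ would correspond to a tangential intersection of $\tilde{\mathscr{Y}}(\bar{\Gamma})$ with the wall $\mathscr{G}_0^{\mathrm{top}}$, and such nontransverse intersections are eliminated by a standard dimension count after perturbing $\bar{\Gamma}$ within $\pmb{\mathscr{G}}^\bullet$.
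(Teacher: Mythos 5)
Your argument is correct and follows the same Lyapunov--Schmidt reduction that the paper sets up (the paper itself then defers the remaining details to Taubes's Lemma~5.9): you complete the reduction, solve the fold normal form $a_1\epsilon+b_2r^2+\dots=0$ for the two branches, track the simple near-zero eigenvalue of $\mathcal{J}_{g_s,v^s_\pm}$ by first-order perturbation theory along each branch, and invoke continuity and the ``critical point $\ne$ weak limit point'' dichotomy from Proposition~\ref{prop:ComeagerPaths} to handle $\iota\ne 0$; all of this is sound. One minor imprecision: $b_2\propto\langle D_2^2H(\theta,\theta),\theta\rangle$ depends only on $(g_t,[v])$, not on the path $\bar{\Gamma}$, so $b_2=0$ is not a matter of a tangential intersection of $\bar{\Gamma}$ with the wall (that would be the condition $a_1=0$). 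Rather, $\{b_2=0\}$ is a further positive-codimension subset of $\mathcal{W}_0^{\mathrm{top}}$, and the relevant genericity statement is that a generic path avoids that thinner stratum; the paper's own claim that $\dim\mathrm{Ker}(\mathcal{J})=1$ alone forces $b_2\ne 0$ is similarly abbreviated and also relies implicitly on this extra genericity. Since the rest of your argument uses $b_2\ne 0$ only through the fold normal form, this does not affect the validity of the proof once the genericity statement is correctly phrased.
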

\begin{figure}[t]
	\includegraphics[width=7cm,height=5cm]{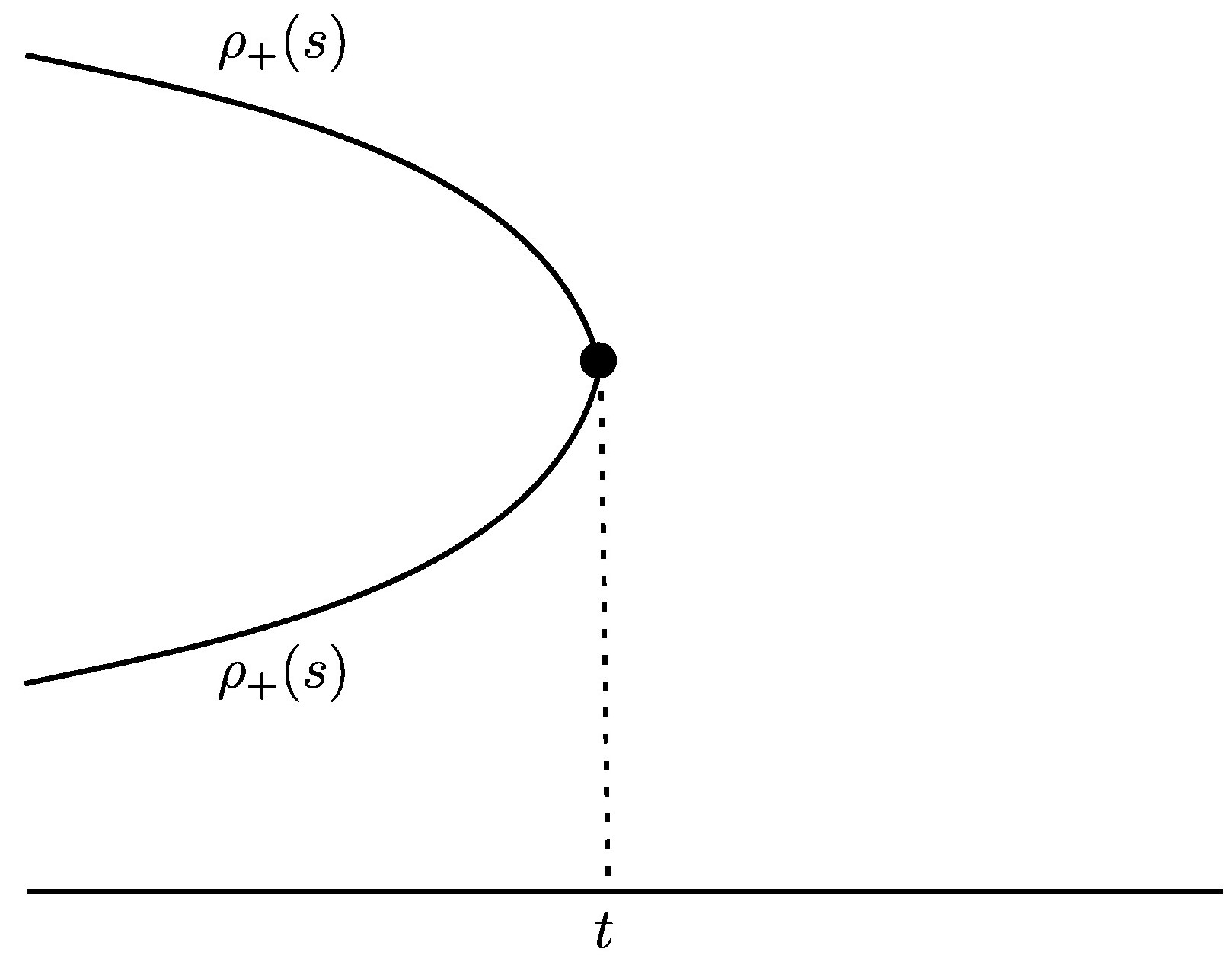}
	\caption{Two sequences in $\tilde{\mathscr{Y}}(\bar{\Gamma})$ which converge to a curve $(g_t,[v])$.}
	\label{fig:fig1}
\end{figure}
In particular, the counting function in Definition \ref{def:1} remains unchanged as $s$ passes through $t$ i.e. for $s\in(t-\epsilon,t+\epsilon)$ and $s\ne t$, $n(g_s,\mathcal{U}_s)$ is constant.

\subsubsection{Local Model Around A Weak Limit Point}\label{Subsection:LocalModel_WeakLimitPoint}
Let consider the case where for $t=t_k$ and $(g_t,[v])$ is a weak limit point i.e. there is a sequence $(g_{t_k},v_k)\in \mathscr{Y}(\bar{\Gamma})$ such that $t_k\rightarrow t$ and the embeddings $v_k:T_k\rightarrow M$ converge to $v_0:T_0\rightarrow M$ and $v_0$ factors as $v\circ\pi$, where $\pi:T_0\rightarrow T$ is a covering map and $v:T\rightarrow M$ is an embedding. 
Following the discussion in the previous subsection, $\pi$ is a $2$ to $1$ covering map.
\begin{figure}[h]
	\includegraphics[width=8cm,height=4cm]{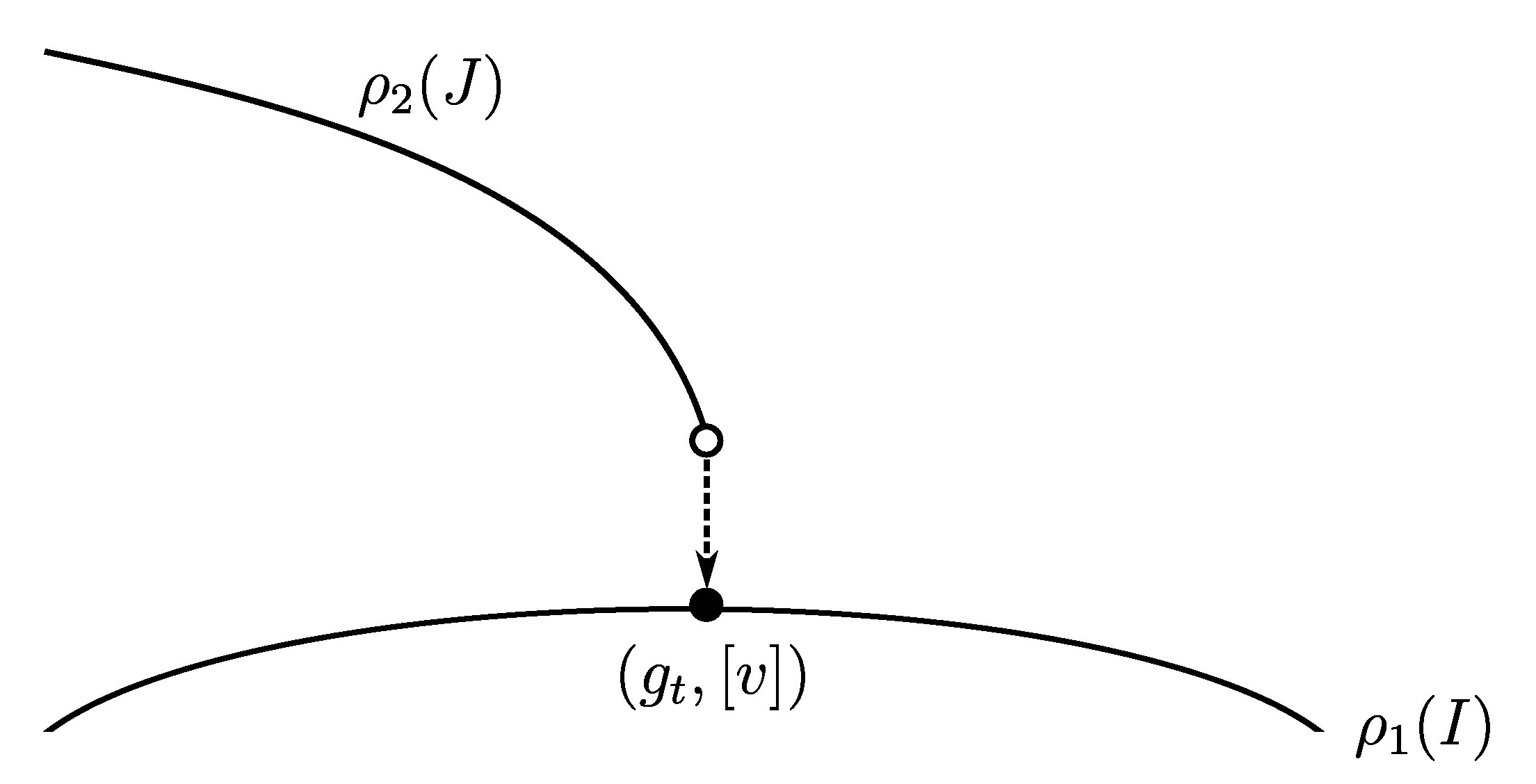}
	\caption{A sequence in $\tilde{\mathscr{Y}}(\bar{\Gamma})$ which converge to the double cover of $(g_t,[v])$.}
	\label{fig:fig2}
\end{figure}
In this situation, according to the discussion in proof of Theorem $3.2$ of \cite{EF-CLosedGeodesics}, a local model for a neighborhood of $(g_t,[v])$ is given by the zero set of
\[
g(\epsilon,r)=r((\epsilon-t)f(\epsilon,r)-r^2h(r))
\]
where, $f:I\times \mathbb{R}\rightarrow\mathbb{R}$ and $h:\mathbb{R}\rightarrow\mathbb{R}$ with $f(t,0),g(0)\ne 0$.
Therefore, the following holds:
\begin{prop}[Lemma $5.10$ of \cite{Taubes}]\label{prop:LocalModelWeakLimitPoint}
There is an interval $J$ of the form $(t-\epsilon,t)$ or $(t,t+\epsilon)$ and there are two maps 
\begin{align*}
	\rho_1:I\rightarrow\tilde{\mathscr{Y}}(\bar{\Gamma}) \quad \mathrm{and} \quad
	\rho_2:J\rightarrow\tilde{\mathscr{Y}}(\bar{\Gamma})
\end{align*} 
where $I=(t-\epsilon,t+\epsilon)$, such that the following properties hold (see Figure \ref{fig:fig2}):
\begin{enumerate}
	\item $\mathrm{pr}\circ\rho_1=\mathrm{id}_I$. Moreover, $\rho_1(t)=(g_t,[v])$ and $(g_t,[v])$ is not the limit point of any sequence in $\tilde{\mathscr{Y}}(\bar{\Gamma})\setminus\rho_1(I)$.
	\item $\mathrm{pr}\circ\rho_2=\mathrm{id}_J$. Moreover,
	\[
	\lim_{s \to t} \rho_2(s)=(g_t,[v_0])
	\]  
	and $(g_t,[v_0])$ is not the limit point of any sequence in $\tilde{\mathscr{Y}}(\bar{\Gamma})\setminus\rho_2(J)$.
	\item Let $\rho_1(s)=(g_s,[v_s])$. Then for all $\iota \in H^2(T,\mathbb{Z}_2)$, the kernel of $\mathcal{J}_{g_s,v_s,\iota}$ is trivial unless $\iota=\iota_0$ where $\iota_0$ is the element which classifies $\pi$. 
	For $\iota_0$, $\mathrm{Ker}(\mathcal{J}_{g_s,v_s,\iota_0})$ is trivial unless $s=t$, for which this kernel is $1-$dimensional.
	\item As $s$ passes through $t$, the sign of $\mathrm{det}(\mathcal{J}_{g_s,v_s,\iota})$ does not change unless $\iota=\iota_0$.
\end{enumerate}
\end{prop}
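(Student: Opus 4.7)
The plan is to perform a Lyapunov--Schmidt reduction at $(g_t,[v_0])$, exactly parallel to the construction in Subsubsection~\ref{Subsection:LocalModel_CriticalPoint}, but in the presence of a $\mathbb{Z}_2$-symmetry coming from the deck involution of $\pi:T_0\to T$. I would begin by parametrizing nearby maps by $\mathrm{exp}\circ\sigma$ for $\sigma\in C^{j,\alpha}(T_0,N_{v_0})$ and restricting the minimal-surface section $H$ to a small neighborhood of $(t,0)$ in $[0,1]\times C^{j,\alpha}(T_0,N_{v_0})$. Because $N_{v_0}=\pi^*N_v$ and $\mathcal{J}_{g_t,v_0}=\pi^*\mathcal{J}_{g_t,v}$, the deck involution acts on all the Banach spaces in play and commutes with $H$. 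The kernel of $\mathcal{J}_{g_t,v_0}$ splits into invariant and anti-invariant parts; the hypothesis $(g_t,[v])\in\mathcal{W}_0^{\mathrm{top}}$ combined with Definition~\ref{def:FailureOfSuperRigidity} forces the invariant part to vanish (the untwisted operator $\mathcal{J}_{g_t,v}$ is invertible) and the anti-invariant part to be one-dimensional, canonically identified with $\mathrm{Ker}(\mathcal{J}_{g_t,v,\iota_0})$ where $\iota_0\in H^1(T,\mathbb{Z}_2)$ classifies $\pi$. Let $\theta$ generate this kernel.

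The implicit function theorem on the equivariant complement then reduces $H=0$ to a one-dimensional bifurcation equation $g(\epsilon,r)$, where $\epsilon=s-t$ and $r$ is the $\theta$-coefficient. Equivariance forces $g$ to be odd in $r$, hence $g(\epsilon,r)=r\,G(\epsilon,r^2)$ for a smooth $G$. Taylor expansion around $(0,0)$ yields $G(\epsilon,r^2)=(\epsilon-t)f(\epsilon,r)-r^2 h(\epsilon,r)$, and after absorbing lower-order $\epsilon$-dependence into $f$ one may take $h=h(r)$. Here $f(t,0)\ne 0$ encodes transversality of $\bar{\Gamma}$ to the wall $\mathscr{G}_0^{\mathrm{top}}$, which is available after the refinement of $\pmb{\mathscr{G}}^\bullet$ in Proposition~\ref{prop:ComeagerPaths}; and $h(0)\ne 0$ encodes non-degeneracy of the birth--death, again a codimension-one condition that is generic along paths. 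Establishing this second non-vanishing rigorously from the transversality package of Section~\ref{sec:Tranversality} is where I expect the main technical work, and I would follow the derivation in the proof of Theorem~3.2 of \cite{EF-CLosedGeodesics} adapted to the minimal-surface setting.

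With the local model $g(\epsilon,r)=r\bigl((\epsilon-t)f(\epsilon,r)-r^2 h(r)\bigr)$ in hand, parts (1) and (2) are immediate: the zero set decomposes as the line $\{r=0\}$, giving $\rho_1(s)=(g_s,[v_s])$ defined on all of $I$; and the curve $\{(\epsilon-t)f=r^2 h\}$, which by the implicit function theorem (using $f(t,0)\ne 0$) is smoothly parametrized as $\epsilon-t=r^2 h(r)/f(\epsilon,r)$ and lies on a single side of $t$ determined by $\mathrm{sign}(f(t,0)h(0))$, giving $\rho_2$ after reparametrizing by $s$. The uniqueness clauses (``not the limit point of any other sequence'') follow because these exhaust the components of the zero locus of $g$.

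For part (3), along $\rho_1$ the operators $\mathcal{J}_{g_s,v_s,\iota}$ depend smoothly on $s$; for $\iota\ne\iota_0$ they are invertible at $s=t$ by the defining condition of $\mathcal{W}_0^{\mathrm{top}}$, hence remain so for $s$ near $t$. For $\iota=\iota_0$, the Lyapunov--Schmidt procedure identifies the kernel with that of the linearization of $r\mapsto g(\epsilon,r)$ at $r=0$, which equals $(\epsilon-t)f(\epsilon,0)$ up to non-vanishing factors; this is one-dimensional precisely at $s=t$ and trivial otherwise. Part (4) then follows from spectral flow: a continuous family of self-adjoint Fredholm operators changes the sign of its determinant only when an eigenvalue crosses zero. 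For $\iota\ne\iota_0$ no eigenvalue crosses, so $\mathrm{sign}(\det\mathcal{J}_{g_s,v_s,\iota})$ is constant across $s=t$; for $\iota=\iota_0$ a single simple eigenvalue crosses transversally (transversality being $f(t,0)\ne 0$), producing exactly one sign flip.
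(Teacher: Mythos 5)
Your proposal is essentially the paper's own approach: the paper's proof amounts to invoking the local model $g(\epsilon,r)=r((\epsilon-t)f(\epsilon,r)-r^2h(r))$ from the proof of Theorem 3.2 of \cite{EF-CLosedGeodesics}, and your $\mathbb{Z}_2$-equivariant Lyapunov--Schmidt reduction at $(g_t,[v_0])$ (with $g$ odd in $r$ by the deck-involution symmetry, the anti-invariant kernel identified with $\mathrm{Ker}(\mathcal{J}_{g_t,v,\iota_0})$, and $f(t,0)\neq 0$, $h(0)\neq 0$ the two genericity conditions) is precisely the derivation that citation refers to. The one point you flag as needing work --- the nondegeneracy $h(0)\neq 0$ of the cubic term, which is not literally ``transversality to the wall'' but an independent generic condition along the path --- is indeed not spelled out in the paper either and is delegated to Lemma $5.8$ of \cite{Taubes} and the cited part of \cite{EF-CLosedGeodesics}, so your proposal matches the paper's level of detail.
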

In particular, this suggests that $n_{\pm k}^d(g,v)=0$ unless $d=2^m$, for some $m\in\mathbb{N}$.

$\pi:T_0\rightarrow T$ induces a $2$ to $1$ map on cohomology
\[
\pi^*:H^1(T,\mathbb{Z}_2) \rightarrow H^1(T_0,\mathbb{Z}_2).
\]
$\iota_0$ which classifies $\pi$ will be sent to $0$ by $\pi^*$.
Let $\rho_2(s)=(g_s,[v'_s])$. The following rules tell us how the sign of $\mathrm{det}(\mathcal{J}_{\rho_2(s),\iota})$, which is the sign of a torus in the image of $\rho_2$, is determined by the sign of $\mathrm{det}(\mathcal{J}_{\rho_1(s),\iota_0})$ 
and the sign of $\mathrm{det}(\mathcal{J}_{\rho_1(s),\iota})$, which is the sign of a torus in the image of $\rho_1$ (cf. Lemma $5.11$ \cite{Taubes}):
\begin{itemize}
	\item
	\begin{align}\label{eq:2}
	 \mathrm{sign}(\mathrm{det}(\mathcal{J}_{\rho_2(s)}))=-\mathrm{sign}(\mathrm{det}(\mathcal{J}_{\rho_1(s),\iota_0})). \mathrm{sign}(\mathrm{det}(\mathcal{J}_{\rho_1(s)}))
    \end{align} 
	
	\item Let $\iota \in \mathrm{Im}(\pi^*)$ be a non-zero class, then
	\begin{align}\label{eq:3}
	\mathrm{sign}(\mathrm{det}(\mathcal{J}_{\rho_2(s),\iota}))=\prod_{\kappa:\pi^*(\kappa)=\iota} \mathrm{sign}(\mathrm{det}(\mathcal{J}_{\rho_1(s),\kappa}))
	\end{align}
	\item Let $\iota \in H^1(T,\mathbb{Z}_2)\setminus\mathrm{Im}(\pi^*)$, then
	\begin{align}\label{eq:4}
	\mathrm{sign}(\mathrm{det}(\mathcal{J}_{\rho_2(s),\iota}))=1
	\end{align}
\end{itemize}

These rules tell us how the type of a torus changes as one passes through a non-rigid metric. For example, let $T$ denote an embedded torus of type $+0$ 
(the torus at the bottom left of Figure \ref{fig:fig2}). Then 
\begin{align*}
\mathrm{sign}(\mathrm{det}(\mathcal{J}_{\rho_1(s),\iota_0}))=+1 
\quad \mathrm{and} \quad
\mathrm{sign}(\mathrm{det}(\mathcal{J}_{\rho_1(s)}))=+1.
\end{align*}
By Equation \ref{eq:2}, for the tours on top left of Figure \ref{fig:fig2} the image of trivial element under $\delta$ is $-1$ and by Equations \ref{eq:3} and \ref{eq:4}, the image of other elements is $+1$. 
Therefore, this torus (the tours on top left of Figure \ref{fig:fig2}) is of type $-0$.

\subsubsection{The Invariance of the Count Function}\label{Subsection:InvarianceOfTheCountFunction}
\begin{figure}[h]
	\begin{tabular}{c:cc}
		
		\begin{tikzpicture}
			
			\node[rectangle,
			draw=lightgray,
			text = olive] (r) at (0,0) {	\includegraphics[width=4.2cm,height=2.6cm]{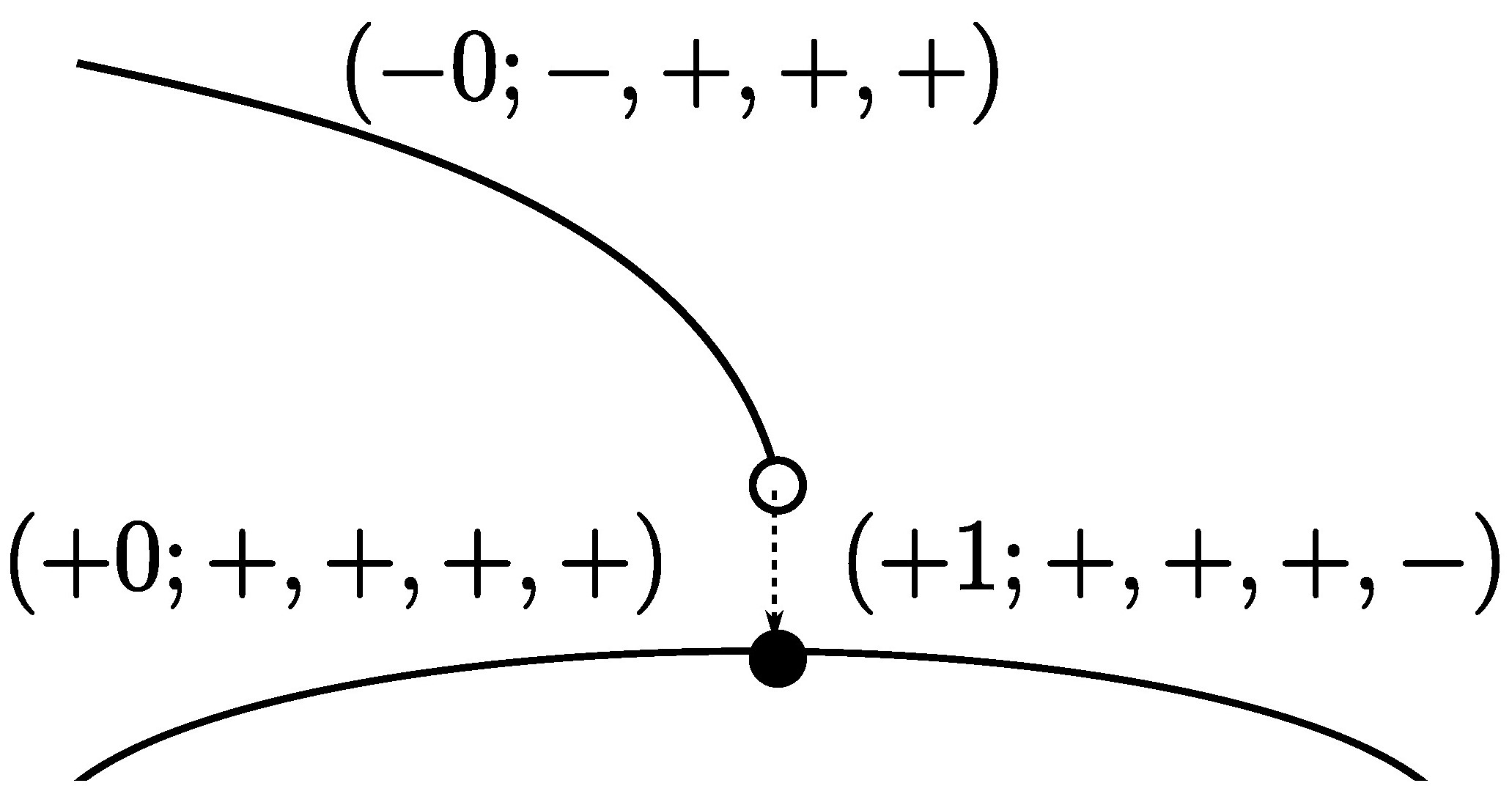}{a}};
			
		\end{tikzpicture}

		& 
		\begin{tikzpicture}
			
			\node[rectangle,
			draw=lightgray,
			text = olive] (r) at (0,0) {	\includegraphics[width=4.2cm,height=2.6cm]{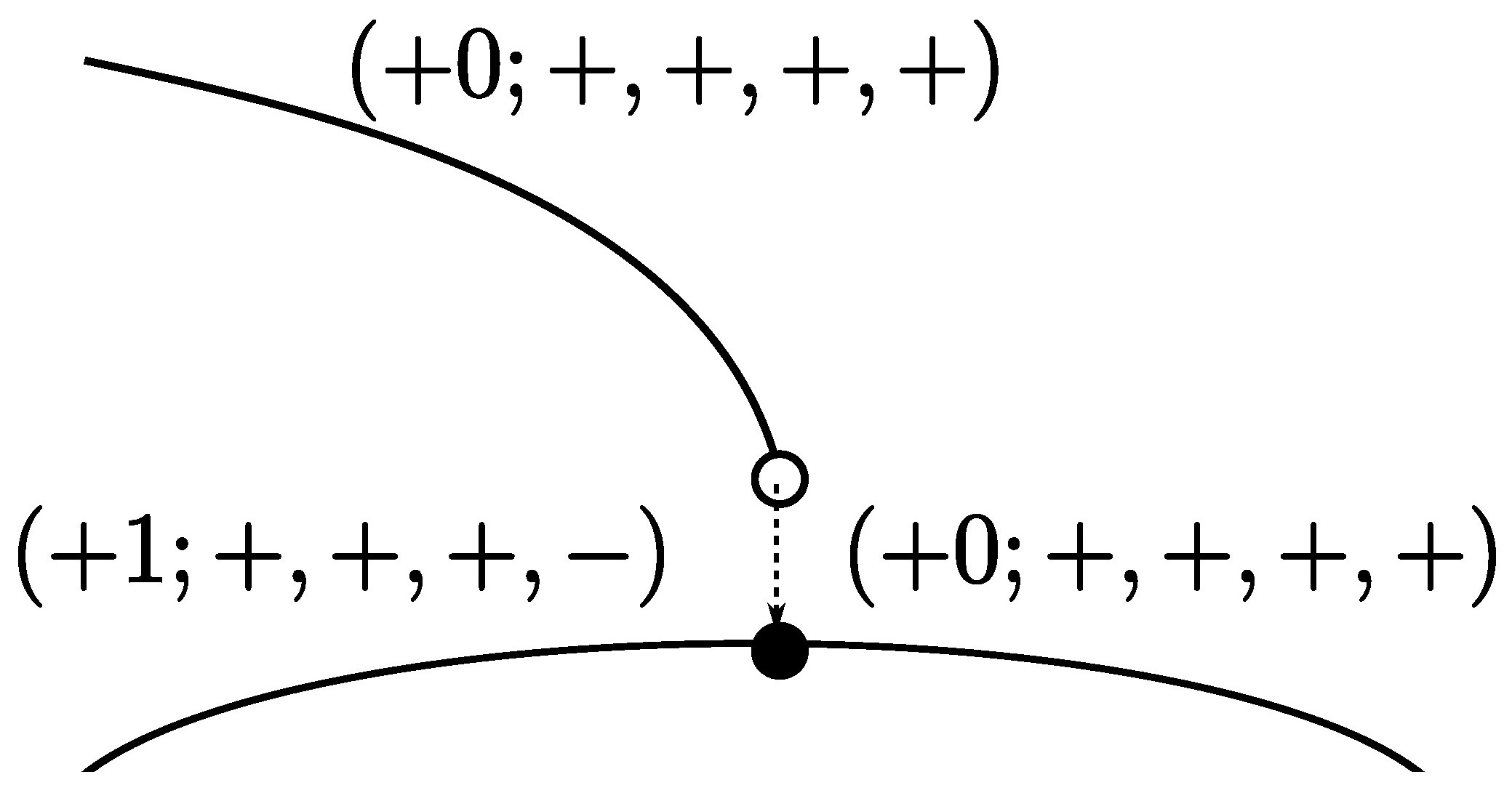}{b}};
			
		\end{tikzpicture}
		&
		\begin{tikzpicture}
			
			\node[rectangle,
			draw=lightgray,
			text = olive] (r) at (0,0) {	\includegraphics[width=4.2cm,height=2.6cm]{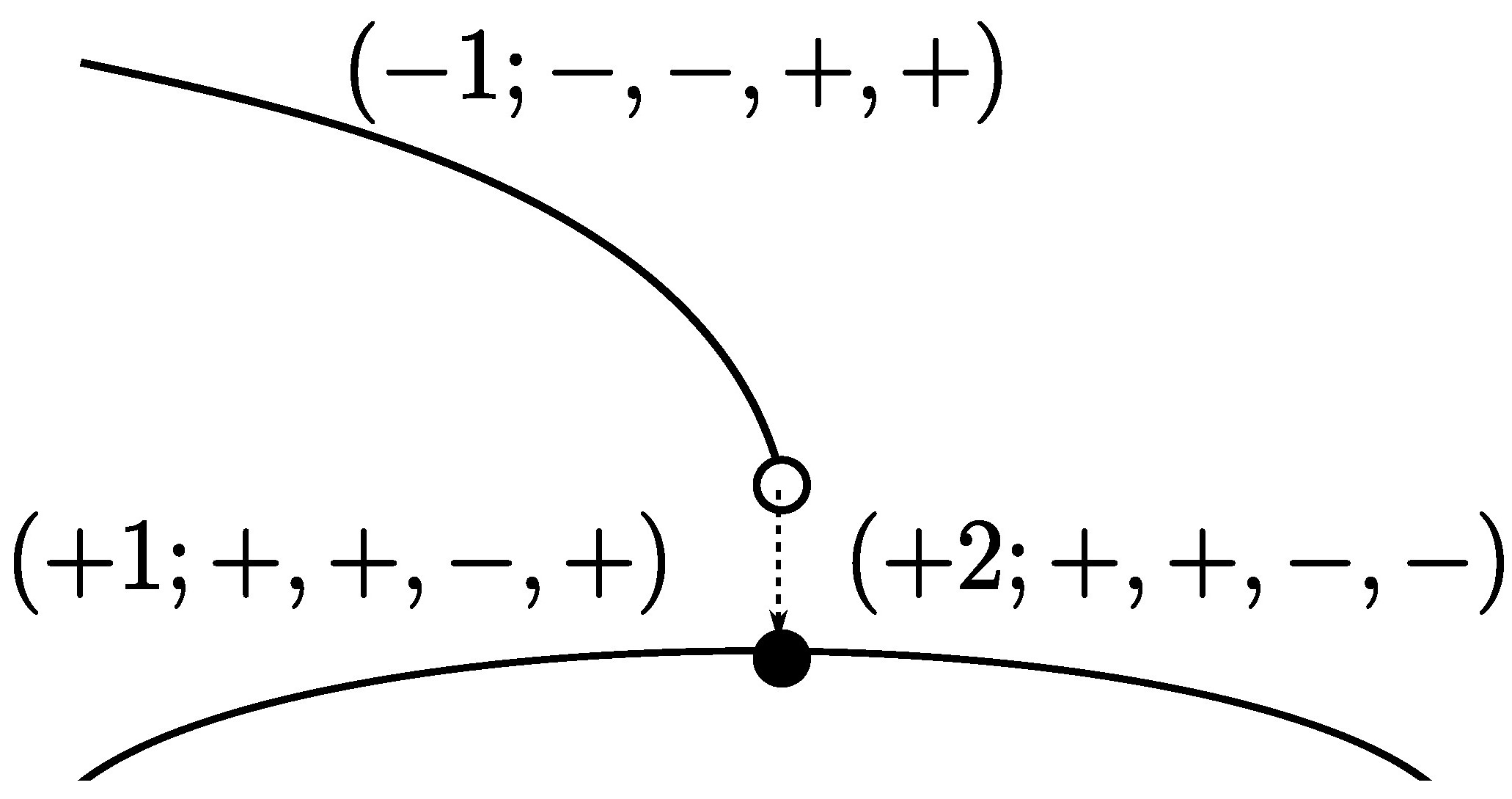}{c}};
			
		\end{tikzpicture}
		\\
		\begin{tikzpicture}
			
			\node[rectangle,
			draw=lightgray,
			text = olive] (r) at (0,0) {	\includegraphics[width=4.2cm,height=2.6cm]{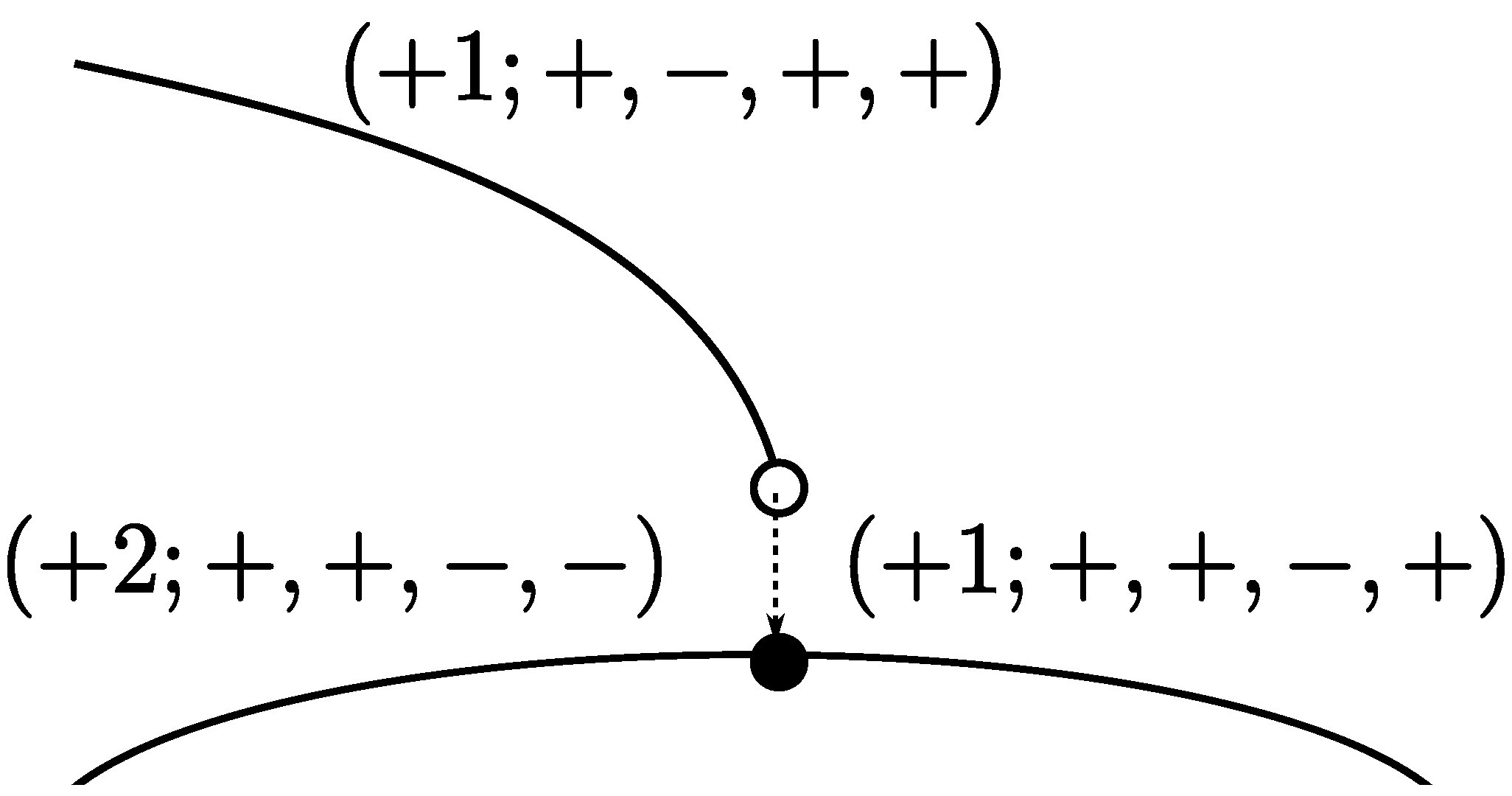}{d}};
			
		\end{tikzpicture}
		& 
		\begin{tikzpicture}
			
			\node[rectangle,
			draw=lightgray,
			text = olive] (r) at (0,0) {	\includegraphics[width=4.2cm,height=2.6cm]{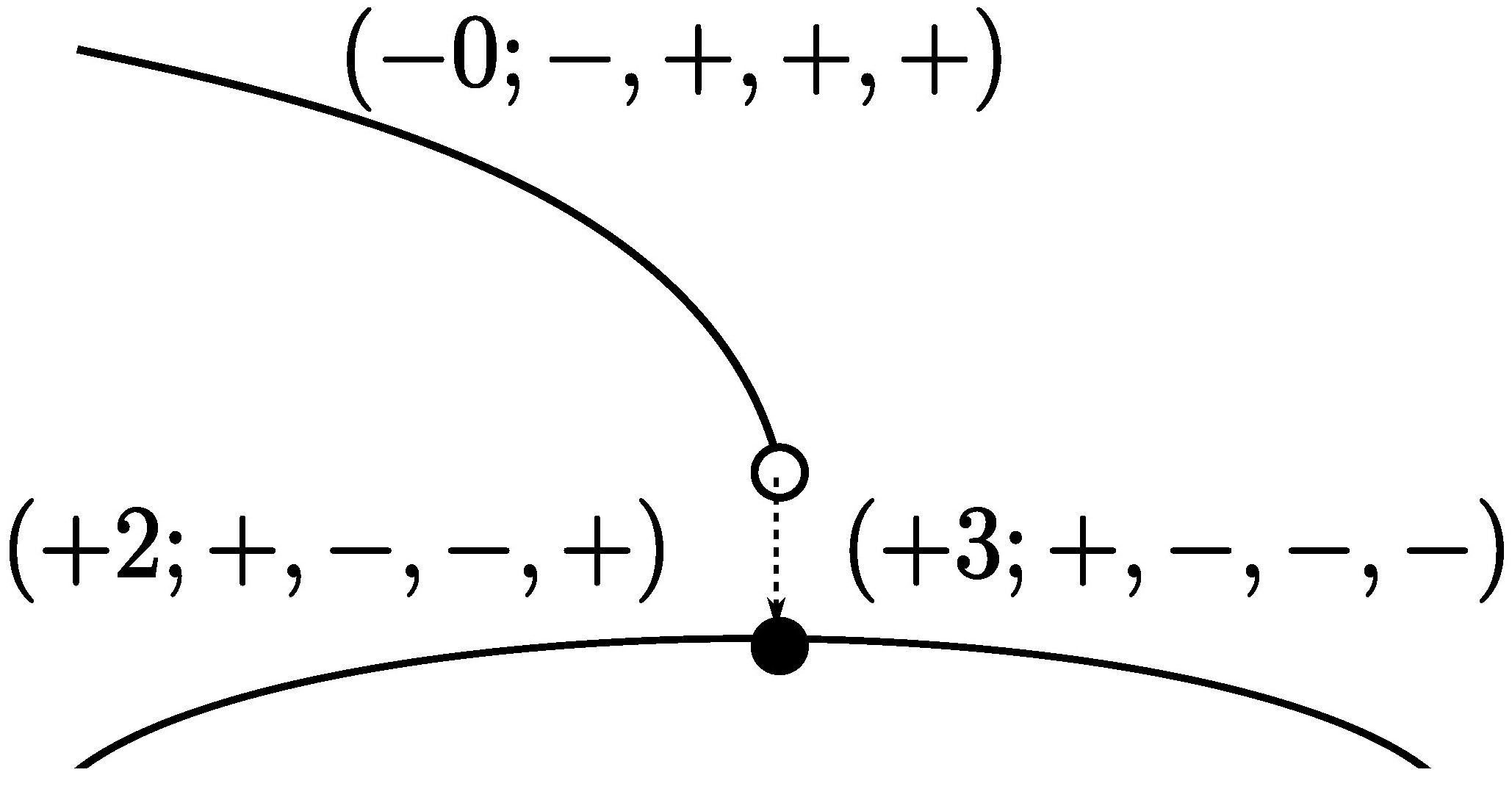}{e}};
			
		\end{tikzpicture}
		&
		\begin{tikzpicture}
			
			\node[rectangle,
			draw=lightgray,
			text = olive] (r) at (0,0) {	\includegraphics[width=4.2cm,height=2.6cm]{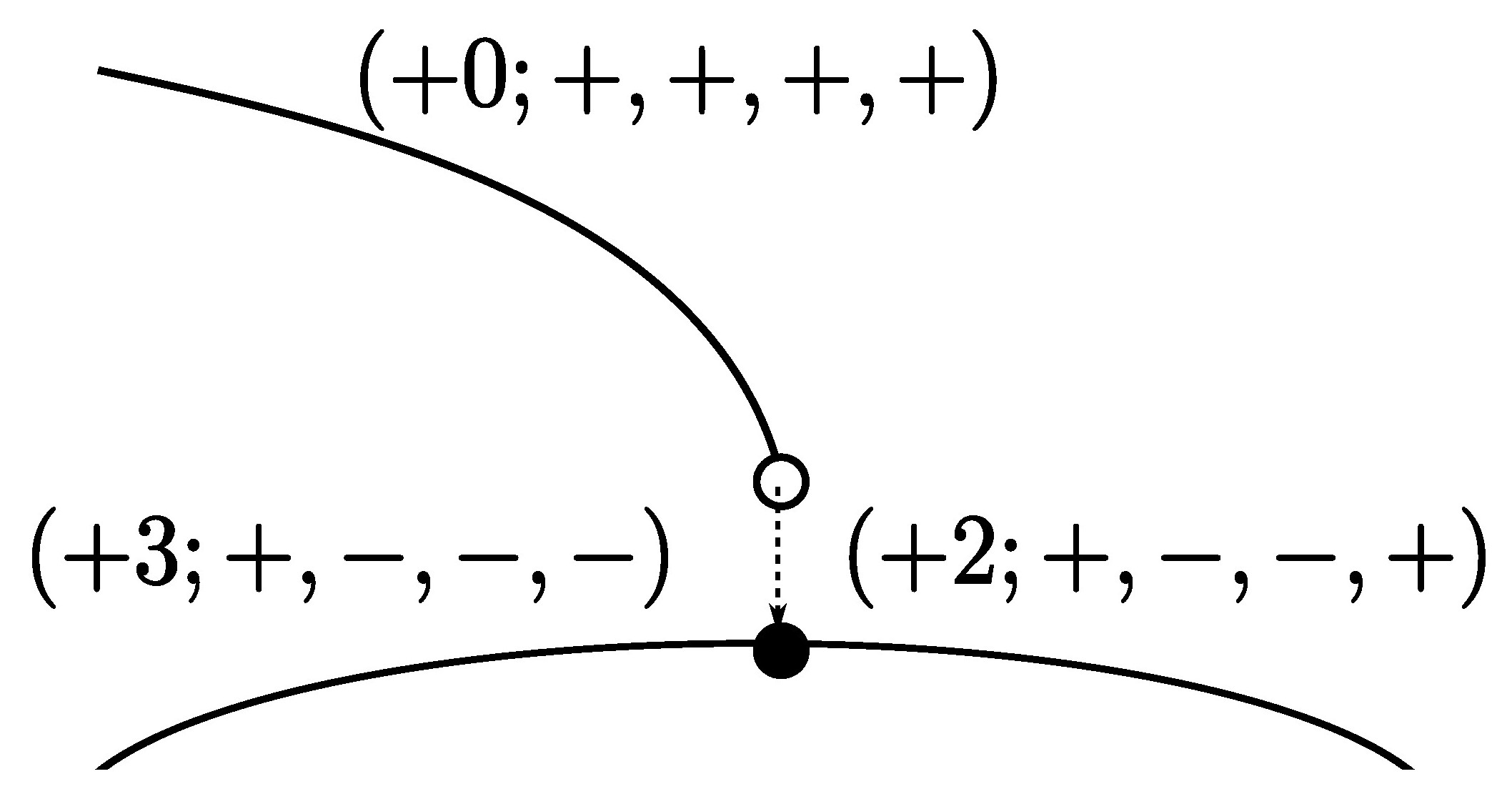}{f}};
			
		\end{tikzpicture}
		\\ 
		\begin{tikzpicture}
			
			\node[rectangle,
			draw=lightgray,
			text = olive] (r) at (0,0) {	\includegraphics[width=4.2cm,height=2.6cm]{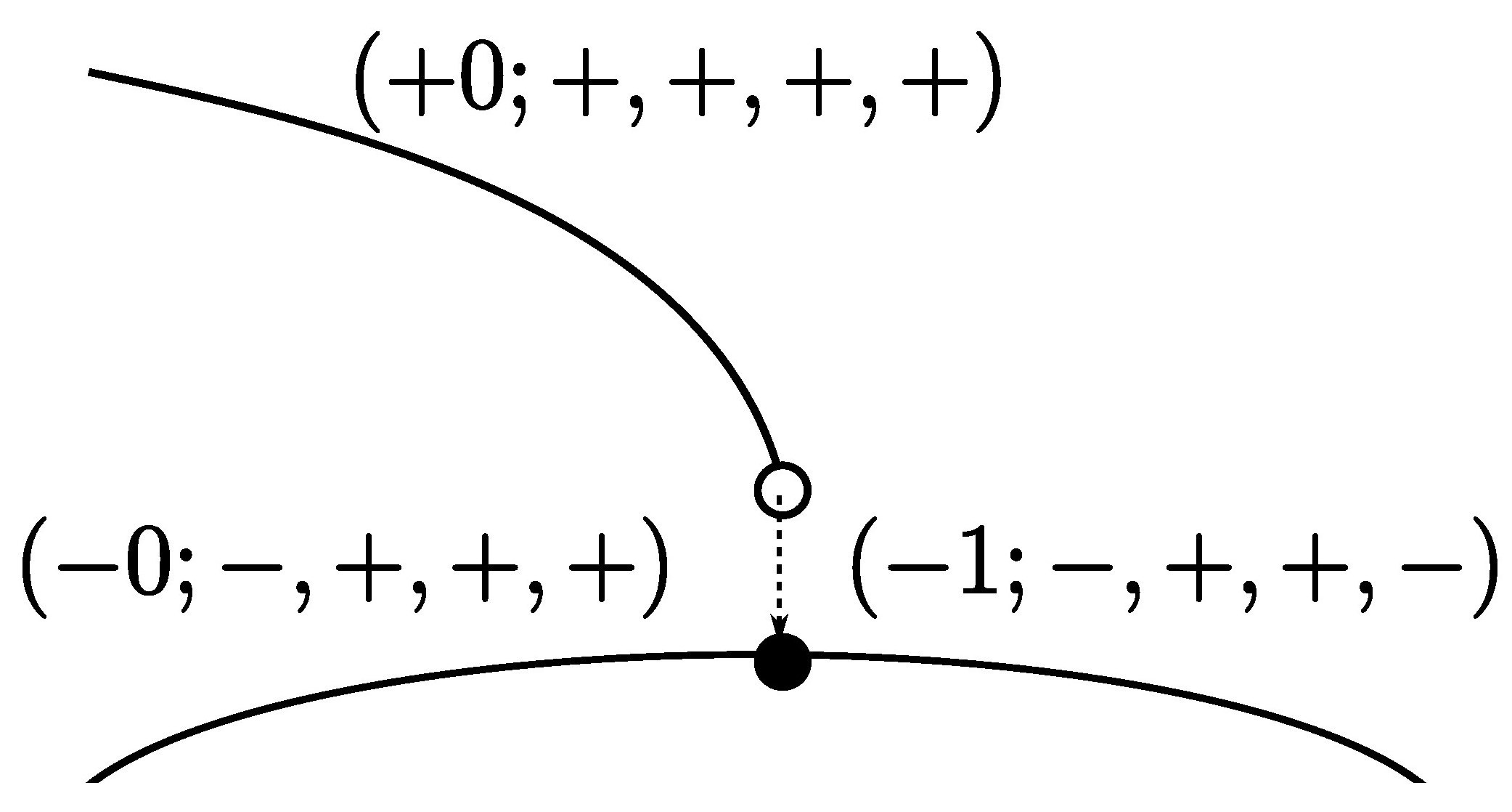}{g}};
			
		\end{tikzpicture}
		& 
		\begin{tikzpicture}
			
			\node[rectangle,
			draw=lightgray,
			text = olive] (r) at (0,0) {	\includegraphics[width=4.2cm,height=2.6cm]{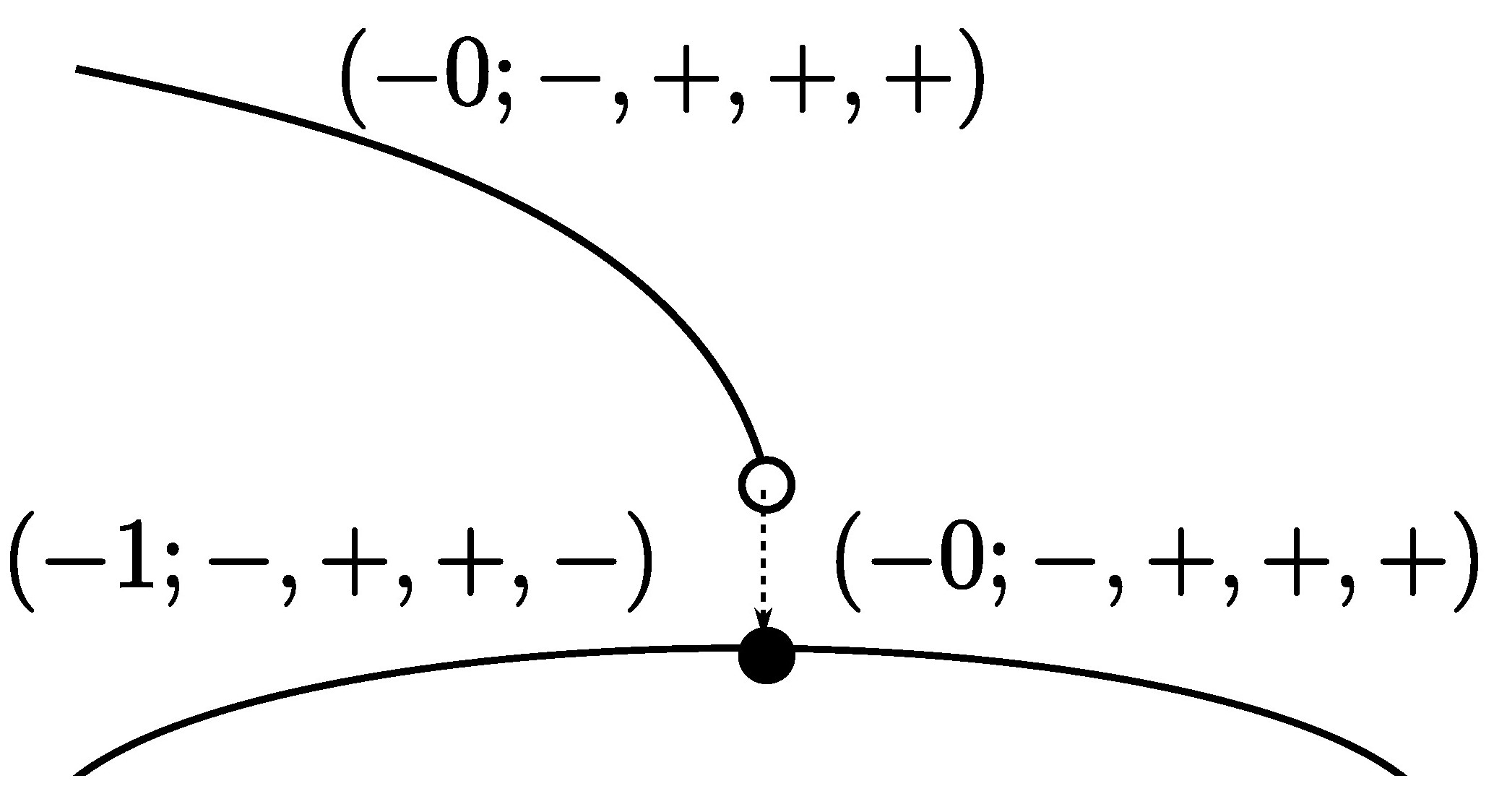}{h}};
			
		\end{tikzpicture}
		&
		\begin{tikzpicture}
			
			\node[rectangle,
			draw=lightgray,
			text = olive] (r) at (0,0) {	\includegraphics[width=4.2cm,height=2.6cm]{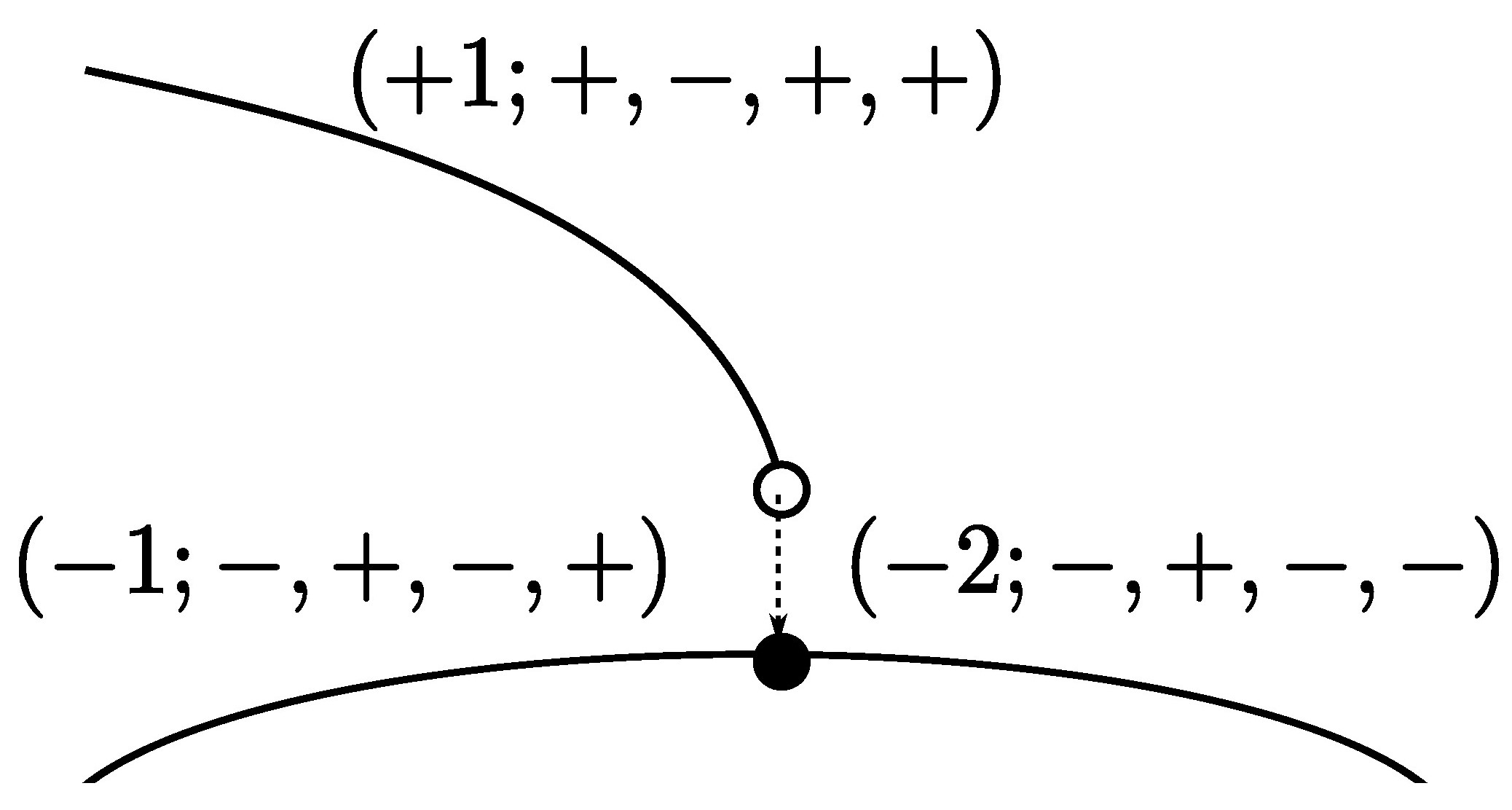}{i}};
			
		\end{tikzpicture}
		\\
		\begin{tikzpicture}
			
			\node[rectangle,
			draw=lightgray,
			text = olive] (r) at (0,0) {	\includegraphics[width=4.2cm,height=2.6cm]{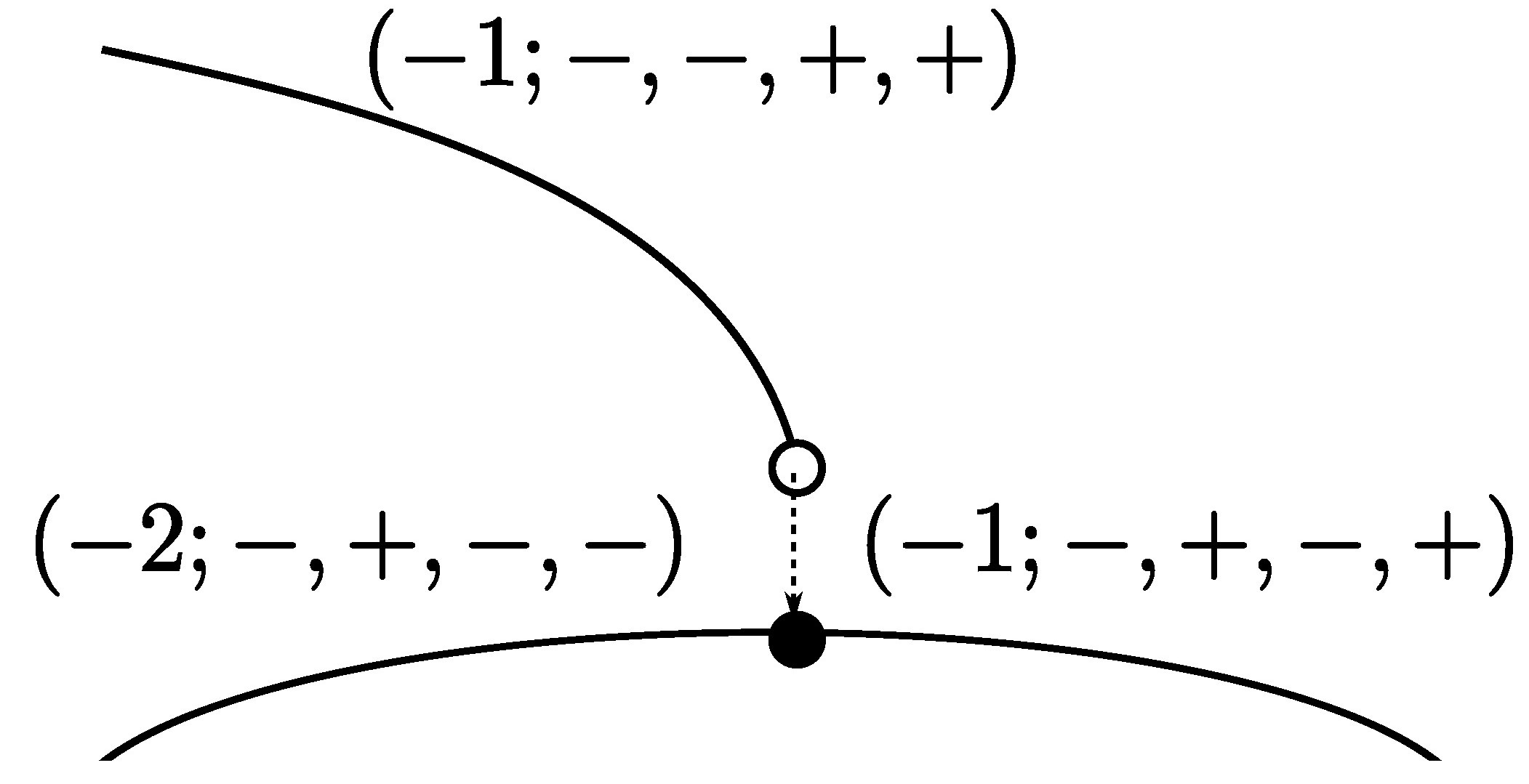}{j}};
			
		\end{tikzpicture}
		& 
		\begin{tikzpicture}
			
			\node[rectangle,
			draw=lightgray,
			text = olive] (r) at (0,0) {	\includegraphics[width=4.2cm,height=2.6cm]{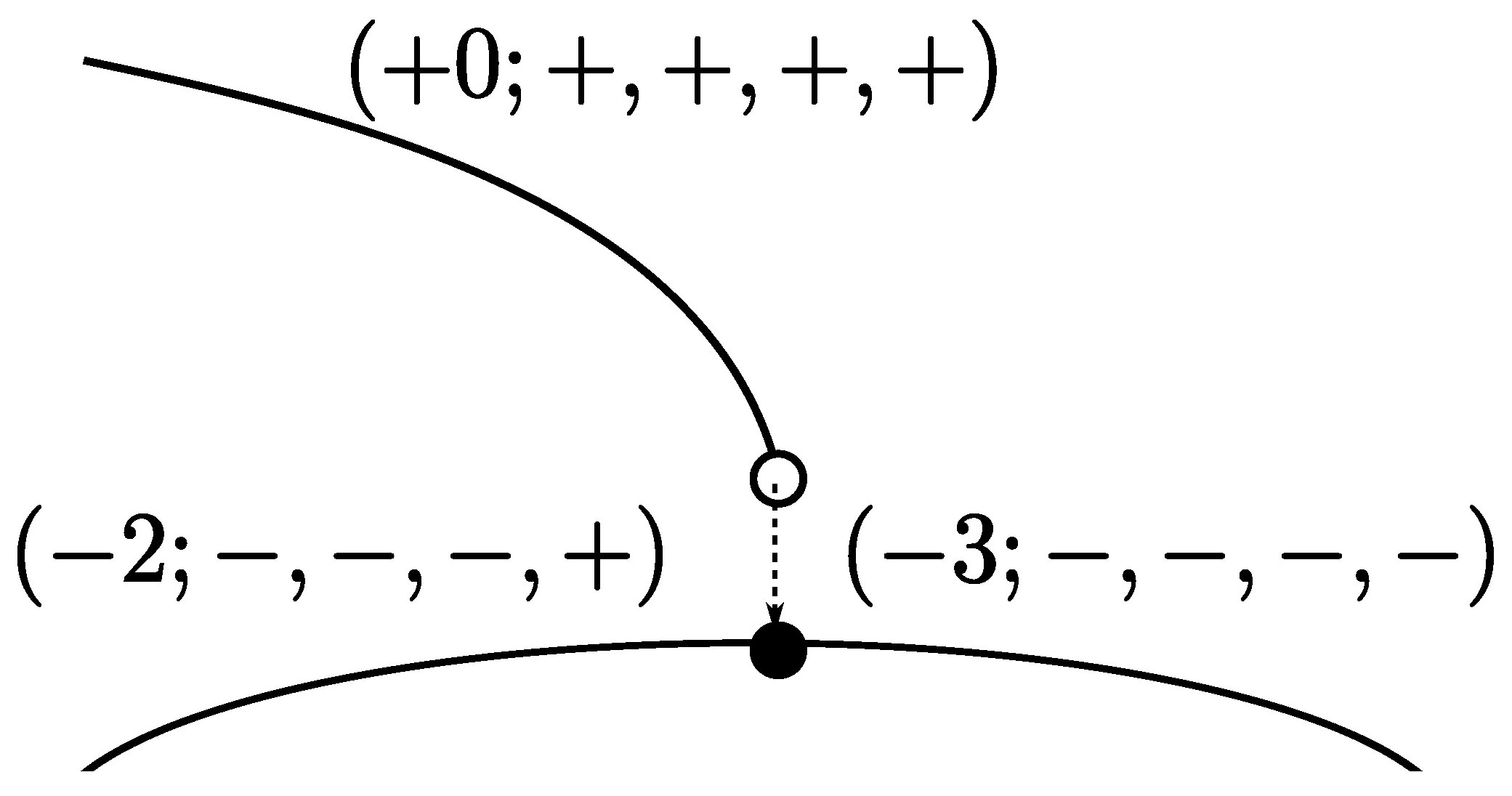}{k}};
			
		\end{tikzpicture}
		&
		\begin{tikzpicture}
			
			\node[rectangle,
			draw=lightgray,
			text = olive] (r) at (0,0) {	\includegraphics[width=4.2cm,height=2.6cm]{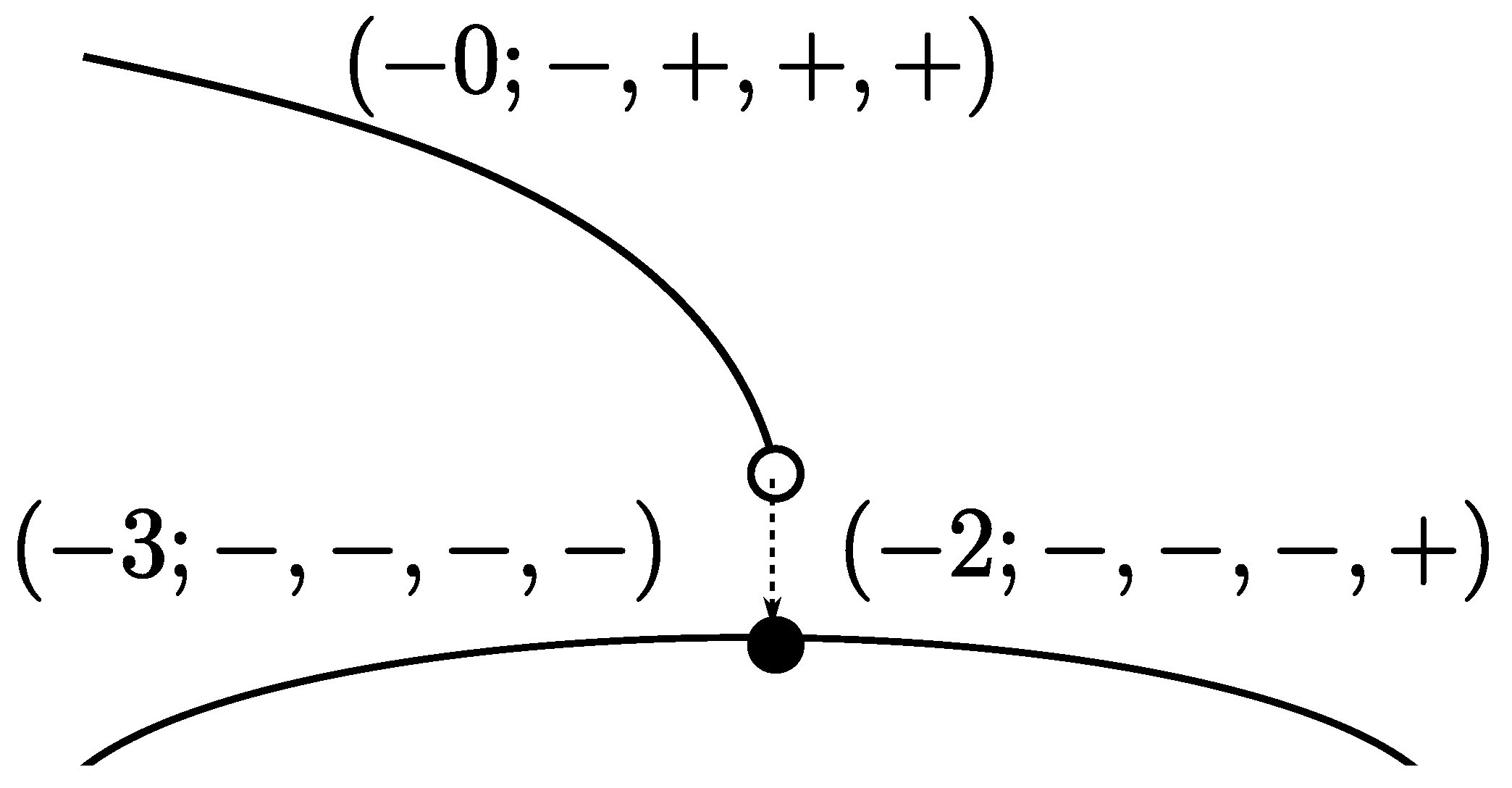}{l}};
			
		\end{tikzpicture}
	\end{tabular}
	\caption{Type of the torus in a neighborhood of $(g_t,[v])$, i.e. in $\rho_1(I)$, and in a neighborhood of $(g_t,[v\circ \pi])$, i.e. in $\rho_2(J)$ is shown in various cases that may occur.}
	\label{fig:fig3}
\end{figure}

Here, we  demonstrate the invariance of the Definition \ref{def:1}. 

\begin{thm}\label{thm:Well-definedness}
	Let $g_0,g_1\in\mathscr{G}^\bullet$ and 
	\[
	\bar{\Gamma}:[0,1]\rightarrow\mathscr{G}
	\] 
	denote a path in $\pmb{\mathscr{G}}^\bullet$ with $\bar{\Gamma}(i)=g_i$, $i=0,1$.
	Take a compact and open subset $\bar{\mathcal{U}}$ in $\mathscr{L}(\bar{\Gamma})$ and let $\mathcal{U}_t:=\bar{\mathcal{U}}\cap\mathscr{L}(\bar{\Gamma}(t))$.
	Then
	\[
	n(g_0,\mathcal{U}_0)=n(g_1,\mathcal{U}_1).
	\]
\end{thm}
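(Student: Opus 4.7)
The strategy is to show that the function $\phi:[0,1]\to\mathbb{Z}$, $\phi(t):=n(g_t,\mathcal{U}_t)$, is locally constant and therefore constant on the connected interval $[0,1]$. By Proposition \ref{prop:ComeagerPaths} applied to $\bar{\Gamma}\in\pmb{\mathscr{G}}^\bullet$, there is a finite set $\{t_1<\cdots<t_m\}\subset (0,1)$ outside of which $g_t$ is super-rigid and the fiber $\mathrm{pr}^{-1}(t)\cap\bar{\mathcal{U}}$ is finite, and which contains every critical value of $\mathrm{pr}$ and every image of a weak-limit point of $\tilde{\mathscr{Y}}(\bar{\Gamma})$. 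It therefore suffices to verify that $\phi$ does not jump at any $t_k$.

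The first step handles regular intervals: on each $(t_k,t_{k+1})$, the subset $\mathrm{pr}^{-1}((t_k,t_{k+1}))\cap\bar{\mathcal{U}}$ is a finite disjoint union of graphs of smooth local sections of $\mathrm{pr}$, each giving a smooth family of embedded tori $(g_s,[v_s])$. Super-rigidity of $g_s$ forces $\mathcal{J}_{g_s,v_s,\iota}$ to have trivial kernel for every $\iota\in H^1(T,\mathbb{Z}_2)$, so $\mathrm{sign}(\det\mathcal{J}_{g_s,v_s,\iota})$ is locally constant, and consequently the type $\pm k$ of $v_s$ is locally constant. Compactness and openness of $\bar{\mathcal{U}}$ imply that the set of covers of $v_s$ lying in $\bar{\mathcal{U}}$ is locally constant as well, so $\phi$ is constant on each interval $(t_k,t_{k+1})$.

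The second step treats a birth-death point $t_k$. Proposition \ref{prop:LocalModelCriticalPoint} supplies two local sections $\rho_\pm(s)=(g_s,[v_\pm^s])$ defined only on one side of $t_k$ and meeting at $(g_{t_k},[v])$; their signs of $\det\mathcal{J}$ agree for every $\iota\ne 0$ and are opposite at $\iota=0$, so $v_+^s$ and $v_-^s$ have types $+k$ and $-k$ with the same $k$. Because $\bar{\mathcal{U}}$ is both compact and open, any cover of $v_+^s$ in $\bar{\mathcal{U}}$ is matched by the corresponding cover of $v_-^s$ in $\bar{\mathcal{U}}$; Definition \ref{def:1} yields $n_{+k}^d(g_s,v_+^s)+n_{-k}^d(g_s,v_-^s)=0$ for every $d$, so the total contribution of these pairs is zero and $\phi$ does not jump at $t_k$.

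The third step treats a weak-limit point $t_k$. Proposition \ref{prop:LocalModelWeakLimitPoint} gives a family $\rho_1$ over $I=(t_k-\epsilon,t_k+\epsilon)$ through $(g_{t_k},[v])$, together with a family $\rho_2(s)=(g_s,[v_s'])$ over one side $J$ of $t_k$ with $v_s'\to v_0=v\circ\pi$; as $s$ crosses $t_k$ only $\delta_{v_s}(\iota_0)$ flips, so the type of $v_s$ changes in a prescribed way. The sign rules \eqref{eq:2}--\eqref{eq:4} then determine the type of $v_s'$ on $J$, producing the configurations of Figure \ref{fig:fig3}. For each configuration and each $d\in\{1,2,4\}$, one verifies the balancing identity
\[
n_{\epsilon_L}^d(g_s,v_s)\big|_{s<t_k}=n_{\epsilon_R}^d(g_s,v_s)\big|_{s>t_k}+n_{\epsilon'}^d(g_s,v_s')\big|_{s>t_k},
\]
where $\epsilon_L,\epsilon_R,\epsilon'$ denote the relevant types. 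Compactness and openness of $\bar{\mathcal{U}}$ ensure that either all covers entering this local picture lie in $\bar{\mathcal{U}}$ or none do, so this identity transfers to $\phi$. The main obstacle is precisely this combinatorial check: running through the twelve cases of Figure \ref{fig:fig3} and confirming, for $d=1,2,4$, the cancellation dictated by Definition \ref{def:1}; the definition of the weights $n_{\pm k}^d$ was in fact designed by Taubes exactly to enforce these balances. Once Steps 1--3 are assembled, $\phi$ is locally constant on $[0,1]$, hence $\phi(0)=\phi(1)$, which is the desired equality.
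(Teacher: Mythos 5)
Your overall strategy — show $\phi(t)=n(g_t,\mathcal{U}_t)$ is locally constant, decomposing $[0,1]$ by the finitely many exceptional parameters $t_k$ supplied by Proposition \ref{prop:ComeagerPaths} — is the same as the paper's, and Steps 1 and 2 are essentially sound: super-rigidity on regular intervals gives local constancy, and at a birth-death point the pair $(v_+^s,v_-^s)$ has opposite-signed contributions at every degree, so the cancellation $n_{+k}^d+n_{-k}^d=0$ (which holds for all $d$ in Definition \ref{def:1}) does the job.

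The gap is in Step~3. The balancing identity you write,
\[
n_{\epsilon_L}^d(g_s,v_s)\big|_{s<t_k}=n_{\epsilon_R}^d(g_s,v_s)\big|_{s>t_k}+n_{\epsilon'}^d(g_s,v_s')\big|_{s>t_k},
\]
puts the same degree $d$ on all three terms, and this is not the correct relation. At a weak-limit parameter $t_k$ the branch $\rho_2(s)=(g_s,[v_s'])$ consists of embeddings of $T_0$, where $\pi:T_0\rightarrow T$ is the degree-$2$ cover; hence a degree-$d$ covering $\psi:\Sigma\rightarrow T_0$ gives an element $v_s'\circ\psi\in\mathscr{L}(g_s)$ that, as $s\to t_k$, merges with the degree-$2d$ cover $v_s\circ\pi\circ\psi$ of $v_s$. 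So the conserved quantity near a given cluster of immersions relates $n^{2d}(\cdot,v_s)$ on one side with $n^{2d}(\cdot,v_s)+n^{d}(\cdot,v_s')$ on the other — the two branches enter at degrees $2d$ and $d$ respectively, exactly as in the paper's relations $n_{-0}^{d}+n_{+0}^{2d}=n_{+1}^{2d}$, etc. Moreover, these constraints are not confined to $d\in\{1,2,4\}$: consistency over all $d$ is what forces $n^{d}=0$ for $d\neq 1,2,4$ (and, in particular, why the $d=8$ level had to be checked in the paper). Finally, the twelve-case combinatorial verification that you declare ``one verifies'' is precisely the content of this step; asserting that the weights ``were designed to enforce these balances'' is an appeal to the conclusion rather than a proof of it. In the paper this step is carried out explicitly: the diagrams of Figure \ref{fig:fig3} are translated into a linear system, and solving it (with the normalization $n_2=n_4=n_8=0$) produces the formulas of Definition \ref{def:1}, which simultaneously establishes invariance and justifies the definition.
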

\renewcommand*{\proofname}{Proof}
\begin{proof}
	The proof is a rewrite of the proof of Theorem $3.2$ of \cite{EF-CLosedGeodesics} with some modifications suitable for minimal tori.
	Let us denote by $g_t$ the image of $t$ under $\bar{\Gamma}$.
	Note that each element $u\in\bar{\mathcal{U}}$ factors as $u=v\circ\pi$ where $\pi$ is a covering map and $v$ is an almost embedding.
	Therefore, 
	$$\bar{\mathcal{U}}=\cup_{k}\,\bar{\mathcal{U}}^k,$$
	where $\bar{\mathcal{U}}^k$ denotes the subset of all degree-$k$ covers of somewhere injective torus.
	Let
	\[
	\mathcal{V}^k:=\{(g,v)\in\mathscr{Y}(\bar{\Gamma})\,|\,(g,v\circ\pi_k)\in\mathscr{L}(\bar{\Gamma})\}
	\quad\text{and}\quad
	\mathcal{V}:=\cup_k\,\mathcal{V}^k
	\]
	where $\pi_k$ denotes a degree-$k$ covering map.
	Note that $\mathcal{V}$ is an open subset.
	
	Denote by $\{t_1,...,t_m\}$, $t_1<...<t_m$, the set of all critical points of the projection map
	\[
	\mathrm{pr}:\tilde{\mathscr{Y}}(\bar{\Gamma})\rightarrow [0,1]
	\]
	and also the images under $\mathrm{pr}$ of elements $(g_{t_i},[v])\in\tilde{\mathscr{Y}}(\Gamma)$ that are the weak limit point of a sequence of elements of $\tilde{\mathscr{Y}}(\bar{\Gamma})$.
	Finiteness of this subset follows from Proposition \ref{prop:ComeagerPaths}.
	
	Let $t\in[0,1]\setminus\{t_1,...,t_m\}$. Then by Proposition \ref{prop:ComeagerPaths}, elements of $\text{pr}^{-1}(t)$ consists of finitely many super-rigid elements.
	Therefore, for $(g_t,v_t)\in \mathrm{pr}^{-1}(t)$, the operator $\mathcal{J}_{g_t,[v_t\circ\pi_k]}$ is bijective.
	Consequently, there is a local inverse for $\mathrm{pr}$, i.e. a map 
	\[
	\mathfrak{z}:(t-\epsilon,t+\epsilon)\rightarrow\tilde{\mathscr{Y}}(\bar{\Gamma})
	\]
	with $\text{pr}\circ\mathfrak{z}=\text{id}$.
	This shows that for any covering map $\pi$, a neighborhood of $(g_t,[v_t\circ\pi])$ in $\tilde{\mathscr{L}}(\bar{\Gamma})$ consists of elements of the form $(g_s,[v_s\circ\pi])$, with $s\in(t-\epsilon,t+\epsilon)$.
	Therefore, $n(g_{s},\mathcal{U}_s)$ remains invariant as $s$ varies in $(t-\epsilon,t+\epsilon)$.
	
	Consider the case where $t=t_k$ and $(t,[v])\in\text{pr}^{-1}(t)$.
	If $t$ be a critical point of $\text{pr}$, then since for any covering map $\pi$,
	\[
	\mathcal{J}_{g_t,v\circ\pi}\cong\pi^*\mathcal{J}_{g_t,v}
	\]
	and also since $\mathrm{Ker}\,\mathcal{J}_{g_t,v}$ is $1-$dimensional, c.f. Subsection \ref{Subsection:LocalModel_CriticalPoint}, it follows that 
	\[
	\mathrm{dim}\,\mathrm{Ker}\,\mathcal{J}_{g_t,v\circ\pi}=1
	\quad\text{and}\quad
	\mathrm{dim}\,\mathrm{Coker}\,\mathcal{J}_{g_t,v\circ\pi}=1
	\]
	The image of $\frac{\partial\bar{\Gamma}}{\partial s}\Big|_{s=t}$ under $D_1H(g_t,v)$, c.f. Subsection \ref{Subsection:LocalModel_CriticalPoint}, is non-trivial in $\mathrm{Coker}\,\mathcal{J}_{g_t,v}$.
	From 
	$$D_1H_{g_t,v\circ\pi}\cong\pi^*D_1H_{g_t,v}$$
	it follows that the image of $\frac{\partial\bar{\Gamma}}{\partial s}\Big|_{s=t}$ under $D_1H_{g_t,v\circ\pi}$ is non-trivial in $\mathrm{Coker}\,\mathcal{J}_{g_t,v\circ\pi}$
	Therefore, the followings hold:
	\begin{itemize}
		\item $\tilde{\mathscr{L}}(\bar{\Gamma})$ is a $1-$dimensionl manifold near $(g_t,[v\circ\pi])$.
		\item Near $(g_t,[v\circ\pi])$, any element of $\tilde{\mathscr{L}}(\bar{\Gamma})$ is obtained by composing an element of $\tilde{\mathscr{Y}}(\bar{\Gamma})$ in a neighborhood of $(g_t,[v])$ by the covering map $\pi$.
		\item $(g_t,[v_t\circ\pi])$ is not a limit point of any sequence in $\tilde{\mathscr{L}}(\bar{\Gamma})\setminus\{(g_s,[v_{\pm 1}^s\circ\pi])\}_{s\in I}$, where $v_{\pm 1}^s$ and $I$ are introduced in Proposition \ref{prop:LocalModelCriticalPoint}.
	\end{itemize}
    
    Let $t=t_k$ be the image under $\mathrm{pr}$ of an element $(g_{t_k},[v])\in\tilde{\mathscr{Y}}$ that is a weak limit point.
    By the discussion of Subsection \ref{Subsection:LocalModel_WeakLimitPoint}, if $d$ is odd, then $\mathrm{Ker}\,\mathcal{J}_{g_t,v\circ\pi_d}=0$.
    This implies that the only tori in a neighborhood of $(g_t,v\circ\pi_d)$ in $\mathscr{L}(\bar{\Gamma})$ are of the form $(g_s,v_s\circ\pi_d)$ for $s\in I$.
    If $d$ is even, since 
    \[
    \mathcal{J}_{g_t,v\circ\pi_d}\cong\pi_{d/2}^*\mathcal{J}_{g_t,v\circ\pi_2},
    \]
    the only tori in a neighborhood of $(g_t,v\circ\pi_d)$ in $\mathscr{L}(\bar{\Gamma})$ are of the form $(g_s,v'_s\circ\pi_{d/2})$ for $s\in J$ and $(g_s,v_s\circ\pi_d)$ for $s\in I$.
    Here, $(g_s,v'_s)$ is in the image of the map $\rho_2$ of Proposition \ref{prop:LocalModelWeakLimitPoint} and $(g_s,v_s)$ is the image of the map $\rho_1$ of Proposition \ref{prop:LocalModelWeakLimitPoint}.

Figure \ref{fig:fig3} shows all possible situations that can occur in a local model around $(g_t,[v])$. Diagrams $(a-f)$ of Figure \ref{fig:fig3} imply the following relations for $n_{\pm}^2$:
\begin{align*}
	-1+n_{+0}^2=n_{+1}^2, \quad  -1+n_{+1}^2=n_{+2}^2, \quad -1+n_{+2}^2=n_{+3}^2.
\end{align*}
Additionally, diagrams $(g-l)$ imply:
\begin{align*}
	+1+n_{-0}^2=n_{-1}^2, \quad  +1+n_{-1}^2=n_{-2}^2, \quad +1+n_{-2}^2=n_{-3}^2.
\end{align*}
Let $d\ge 2$. Diagrams $a$ and $b$ of Figure \ref{fig:fig3} imply: 
\[
n_{-0}^{d}+n_{+0}^{2d}=n_{+1}^{2d}, \quad n_{+0}^d+n_{+1}^{2d}=n_{+0}^{2d}
\]
from which it follows that $n_{-0}^d=-n_{+0}^d$. Similarly, we have:
\[
n_{+k}^d=-n_{-k}^d, \quad k=0,1,2,3.
\]
Set $n_{+0}^2=n_2$, then:
\begin{align*}
	n_{+1}^2=n_2-1, \quad n_{+2}^2=n_2-2, \quad n_{+3}^2=n_2-3. \\
\end{align*}
The following relations also hold.
\begin{align*}
	n_{-0}^2+n_{+0}^4=n_{+1}^4, \quad  n_{-1}^2+n_{+1}^4=n_{+2}^4, \quad n_{-0}^2+n_{+2}^4=n_{+3}^4. \\
\end{align*}
Set $n_{+0}^4=n_4$, then:
\begin{align*}
	n_{+1}^4=n_4-n_2, \quad n_{+2}^4=n_4-2n_2+1, \quad n_{+3}^4=n_4-3n_2+1. \\
\end{align*}
By setting $n_{+0}^8=n_8$, a similar calculations shows that:
\begin{align*}
	n_{+1}^8=n_8-n_4, \quad n_{+2}^8=n_8-2n_4+n_2, \quad n_{+3}^8=n_8-3n_4+n_2. \\
\end{align*}
Let $n_2=n_4=n_8=0$, then $n_{\pm k}^8=0$, $k=0,1,2,3$. 
Moreover:
\begin{align*}
	& n_{+1}^2=-1, \quad n_{+2}^2=-2, \quad n_{+3}^2=n-3., \\
	& n_{+1}^4=0, \quad n_{+2}^4=1, \quad n_{+3}^4=1.
\end{align*}
A similar line of calculations also suggests that we can assume $n_{\pm k}^{2^d}=0$ for any $d\ge 3$. Thus:
\[
n_{\pm k}^d=
\begin{cases}
	\epsilon, \quad & \mathrm{if}\; d=1, \\
	\pm k, \quad & \mathrm{if}\; d=2, \\
	\pm [\frac{k}{2}], \quad & \mathrm{if}\; d=4, \\
	0, \quad & \mathrm{otherwise}.
\end{cases}
\]
\end{proof}

\subsection{Counting Function For Non-generic Metrics}\label{Subsection:CountingFunctionInGeneral}
In the previous subsection we defined the count function $n(g,\mathcal{U})$ when $g$ is super-rigid and showed that this definition is well-defined, c.f.  Theorem \ref{thm:Well-definedness}, i.e. it dose not change by a perturbation of the metric.

Let $g\in\mathscr{G}$ be a general metric and $\mathcal{U}$ be a compact and open subset of $\mathscr{L}(g)$, there are bounded open subsets $\mathscr{U}$ and $\mathscr{U}'$ of $C^{2,\alpha}(T,M)$ such that 
\[
\overline{\mathscr{U}}\subset \mathscr{U}' \quad\text{and}\quad
\mathcal{U}=\mathscr{L}(g)\cap \mathscr{U}=\mathscr{L}(g)\cap \mathscr{U}'.
\]
Moreover, the discussion at the beginning of Section $4.1$ of \cite{EF-CLosedGeodesics} shows that
there is an open and path-connected subset $U$ of $\mathscr{G}$ containing $g$ such that for all $g'\in U$, $\mathcal{U}_g:=\mathscr{L}(g')\cap\mathscr{U}$ is open and compact.

\begin{defi}[Definition $4.1$ of \cite{EF-CLosedGeodesics}]
	Let $g\in\mathscr{G}$ and $\mathcal{U}$ be a compact and open subset of $\mathscr{L}(g)$.
	Let $g'$ be an arbitrary metric in $\mathscr{G}^\bullet\cap U$ and $\mathcal{U}':=\mathscr{L}(g')\cap\mathscr{U}$. Then the weight associated to $g$ and $\mathcal{U}$ is defined as follows:
	\[
	n(g,\mathcal{U}):=n(g',\mathcal{U}')
	\]
\end{defi}

By Theorem $4.2$ of \cite{EF-CLosedGeodesics}, $n(g,\mathcal{U})$ is well-defined:

\begin{thm}[Theorem $4.2$ of \cite{EF-CLosedGeodesics}]
	Let $g\in\mathscr{G}$ and $\mathcal{U}$ be a compact and open subset of $\mathscr{L}(g)$. Then $n(g,\mathcal{U})$ is well-defined. 
	Moreover, if $\bar{\Gamma}\in\pmb{\mathscr{G}}$ and $\bar{\mathcal{U}}$ be a compact and open subset of $\mathscr{L}(\bar{\Gamma})$, then 
	\[
	n(g_0,\mathcal{U}_0)=n(g_1,\mathcal{U}_1)
	\]
	where, $\bar{\Gamma}(i)=g_i$ and $\mathcal{U}_i:=\bar{\mathcal{U}}\cap\mathscr{L}(g_i)$, $i=0,1$.
\end{thm}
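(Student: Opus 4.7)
The plan is to follow closely the strategy of Theorem $4.2$ in \cite{EF-CLosedGeodesics}, adapted to the current setting. The proof splits into two parts: first, showing that $n(g,\mathcal{U})$ is independent of the auxiliary choice $g'\in\mathscr{G}^\bullet\cap U$, and second, upgrading the invariance of Theorem \ref{thm:Well-definedness} from generic paths to arbitrary paths.

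For the well-definedness, I would first argue that $\mathscr{G}^\bullet\cap U$ is non-empty and path-connected after a small perturbation. Since $U$ is open and $\mathscr{G}^\bullet\subset \mathscr{G}$ is comeager by Theorem \ref{Theorem-A}, the intersection is dense in $U$; since $U$ was chosen path-connected, any two points $g_1',g_2'\in\mathscr{G}^\bullet\cap U$ can be joined by a continuous path $\bar{\Gamma}_0:[0,1]\to U$. Using that $\pmb{\mathscr{G}}^\bullet$ is comeager in $\pmb{\mathscr{G}}$ (Proposition \ref{PathOfSuper-rigidMetrics}) together with the openness of $U$, I would perturb $\bar{\Gamma}_0$ rel endpoints to a path $\bar{\Gamma}\in\pmb{\mathscr{G}}^\bullet$ with $\mathrm{Im}(\bar{\Gamma})\subset U$. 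Setting $\bar{\mathcal{U}}:=\mathscr{L}(\bar{\Gamma})\cap\mathscr{U}$, one checks that $\overline{\mathscr{U}}\subset\mathscr{U}'$ together with the compactness of $\mathcal{U}_g=\mathscr{L}(g)\cap\mathscr{U}$ for all $g\in U$ ensures $\bar{\mathcal{U}}$ is a compact, open subset of $\mathscr{L}(\bar{\Gamma})$. Theorem \ref{thm:Well-definedness} then gives $n(g_1',\mathcal{U}_1')=n(g_2',\mathcal{U}_2')$, so the definition is unambiguous.

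For the invariance along an arbitrary path $\bar{\Gamma}\in\pmb{\mathscr{G}}$ connecting $g_0$ and $g_1$, the plan is a standard compactness-and-concatenation argument. For each $t\in[0,1]$, the discussion preceding the theorem provides an open path-connected neighborhood $U_t$ of $g_t$ such that $\mathscr{L}(g')\cap\mathscr{U}$ is compact and open for all $g'\in U_t$; shrinking if necessary one can also assume $U_t$ arises from the neighborhood $\mathscr{U}$ chosen for $g_t$. By compactness of $[0,1]$, I would extract a finite subdivision $0=s_0<s_1<\cdots<s_N=1$ such that the image of $[s_{i-1},s_i]$ under $\bar{\Gamma}$ lies in a single such $U_{t_i}$. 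On each $U_{t_i}$, I would pick a super-rigid representative $g_i^\bullet\in\mathscr{G}^\bullet\cap U_{t_i}$ (non-empty by Theorem \ref{Theorem-A}) and connect $\bar{\Gamma}(s_{i-1})$ to $g_i^\bullet$ and $g_i^\bullet$ to $\bar{\Gamma}(s_i)$ through paths inside $U_{t_i}$. Applying the first part of the theorem to each of these local paths reduces the global invariance to showing $n(g_i^\bullet,\mathcal{U}_i^\bullet)=n(g_{i+1}^\bullet,\mathcal{U}_{i+1}^\bullet)$, which in turn follows by joining $g_i^\bullet$ to $g_{i+1}^\bullet$ by a path in $\pmb{\mathscr{G}}^\bullet$ (obtained from the generic path inside $U_{t_i}\cup U_{t_{i+1}}$, whose overlap contains $\bar{\Gamma}(s_i)$) and invoking Theorem \ref{thm:Well-definedness} once more.

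The main obstacle I anticipate is the compactness of $\bar{\mathcal{U}}$ along the path, namely that no new minimal tori ``leak into'' the closure $\overline{\mathscr{U}}\setminus\mathscr{U}$ or escape out along the family. This is where the separation $\overline{\mathscr{U}}\subset\mathscr{U}'$ and the choice of $U$ as in the discussion at the beginning of Section $4.1$ of \cite{EF-CLosedGeodesics} are used in a crucial way: they ensure that the ``hitting set'' $\mathscr{L}(\bar{\Gamma}(t))\cap\partial\mathscr{U}$ remains empty for $t$ in the chosen neighborhoods, so compactness of $\bar{\mathcal{U}}$ propagates across the concatenation. Beyond this point the proof is essentially bookkeeping: one patches together the local invariances produced by Theorem \ref{thm:Well-definedness} to conclude $n(g_0,\mathcal{U}_0)=n(g_1,\mathcal{U}_1)$.
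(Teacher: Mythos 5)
The paper does not prove this theorem: it states it and attributes it to Theorem~4.2 of \cite{EF-CLosedGeodesics}, so there is no proof here to compare against. Assessed on its own terms, your two--step outline (well--definedness by joining auxiliary super--rigid metrics through a generic path inside the fixed $U$, then invariance by a compactness--and--concatenation argument) is the natural way to reduce the statement to Theorem~\ref{thm:Well-definedness}, and the well--definedness step is essentially complete given Theorem~\ref{Theorem-A} and Proposition~\ref{PathOfSuper-rigidMetrics}.

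There is, however, a genuine gap in the concatenation step that you flag but do not close. The weight $n(g',\mathcal{U}_{g'})$ on each chart $U_{t_i}$ is computed relative to a \emph{fixed} open bounded set $\mathscr{U}_{t_i}\subset C^{2,\alpha}(T,M)$ chosen for the reference metric $g_{t_i}$, and the slices $\mathcal{U}_{g'}=\mathscr{L}(g')\cap\mathscr{U}_{t_i}$ it produces need not coincide a priori with the slices $\mathcal{U}_t=\bar{\mathcal{U}}\cap\mathscr{L}(g_t)$ of the given compact open $\bar{\mathcal{U}}\subset\mathscr{L}(\bar{\Gamma})$, nor with the slices coming from the neighboring chart $U_{t_{i+1}}$ at the transition time $s_i$. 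To make the patching rigorous you need an explicit argument that near each $t_i$ the slice $\mathcal{U}_t$ of $\bar{\mathcal{U}}$ is of the form $\mathscr{L}(g_t)\cap\mathscr{U}_{t_i}$ for the same $\mathscr{U}_{t_i}$ used in the local definition; in particular, the condition $\overline{\mathscr{U}}_{t_i}\subset\mathscr{U}'_{t_i}$ and $\mathcal{U}_{t_i}=\mathscr{L}(g_{t_i})\cap\mathscr{U}_{t_i}=\mathscr{L}(g_{t_i})\cap\mathscr{U}'_{t_i}$ must be promoted, via the compactness of $\bar{\mathcal{U}}$ and continuity of the solution set in $t$, to the statement that no element of $\mathscr{L}(g_t)$ enters $\overline{\mathscr{U}}_{t_i}\setminus\mathscr{U}_{t_i}$ for $t$ near $t_i$, and that the two slice descriptions agree on the overlap $U_{t_i}\cap U_{t_{i+1}}$. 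Without this identification, the chain of equalities $n(g_i^\bullet,\mathcal{U}_i^\bullet)=n(g_{i+1}^\bullet,\mathcal{U}_{i+1}^\bullet)$ compares counts over potentially different subsets and does not yet give $n(g_0,\mathcal{U}_0)=n(g_1,\mathcal{U}_1)$.
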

	\appendix
	\section{Pullback And Twisting Of Operators}\label{AppendixA}
In this appendix, we describe two methods for constructing a differential operator from a given one, utilizing  a covering map.
Moreover, we discuss the relation between these two new operators.
For a more comprehensive discussion, see Sections $1.2-1.5$ of \cite{DoanWalpuski}.

Let $\Sigma$ and $\tilde{\Sigma}$ be manifolds with $\tilde{\Sigma}$ connected. A $d-$fold covering map $\phi:\tilde{\Sigma}\rightarrow \Sigma$ is called \textbf{regular} if $\mathrm{Aut}(\phi)=\mathrm{deg}(\phi)=d$. Fix $x_0\in M$ and $\tilde{x}_0\in \phi^{-1}(x_0)$.
Let $H$ denote the \textbf{normal core} of $C:=\phi_*(\pi_1(\tilde{\Sigma},\tilde{x}_0))$, that is
\begin{align}\label{eq:A1}
H=\bigcap_{g\in C}gCg^{-1}.
\end{align}
If $\phi$ is a regular covering, then $\pi_1(\Sigma,x_0)/H\cong\mathrm{Aut}(\phi)$. 

In general, to a $d-$fold covering $\phi:\tilde{\Sigma}\rightarrow \Sigma$, one can associates a regular covering $\pi:\breve{\Sigma}\rightarrow \Sigma$ with $\breve{\Sigma}$ connected and a monomorphism $\rho:\mathrm{Aut}(\pi)\rightarrow \mathrm{S}_d$ such that
\begin{equation}\label{eq:A2}
	\begin{split}
     & (\breve{\Sigma}\times\{1,...,d\})/\mathrm{Aut}(\pi)\rightarrow \Sigma \\
     & [(z,i)]\mapsto\pi(z)
    \end{split}
\end{equation}
is equivalent to $\phi:\tilde{\Sigma}\rightarrow \Sigma$ (Here $\mathrm{S}_d$ denote the permutation group on $d$ letters).
To see this, let $\tilde{\pi}:\mathscr{U}\rightarrow \Sigma$ denotes the uinversal covering of $\Sigma$. Take an order for $\phi^{-1}(x_0)$. Each loop in $\Sigma$ with base point $x_0$ has a lift to a path in $\tilde{\Sigma}$. This gives a permutation on $\phi^{-1}(x_0)$. Therefore, there exists a homomorphism
\begin{align}\label{eq:A3}
\tilde{\rho}:\pi_1(\Sigma,x_0)\rightarrow \mathrm{S}_d.
\end{align}
Then $\phi:\tilde{\Sigma}\rightarrow \Sigma$ can be described as
\begin{equation}\label{eq:A4}
	\begin{split}
	 & (\mathscr{U}\times\{1,...,d\})/\pi_1(\Sigma,x_0)\rightarrow \Sigma \\
	 & [(z,i)]\mapsto\tilde{\pi}(z)
    \end{split}
\end{equation}
Let 
\begin{align}\label{eq:A5}
G:=\frac{\pi_1(\Sigma,x_0)}{\mathrm{Ker}(\tilde{\rho})}. 
\end{align}
Then $\tilde{\rho}$ gives the injection 
\[
\rho:G\rightarrow \mathrm{S}_d.
\]
Let $\breve{\Sigma}:=\mathscr{U}/\mathrm{Ker}(\tilde{\rho})$. Then $\tilde{\pi}$ induces a regular covering map $\pi:\breve{\Sigma}\rightarrow\Sigma$.
Consequently, \ref{eq:A4} reduces to \ref{eq:A2}.

Let $E$ and $F$ be two real vector bundles on $\Sigma$ equipped with orthogonal covraiant derivatives. Let $j\ge 2$ and $W^{j,\alpha}\Gamma(E)$ denote the Sobolev completion of the space of smooth sections of $E$.
\begin{defi}
	Let $\phi:\tilde{\Sigma}\rightarrow \Sigma$ be a covering map with $\tilde{\Sigma}$ connected. For $j\ge 2$, let $D:W^{j,\alpha}\Gamma(E)\rightarrow W^{0,\alpha}\Gamma(F)$ denote a linear differential operator of order $j$. The \textbf{pullback} of $D$ by $\phi$ is the operator characterized by
	\begin{align*}
		& \phi^*D:W^{j,\alpha}\Gamma(\phi^*E)\rightarrow W^{0,\alpha}\Gamma(\phi^* F) \\
		& \phi^*D(\phi^* s)=\phi^*(Ds).
	\end{align*}
where $s$ is a section of $E$.
\end{defi} 

For the covering map $\phi:\tilde{\Sigma}\rightarrow\Sigma$ of degree $d$, a trivial line bundle over $\tilde{\Sigma}$ induces a vector bundle $\underline{V}$, with fiber $V\cong\mathbb{R}^d$, via pushforward.
In general, the \textbf{pushforward} of a vector bundle $E$ over $\tilde{\Sigma}$ by the covering map $\phi:\tilde{\Sigma}\rightarrow \Sigma$ is the unique vector bundle $\phi_*E$ over $\Sigma$, defined as follows:
\[
(\phi_*E)_x=\oplus_{\tilde{x}\in\phi^{-1}(x)}E_{\tilde{x}}.
\]
$\underline{V}$ is equipped with a natural flat orthogonal connection.
A real vector bundle $\underline{V}$ on a manifold $\Sigma$ together with a flat orthogonal connection is called a \textbf{Euclidean local system}.

\begin{defi}
	Let $j\ge 2$ and $D:W^{j,\alpha}\Gamma(E)\rightarrow W^{0,\alpha}\Gamma(F)$ denote a linear differential operator of order $j$. let $\underline{V}$ be a Euclidean local system on $\Sigma$. The \textbf{twist} of $D$ by $\underline{V}$ is the operator characterized by
	\begin{align*}
		& D^{\underline{V}}:W^{j,\alpha}\Gamma(E\otimes\underline{V})\rightarrow W^{0,\alpha}\Gamma(F\underline{V}) \\
		& D^{\underline{V}}(s\otimes f)=(Ds)\otimes f.
	\end{align*}
    where $s$ is a section of $E$ defined on an open subset $U\subset \Sigma$ and $f$,  defined on $U\subset \Sigma$, is a covariantly constant section of $\underline{V}$ with respect to the flat connection.
\end{defi}

For the covering map $\phi:\tilde{\Sigma}\rightarrow\Sigma$ of degree $d$, a representation of the group $G$, c.f. \ref{eq:A5}, gives rise to a Euclidean local system on $\breve{\Sigma}$ as follow.
Let $W$ be a real representation of $G$:
\[
\theta:G\rightarrow\mathrm{Aut}_\mathbb{R}(W).
\]
Let $W^\theta:=(\breve{\Sigma}\times W)/G$. The trivial vector bundle $\breve{\Sigma}\times W\rightarrow\breve{\Sigma}$ with its trivial connection gives the vector bundle $W^\theta\rightarrow\breve{\Sigma}$ with a flat connection.
Specially, the injection $\rho:G\rightarrow\mathrm{S}_d$ induces a \textbf{permutation representation} of $G$: let $\{e_1,...,e_d\}$ be the standard basis of $\mathbb{R}^d$,
\begin{align*}
& \boldsymbol{\rho}:G\rightarrow \mathrm{GL}(d,\mathbb{R}) \\
&  \boldsymbol{\rho}(g)e_i:=e_{\rho(g)(i)}
\end{align*}
Let $W=\mathbb{R}^d$. Then for a vector bundle $E$ over $\Sigma$:
\[
E\otimes W^{\boldsymbol{\rho}}\cong(\pi^*E\otimes\mathbb{R}^d)/G.
\]
Let $\{\eta_1,...,\eta_d\}$ denote a basis for the space of sections of $\pi^*E\rightarrow \breve{\Sigma}$, then $\eta=\sum_i \eta_i\otimes e_i$ is a $G-$equivariant section of $\pi^*E\otimes\mathbb{R}^d$, in the sense that for $g\in G$ and $z\in \breve{\Sigma}$,
\[
\eta(gz)=(1\otimes \boldsymbol{\rho}(g))\eta(z)
\]
from which it follows that for all $i=1,...,d$,
\[
\eta_i(z)=\eta_{\rho(g)i}(gz).
\]

This gives rise to an equivalence between sections of $E\otimes W^{\boldsymbol{\rho}}$ and $G-$equivariant section of $\pi^*E\otimes\mathbb{R}^d$ which implies a one-to-one correspondence between $\Gamma(E\otimes W^{\boldsymbol{\rho}})$ and that of $\Gamma(\phi^*E)$. 
To see this let $\eta\in\Gamma(E\otimes W^{\boldsymbol{\rho}})$, then as explained above $\eta$ can be described by a $G-$equivariant section of $\pi^*E\otimes\mathbb{R}^d$ as $\sum_i\eta_i\otimes e_i$ where
for each $i$, $\eta_i\in\Gamma(\pi^*E)$. 
Recall that $\Sigma$ is isomorphic to $\breve{\Sigma}\times\{1,...,d\}/G$. 
This suggests to define
\[
\hat{\eta}[(z,i)]:=\eta_i(z).
\]
as the corresponding section of $\eta$ in $\Gamma(\pi^*E\otimes\mathbb{R}^n)$.
The $G-$equivariance condition guaranties that $\hat{\eta}$ is well-defined. 

In summary, for a $d-$fold covering $\phi:(\tilde{\Sigma},\tilde{x}_0)\rightarrow (\Sigma,x_0)$ with $\tilde{\Sigma}$ connected, let $C=\phi_*\pi_1(\Sigma,x_0)$, $G=\frac{\pi_1(\Sigma,x_0)}{\mathrm{Ker}(\tilde{\rho})}$ and $E$ be a vector bundle over $\Sigma$ (note that $\mathrm{Ker}(\tilde{\rho})$ is the normal core of $C$). Then, there is an isomorphism 
\[
\Gamma(E\otimes W^{\boldsymbol{\rho}})\cong\Gamma(\phi^*E).
\]
This implies that $D^{\boldsymbol{\rho}}$ and $\phi^*D$ are equivalent (see Proposition \ref{prop:push-pull formula} for a more precise relation).


For the covering map $\phi:\tilde{\Sigma}\rightarrow\Sigma$ of degree $d$, the flat orthogonal connection on the Euclidean local system $\underline{V}$, constructed via pushforward, can be used to perform parallel transport along loops of $\pi_1(\Sigma,x_0)$ which induces a \textbf{monodromy representation}
\[
\mu:\pi_1(\Sigma,x_0)\rightarrow O(V)
\]
This representation is actually the map $\tilde{\rho}$, c.f. \ref{eq:A3}. 
Therefore, it factors through $G=\pi_1(\Sigma,x_0)/H$, where $H$ is the normal core of $\phi_*(\pi_1(\Sigma,x_0))$, c.f. \ref{eq:A1}.

\begin{prop}[Proposition 1.2.9 of \cite{DoanWalpuski}] \label{prop:push-pull formula}
	Let $\phi:(\tilde{\Sigma},\tilde{x}_0)\rightarrow (\Sigma,x_0)$ with $\tilde{\Sigma}$ connected, be a $d-$fold covering. Let $C=\phi_*\pi_1(\Sigma,x_0)$ and $N$ be the normal core of $C$. 
	Let $\underline{V}:=\phi_*\underline{\mathbb{R}}$ denote the pushforward of the trivial line bundle on $\tilde{\Sigma}$. Then
	\begin{itemize}
		\item The monodromy representation of $\underline{V}$ factors through $G=\pi_1(\Sigma,x_0)/N$.
		\item For $j\ge 2$, let $D:W^{j,\alpha}\Gamma(E)\rightarrow W^{0,\alpha}\Gamma(F)$ denotes a linear differential operator of order $j$. Then
		\[
		D^{\underline{V}}=\phi_*\circ\phi^*D\circ\phi_*^{-1}.
		\]
	\end{itemize}
\end{prop}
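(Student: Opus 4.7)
The plan is to treat the two parts separately. For part $(1)$, I would reduce the claim to identifying the kernel of the monodromy representation with the normal core $N$, using standard covering-space theory. For part $(2)$, I would verify the operator identity locally over evenly covered neighborhoods and then glue.

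For part $(1)$, I would begin by unpacking the flat structure on $\underline{V}=\phi_{*}\underline{\mathbb{R}}$. The fiber at $x\in\Sigma$ is $\bigoplus_{\tilde{x}\in\phi^{-1}(x)}\mathbb{R}$ with a canonical basis $\{e_{\tilde{x}}\}$ coming from the trivial line bundle on $\tilde{\Sigma}$. Since $\underline{\mathbb{R}}$ carries the trivial connection, parallel transport along a loop $\gamma$ at $x_0$ permutes the basis $\{e_i\}_{i=1}^{d}$ of $(\phi_{*}\underline{\mathbb{R}})_{x_0}$ according to the lift of $\gamma$ starting at each preimage $\tilde{x}_i$; this is by construction the permutation representation $\tilde{\rho}$ of (\ref{eq:A3}). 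To establish that $\tilde{\rho}$ factors through $G=\pi_1(\Sigma,x_0)/N$, I would invoke the standard fact that the stabilizer of $\tilde{x}_0$ under the monodromy action on $\phi^{-1}(x_0)$ is precisely $C=\phi_{*}\pi_1(\tilde{\Sigma},\tilde{x}_0)$, and the stabilizer of $g\cdot\tilde{x}_0$ is the conjugate subgroup $gCg^{-1}$. A class $[\gamma]$ lies in $\ker\tilde{\rho}$ if and only if it fixes every sheet, i.e.\ lies in $\bigcap_{g\in\pi_1(\Sigma,x_0)}gCg^{-1}$, which is the normal core $N$ of (\ref{eq:A1}). This yields the required descent.

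For part $(2)$, I would work locally over an evenly covered $U\subset\Sigma$ with $\phi^{-1}(U)=\bigsqcup_{i=1}^{d}U_i$ and $\phi_i:=\phi|_{U_i}\colon U_i\to U$ a diffeomorphism. A section $\tilde{s}$ of $\phi^{*}E$ over $\phi^{-1}(U)$ splits into local sections $s_i:=\tilde{s}\circ\phi_i^{-1}\in\Gamma(U,E)$, and the pushforward isomorphism identifies $\tilde{s}$ with $\sum_i s_i\otimes e_i\in\Gamma(U,E\otimes\underline{V})$, where $\{e_i\}$ is the local frame of $\underline{V}|_U$ indexed by the sheets; this frame is covariantly constant for the flat connection on $\underline{V}$ inherited from the trivial connection on $\underline{\mathbb{R}}$. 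Since $D$ is a local differential operator and each $\phi_i$ a diffeomorphism, $\phi^{*}D$ acts on $\tilde{s}$ by $(Ds_i)\circ\phi_i$ on each sheet, and hence $\phi_{*}(\phi^{*}D\,\tilde{s})=\sum_i (Ds_i)\otimes e_i$. On the other hand, by definition of the twist and parallelness of the $e_i$, $D^{\underline{V}}\bigl(\sum_i s_i\otimes e_i\bigr)=\sum_i (Ds_i)\otimes e_i$. The two expressions agree, giving $D^{\underline{V}}\circ\phi_{*}=\phi_{*}\circ\phi^{*}D$ on $U$, and this identity extends globally because the local frames $\{e_i\}$ transform between overlapping $U$'s exactly according to the monodromy permutations of sheets, which is precisely the transition data of $\underline{V}$.

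The main obstacle is the bookkeeping in part $(1)$: one must carefully verify that the right-monodromy action of $\pi_1(\Sigma,x_0)$ on $\phi^{-1}(x_0)$ produces conjugate stabilizers whose intersection coincides with the normal core of $C$, which amounts to comparing the right-coset description of $\phi^{-1}(x_0)$ with the fiber of $\underline{V}$. Once this is in place, part $(2)$ is essentially a local verification whose only subtlety is ensuring that the local trivializations of $\underline{V}$ patch coherently with the sheet decomposition of $\phi^{*}E$, which is exactly what the flatness of the connection guarantees.
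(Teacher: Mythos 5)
Your proof is correct. Part (1) is the same route the paper takes in Appendix \ref{AppendixA}: parallel transport of $\underline{V}=\phi_*\underline{\mathbb{R}}$ realizes the monodromy permutation $\tilde{\rho}$ of (\ref{eq:A3}), whose kernel you correctly compute as $\bigcap_{g\in\pi_1(\Sigma,x_0)}gCg^{-1}$, the intersection of the conjugate sheet-stabilizers; note that this is the standard formula for the normal core, while the paper's displayed (\ref{eq:A1}) appears to have a typo, since as written $\bigcap_{g\in C}gCg^{-1}=C$. For part (2) you take a genuinely different and more elementary route. The paper does not prove the operator identity itself — it is cited to \cite{DoanWalpuski} — and the surrounding discussion instead establishes the abstract isomorphism $\Gamma(E\otimes W^{\boldsymbol{\rho}})\cong\Gamma(\phi^*E)$ by passing to $G$-equivariant sections of $\pi^*E\otimes\mathbb{R}^d$ on the intermediate regular cover $\breve{\Sigma}=\mathscr{U}/\ker\tilde{\rho}$. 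You verify $D^{\underline{V}}\circ\phi_*=\phi_*\circ\phi^*D$ directly over evenly covered patches: split $\phi^*E$ sheet by sheet via the local diffeomorphisms $\phi_i$, match sheets with a parallel frame of $\underline{V}$, and use the locality of $D$ together with the flatness of the twisting frame. This avoids the universal and regular covers entirely and makes the mechanism transparent; the trade-off is that the paper's regular-cover, $G$-equivariant description is precisely the package needed for the equivariant Brill--Noether stratification in Theorems \ref{G-EquivariantTransversality} and \ref{B-EquivariantTransversality}, so its phrasing is optimized for downstream use rather than for a self-contained proof.
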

\begin{rem}\label{rem:ActionOnKernel}
	The action of $\pi_1(\Sigma,x_0)$ on $\underline{V}$ induces an action on $\mathrm{Ker}\,D^{\underline{V}}$.
	Then Lemma $5.3$ of \cite{Taubes} implies that for $d>2$ and $\mathrm{Ker}\,D^{\underline{V}}\ne 0$ we have $\mathrm{dim}\,\mathrm{Ker}\,D^{\underline{V}}\ge 2$.
\end{rem}

Since $V$ is a representation of $G$, it decomposes into irreducible representations:
\begin{align*}
	V\cong\bigoplus_{i=1}^nV_i^{n_i}, \quad\mathrm{where}\quad n_i=\mathrm{dim}_{K_i^{op}}\mathrm{Hom}_G(V_i,V).
\end{align*}
Here, $K_i:=\mathrm{End}_G(V_i)$ and $K_i^{op}$ is the opposite algebra of $K_i$.
In this paper we are dealing with a family of linear differential operators. Let $\{D_\mathfrak{v}\}_{\mathfrak{v}\in\mathscr{V}}$ denote such a family.
For each $\mathfrak{v}\in\mathscr{V}$, $\mathrm{Ker}(\pi^*D_\mathfrak{v})$ and $\mathrm{Coker}(\pi^*D_\mathfrak{v})$ are representations of $G$.
Let $d,c\in\mathbb{N}_0^n$. The \pmb{$G-$}\textbf{equivariant Brill-Noether locus} is defined as follows:
\[
	\mathscr{V}_{d,c}^G:=\Big\{\mathfrak{v}\in\mathscr{V}\,\Big|\, 
	\begin{matrix}
		 \mathrm{dim}_{K_i^{op}}\mathrm{Hom}_G(V_i,\mathrm{Ker}(\pi^*D_\mathfrak{v}))=d_i 
		\quad\mathrm{and} \\
		 \mathrm{dim}_{K_i^{op}}\mathrm{Hom}_G(V_i,\mathrm{Coker}(\pi^*D_\mathfrak{v}))=c_i, i=1,...,n
	\end{matrix}	
	\Big\}
\]
A similar argument like Section \refeq{sec:Tranversality} implies the $G-$equivariant transversality theorem.

\begin{thm}[Theorem $1.4.6$ of \cite{DoanWalpuski}]\label{G-EquivariantTransversality}
	Let $\{D_\mathfrak{v}\}_{\mathfrak{v}\in\mathscr{V}}$ denote a family of linear differential operators. For $\mathfrak{v}\in\mathscr{V}$ define
	\begin{align*}
		& \Theta_\mathfrak{v}^G:T_\mathfrak{v}\mathscr{V}\rightarrow\mathrm{Hom}(\mathrm{Ker}\,\pi^*D_\mathfrak{v},\mathrm{Coker}\,\pi^*D_\mathfrak{v}) \\
		& \Theta_\mathfrak{v}^G(\hat{X})\,x:=\mathrm{d}_\mathfrak{v}D(\hat{X})\,x
		\quad\mathrm{mod}\quad\mathrm{Im}\,\pi^*D_\mathfrak{v}
	\end{align*}
    Let $d,c\in\mathbb{N}_0^n$.
    If $\Theta_\mathfrak{v}^G$ is surjective,
    then there is a neighborhood $\mathscr{U}$ of $\mathfrak{v}\in\mathscr{V}$ such that $\mathscr{V}_{d,c}\cap \mathscr{U}$ is a submanifold of codimension $\sum_{i=1}^nk_id_ic_i$, where $k_i=\mathrm{dim}\, K_i$.
\end{thm}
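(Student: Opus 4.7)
The plan is to run a $G$-equivariant version of the argument in Subsection \ref{Subsection:TransversalityTheorem}. Recall that there, for a linear Fredholm $L\in\mathcal{F}(X,Y)$, we produced a smooth map $\mathscr{I}:\mathcal{U}\to\mathrm{Hom}(\mathrm{Ker}\,L,\mathrm{Coker}\,L)$ on a neighborhood of $L$, with $\mathrm{d}_L\mathscr{I}(\hat{T})x=\hat{T}x\ \mathrm{mod}\ \mathrm{Im}\,L$ surjective, and such that $\mathrm{dim}\,\mathrm{Ker}\,T=\mathrm{dim}\,\mathrm{Ker}\,L$, $\mathrm{dim}\,\mathrm{Coker}\,T=\mathrm{dim}\,\mathrm{Coker}\,L$ iff $\mathscr{I}(T)=0$. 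Here I will carry out the same construction inside the category of $G$-equivariant operators, applied to the family $\mathfrak{v}\mapsto\pi^*D_\mathfrak{v}$.

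First I would observe that $\pi^*D_\mathfrak{v}$ is a $G$-equivariant Fredholm operator on $W^{k,p}\Gamma(\pi^*E)\to W^{0,p}\Gamma(\pi^*F)$, so both $\mathrm{Ker}\,\pi^*D_\mathfrak{v}$ and $\mathrm{Coker}\,\pi^*D_\mathfrak{v}$ are finite-dimensional $G$-representations. Choose $G$-invariant complements: $W^{k,p}\Gamma(\pi^*E)=V\oplus K$ and $W^{0,p}\Gamma(\pi^*F)=I\oplus C$ with $K=\mathrm{Ker}\,\pi^*D_\mathfrak{v}$, $C\cong \mathrm{Coker}\,\pi^*D_\mathfrak{v}$; these exist by averaging over the finite group $G$. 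With respect to these splittings, for $\mathfrak{v}'$ near $\mathfrak{v}$, $\pi^*D_{\mathfrak{v}'}$ has the standard block form, and the Schur-complement construction produces a smooth map
\[
\mathscr{I}^G:\mathscr{U}\rightarrow\mathrm{Hom}_G(\mathrm{Ker}\,\pi^*D_\mathfrak{v},\mathrm{Coker}\,\pi^*D_\mathfrak{v})
\]
whose differential at $\mathfrak{v}$ is precisely $\Theta^G_\mathfrak{v}$. Crucially, because all the ingredients of the Schur complement are $G$-equivariant projections and inverses (on the $G$-invariant summand $V\to I$), the output lies in the equivariant Hom, not merely the full Hom.

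Next, using the decomposition $\mathrm{Ker}\,\pi^*D_\mathfrak{v}\cong\bigoplus_i V_i^{d_i}$ and $\mathrm{Coker}\,\pi^*D_\mathfrak{v}\cong\bigoplus_i V_i^{c_i}$ as $G$-representations, Schur's lemma together with $K_i=\mathrm{End}_G(V_i)$ gives
\[
\mathrm{dim}_\mathbb{R}\mathrm{Hom}_G(\mathrm{Ker}\,\pi^*D_\mathfrak{v},\mathrm{Coker}\,\pi^*D_\mathfrak{v})=\sum_{i=1}^n k_id_ic_i,
\]
since $\mathrm{Hom}_G(V_i^{d_i},V_j^{c_j})=0$ for $i\ne j$ and $\mathrm{Hom}_G(V_i^{d_i},V_i^{c_i})\cong M_{c_i\times d_i}(K_i)$. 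Moreover, semi-continuity of the isotypical multiplicities combined with the matrix presentation of $\pi^*D_{\mathfrak{v}'}$ shows that the defining equalities of $\mathscr{V}_{d,c}^G$ hold on a neighborhood of $\mathfrak{v}$ exactly on $(\mathscr{I}^G\circ D)^{-1}(0)$: the $K_i$-dimensions of $\mathrm{Hom}_G(V_i,\mathrm{Ker})$ and $\mathrm{Hom}_G(V_i,\mathrm{Coker})$ are preserved precisely when the Schur complement, which encodes the induced map between these isotypic pieces, vanishes component by component.

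Finally, the hypothesis that $\Theta^G_\mathfrak{v}=\mathrm{d}_\mathfrak{v}(\mathscr{I}^G\circ D)$ is surjective onto the $\sum_i k_id_ic_i$-dimensional target lets me apply the Regular Value Theorem on a possibly smaller neighborhood $\mathscr{U}$ of $\mathfrak{v}$, yielding that $\mathscr{V}^G_{d,c}\cap\mathscr{U}$ is a smooth submanifold of codimension $\sum_i k_id_ic_i$, as claimed. The main obstacle that needs care is the verification that the Schur-complement map $\mathscr{I}$ can be carried out in a fully $G$-equivariant manner and that its zero set really coincides locally with the isotypic Brill--Noether stratum $\mathscr{V}_{d,c}^G$; once the equivariance of the splittings $V,I$ and of the local inverse of the top-left block $A:V\to I$ is established, everything else is formally parallel to Theorem \ref{thm:Transversality} with $\mathrm{Hom}$ replaced throughout by $\mathrm{Hom}_G$.
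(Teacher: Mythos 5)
Your proposal is correct and is precisely the argument the paper has in mind: the paper itself offers no separate proof, remarking only that ``a similar argument like Section \ref{sec:Tranversality} implies the $G$-equivariant transversality theorem,'' and you have carried out exactly that $G$-equivariant version of the Schur-complement construction — averaging to get $G$-invariant splittings, observing that $\mathscr{I}$ lands in $\mathrm{Hom}_G$, computing $\dim_{\mathbb{R}}\mathrm{Hom}_G(\ker\pi^*D_\mathfrak{v},\mathrm{coker}\,\pi^*D_\mathfrak{v})=\sum_i k_id_ic_i$ by Schur's lemma, and applying the Regular Value Theorem. One small remark: the statement as printed writes the target of $\Theta^G_\mathfrak{v}$ as $\mathrm{Hom}$ rather than $\mathrm{Hom}_G$; your reading of the intended $\mathrm{Hom}_G$ is the correct one (and is what makes the codimension formula $\sum k_id_ic_i$ come out right).
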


There are $G-$equivariant versions of Petri's condition and flexibility condition, c.f. Definitions $1.4.8$ and $1.4.9$ of \cite{DoanWalpuski}, which imply the surjectivity of $\Theta_\mathfrak{v}^G$ in Theorem \ref{G-EquivariantTransversality}.

Since the pullback of an operator is equivalent to a twisted operator, there is a similar notion of \pmb{$B-$}\textbf{equivariant Brill-Noether locus} for twisted operators defined as follows:
\[
\mathscr{V}_{d,c}^B:=\{\mathfrak{v}\in\mathscr{V}\,|\, 
	\mathrm{dim}_{K_i}\mathrm{Ker}\,D_\mathfrak{v}^{\underline{V}_i}=d_i 
	\;\mathrm{and}\;
	\mathrm{dim}_{K_i}\mathrm{Coker}\,D_\mathfrak{v}^{\underline{V}_i}=c_i, i=1,...,n
\}
\]
where $d,c\in\mathbb{N}_0^n$.
Analogously, we have
\begin{thm}[Theorem $1.3.5$ of \cite{DoanWalpuski}]\label{B-EquivariantTransversality}\label{thm:B-Transversality}
	Let $\{D_\mathfrak{v}\}_{\mathfrak{v}\in\mathscr{V}}$ denote a family of linear differential operators. For $\mathfrak{v}\in\mathscr{V}$ define
	\begin{align*}
		& \Theta_\mathfrak{v}^B:T_\mathfrak{v}\mathscr{V}\rightarrow
		\bigoplus_{i=1}^n\mathrm{Hom}(\mathrm{Ker}\,D_\mathfrak{v}^{\underline{V}_i},\mathrm{Coker}\,D_\mathfrak{v}^{\underline{V}_i}) \\
		& \Theta_\mathfrak{v}^B(\hat{X}):=\bigoplus_{i=1}^n \Theta_\mathfrak{v}^i(\hat{X})
	\end{align*}
    where
    \[
    \Theta_\mathfrak{v}^i(\hat{X})\,x:=
    \mathrm{d}_\mathfrak{v}D^{\underline{V}_i}(\hat{X})\,x
    \quad\mathrm{mod}\quad\mathrm{Im}\,D_\mathfrak{v}^{\underline{V}_i}.
    \]
	Let $d,c\in\mathbb{N}_0^n$.
	If $\Theta_\mathfrak{v}^B$ is surjective,
	then there is a neighborhood $\mathscr{U}$ of $\mathfrak{v}\in\mathscr{V}$ such that $\mathscr{V}_{d,c}^B\cap \mathscr{U}$ is a submanifold of codimension $\sum_{i=1}^nk_id_ic_i$, where $k_i=\mathrm{dim}\, K_i$.
\end{thm}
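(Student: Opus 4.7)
The plan is to adapt the proof of Theorem \ref{thm:Transversality} to the $B$-equivariant setting by working locally in the space of $K_i$-equivariant Fredholm operators for each $i$ and then assembling the resulting local obstructions. Since each Euclidean local system $\underline{V}_i$ has endomorphism algebra $K_i$, the sections $\Gamma(E \otimes \underline{V}_i)$ and $\Gamma(F \otimes \underline{V}_i)$ are naturally $K_i$-modules, every $D_\mathfrak{v}^{\underline{V}_i}$ is $K_i$-linear, and hence its kernel and cokernel are finite-dimensional $K_i$-modules of dimensions $d_i$ and $c_i$ respectively.

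First I would fix $\mathfrak{v}\in\mathscr{V}$, set $L_i := D_\mathfrak{v}^{\underline{V}_i}$, and choose $K_i$-invariant splittings $W^{j,\alpha}\Gamma(E\otimes \underline{V}_i) = V_i \oplus \mathrm{Ker}\, L_i$ and $W^{0,\alpha}\Gamma(F\otimes \underline{V}_i) = \mathrm{Im}\, L_i \oplus C_i$, with $C_i \cong \mathrm{Coker}\, L_i$ as $K_i$-modules. Such equivariant splittings exist because $K_i$ is a finite-dimensional semisimple algebra and any splitting may be averaged (or decomposed into isotypic components) to produce $K_i$-equivariant projectors. In a small neighborhood $\mathcal{U}_i$ of $L_i$ inside the space of $K_i$-equivariant Fredholm operators, the corresponding block $A_i$ remains invertible, and I can define
\[
\mathscr{I}_i(T) := D_i - C_iA_i^{-1}B_i \;\in\; \mathrm{Hom}_{K_i}(\mathrm{Ker}\, L_i,\mathrm{Coker}\, L_i).
\]
Repeating the straightening computation that underlies (\ref{DiffOfFredholm}) with $K_i$-equivariance maintained at every step shows that $\mathrm{Ker}\,T\cong\mathrm{Ker}\,\mathscr{I}_i(T)$ and $\mathrm{Coker}\,T\cong\mathrm{Coker}\,\mathscr{I}_i(T)$ as $K_i$-modules, and that $\mathrm{d}_{L_i}\mathscr{I}_i(\hat T)\,x \equiv \hat T\,x \pmod{\mathrm{Im}\,L_i}$ for $x\in\mathrm{Ker}\,L_i$.

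Next I would assemble the components into
\[
\mathscr{I}^B := \bigoplus_{i=1}^n \mathscr{I}_i, \qquad D^B(\mathfrak{w}) := (D_\mathfrak{w}^{\underline{V}_1},\ldots,D_\mathfrak{w}^{\underline{V}_n}),
\]
so that, locally around $\mathfrak{v}$, one identifies $\mathscr{V}_{d,c}^B \cap \mathscr{U} = (\mathscr{I}^B\circ D^B)^{-1}(0)$. The chain rule, together with the formula above for $\mathrm{d}_{L_i}\mathscr{I}_i$, shows that the differential of $\mathscr{I}^B\circ D^B$ at $\mathfrak{v}$ is precisely the map $\Theta_\mathfrak{v}^B$ in the statement. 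The hypothesis that $\Theta_\mathfrak{v}^B$ be surjective therefore ensures that $0$ is a regular value, and the implicit function theorem endows $\mathscr{V}_{d,c}^B \cap \mathscr{U}$ with a submanifold structure of real codimension
\[
\sum_{i=1}^n \dim_\mathbb{R}\mathrm{Hom}_{K_i}(\mathrm{Ker}\,L_i,\mathrm{Coker}\,L_i) \;=\; \sum_{i=1}^n k_i d_i c_i,
\]
since $\mathrm{Hom}_{K_i}(K_i^{d_i},K_i^{c_i})$ has real dimension $k_i d_i c_i$.

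The principal technical burden is ensuring that every step of the classical Fredholm straightening is carried out $K_i$-equivariantly: the complements to $\mathrm{Ker}\,L_i$ and $\mathrm{Im}\,L_i$ must be $K_i$-invariant, the block decomposition near $L_i$ must respect the $K_i$-action, and the obstruction map $\mathscr{I}_i$ must take values in $K_i$-linear (rather than merely $\mathbb{R}$-linear) homomorphisms. Thanks to the semisimplicity of each $K_i$ and the finite-dimensionality of kernels and cokernels this is routine, but it is precisely where the $B$-equivariant proof genuinely differs from the non-equivariant Theorem \ref{thm:Transversality} and where the factor $k_i = \dim_\mathbb{R}K_i$ enters the codimension formula.
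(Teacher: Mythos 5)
Your proposal is correct and is essentially the approach the paper intends: the paper does not supply its own proof of this theorem (it is cited from Doan--Walpuski and preceded by the word ``Analogously''), and the intended argument is precisely the $K_i$-equivariant adaptation of the Fredholm-straightening argument sketched in Section~\ref{sec:Tranversality} — which is what you carried out. You have correctly identified the two places where equivariance genuinely matters: the splittings $W^{j,\alpha}\Gamma(E\otimes\underline{V}_i)=V_i\oplus\mathrm{Ker}\,L_i$ and $W^{0,\alpha}\Gamma(F\otimes\underline{V}_i)=\mathrm{Im}\,L_i\oplus C_i$ must be chosen $K_i$-invariant (which is possible since each $K_i$ is a finite-dimensional real division algebra by Schur's lemma, and the kernel is finite-dimensional while the image is closed), and the target of the obstruction map must be $\mathrm{Hom}_{K_i}$ rather than $\mathrm{Hom}_{\mathbb{R}}$ — a point the theorem statement leaves implicit but which is forced, since $\mathrm{d}_\mathfrak{v}D^{\underline{V}_i}(\hat{X})$ is automatically $K_i$-linear, making surjectivity onto $\mathrm{Hom}_{\mathbb{R}}$ impossible when $k_i>1$. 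Your codimension count $\dim_{\mathbb{R}}\mathrm{Hom}_{K_i}(\mathrm{Ker}\,L_i,\mathrm{Coker}\,L_i)=k_i d_i c_i$ is correct, and the identification of $\mathrm{d}_\mathfrak{v}(\mathscr{I}^B\circ D^B)$ with $\Theta_\mathfrak{v}^B$ via the chain rule and \eqref{DiffOfFredholm} closes the argument by the implicit function theorem.
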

There are also $B-$equivariant versions of Petri's condition and flexibility condition, c.f. Definitions $1.3.8$ and $1.3.9$ of \cite{DoanWalpuski}, which imply the surjectivity of $\Theta_\mathfrak{v}^B$ in Theorem \ref{B-EquivariantTransversality}.
	\section{Jacobi Operator For Orbifolds} \label{Jacobi Operator}
\subsection{Orbifold Riemann Surface}\label{subsec:Orbifolds}
First we recall the definition of an orbifold and its related concepts. We closely follow \cite{ShenYu}.
Let $X$ be a topological space and $U\subset X$ be an open connected subset. Let $k\in\mathbb{N}$.

\begin{defi}
	A $k-$dimensional \textbf{orbifold chart} for $U$ consists of
	\begin{itemize}
		\item A connected open subset $\tilde{U}\subset \mathbb{R}^k$,
		\item A finite group $G_U$ which acts smoothly and effectively from left on $\tilde{U}$,
		\item A $G_U-$invariant continuous map $\pi_U:\tilde{U}\rightarrow U$ such that
		\[
		G_U\setminus\tilde{U}\cong U
		\] 
	\end{itemize}
    Such a chart is denoted by $(\tilde{U},G_U,\pi_U)$.
\end{defi}

Let $(\tilde{U},G_U,\pi_U)$ and $(\tilde{V},G_V,\pi_V)$ denote two orbifold charts for open connected subsets $U$ and $V$ of $X$. If $U\hookrightarrow V$ is an embedding, then an embedding of orrbifold charts is a smooth embedding $\phi_{UV}:\tilde{U}\rightarrow\tilde{V}$ such that
\begin{center}
	\begin{tikzpicture}
		\matrix(m)[matrix of math nodes,
		row sep=3em, column sep=2.5em,
		text height=1.5ex, text depth=0.25ex]
		{\tilde{U} & \tilde{V}\\
			U & V\\};
		\path[->,font=\scriptsize]
		(m-1-1) edge  node[above] {$\phi_{UV}$} (m-1-2) edge  node[right] {$\pi_U$} (m-2-1)
		(m-1-2) edge  node[right] {$\pi_V$} (m-2-2)
		(m-2-1) edge (m-2-2);
	\end{tikzpicture}
\end{center}
commutes.

Two orbifold charts $(\tilde{U},G_U,\pi_U)$ and $(\tilde{V},G_V,\pi_V)$ are said to be compatible if for each $x\in U\cap V$ there exists an open connected subset $W\subset U\cap V$ containing $x$, an orbifold chart $(\tilde{W},G_W,\pi_W)$ and two embeddings of orbifold charts
\begin{align*}
\phi_{UW}:(\tilde{W},G_W,\pi_W)\rightarrow(\tilde{U},G_U,\pi_U) \quad \mathrm{and} \quad
\phi_{VW}:(\tilde{W},G_W,\pi_W)\rightarrow(\tilde{V},G_V,\pi_V).
\end{align*}
Note that $\phi_{VW}\circ\phi_{UW}^{-1}:\phi_{UW}(\tilde{W})\rightarrow\phi_{VW}(\tilde{W})$ is a diffeomorphism which is called a \textbf{coordinate transformation}.
 
An \textbf{orbifold atlas} for $X$ is defined to be an open connected cover $\mathcal{U}=\{U\}$ for $X$ and a set of compatible orbifold charts $\widetilde{\mathcal{U}}=\{(\tilde{U},G_U,\pi_U)\}_{U\in\mathcal{U}}$.

A refinement of $(\mathcal{U},\tilde{\mathcal{U}})$ is an orbifold atlas $(\mathcal{V},\tilde{\mathcal{V}})$ such that $\mathcal{V}$ is a refinement of $\mathcal{U}$ and each orbifold chart of $\tilde{\mathcal{V}}$ is embedded into an orbifold chart of $\tilde{\mathcal{U}}$. Two orbifold atlas with a common refinement are called equivalent. An orbifold structure on $X$ is an equivalent class of an orbifold atlas.

\begin{defi}
	An \textbf{orbifold} is a second countable Hausdorff space with an orbifold structure.
\end{defi}

\begin{defi}\label{defi:orbifold_map}
Let $X$ and $Y$ be two orbifolds. A continuous map $f:X\rightarrow Y$ is called smooth (holomorph) if for each $x\in X$, there are open connected neighborhood $U\subset X$ of $x$ and $V\subset Y$ of $f(x)$ such that $f(U)\subset V$ with orbifold charts $(\tilde{U},G_U,\pi_U)$ and $(\tilde{V},G_V,\pi_V)$ for $U$ and $V$, respectively. Moreover there exists a smooth (holomorph) map $\tilde{f}_U:\tilde{U}\rightarrow \tilde{V}$ such that the following diagram
\begin{center}
	\begin{tikzpicture}
		\matrix(m)[matrix of math nodes,
		row sep=3em, column sep=2.5em,
		text height=1.5ex, text depth=0.25ex]
		{\tilde{U} & \tilde{V}\\
			U & V\\};
		\path[->,font=\scriptsize]
		(m-1-1) edge  node[above] {$\tilde{f}_U$} (m-1-2) edge  node[right] {$\pi_U$} (m-2-1)
		(m-1-2) edge  node[right] {$\pi_V$} (m-2-2)
		(m-2-1) edge  node[above] {$f|_U$} (m-2-2);
	\end{tikzpicture}
\end{center}
commutes.
\end{defi}

\begin{ex}\normalfont\label{ex:Orbifold_example_1}
	Let $G$ be a discrete group which acts smoothly and properly discontinuously on an orbifold $X$. Let $\mathbb{F}$ be a field and $V$ be a $\mathbb{F}-$vector field. Assume $\rho:G\rightarrow\mathrm{End}_\mathbb{F}(V)$ is a representation of $G$. 
	Then by Proposition $2.12$ of \cite{ShenYu}, $X/G$ and $X \times_G V$ are orbifolds. Moreover, the projection $X\times V\rightarrow X$ induces a smooth map of orbifols
	\[
	X \times_G V\rightarrow X/G.
	\]
\end{ex}

\begin{defi}\label{Ex.B4}
	Let $X$ be an orbifold, $\mathbb{F}$ a field and $V$ a $\mathbb{F}-$vector space of rank $r$.
	An \textbf{orbifold vector bundle} of rank $r$ on $X$ consists of an orbifold $\mathcal{E}$ and a smooth map $\pi:\mathcal{E}\rightarrow X$ such that there is an orbifold atlas $(\mathcal{U},\tilde{\mathcal{U}})$ for $X$ with the following properties:
	\begin{enumerate}
		\item For each $U\in\mathcal{U}$ and $(\tilde{U},G_U,\pi_U)\in\tilde{\mathcal{U}}$ there is a finite group $G_U^E$ which acts smoothly on $\tilde{U}$, a surjective morphism $G_U^E\rightarrow G_U$ and a representation $\rho_U^E:G_U^E\rightarrow \mathrm{End}_\mathbb{F}(V)$. 
		Moreover, there exists a $G_U^E-$invariant continuous map 
		\[
		\pi_U^E:\tilde{U}\times V\rightarrow\pi^{-1}(U)
		\]
		which induces
		\[
		\tilde{U}\,{}_{G_U^E}\!\times V\cong\pi^{-1}(U).
		\]
		\item $(\tilde{U}\times V,G_U^E,\pi_U^E)$ is a compatible orbifold chart for $\mathcal{E}$.
		\item Let $U_1,U_2\in\mathcal{U}$ such that $U_1\cap U_2\ne \empty$. 
		Then for every $x\in U_1\cap U_2$, there is a connected open subset $W\subset U_1\cap U_2$, containing $x$, a simply connected orbifold chart $(\tilde{W},G_W,\pi_W)$ and a triple $(G_W^E,\rho_W^E,\pi_W^E)$ which satisfies conditions $(1)$ and $(2)$. 
		Moreover, embeddings  of orbifold charts have the following forms
		\begin{align*}
			& \phi_{U_iW}^E:(\tilde{W}\times V,G_W^E,\pi_W^E)\rightarrow(\tilde{U}_i\times V,G_{U_i}^E,\pi_{U_i}^E) \\
			& \phi_{U_iW}^E(x,v)=(\phi_{U_iW}(x),g_{U_iW}^E(x)v)
		\end{align*}
	    for $i=1,2$, where $g_{U_iW}^E$ are elements of $C^\infty(W,\mathrm{End}_\mathbb{F}(V)))$.
	    
	\end{enumerate}
\end{defi}

\begin{ex}\normalfont 
	Let $\Sigma$ be a Riemann surface with a multiplicity function $\varpi:\Sigma\rightarrow\mathbb{N}$. 
	Let $\Sigma_\varpi$ and $\beta:\Sigma_\varpi\rightarrow\Sigma$ be the orbifold holomorphic map introduced in Section \ref{Index Operator}.
	If $\pi:E\rightarrow\Sigma$ is a vector bundle on $\Sigma$, then $E_\varpi:=\beta_\varpi^* E$ is an orbifold vector bundle on $\Sigma_\varpi$.
\end{ex}

\begin{ex}\normalfont\label{ex:Orbifold_example_4}
	Let $G$, $X$ and $\rho$ be as in Example \ref{ex:Orbifold_example_1}. Moreover, assume that the action of $G$ on $V$ is effective, i.e. $\rho$ is injective. Then 
	\[
	X \times_G V\rightarrow X/G
	\]
	defines a flat vector bundle, i.e. $g_{UW}^E$, can be chosen to be constant.
\end{ex}

A \textbf{smooth (holomorphic) section} of the orbifold vector bundle $E$ is a smooth (homorphic) map $s:X\rightarrow E$, in the sense of Definition \ref{defi:orbifold_map}, such that $\pi\circ s=\mathrm{id}$ and each local lift $\tilde{s}_U$ of $s|_U$ is $G_U^E-$invariant.

When $G_U^E\rightarrow G_U$ is an isomorphism, $E$ is called proper. From now on, we assume all orbifold vector bundles are proper, therefore $G_U^E=G_U$. Thus a smooth section $s:X\rightarrow E$ can be represented by a family of $G_U-$invariant sections $\{s_U\in C^\infty(\tilde{U},\tilde{E}_U)^{G_U}\}_{U\in\mathcal{U}}$ such that for each $x_i\in \tilde{U}_i$, $i=1,2$, and a germ of coordinate transformation, $g$, where $g(x_1)=x_2$, the following holds
\[
g^*s_{U_2}=s_{U_1}.
\]
\subsection{Calculus On Orbifold} \label{Diff. Operator}
Let $X$ be a compact orbifold with atlas $(\mathcal{U},\tilde{\mathcal{U}})$ and $E\rightarrow X$ be an orbifold vector bundle.
A differential operator $\mathbf{D}_v$ of order $l$ is a family of $G_U-$invariant differential operators 
$$
\{\widetilde{\mathbf{D}}_U:C^\infty(\tilde{U},\tilde{E}_U) \rightarrow C^\infty(\tilde{U},\tilde{E}_U)\}_{U\in\mathcal{U}}
$$ 
for which the compatibility condition holds: Let $x_i\in \tilde{U}_i$, $i=1,2$, and $g$ be a germ of coordinate transformation such that $g(x_1)=x_2$. Then
\[
g^*\widetilde{\mathbf{D}}_{U_2}=\widetilde{\mathbf{D}}_{U_1}
\]

\begin{ex}\normalfont
	Let $v:\Sigma\rightarrow M$ be an embedding of a Riemann surface in a Riemannian manifold $M$ and let $\varpi:\Sigma\rightarrow\mathbb{N}$ be a multiplicity function and let $\beta_\varpi:\Sigma_\varpi\rightarrow\Sigma$ be be the orbifold holomorphic map introduced in Section \ref{Index Operator}.
	
	Assume $N$ is the normal vector bundle on $\Sigma$ and $\mathcal{J}:\Gamma(N)\rightarrow\Gamma(N)$ is the Jacobi operator. Then $\mathcal{J}_\varpi=\beta_\varpi^*\mathcal{J}$ is an orbifold differential operator which is characterized by 
	\begin{align}
	(\beta_\varpi^*\mathcal{J})(\beta_\varpi^* s)=\beta_\varpi^*(\mathcal{J} s).
	\end{align}
\end{ex}

Since $\mathcal{J}$ is a self-adjoint operator, we expect the same for $\mathcal{J}_\varpi$. 
Before proving this fact we review the definition of an integral operator on the Riemannian orbifold $\Sigma_v$. The definition for general orbifolds is similar.

$\beta_\varpi^*:\Gamma(N) \rightarrow \Gamma(N_v)$ which is induced by $\beta_\varpi$ gives an injection $\mathrm{Ker}\, \mathcal{J} \hookrightarrow \mathrm{Ker}\, \mathcal{J}_\varpi$ which indeed is an isomorphism.
To see this let $s\in \mathrm{Ker}\, \mathcal{J}_\varpi$. As explained in Subsection \ref{subsec:Orbifolds}, $s$ is represented by a family $\{s_x\in C^\infty(D_x,\mathbb{R}^r)^{\mu_{v(x)}}\}$ which are $\mu_{v(x)}-$invariant sections of $E_{D_x}$.
For any such map there is a bounded map $\bar{s}_x \in C^\infty(D_x\setminus \{0\},\mathbb{R}^r)$ such that $s_x=\beta_\varpi^*\,\bar{s}_x$. 
Therefore
\[
0=\mathcal{J}_\varpi\, s_x=(\beta_\varpi^*\mathcal{J})(\beta_\varpi^* \,\bar{s}_x)=\beta_\varpi^*(\mathcal{J} \,\bar{s}_x)
\]
In local coordinates $\beta_\varpi:D_x \rightarrow D_x$ is of the form $\beta_\varpi(z)=z^{\varpi(x)}$. 
Therefore, $\mathcal{J}(\bar{s}_x)=0$.
Since $\mathcal{J}$ is an elliptic operator $\bar{s}_x$ extends to $D_x$. Therefore the family $\{\bar{s}_x\}$ provides the required section. 

An argument like the last paragraph of the proof of Proposition $2.8.3$ of \cite{DoanWalpuski}, shows that the pullback map $\beta_\varpi^*$ induces an injection $\mathrm{Ker}\, \mathcal{J}^\dagger \hookrightarrow \mathrm{Ker}\, \mathcal{J}_\varpi^\dagger$ which is actually an isomorphism.
As a result the following holds:
\begin{prop}[Proposition $2.8.3$ of \cite{DoanWalpuski}]\label{prop:IndexOrbifoldJacobiOperator}
	Let $v:\Sigma\rightarrow (M,g)$ be a minimal embedding of a closed surface and let $\varpi$ denote a multiplicity function on $\Sigma$. Then  
	\[
	\mathrm{Ker}\, \mathcal{J}_{g,v;\varpi} \cong \mathrm{Ker}\, \mathcal{J}_{g,v}
	\quad \mathrm{and} \quad 
	\mathrm{Coker}\, \mathcal{J}_{g,v;\varpi} \cong \mathrm{Coker}\, \mathcal{J}_{g,v} 
	\]
	Therefore, 
	\begin{align}\label{Eq:IndexTwisted}
		\mathrm{index}(\mathcal{J}_{g,v;\varpi})=\mathrm{index}(\mathcal{J}_{g,v})
	\end{align}
\end{prop}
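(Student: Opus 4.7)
The plan is to mimic closely the sketch the paper already gives before the proposition, making the removable-singularity step at the orbifold points precise, and then deduce the index identity directly.

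First, I would establish the kernel isomorphism $\mathrm{Ker}\,\mathcal{J}_{g,v;\varpi}\cong \mathrm{Ker}\,\mathcal{J}_{g,v}$. The pullback $\beta_\varpi^{*}:\Gamma(N_v)\to\Gamma(N_{v,\varpi})$ is injective (since $\beta_\varpi$ is the identity on underlying spaces and is a branched cover of nonzero degree on charts), and by the very definition $(\beta_\varpi^{*}\mathcal{J}_{g,v})(\beta_\varpi^{*}s)=\beta_\varpi^{*}(\mathcal{J}_{g,v}s)$, so it sends $\mathrm{Ker}\,\mathcal{J}_{g,v}$ into $\mathrm{Ker}\,\mathcal{J}_{g,v;\varpi}$. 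For surjectivity I would fix $s\in\mathrm{Ker}\,\mathcal{J}_{g,v;\varpi}$ and work chart by chart. Away from $Z_\varpi$, $\beta_\varpi$ is a local diffeomorphism, so $s$ descends trivially to a section $\bar{s}$ on $\Sigma\setminus Z_\varpi$ satisfying $\mathcal{J}_{g,v}\bar{s}=0$ there. Near a point $x\in Z_\varpi$ of multiplicity $k=\varpi(x)$, the local representative $s_x\in C^\infty(D,\mathbb{R}^r)^{\mu_k}$ is $\mu_k$-invariant, hence in the local coordinate $z\mapsto z^k$ takes the form $s_x=\beta_\varpi^{*}\bar{s}_x$ for some smooth section $\bar{s}_x$ on $D\setminus\{0\}$ that is automatically bounded near $0$. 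The relation $\beta_\varpi^{*}(\mathcal{J}_{g,v}\bar{s}_x)=\mathcal{J}_{g,v;\varpi}s_x=0$ combined with injectivity of $\beta_\varpi^{*}$ on sections over $D\setminus\{0\}$ implies $\mathcal{J}_{g,v}\bar{s}_x=0$ on the punctured disk.

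The crucial step, and the one I expect to be the main technical obstacle, is removable singularity: a bounded solution of the elliptic equation $\mathcal{J}_{g,v}\bar{s}_x=0$ on $D\setminus\{0\}$ extends smoothly to $D$. Since $\mathcal{J}_{g,v}$ is a self-adjoint second-order elliptic operator whose principal part is the normal-bundle Laplacian (so in particular $\mathcal{J}_{g,v}^{*}\mathcal{J}_{g,v}=\nabla^{*}\nabla+\text{lower order}$), this follows from the standard removable-singularity theorem for elliptic operators: a bounded weak solution near an isolated puncture extends across it, and elliptic regularity upgrades this to a smooth extension. The extended local sections agree on overlaps (being determined by $s$ off $Z_\varpi$), so they glue to a global $\bar{s}\in\Gamma(N_v)$ with $\mathcal{J}_{g,v}\bar{s}=0$ and $\beta_\varpi^{*}\bar{s}=s$. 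This proves $\mathrm{Ker}\,\mathcal{J}_{g,v;\varpi}\cong\mathrm{Ker}\,\mathcal{J}_{g,v}$.

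For the cokernel isomorphism I would use self-adjointness, exactly as suggested in the paper. Because $\mathcal{J}_{g,v}$ is self-adjoint (Theorem \ref{thm:c1}(3)), so is its orbifold pullback: $\mathcal{J}_{g,v;\varpi}^{\dagger}=\beta_\varpi^{*}\mathcal{J}_{g,v}^{\dagger}$. Repeating the above argument verbatim with $\mathcal{J}_{g,v}^{\dagger}$ in place of $\mathcal{J}_{g,v}$ yields $\mathrm{Ker}\,\mathcal{J}_{g,v;\varpi}^{\dagger}\cong\mathrm{Ker}\,\mathcal{J}_{g,v}^{\dagger}$, and since $\mathrm{Coker}\cong\mathrm{Ker}$ of the adjoint for formally self-adjoint elliptic operators between sections of the same bundle, this gives the cokernel isomorphism.

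Finally, subtracting dimensions gives
\[
\mathrm{index}(\mathcal{J}_{g,v;\varpi})=\dim\mathrm{Ker}\,\mathcal{J}_{g,v;\varpi}-\dim\mathrm{Coker}\,\mathcal{J}_{g,v;\varpi}=\dim\mathrm{Ker}\,\mathcal{J}_{g,v}-\dim\mathrm{Coker}\,\mathcal{J}_{g,v}=\mathrm{index}(\mathcal{J}_{g,v}),
\]
which yields \eqref{Eq:IndexTwisted}. The only nontrivial analytic ingredient is the removable-singularity claim at points of $Z_\varpi$; everything else is bookkeeping with the orbifold pullback definition and self-adjointness.
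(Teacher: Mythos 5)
Your proof follows exactly the same route as the paper (which in turn mirrors Doan--Walpuski, Proposition 2.8.3): pull back to produce the injection on kernels; near each $x\in Z_\varpi$ write the $\mu_{\varpi(x)}$-invariant local representative as $\beta_\varpi^*\bar{s}_x$ for a bounded section on the punctured disk, deduce $\mathcal{J}_{g,v}\bar{s}_x=0$ there from injectivity of the pullback on the punctured chart, invoke removable singularity for bounded solutions of a second-order elliptic equation in two dimensions, and glue; then run the same argument for $\mathcal{J}^\dagger$ and identify $\mathrm{Coker}$ with $\mathrm{Ker}$ of the adjoint. You are slightly more explicit than the paper in naming the removable-singularity step and in carrying out the cokernel case (the paper defers to the cited proof), but the approach and the key analytic ingredient are identical.
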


\subsection{Branched Immersions Of Surfaces Via Orbifolds} \label{Branched Imm.}
Let $(M,g)$ be a Riemannian manifold and $f:\Sigma \rightarrow M$ be a $C^1-$map. A point $p \in \Sigma$ is called a branched point if $df(p)=0$.
\begin{defi}
	A branched point $p \in \Sigma$ is a good branched point of order $m-1$ if there exists a local coordinates $z$ at $p$ and a local coordinates $(x_1,...x_n)$ at $f(p)$ such that in these coordinates $f$  is represented by 
	\[
	x_1+i x_2=z^m, \quad x_k=\eta_k(z) ,\quad k=3,...,n
	\]
	where $\eta_k \in o(|z|^m)$.
\end{defi}

According to \cite{GOR} every minimal map is a branched immersion, meaning that it is an immersion everywhere except on a discrete set of good branched points.

It is known that, c.f. \cite{GOR} and Section $1.1$ of \cite{Sagman}, every branched minimal immersion $f:(\Sigma,h)\rightarrow (M,g)$ factors through a smooth Riemann surface $(\Sigma_0,h_0)$ via an almost minimal embedding $f_0:\Sigma_0 \rightarrow (M,g)$ and a branched holomorphic covering map $\pi: (\Sigma,j) \rightarrow (\Sigma_0,j_0)$ so that the following diagram commutes
\begin{center}
	\begin{tikzpicture}
		\matrix(m)[matrix of math nodes,
		row sep=3em, column sep=2.5em,
		text height=1.5ex, text depth=0.25ex]
		{\Sigma \\
			\Sigma_0 & M\\};
		\path[->,font=\scriptsize]
		(m-1-1) edge  node[above] {$f$} (m-2-2) edge  node[left] {$\pi$} (m-2-1)
		(m-2-1) edge  node[below] {$f_0$} (m-2-2);
	\end{tikzpicture}
\end{center}
Note that $j$ and $j_0$ are the unique complex structures induced by $h$ and $h_0$, respectively.

According to Proposition $2.7.3$ \cite{DoanWalpuski}, one can describe a branched holomorphic covering map via a covering of orbifolds:
\begin{prop}[Proposition $2.7.3$ of \cite{DoanWalpuski}]
	Let $\pi:(\tilde{\Sigma},\tilde{j}) \rightarrow (\Sigma,j)$ be a non-constant holomorphic  map between Riemann surfaces. There are multiplicity functions $\varpi:\Sigma \rightarrow \mathbb{N}$ and $\widetilde{\varpi}:\tilde{\Sigma} \rightarrow \mathbb{N}$ such that there is a unique holomorphic covering map between their corresponding orbifold Riemann surfaces $\hat{\pi}:(\tilde{\Sigma}_{\widetilde{\varpi}},\tilde{j}_{\tilde{\varpi}}) \rightarrow (\Sigma_\varpi,j_\varpi)$ for which the following diagram commutes:
	
	\begin{center}
		\begin{tikzpicture}
			\matrix(m)[matrix of math nodes,
			row sep=3em, column sep=2.5em,
			text height=1.5ex, text depth=0.25ex]
			{(\tilde{\Sigma}_{\widetilde{\varpi}},\tilde{j}_{\tilde{\varpi}}) & (\Sigma_\varpi,j_\varpi)\\
				(\tilde{\Sigma},\tilde{j}) & (\Sigma,j)\\};
			\path[->,font=\scriptsize]
			(m-1-1) edge  node[above] {$\tilde{\pi}$} (m-1-2) edge  node[left] {$\beta_{\widetilde{\varpi}}$} (m-2-1)
			(m-1-2) edge node[right] {$\beta_\varpi$} (m-2-2)
			(m-2-1) edge  node[above] {$\pi$} (m-2-2);
		\end{tikzpicture}
	\end{center}
\end{prop}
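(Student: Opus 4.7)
The plan is to build the multiplicity functions directly from the local ramification data of $\pi$. Recall that any non-constant holomorphic map between Riemann surfaces is locally of the form $z \mapsto z^{e(\tilde{x})}$ in suitable holomorphic charts around $\tilde{x}$ and $\pi(\tilde{x})$, where $e(\tilde{x}) \in \mathbb{N}$ is the ramification index. Outside the (locally finite) ramification set $R \subset \tilde{\Sigma}$, we have $e \equiv 1$. First I would define $\varpi: \Sigma \to \mathbb{N}$ by setting $\varpi(x) := \mathrm{lcm}\{e(\tilde{x}) : \tilde{x} \in \pi^{-1}(x)\}$ at branch values and $\varpi \equiv 1$ elsewhere, and then set $\tilde{\varpi}(\tilde{x}) := \varpi(\pi(\tilde{x}))/e(\tilde{x})$, which is an integer by construction. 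Both sets $Z_\varpi$ and $Z_{\tilde{\varpi}}$ are discrete, so these are legitimate multiplicity functions in the sense of Subsection \ref{subsec:Orbifolds}.

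Next I would verify that the tautological lift $\hat{\pi}: \tilde{\Sigma}_{\tilde{\varpi}} \to \Sigma_\varpi$ (which is set-theoretically $\pi$) is a holomorphic orbifold covering. The local model is the key computation: in the orbifold chart $(D, \mu_{\tilde{\varpi}(\tilde{x})}, \phi_{\tilde{\varpi}(\tilde{x})})$ around $\tilde{x}$ and $(D, \mu_{\varpi(\pi(\tilde{x}))}, \phi_{\varpi(\pi(\tilde{x}))})$ around $\pi(\tilde{x})$, the map $\pi$ lifts to $z \mapsto z^{e(\tilde{x})}$. I would then check equivariance: for $\zeta \in \mu_{\tilde{\varpi}(\tilde{x})}$ one has $(\zeta z)^{e(\tilde{x})} = \zeta^{e(\tilde{x})} z^{e(\tilde{x})}$, and since $\zeta^{e(\tilde{x})\tilde{\varpi}(\tilde{x})} = \zeta^{\varpi(\pi(\tilde{x}))} = 1$, the element $\zeta^{e(\tilde{x})}$ lies in $\mu_{\varpi(\pi(\tilde{x}))}$. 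Thus the lift descends to a holomorphic map of orbifold charts, and a degree/dimension count (comparing $|\mu_{\varpi(\pi(\tilde{x}))}|$ with $e(\tilde{x})\cdot|\mu_{\tilde{\varpi}(\tilde{x})}|$) shows that this chart map is an isomorphism onto its image, i.e.\ $\hat{\pi}$ is locally modeled on a quotient of an unramified map, hence is an orbifold covering.

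For uniqueness I would argue as follows. Any pair $(\varpi', \tilde{\varpi}')$ making $\hat{\pi}$ an orbifold covering must, at each $\tilde{x}$, satisfy the local compatibility $\tilde{\varpi}'(\tilde{x}) \cdot e(\tilde{x}) = \varpi'(\pi(\tilde{x}))$ (this is forced by the requirement that $z \mapsto z^{e(\tilde{x})}$ intertwine the two cyclic group actions). Imposing this simultaneously at every $\tilde{x} \in \pi^{-1}(x)$ forces $\varpi'(x)$ to be a common multiple of $\{e(\tilde{x})\}_{\tilde{x} \in \pi^{-1}(x)}$, and minimality of the orbifold structure (i.e.\ the requirement that $Z_{\varpi'}$ and $Z_{\tilde{\varpi}'}$ be as small as possible, or equivalently that no extraneous orbifold points are introduced away from the branch locus) pins down $\varpi'(x)$ as the lcm, and then $\tilde{\varpi}'$ is determined by the compatibility relation.

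I expect the main obstacle to be the careful bookkeeping in the local model, in particular verifying that the induced map on quotients $D/\mu_{\tilde{\varpi}(\tilde{x})} \to D/\mu_{\varpi(\pi(\tilde{x}))}$ is in fact étale in the orbifold sense, because the cyclic group $\mu_{\tilde{\varpi}(\tilde{x})}$ on the source does not sit as a literal subgroup of $\mu_{\varpi(\pi(\tilde{x}))}$ but embeds via $\zeta \mapsto \zeta^{e(\tilde{x})}$; one must check that this embedding realizes the chart structure for the target under $\hat{\pi}$, which comes down to the identity $\mathrm{lcm} = e(\tilde{x}) \cdot \tilde{\varpi}(\tilde{x})$ at each point of $\pi^{-1}(x)$. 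Everything else is formal once the local picture is straightened out.
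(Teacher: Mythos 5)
The paper states this as Proposition~2.7.3 of \cite{DoanWalpuski} without proof, so there is no internal argument to compare against; I will assess your proposal on its own terms. Your choice of multiplicity functions, $\varpi(x)=\mathrm{lcm}\{e(\tilde x):\tilde x\in\pi^{-1}(x)\}$ and $\tilde\varpi(\tilde x)=\varpi(\pi(\tilde x))/e(\tilde x)$, is the standard and correct construction, and the observations that $\tilde\varpi$ is integer-valued and that $Z_\varpi$, $Z_{\tilde\varpi}$ are discrete are fine.

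The local verification, however, contains a genuine error. In the paper's chart convention, the orbifold chart around $\tilde x$ is $\phi_{\tilde\varpi(\tilde x)}\colon D\to\tilde\Sigma$ with $\phi_{\tilde\varpi(\tilde x)}(z)=\phi(z^{\tilde\varpi(\tilde x)})$, and similarly around $\pi(\tilde x)$. The defining requirement $\psi_{\varpi}\circ\tilde{\hat\pi}=\pi\circ\phi_{\tilde\varpi}$ yields $\tilde{\hat\pi}(z)^{\varpi(\pi(\tilde x))}=\bigl(z^{\tilde\varpi(\tilde x)}\bigr)^{e(\tilde x)}=z^{\varpi(\pi(\tilde x))}$, so the chart-level lift of $\hat\pi$ is $z\mapsto z$ (up to a root of unity), \emph{not} $z\mapsto z^{e(\tilde x)}$ as you assert. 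Correspondingly, the group homomorphism is the literal inclusion $\mu_{\tilde\varpi(\tilde x)}\hookrightarrow\mu_{\varpi(\pi(\tilde x))}$, which exists precisely because $\tilde\varpi(\tilde x)\mid\varpi(\pi(\tilde x))$ --- contrary to your remark that $\mu_{\tilde\varpi}$ ``does not sit as a literal subgroup.'' The map $\zeta\mapsto\zeta^{e(\tilde x)}$ that you propose instead is not even injective in general: its kernel has order $\gcd(\tilde\varpi(\tilde x),e(\tilde x))$, which can exceed $1$; for instance, if $\pi^{-1}(x)$ contains preimages with ramification indices $2$ and $4$, then $\varpi(x)=4$, $\tilde\varpi=2$ at the index-$2$ point, and $\gcd(2,2)=2$, so your proposed embedding is the zero map there. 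Likewise, the degree count $|\mu_{\varpi}|=e(\tilde x)\cdot|\mu_{\tilde\varpi}|$ is true but does not show $z\mapsto z^e$ is an isomorphism onto its image --- that map is $e$-to-$1$ near the origin. If you replace the chart lift by the identity and the group map by the honest inclusion, the local check does go through, giving the standard quotient-of-subgroups picture for an orbifold covering; and your uniqueness argument (minimality forces $\varpi(x)=\mathrm{lcm}$) is then fine.
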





\end{document}